\def\baselinestretch{1.1187}
\newtheorem{theorem}[equation]{Theorem}
\newtheorem{lemma}[equation]{Lemma}
\newtheorem{prop}[equation]{Proposition}
\newtheorem{cor}[equation]{Corollary}
\newtheorem{corollary}[equation]{Corollary}
\newtheorem{definition}[equation]{Definition}
\theoremstyle{remark}
\newtheorem{remark}[equation]{Remark}
\newtheorem{notation}[equation]{Notation}
\newtheorem{convention}[equation]{Convention}
\newtheorem{assumption}[equation]{Assumption}
\numberwithin{equation}{section}
\newcommand{\tp}{{p,\tau}}
\newcommand{\tpp}{{p,\tau_p}}
\newcommand{\zp}{{p,0}}
\newcommand{\tb}{{\underline{\tau}}}
\newcommand{\vhat}{\widehat{v}}
\newcommand{\uhat}{\widehat{u}}
\newcommand{\hhat}{\widehat{h}}
\newcommand{\tauhat}{\widehat{\tau}}
\newcommand{\gammahat}{{\widehat{\gamma}}} 
\newcommand{\betahat}{{\widehat{\beta}}} 
\newcommand{\varphihat}{\widehat{\varphi}}
\newcommand{\varphinl}{{\varphi_{nl}}}
\newcommand{\phie}{{\phi_{even}}}
\newcommand{\phio}{{\phi_{odd}}}
\newcommand{\gammagl}{{\alpha}}
\newcommand{\eq}{{{eq}}}
\newcommand{\ep}{{{{eq\text{-}pol}}}}
\newcommand{\Lep}{{L_\ep}}
\newcommand{\osc}{{{osc}}}
\newcommand{\ave}{{{avg}}}
\newcommand{\zosc}{{{0,osc}}}
\newcommand{\zave}{{{0,avg}}}
\newcommand{\riza}{{{root}}}
\newcommand{\Xtilde}{{\widetilde{X}}}
\newcommand{\gtilde}{{\widetilde{g}}}
\newcommand{\gtildep}{{\widetilde{g}'}}
\newcommand{\Atilde}{{\widetilde{A}}}
\newcommand{\atilde}{{\widetilde{a}}}
\newcommand{\btilde}{{\widetilde{b}}}
\newcommand{\rtilde}{{\widetilde{r}}}
\newcommand{\Rtilde}{{\widetilde{R}}}
\newcommand{\mutilde}{{\widetilde{\mu}}}
\newcommand{\RRR}{\mathsf{R}}
\newcommand{\Omegatilde}{\widetilde{\Omega}}  
\newcommand{\varphitilde}{\widetilde{\varphi}}
\newcommand{\phihat}{{{\widehat{\phi}}}}  
\newcommand{\skernel}{\mathscr{K}}
\newcommand{\skernelv}{\widehat{\mathscr{K}}}
\newcommand{\val}{\mathscr{V}}
\newcommand{\beq}{\begin{equation}}
\newcommand{\eeq}{\end{equation}}
\newcommand{\bea}{\begin{eqnarray}}
\newcommand{\eea}{\end{eqnarray}}
\newcommand{\R}{\mathbb{R}}
\newcommand{\Z}{\mathbb{Z}}
\newcommand{\N}{\mathbb{N}}
\newcommand{\Sph}{\mathbb{S}}
\newcommand{\Spheq}{\mathbb{S}^2_{eq}}
\newcommand{\Cir}{\mathbb{P}}
\newcommand{\cat}{\mathbb{K}}
\newcommand{\tildecat}{{{\widetilde{\cat}}}}
\newcommand{\PiSph}{\Pi_{\Spheq}}
\newcommand{\tildePiKp}{\widetilde{\Pi}_{\mathbb{K},p}}
\newcommand{\Exp}{\operatorname{Exp}}
\newcommand{\cunder}{\underline{c}\,}
\newcommand{\Stildep}{\widetilde{S}'}
\newcommand{\Shat}{\widehat{S}}
\newcommand{\phiunder}{{\underline{\phi}}}
\newcommand{\Gtau}{{\underline{G}_\tau}}
\newcommand{\Gtaup}{{\underline{G}_{\tp}}}
\newcommand{\Gtpp}{{\underline{G}_{\tpp}}}
\newcommand{\cattp}{{{\cat}_{\tp}}}
\newcommand{\tildecattp}{{\widetilde{\cat}_{\tp}}}
\newcommand{\tildecatmtp}{{\widetilde{\cat}_{p,-\tau}}}
\newcommand{\tildecatztp}{{\widetilde{\cat}_{p,0}}}
\newcommand{\junder}{{\underline{j}}}
\newcommand{\Lcal}{{\mathcal{L}}}
\newcommand{\Lcalp}{{\mathcal{L}'}}
\newcommand{\Lcaltilde}{{\widetilde{\mathcal{L}}}}
\newcommand{\Acal}{{\mathcal{A}}}
\newcommand{\Rcal}{{\mathcal{R}}}
\newcommand{\Jcal}{{\mathcal{J}}}
\newcommand{\Ecal}{{\mathcal{E}}}
\newcommand{\Bcal}{{\mathcal{B}}}
\newcommand{\Ecalinv}{{\mathcal{E}^{-1}_L}}
\newcommand{\Ecalinvep}{{\mathcal{E}^{-1}_{\Lep}}}
\newcommand{\Lgtp}{{\mathcal{L}'_{\gtilde \,}}}
\newcommand{\dbold}{{\mathbf{d}}}
\newcommand{\zetabold}{{\boldsymbol{\zeta}}}
\newcommand{\zetaboldhat}{{\widehat{\boldsymbol{\zeta}}}}
\newcommand{\taubold}{{\boldsymbol{\tau}}}
\newcommand{\tbbold}{{\underline{\taubold}}}
\newcommand{\tauboldhat}{{\widehat{\boldsymbol{\tau}}}}
\newcommand{\mubold}{{\boldsymbol{\mu}}}
\newcommand{\zerobold}{{\boldsymbol{0}}}
\newcommand{\xiw}{{{w}}}
\newcommand{\zw}{{{w}}}
\newcommand{\zwE}{{{w_E}}}
\newcommand{\zwhat}{\widehat{\zw}}
\newcommand{\xiwhat}{\widehat{\xiw}}
\newcommand{\psihat}{\widehat{\psi}}
\newcommand{\Pitilde}{\widetilde{\Pi}}
\newcommand{\avg}{\operatornamewithlimits{avg}}
\newcommand{\sym}{{sym}}
\newcommand{\xx}{{{\ensuremath{\mathrm{x}}}}}
\newcommand{\xxhat}{{{\widehat{\xx}}}}
\newcommand{\xxroot}{{{\xx_\riza}}}
\newcommand{\xxbal}{{{\xx_{balanced}}}}
\newcommand{\fdecay}{{{f_{\Spheq,\xx_1}}}}
\newcommand{\fdecayz}{{{f_{\Spheq,0}}}}
\newcommand{\xxx}{{|\xx|}}
\newcommand{\Acalxx}{{\Acal_{\xx_1}\!}}
\newcommand{\deltaL}{{{\delta_1}}}
\newcommand{\deltaLp}{{{\delta'_1}}}
\newcommand{\deltaz}{{{\delta_0}}}
\newcommand{\deltat}{{{\delta_2}}}
\newcommand{\yy}{\ensuremath{\mathrm{y}}}
\newcommand{\zz}{\ensuremath{\mathrm{z}}}
\newcommand{\rr}{r}
\newcommand{\xxtilde}{{\ensuremath{\widetilde{\mathrm{x}}}}}
\newcommand{\yytilde}{{\ensuremath{\widetilde{\mathrm{y}}}}}
\newcommand{\domTheta}{\text{Dom}_\Theta}
\newcommand{\Pp}{{\widehat{G}}}
\newcommand{\Ptp}{\Phi''}
\newcommand{\Phip}{\Phi'}
\newcommand{\grouptwo}{{\mathscr{G}_{\Spheq,m}}}  
\newcommand{\groupthree}{{\mathscr{G}_{\Sph^3,m}}}
\newcommand{\Xhat}{\widehat{X}}
\newcommand{\Mhat}{\widehat{M}}
\newcommand{\Sp}{S'}
\newcommand{\ghat}{\widehat{g}}
\newcommand{\XXX}{\widehat{\mathsf{X}}}
\newcommand{\YYY}{\widehat{\mathsf{Y}}}
\newcommand{\ZZZ}{\widehat{\mathsf{Z}}}
\newcommand{\xbar}{\underline{\XXX}}
\newcommand{\ybar}{\underline{\YYY}}
\newcommand{\zbar}{\underline{\ZZZ}}
\newcommand{\xbartilde}{{\underline{\mathsf{X}}}}
\newcommand{\ybartilde}{{\underline{\mathsf{Y}}}}
\newcommand{\zbartilde}{{\underline{\mathsf{Z}}}}
\newcommand{\ytilde}{{\mathsf{Y}}}
\newcommand{\arccosh}{\operatorname{arccosh}}
\newcommand{\disjun}{\textstyle\bigsqcup}
\newcommand{\mmer}{{m_{mer}}}
\newcommand{\mpar}{{m_{par}}}
\newcommand{\Lmer}{{L_{mer}}}
\newcommand{\Lpar}{{L_{par}}}
\newcommand{\psicut}{{\psi_{cut}}}
\newcommand{\Psibold}{{\boldsymbol{\Psi}}}
\begin{document}

\title[Doubling]{Minimal surfaces in the round three-sphere \\
by doubling the equatorial two-sphere, I}

\author[N.~Kapouleas]{Nikolaos~Kapouleas}
\address{Department of Mathematics, Brown University, Providence,
RI 02912} \email{nicos@math.brown.edu}

\date{\today}

\keywords{Differential geometry, minimal surfaces,
partial differential equations, perturbation methods}

\begin{abstract}
We construct 
closed embedded minimal surfaces in the round three-sphere
$\mathbb S^3(1)$,
resembling two parallel copies of the equatorial two-sphere $\Spheq$,
joined by small catenoidal bridges 
symmetrically arranged either along two parallel circles of $\Spheq   $,
or along the equatorial circle and the poles.
To carry out these constructions
we refine and reorganize the doubling methodology
in ways which we expect to apply also to further constructions.
In particular we introduce what we call ``linearized doubling'', 
which is an intermediate step where singular solutions 
to the linearized equation are constructed 
subject to appropriate linear and nonlinear conditions.
Linearized doubling provides a systematic approach for dealing 
with the obstructions involved 
and also understanding in detail the regions further away from the catenoidal bridges.
\end{abstract}

\maketitle
\section{Introduction}
\label{Sintro}
\nopagebreak

\subsection*{The general framework}
$\phantom{ab}$
\nopagebreak

This article is an important step in the author's program to develop 
doubling constructions for minimal surfaces by singular perturbation methods.
It is also 
the first article in a series in which we discuss gluing constructions
for closed embedded minimal surfaces in the round three-sphere $\Sph^3(1)$
by doubling the equatorial two-sphere $\Spheq   $.
Doublings of the equatorial two-sphere $\Spheq   $ are important because
their area is close to $8\pi$ (the area of two equatorial two-spheres), 
a feature they share with the celebrated surfaces 
constructed by Lawson in 1970 \cite{L2}.
The classification of the low area 
closed embedded minimal surfaces in the round three-sphere $\Sph^3(1)$, 
especially of those of area close to $8\pi$ 
or less, 
is a natural open question.
This is further motivated by the recent resolutions of 
the Lawson conjecture by Brendle \cite{brendle} 
and the Willmore conjecture by Marques and Neves \cite{neves} 
where they also characterize the Clifford torus and the equatorial sphere
as the only examples of area $\le2\pi^2$. 
We refer to \cite{brendle:survey} for a survey of existence and uniqueness
results for minimal surfaces in the round three-sphere.

The general idea of doubling constructions by gluing methods was proposed and discussed in 
\cite{kapouleas:survey,kapouleas:clifford,alm20}.
The particular kind of gluing methods used relates most
closely to the methods developed in \cite{schoen} and \cite{kapouleas:annals},
especially as they evolved and were systematized in
\cite{kapouleas:wente:announce,kapouleas:wente,kapouleas:imc}.
We refer to \cite{kapouleas:survey} for a general discussion of this gluing methodology 
and to \cite{alm20} for a detailed general discussion of doubling by gluing methods.

Roughly speaking, in such doubling constructions 
one starts with an approximately minimal surface
consisting of two approximately parallel copies of a given minimal surface $\Sigma$
with a number of discs removed and replaced by approximately catenoidal bridges.
The initial surface is then perturbed to minimality by Partial Differential Equations methods.
Understanding such constructions in full generality seems beyond the immediate horizon
at the moment.
In the first such construction \cite{kapouleas:clifford}, 
there is so much symmetry imposed that the
position of the catenoidal bridges is completely fixed and all bridges are identical
modulo the symmetries.
Moreover the bridges are uniformly distributed, 
that is when their number is large enough, 
there are bridges located inside any preassigned domain of $\Sigma$.
Wiygul \cite{wiygul:t,wiygul} has extended that construction to situations 
where the symmetries do not determine the 
vertical (that is perpendicular to $\Sigma$) 
position of the bridges.

In this article for the first time we deal with situations where 
the horizontal position of the bridges is not determined by the symmetries, 
that is the bridges can slide along $\Sigma$,
or there are more than one bridge modulo the symmetries. 
Equally importantly the bridges are not uniformly distributed on $\Sigma$,
that is they stay away from certain fixed domains of $\Sigma$ 
even when the number of the bridges tends to infinity.  
To realize such constructions 
we introduce what we call ``linearized doubling'', 
which is an intermediate step in the construction, 
where singular solutions to the linearized equation are constructed, 
subject to appropriate linear and nonlinear conditions.
Linearized doubling provides a systematic approach for dealing 
with the obstructions involved 
and also provides a detailed understanding of the regions further 
away from the catenoidal bridges.

We expect that linearized doubling will be indispensable in developing further constructions
except in the (rare) cases of exceptionally high symmetry.
Since there is an abundance of such potential constructions, 
linearized doubling will have many further profound applications.
Note for example the potential doubling constructions of free boundary minimal surfaces,  
or of self-shrinkers of the mean curvature flow, 
which we will discuss elsewhere. 

Unlike the case of desingularization constructions,  
doubling constructions generalize to higher dimensions: 
In another article under preparation \cite{kapouleas:high:doubling}, 
we generalize the current results to doubling the equatorial 
$\Sph^{n-1}(1)$ in the round $\Sph^n(1)$
for any $n>3$.
Although the existence of infinitely many 
closed embedded smooth minimal hypersurfaces 
of some simple topological types 
in the round sphere of dimension $n>3$
was established by Hsiang \cite{hsiang1,hsiang2} and of unknown topological type
for $3\le n\le7$ by Marques-Neves \cite{neves:yau}, 
our construction in \cite{kapouleas:high:doubling} provides 
for the first time infinitely many 
topological types of closed embedded smooth minimal hypersurfaces in 
the round sphere of any dimension $n>3$.
Note that the constructions in \cite{kapouleas:high:doubling} 
like the ones in this article are fairly explicit 
with the volume of the hypersurfaces constructed uniformly bounded (depending on the dimension).

We return now to the doublings of the equatorial two-sphere $\Spheq   $ 
constructed in this article and the rest of the series.
All these doublings are symmetric under a group $\groupthree$.
$\groupthree$ is defined (see \ref{Dgroup}) as the group of isometries of $\Sph^3(1)$ which map
$\Lmer$ to itself, where $\Lmer$ (see \ref{ELmer}) is the union of $\mmer$ meridians 
arranged with maximal symmetry.
The centers of the catenoidal bridges we employ in the construction form a set $L$
which we call the configuration of the construction.
$L$ is invariant under $\groupthree$ and therefore we can write $L=\Lmer\cap\Lpar$
where $\Lpar$ is the union of $\mpar$ parallel circles symmetrically arranged with respect
to the equator.
The number of bridges used is therefore $\mmer\mpar$, 
or when the poles (as degenerate circles) are included, 
$\mmer\,(\mpar-2)+2$.
The latitude of the circles in $\Lpar$
(except for the equator and poles if included)
has to be appropriately chosen for the construction to work.
We call this ``horizontal balancing''.
As discussed in \cite{alm20} and later in \ref{rem:eq} 
the construction fails when $L$ lies on an equatorial circle.
We need therefore $\mmer\ge3$ and $\mpar\ge2$.

The perturbation methods we employ require
that the catenoidal bridges are small
so that they do not interact with each other too much.
To ensure this we need the number of catenoidal bridges to be large.
Moreover our current approach relies on a comparison with 
and careful analysis of certain  
rotationally invariant solutions which are controlled by ODEs,  
and this imposes the extra requirement that $\mmer$ is large. 
We only present the two simplest possible cases in this article 
in order to emphasize the fundamental ideas and minimize technical issues: 
In the first case (see theorem \ref{Ttwocir}, also announced and discussed in \cite{alm20}) 
$\mpar=2$ and therefore we have two parallel circles and
the number of catenoidal bridges is $2\mmer$;
in the second case (see theorem \ref{Teq-pol}) 
$\mpar=3$ with 
parallel circles the two poles (which we count as degenerate parallel circles) 
and the equator circle, 
and therefore we have $\mmer+2$ bridges.

The approach here can be extended to apply at least 
to the case when $\mmer$ is large in terms of $\mpar$  \cite{sphere2}. 
The exact limitations of the applicability of this approach are currently under invistigation 
although there certainly exist cases where the ODE model is inadequate,  
as for example when $\mpar$ is large and $\mmer$ small. 
In such cases further ideas will be needed to carry out the construction.

\subsection*{Outline of the approach}
$\phantom{ab}$
\nopagebreak

The constructions in this article and articles in preparation using the same approach are based on the following two main ideas: 
The first idea involves the introduction of an intermediate 
step in the construction, as mentioned earlier, where singular solutions of the linearized 
equation on the given surface being doubled (the equatorial two-sphere in this article) are constructed and analyzed. 
These singular solutions have logarithmic singularities at the points where we plan to place the catenoidal bridges.
The initial surfaces are constructed by gluing the catenoidal bridges to appropriately modified graphs of these singular 
solutions with neighborhoods of the singular points excised.  

More precisely the simplest singular solutions of the linearized equation we consider 
satisfy the linearized equation away from the singularities and 
can be viewed also as multi-Green's functions for the linearized equation. 
We call them 
\emph{linearized doubling} (LD) solutions (see \ref{DLDnoK}).  
If we use an LD solution to construct an initial surface as described above, 
to ensure that the error introduced by the gluing is small, 
the LD solution has to satisfy certain matching conditions. 
Unfortunately the supply of LD solutions which satisfy these matching conditions is inadequate for our purposes. 
This can be remedied however by expanding the class of LD solutions under consideration 
to a larger class of solutions which satisfy the linearized equation 
only modulo a certain space which we call $\skernel[L]$ (see \ref{DLDmodK}) 
which plays also the role of the (extended) substitute kernel used in the linear theory in various earlier constructions 
\cite{alm20,kapouleas:survey,kapouleas:clifford,alm7,haskins:kapouleas:invent,kapouleas:finite,kapouleas:imc,kapouleas:wente,kapouleas:wente:announce,kapouleas:cmp,kapouleas:jdg,kapouleas:annals,kapouleas:bulletin}. 
We call those of the solutions in the expanded class that satisfy the desired matching conditions 
\emph{matched linearized doubling} (MLD) solutions (see \ref{DMLD}). 
MLD solutions are in sufficient supply because by an easy technical step it is possible to convert any LD solution 
(even if it does not satisfy the matching conditions) to a corresponding MLD solution. 
In doing so we trade the failure to satisfy the matching conditions for the failure to satisfy the precise linearized equation. 

It is rather difficult to estimate the LD and MLD solutions carefully so that we have satisfactory control of the construction.  
In particular we need to construct families of MLD solutions which satisfy the balancing and unbalancing conditions as required by 
the general approach (see \cite{kapouleas:survey,alm20} for a discussion of the general approach). 
The second main idea of this article allows us in certain cases to achieve the required control by comparing 
the LD and MLD solutions to certain ODE solutions which can be well understood. 
In particular the study of balancing and unbalancing questions is reduced to the ODE framework. 
The implementation of this idea relies on the rotational invariance of the original surface (the equatorial two-sphere in this article)  
and the largeness of $\mmer$. 
If these conditions are not satisfied the questions involving the LD and MLD solutions (and the corresponding doubling constructions) 
are still open.  

At a more technical level we remark that in this article we experimented with constructing the initial surfaces carefully so that 
they are exactly minimal away from the gluing regions and the support of the functions in $\skernel[L]$. 
This reduces the error terms we have to deal with later, 
at the expense of complicating the construction of the initial surfaces. 
We also note that we organized the presentation so that the results using standard or earlier methodology 
(sections 2, 3, 4 and 7) are separated from the more innovative steps of constructing and analyzing the LD and MLD solutions (sections 5 and 6).

\subsection*{Organization of the presentation}
$\phantom{ab}$
\nopagebreak

The main body of this article consists of three parts.
The first part consists of sections 2, 3, and 4, where we present a general 
construction of initial approximate minimal surfaces based on 
LD solutions and MLD solutions. 
The second part of the paper consists of sections 5 and 6 where we construct and study
in detail the LD and MLD solutions needed for the constructions carried out in this paper.
Finally in the last part which consists of section 7 only we combine the earlier 
results to prove the main results of this paper.

In more detail now, 
in section 2 we review the elementary geometry of the geometric objects we are interested in, 
and we establish the corresponding notation. 
In particular we study aspects of the geometry of the round three-sphere and its equator,
the symmetries we impose,
and the catenoidal bridges we will be using later. 
In section 3 we discuss in detail linearized doubling, 
the LD and MLD solutions, 
and we construct the initial surfaces by gluing MLD solutions and catenoidal bridges.
We also discuss geometric aspects of the initial surfaces 
needed later in understanding their perturbations. 
In section 4 we develop the perturbation theory on the initial surfaces constructed in section 3:  
We solve the linearized equation on the initial surfaces and 
we also estimate the solutions and the corresponding nonlinear terms.
Note that the theory in sections 3 and 4 is developed with a general setting in mind 
(see also \ref{remarkone}) 
and is not restricted to the cases we actually pursue in this article.  

In section 5 we carefully study and estimate the LD and MLD solutions needed for the construction of doublings
where the catenoidal bridges are distributed on two parallel circles. 
In section 6 we do the same in the case where the catenoidal bridges are distributed on the equatorial
circle with two more bridges, one at each pole.
Finally in section 7 we use the MLD solutions constructed in sections 5 and 6 to construct our 
minimal surfaces by using the results of sections 3 and 4.

\subsection*{General notation and conventions}
$\phantom{ab}$
\nopagebreak

In comparing equivalent norms we will find the following notation useful. 

\begin{definition}
\label{Dsimc}
If $a,b>0$ and $c>1$ we write $a\sim_c b$ to mean that the inequalities $a\le cb$
and $b\le ca$ hold.
\end{definition}

We discuss now the H\"{o}lder norms we use.
We use the standard notation $\|u: C^{k,\beta}(\,\Omega,g\,)\,\|$ 
to denote the standard $C^{k,\beta}$-norm of a function or more generally
tensor field $u$ on a domain $\Omega$ equipped with a Riemannian metric $g$.
Actually the definition is completely standard only when $\beta=0$
because then we just use the covariant derivatives and take a supremum
norm when they are measured by $g$.
When $\beta\ne0$ we have to use parallel transport along geodesic segments 
connecting any two points of small enough distance in order to define the H\"{o}lder  
seminorms and this could lead to complications in some cases. 
In this paper we take care to avoid situations where such complications 
may arise and so we will not discuss this issue further.

In this paper we use also weighted H\"{o}lder norms.
The definition we use is somewhat more flexible than the one used in some earlier work
(for example in \cite{kapouleas:finite,kapouleas:annals,kapouleas:wente,kapouleas:clifford,haskins:kapouleas:invent}):
\begin{definition}
\label{D:newweightedHolder}
Assuming that $\Omega$ is a domain inside a manifold,
$g$ is a Riemannian metric on the manifold, 
$\rho,f:\Omega\to(0,\infty)$ are given functions, 
$k\in \N_0$, 
$\beta\in[0,1)$, 
$u\in C^{k,\beta}_{loc}(\Omega)$ 
or more generally $u$ is a $C^{k,\beta}_{loc}$ tensor field 
(section of a vector bundle) on $\Omega$, 
and that the injectivity radius in the manifold around each point $x$ in the metric $\rho^{-2}(x)\,g$
is at least $1/10$,
we define
$$
\|u: C^{k,\beta} ( \Omega,\rho,g,f)\|:=
\sup_{x\in\Omega}\frac{\,\|u:C^{k,\beta}(\Omega\cap B_x, \rho^{-2}(x)\,g)\|\,}{f(x) },
$$
where $B_x$ is a geodesic ball centered at $x$ and of radius $1/100$ in the metric $\rho^{-2}(x)\,g$.
For simplicity we may omit any of $\beta$, $\rho$, or $f$, 
when $\beta=0$, $\rho\equiv1$, or $f\equiv1$, respectively.
\end{definition}

$f$ can be thought of as a ``weight'' function because $f(x)$ controls the size of $u$ in the vicinity of
the point $x$.
$\rho$ can be thought of as a function which determines the ``natural scale'' $\rho(x)$
at the vicinity of each point $x$.
Note that if $u$ scales nontrivially we can modify appropriately $f$ by multiplying by the appropriate 
power of $\rho$. 
Note that from the definition follows that we always have
\begin{equation}
\label{E:norm:derivative}
\| \, \nabla u: C^{k-1,\beta}(\Omega,\rho,g,\rho^{-1}f)\|
\le
\|u: C^{k,\beta}(\Omega,\rho,g,f)\|, 
\end{equation}
and the multiplicative property 
\begin{equation}
\label{E:norm:mult}
\| \, u_1 u_2 \, : C^{k,\beta}(\Omega,\rho,g,\, f_1 f_2 \, )\|
\le
C(k)\, 
\| \, u_1 \, : C^{k,\beta}(\Omega,\rho,g,\, f_1 \, )\|
\,\,
\| \, u_2 \, : C^{k,\beta}(\Omega,\rho,g,\, f_2 \, )\|.
\end{equation}

We will be using extensively cut-off functions,
and for this reason we adopt the following.

\begin{definition}
\label{DPsi} 
We fix a smooth function $\Psi:\R\to[0,1]$ with the following properties:
\newline
(i).
$\Psi$ is nondecreasing.
\newline
(ii).
$\Psi\equiv1$ on $[1,\infty]$ and $\Psi\equiv0$ on $(-\infty,-1]$.
\newline
(iii).
$\Psi-\frac12$ is an odd function.
\end{definition}

Given now $a,b\in \R$ with $a\ne b$,
we define smooth functions
$\psicut[a,b]:\R\to[0,1]$
by
\begin{equation}
\label{Epsiab}
\psicut[a,b]:=\Psi\circ L_{a,b},
\end{equation}
where $L_{a,b}:\R\to\R$ is the linear function defined by the requirements $L(a)=-3$ and $L(b)=3$.

Clearly then $\psicut[a,b]$ has the following properties:
\newline
(i).
$\psicut[a,b]$ is weakly monotone.
\newline
(ii).
$\psicut[a,b]=1$ on a neighborhood of $b$ and 
$\psicut[a,b]=0$ on a neighborhood of $a$.
\newline
(iii).
$\psicut[a,b]+\psicut[b,a]=1$ on $\R$.

Suppose now we have two sections $f_0,f_1$ of some vector bundle over some domain $\Omega$.
(A special case is when the vector bundle is trivial and $f_0,f_1$ real-valued functions).
Suppose we also have some real-valued function $d$ defined on $\Omega$.
We define a new section 
\begin{equation}
\label{EPsibold}
\Psibold[a,b;d](f_0,f_1):=
\psicut[a,b]\circ d \, f_1
+
\psicut[b,a]\circ d \, f_0.
\end{equation}
Note that
$\Psibold[a,b;d](f_0,f_1)$
is then a section which depends linearly on the pair $(f_0,f_1)$
and transits from $f_0$
on $\Omega_a$ to $f_1$ on $\Omega_b$,
where $\Omega_a$ and $\Omega_b$ are subsets of $\Omega$ which contain
$d^{-1}(a)$ and $d^{-1}(b)$ respectively,
and are defined by
$$
\Omega_a=d^{-1}((-\infty,a+\frac13(b-a))),
\qquad
\Omega_b=d^{-1}((b-\frac13(b-a),\infty)),
$$
when $a<b$, and 
$$
\Omega_a=d^{-1}((a-\frac13(a-b),\infty)),
\qquad
\Omega_b=d^{-1}((-\infty,b+\frac13(a-b))),
$$
when $b<a$.
Clearly if $f_0,f_1,d$ are smooth then
$\Psibold[a,b;d](f_0,f_1)$
is also smooth.

\subsection*{Acknowledgments}
The author would like to thank Richard Schoen for his continuous support and interest in the results of this article. 
He would like also to thank the Mathematics Department and the MRC at Stanford University
for providing a stimulating mathematical environment and generous financial support during Fall 2011 and Winter 2012.
The author was also partially supported by NSF grant DMS-1105371. 

\section{Elementary geometry and notation}
\label{AppendixA}
\nopagebreak

\subsection*{The parametrization $\Theta$ and the coordinates $\xx\yy\zz$}
$\phantom{ab}$
\nopagebreak

We consider now the unit three-sphere $\Sph^3(1)\subset\R^4$.
We denote by $(x_1,x_2,x_3,x_4)$ the standard coordinates of $\R^4$
and we define by
\begin{equation}
\label{Eeqtwo}
\Spheq:=\Sph^3(1)\cap\{x_4=0\}
\end{equation}
an equatorial two-sphere in $\Sph^3(1)$.
To facilitate the discussion we fix spherical coordinates $(\xx,\yy,\zz)$ on $\Sph^3(1)$
(see \ref{LThetasymmetries})
by defining a map $\Theta:\R^3\to\Sph^3(1)$ by
\begin{equation}
\label{ETheta}
  \Theta(\xx,\yy,\zz) = 
(\cos\xx \cos\yy \cos\zz, \cos\xx \sin\yy \cos\zz, \sin\xx\cos\zz,\sin\zz).
\end{equation}
Note that in the above notation we can think of $\xx$ as the geographic latitude on $\Spheq$
and of $\yy$ as the geographic longitude.
We will also refer to
\begin{equation}
\label{Eeqone}
\begin{gathered}
\Cir_0:= \Spheq\cap \{x_3=0\}=
\Theta(\{\xx=\zz=0\}),
\\
p_N:=(0,0,1,0)=\Theta(\pi/2,\yy,0),
\\
p_S:=(0,0,-1,0)=\Theta(-\pi/2,\yy,0),
\end{gathered}
\end{equation}
as the equator circle, the North pole, and the South pole of $\Spheq$ respectively.
More generally to facilitate reference to circles of latitude we introduce the notation
for $x\in[-1,1]$
\begin{equation}
\label{Ecilat}
\Cir_{x}
:= \Spheq\cap \{x_3=x\}.
\end{equation}
We have then that $\Cir_{\sin \xx}$ is the circle (or pole) of latitude $\xx$
(which is consistent with the definition of the equator circle $\Cir_0$ above),
$\Cir_{-1}=\{p_S\}$,
and 
$\Cir_{1}=\{p_N\}$.

Clearly the standard metric of $\Sph^3(1)$ is given in the coordinates of \ref{ETheta}
by
\begin{equation}
\label{EThetag}
    \Theta^* g = \cos^2\zz\,( \, d\xx^2 +\cos^2\xx \,d\yy^2 \,)\, + d\zz^2.
\end{equation}
Finally we define a nearest-point projection 
$\PiSph:\Sph^3(1)\setminus\{\,( 0,0,0,\pm1 )\,\}\to\Spheq$
by
\begin{equation}
\label{EPiSph}
\PiSph(x_1,x_2,x_3,x_4)=
\frac1{|(x_1,x_2,x_3,0)|}
(x_1,x_2,x_3,0).
\end{equation}
Clearly we have
\begin{equation}
\label{EPiSphTheta}
\PiSph\circ \Theta(\xx,\yy,\zz)
=
\Theta(\xx,\yy,0).
\end{equation}

We introduce now some convenient notation.

\begin{notation}
\label{NT}
For $X$ a subset of $\Spheq$ we will write $\dbold_X$ for the distance function from $X$,
that is
$\dbold_X(p)$ denotes the distance in $\Spheq$ of 
some $p\in\Spheq$
from $X$ with respect to the standard metric.
Moreover for $\delta>0$ we define a tubular neighborhood of $X$ by
$$
D_X(\delta):=\{p\in\Spheq:\dbold_X(p)<\delta\}.
$$
If $X$ is finite we just enumerate its points in both cases,
for example $\dbold_q(p)$ is the geodesic distance between $p$ and $q$ and
$D_q(\delta)$ is the geodesic disc in $\Spheq$ of center $q$ and radius $\delta$.
\hfill $\square$
\end{notation}

\subsection*{Symmetries of $\Theta$ and symmetries of the construction}
$\phantom{ab}$
\nopagebreak

We first define reflections 
$\xbar$, $\ybar_c$, $\ybar:=\ybar_0$, and $\zbar$ 
in $\R^3$,  
and translations 
$\YYY_c$ 
in $\R^3$,  
where $c\in\R$, by 
\begin{equation}
\label{Edomsym}
\begin{aligned}
\xbar(\xx,\yy,\zz):=(-\xx,\yy,\zz),\qquad
\ybar_c(\xx,\yy,\zz)&:=(\xx,2c-\yy,\zz)\qquad,
\zbar(\xx,\yy,\zz):=(\xx,\yy,-\zz),
\\
\YYY_c(\xx,\yy,\zz)&:=(\xx,\yy+c,\zz).
\end{aligned}
\end{equation}
All these clearly preserve 
\begin{equation}
\label{E:domTheta}
\domTheta:=
\left(-\frac{\pi}{2},\frac{\pi}{2}\right) \times \R \times \left(-\frac{\pi}{2},\frac{\pi}{2}\right). 
\end{equation}
We also define corresponding reflections 
$\xbartilde$, $\ybartilde_c$, $\ybartilde:=\ybartilde_0$, and $\zbartilde$ 
in $\R^4$,  
and rotations $\ytilde_c$
in $\R^4$,  
all of which preserve 
$\Sph^3(1) \subset \R^4$, 
by
\begin{equation}
\label{Esphsym}
\begin{aligned}
\xbartilde(x_1,x_2,x_3,x_4):=&(x_1,x_2,-x_3,x_4),
\\
\ybartilde(x_1,x_2,x_3,x_4):=&(x_1,-x_2,x_3,x_4),
\\
\zbartilde(x_1,x_2,x_3,x_4):=&(x_1,x_2,x_3,-x_4),
\\
\ybartilde_c(x_1,x_2,x_3,x_4):=&( x_1 \cos2c + x_2 \sin2c \, ,\, x_1 \sin2c - x_2 \cos2c \, , \, x_3 \, , \, x_4),
\\
\ytilde_c(x_1,x_2,x_3,x_4):=&(x_1 \cos c -x_2 \sin c \, ,\, x_1 \sin c + x_2 \cos c \, , \, x_3 \, ,\,  x_4).
\end{aligned}
\end{equation}
Note that $\xbartilde$, $\ybartilde$, $\zbartilde$, and $\ybartilde_c$
are reflections with respect to the $3$-planes
$\{x_3=0\}$, $\{x_2=0\}$, $\{x_4=0\}$, and $\ytilde_c(\{x_2=0\})$,
respectively.
$\zbartilde$ fixes $\Spheq$ pointwise and exchanges its two sides in $\Sph^3(1)$.
Clearly $\ytilde_{2\pi}$ is the identity map.
We record the symmetries of $\Theta$ in the following lemma:

\begin{lemma}
\label{LThetasymmetries}
$\Theta$ restricted to
$
\domTheta
$
is a covering map onto 
$\Sph^3(1)\setminus\{x_1=x_2=0\}$.
Moreover the following hold:
\newline
(i).
The group of covering transformations is generated by
$\YYY_{2\pi}$.
\newline
(ii).
$\xbartilde\circ\Theta=\Theta\circ\xbar$,
$\ybartilde_c\circ\Theta=\Theta\circ\ybar_c$,
$\zbartilde\circ\Theta=\Theta\circ\zbar$,
and
$\ytilde_c\circ\Theta=\Theta\circ\YYY_c$.
\end{lemma}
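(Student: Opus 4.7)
The plan is to establish the covering statement first, then read off (i) as a corollary, and finally verify (ii) by four direct trigonometric computations. The lemma is essentially a bookkeeping statement about spherical coordinates, so nothing should be hard, but care is needed to track the domain conventions.

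First I would check that $\Theta(\domTheta)\subset\Sph^3(1)\setminus\{x_1=x_2=0\}$. Using \eqref{ETheta} one computes
\[
x_1^2+x_2^2=\cos^2\xx\,\cos^2\zz,
\]
and on $\domTheta$ both $\cos\xx$ and $\cos\zz$ are strictly positive, so this is nonzero. For surjectivity onto $\Sph^3(1)\setminus\{x_1=x_2=0\}$, given such a point $(x_1,x_2,x_3,x_4)$ I would set $\zz:=\arcsin x_4\in(-\pi/2,\pi/2)$ (noting $x_4^2<1$ since $x_1^2+x_2^2>0$), then $\xx:=\arcsin(x_3/\cos\zz)\in(-\pi/2,\pi/2)$ (noting $x_3^2/\cos^2\zz = x_3^2/(x_1^2+x_2^2+x_3^2)<1$), and finally pick $\yy$ so that $(\cos\yy,\sin\yy)=(x_1,x_2)/(\cos\xx\cos\zz)$, which determines $\yy$ modulo $2\pi\Z$. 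This gives the inverse construction explicitly and shows that each fiber of $\Theta$ is of the form $\{(\xx,\yy+2\pi k,\zz):k\in\Z\}$.

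Next, the covering property: by \eqref{EThetag} the pullback $\Theta^*g=\cos^2\zz(d\xx^2+\cos^2\xx\,d\yy^2)+d\zz^2$ is a genuine Riemannian metric on $\domTheta$ (because $\cos\xx,\cos\zz>0$ there), so $\Theta$ is a local isometry hence a local diffeomorphism. Together with the fact that each fiber is a discrete $\Z$-orbit, this shows $\Theta:\domTheta\to\Sph^3(1)\setminus\{x_1=x_2=0\}$ is a covering map. Statement (i) is then immediate: any covering transformation $\phi$ permutes each fiber, so $\phi(\xx,\yy,\zz)=(\xx,\yy+2\pi k(\xx,\yy,\zz),\zz)$, and continuity of $\phi$ forces the integer $k$ to be constant, so $\phi=(\YYY_{2\pi})^k$.

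Finally for (ii), each of the four identities is a short trigonometric verification which I would carry out side by side. For $\xbartilde\circ\Theta=\Theta\circ\xbar$ one uses $\sin(-\xx)=-\sin\xx$ and $\cos(-\xx)=\cos\xx$; for $\zbartilde\circ\Theta=\Theta\circ\zbar$ one uses the analogous identities for $\zz$; for $\ytilde_c\circ\Theta=\Theta\circ\YYY_c$ one applies the addition formulas
\[
\cos\yy\cos c-\sin\yy\sin c=\cos(\yy+c),\qquad \cos\yy\sin c+\sin\yy\cos c=\sin(\yy+c),
\]
to the first two coordinates of $\Theta(\xx,\yy,\zz)$; and for $\ybartilde_c\circ\Theta=\Theta\circ\ybar_c$ one similarly computes
\[
\cos\yy\cos 2c+\sin\yy\sin 2c=\cos(2c-\yy),\qquad \cos\yy\sin 2c-\sin\yy\cos 2c=\sin(2c-\yy).
\]
The third and fourth coordinates are unchanged by $\xbartilde,\ybartilde_c,\ytilde_c$ and match trivially (the $x_4$ coordinate is $\sin\zz$ and plays no role except in the $\zbartilde$ identity). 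There is no genuine obstacle; the only thing to be careful about is the sign convention in the definition \eqref{Edomsym} of $\ybar_c$, namely that $\yy\mapsto 2c-\yy$ is consistent with the rotated reflection $\ybartilde_c$ acting with matrix entries $\cos 2c,\sin 2c$ rather than $\cos c,\sin c$.
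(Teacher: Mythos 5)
Your proof is correct and complete: the computations for surjectivity, the local-diffeomorphism property via nondegeneracy of $\Theta^*g$, the identification of the fibers as $\YYY_{2\pi}$-orbits, and the four trigonometric identities in (ii) all check out. The paper omits the proof of this elementary lemma, and your argument is the natural direct verification one would supply.
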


The symmetry group of our constructions depends on a large number $m\in\N$
which we assume now fixed.
We define $\Lmer=\Lmer[m]\subset\Spheq$ to be the union of $m$ meridians symmetrically arranged: 
\begin{equation}
\label{ELmer}
\Lmer=\Lmer[m]:=\Theta(\{(\xx,\yy,0):\xx\in[-\pi/2,\pi/2],\yy=2\pi i/m, i\in\Z\}).
\end{equation}

\begin{definition}
\label{Dgroup}
We denote by $\groupthree$ and $\grouptwo$ the groups of 
isometries of $\Sph^3(1)$ and $\Spheq$ respectively
which fix $\Lmer[m]$ as a set.
\end{definition}

Clearly $\groupthree$ is a finite group
and is generated by the reflections
$\xbartilde$, $\ybartilde$, $\zbartilde$ and $\ybartilde_{\pi/m}$.
$\grouptwo$ can be identified with the subgroup of $\groupthree$
which is generated by
$\xbartilde$, $\ybartilde$, and $\ybartilde_{\pi/m}$.

\subsection*{The linearized equation and rotationally invariant solutions}
$\phantom{ab}$
\nopagebreak

It will be easier later to state some of our estimates if we use a scaled metric on $\Spheq$
and scaled coordinates
$(\xxtilde,\yytilde)$
defined by
\begin{equation}
\label{Egtilde}
\gtilde:=m^2 g_{\Spheq},
\qquad
\xxtilde=m\xx,
\qquad
\yytilde=m\yy.
\end{equation}
To simplify the notation we also define linear operators acting on twice differentiable functions
on domains of $\Spheq$ by
\begin{equation}
\label{ELcalp}
\Lcalp:=\Delta+2,
\qquad\qquad
\Lgtp:=\Delta_{\gtilde}+2m^{-2}
=m^{-2}\Lcalp .
\end{equation}
$\Lcalp$ is of course the linearized operator for the mean curvature on $\Spheq$.

By a rotationally invariant function we mean a function on a domain of $\Spheq$ which depends only on the
latitude $\xx$.
The linearized equation $\Lcalp\phi=0$ amounts to an ODE when the solution $\phi$ is rotationally invariant.
Motivated by this we introduce some notation
to simplify the presentation.

\begin{notation}
\label{Nsym}
Consider a function space $X$ consisting of functions defined on a domain
$\Omega\subset\Spheq$.
If $\Omega$ is invariant under the action of $\grouptwo$
we use a subscript ``$\sym$'' to denote the subspace $X_\sym\subset X$
consisting of those functions in $X$ which are invariant under the action of $\grouptwo$. 
If $\Omega$ is a union of parallel circles we use a subscript ``$\xx$'' to denote 
the subspace of functions $X_{\xx}$ consisting of 
rotationally invariant functions which therefore depend only on $\xx$.
If moreover $\Omega$ is invariant under reflection with respect to the equator of $\Spheq$
we use a subscript ``$|\xx|$'' to denote 
the subspace of functions $X_{\xxx}= X_\xx \cap X_\sym$
consisting of those functions which depend only on $|\xx|$.
\hfill $\square$
\end{notation}
For example we have
$C^0_\xxx(\Spheq)\subset C^0_\sym(\Spheq) \subset C^0(\Spheq)$
and $C^0_\xxx(\Spheq) \subset C^0_\xx(\Spheq)$,
but $C^0_\xx(\Spheq)$ is not a subset of $C^0_\sym(\Spheq)$.

\begin{definition}
\label{Dphieo}
We define rotationally invariant functions
$\phio \in C^\infty_\xx(\Spheq)$ and 
$\phie \in C^\infty_\xxx(\Spheq\setminus\{p_N,p_S\})$ 
by
$$
\phio=\sin\xx,
\qquad
\phie=
1- \sin\xx \, \log\frac{1+\sin\xx}{\cos\xx}
=
1+ \sin\xx \, \log\frac{1-\sin\xx}{\cos\xx} .
$$
\end{definition}

\begin{lemma}
\label{Lphieo}
$\phie$ and $\phio$ are even and odd in $\xx$ respectively.  
They satisfy $\Lcalp\phie=0$ and $\Lcalp\phio=0$.
Moreover $\phie$ is strictly decreasing on $[0,\pi/2)$
where it has a unique root we will denote by $\xx_\riza$.
\end{lemma}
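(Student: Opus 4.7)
The plan is to verify the three claims in sequence by direct computation, using only elementary trigonometric identities and the explicit form of the Laplacian on $\Spheq$ restricted to rotationally invariant functions.

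First, for the parity and the equality of the two expressions for $\phie$ in Definition \ref{Dphieo}, I would observe that
$$
\frac{1-\sin\xx}{\cos\xx}\cdot\frac{1+\sin\xx}{\cos\xx}=\frac{1-\sin^2\xx}{\cos^2\xx}=1,
$$
so $\log\tfrac{1-\sin\xx}{\cos\xx}=-\log\tfrac{1+\sin\xx}{\cos\xx}$; this simultaneously shows that the two displayed formulas for $\phie$ agree, and that $\phie(-\xx)=\phie(\xx)$. Oddness of $\phio=\sin\xx$ is immediate.

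Next, for $\Lcalp\phie=\Lcalp\phio=0$, I would specialize the formula $\Theta^*g|_{\zz=0}=d\xx^2+\cos^2\xx\, d\yy^2$ from \eqref{EThetag} to compute that on $\Spheq$ a rotationally invariant function $f=f(\xx)$ satisfies
$$
\Lcalp f=f''-\tan\xx\, f'+2f.
$$
For $\phio=\sin\xx$ the identity $\Lcalp\phio=0$ follows with a one-line computation (equivalently, $\sin\xx$ is the restriction to $\Spheq\subset\R^3$ of the linear coordinate $x_3$, hence an eigenfunction of $\Delta_{\Spheq}$ with eigenvalue $-2$). For $\phie$, the key auxiliary computation is that with $u(\xx):=\log\tfrac{1+\sin\xx}{\cos\xx}$ one has
$$
u'(\xx)=\frac{\cos\xx}{1+\sin\xx}+\tan\xx=\frac{1-\sin\xx}{\cos\xx}+\frac{\sin\xx}{\cos\xx}=\sec\xx,
$$
after rationalizing by $1-\sin\xx$. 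Differentiating $\phie=1-\sin\xx\cdot u$ twice and substituting, the terms involving $u$ cancel and the remaining terms reduce to $1-\sec^2\xx+\tan^2\xx=0$, proving $\Lcalp\phie=0$.

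Finally, for the monotonicity and the unique root, using the same formula for $u$ I would compute
$$
\phie'(\xx)=-\cos\xx\cdot u(\xx)-\tan\xx.
$$
For $\xx\in(0,\pi/2)$, both $u(\xx)>0$ (since $u(0)=0$ and $u'=\sec\xx>0$) and $\tan\xx>0$, so each summand is strictly negative, giving $\phie'(\xx)<0$ on $(0,\pi/2)$; combined with continuity at $\xx=0$ this yields strict monotonicity on $[0,\pi/2)$. Since $\phie(0)=1>0$ and $\phie(\xx)\to-\infty$ as $\xx\to(\pi/2)^-$ (because $\sin\xx\to1$ while $u(\xx)\to+\infty$), the intermediate value theorem together with strict monotonicity supplies a unique root $\xx_\riza\in(0,\pi/2)$.

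No single step is really a substantive obstacle; the only minor subtlety is the algebraic simplification of $u'(\xx)$ to $\sec\xx$, which is what makes the verification $\Lcalp\phie=0$ collapse cleanly. Everything else is bookkeeping with trigonometric identities.
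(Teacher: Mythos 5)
Your proof is correct, and it is precisely the ``direct calculation'' route that the paper's own proof explicitly mentions as an alternative (``Alternatively it is straightforward to check by direct calculation''). The paper's primary sketch is conceptual: it identifies $\phio=\sin\xx=x_3|_{\Spheq}$ as a first spherical harmonic (hence an eigenfunction of $\Delta_{\Spheq}$ with eigenvalue $-2$), and recognizes $\phie$ as the pushforward by the catenoid's Gauss map of the dilation Jacobi field (whence the logarithmic growth); it does not actually write out the monotonicity or the existence and uniqueness of the root. (Incidentally, the paper's one-line proof appears to have $\phie$ and $\phio$ swapped when attributing the ``first harmonic'' and ``catenoid scaling'' interpretations.) Your argument supplies the missing details: the clean identity $u'(\xx)=\sec\xx$ for $u=\log\tfrac{1+\sin\xx}{\cos\xx}$ is exactly what makes the verification of $\Lcalp\phie=0$ collapse, and your formula $\phie'=-\cos\xx\,u-\tan\xx$ gives strict negativity on $(0,\pi/2)$ at a glance, with $\phie(0)=1$ and $\phie\to-\infty$ as $\xx\to(\pi/2)^-$ pinning down the unique root. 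The conceptual route is shorter and explains \emph{why} these two functions arise, but your computation actually proves all the stated quantitative properties, which the paper leaves to the reader.
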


\begin{proof}
$\phie$ corresponds to a translation and is a first harmonic of the Laplacian on $\Spheq$.
$\phio$ is the pushforward of the scaling of the catenoid by the Gauss map and we can finish the proof using this. 
Alternatively it is straightforward to check by direct calculation.
\end{proof}

We discuss now the Green's function for $\Lcalp$ on $\Spheq$:

\begin{lemma}
\label{LGp}
There is a function 
$G\in C^\infty((0,\pi))$
uniquely characterized by (i) and (ii) 
and moreover satisfying (iii-vii) below.
We denote by $\rr$ the standard coordinate of $\R^+$:
\newline
(i). For small $\rr$ we have $G(\rr)=(1+O(\rr^2))\,\log \rr$.
\newline
(ii). 
For each $p\in\Spheq$
we have $\Lcalp G_p=0$
where
$G_p:=G\circ  \dbold_p\in C^\infty(\Spheq\setminus\{p,-p\})$
(recall \ref{NT}).
\newline
(iii). 
$G_{p_N}= (\log 2-1) \, \phio+\phie$ (recall \ref{Eeqone}).
\newline
(iv).
$G(r)\, = \, 1+ \, \cos r \, (\,-1+\log\frac{2\sin r }{1+\cos r}\,)$. 
\newline
(v).
$\frac{\partial G}{\partial r} (r) \, = \, -\sin r \, \log\frac{2\sin r }{1+\cos r} 
+ \frac1{\sin r} + \frac{\sin r \cos r }{1+\cos r} $.  
\newline
(vi).
$
\|\, 
G - 
\,\cos r \, \log \rr
\,
: C^{k}(\,
(0,1)\,,\,
r, dr^2,r^{2}\,)\,\|
\le 
\, C(k) \, .
$ 
\newline
(vii).
$
\|\, 
G 
\,
: C^{k}(\,
(0,1)\,,\,
r, dr^2,\, |\log r| \,)\,\|
\le 
\, C(k) \, .
$ 
\end{lemma}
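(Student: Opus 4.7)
The plan is to reduce $\Lcalp\phi=0$ to an ODE by seeking rotationally symmetric solutions $\phi(r)$ in geodesic polar coordinates about a fixed point $p\in\Spheq$. In those coordinates the equation becomes $(\sin r\,\phi')'/\sin r+2\phi=0$. One solution is $\phi_1(r)=\cos r$, since on $\Spheq$ the function $\cos\dbold_p$ extends to the first spherical harmonic in the direction of $p$ and hence lies in $\ker\Lcalp$; indeed this is the same fact that produces $\phio=\sin\xx$ in \ref{Lphieo}. Reduction of order then gives a second, logarithmically singular solution
\begin{equation*}
\phi_2(r)=\cos r\int\frac{dr}{\sin r\,\cos^2 r}
=\cos r\bigl(\sec r+\log\tan(r/2)\bigr)
=1+\cos r\,\log\frac{\sin r}{1+\cos r},
\end{equation*}
using the partial-fraction decomposition $1/(\sin r\,\cos^2 r)=\sin r/\cos^2 r+1/\sin r$. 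I would then set $G:=\phi_2+(\log 2-1)\cos r$, which after combining the constants inside the logarithm is exactly formula (iv); property (ii) follows since both $\phi_1$ and $\phi_2$ solve the ODE, and (v) is obtained by direct differentiation of (iv).

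To handle (i) and the weighted H\"older bounds, I would write $G(r)=\cos r\,\log r+A(r)$ where
\begin{equation*}
A(r)=1-\cos r+\cos r\bigl(\log 2+\log(\sin r/r)-\log(1+\cos r)\bigr)
\end{equation*}
is smooth and even in $r$ near $0$ with $A(0)=0$, hence $A(r)=O(r^2)$ and $A^{(k)}(r)=O(r^{2-k})$ on $(0,1)$. This immediately yields (i), and it also reduces (vi) to the same bound on the explicit function $A(r)$ and (vii) to separately bounding $\cos r\,\log r$ and its derivatives in the $r$-scaled metric, both of which are elementary. Uniqueness of $G$ subject to (i) and (ii) follows because any other candidate would differ from $G$ by a smooth rotationally symmetric solution of $\Lcalp\phi=0$ on $\Spheq\setminus\{p,-p\}$, which must be a multiple of $\cos r$, and that multiple is pinned down by the requirement in (i) that no multiple of $\cos r$ appears in the $r\to 0$ expansion.

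Finally, for (iii) I would substitute $r=\pi/2-\xx$ into (iv), so that $\sin r=\cos\xx$ and $\cos r=\sin\xx$, obtaining
\begin{equation*}
G_{p_N}=1+\sin\xx\Bigl(-1+\log\frac{2\cos\xx}{1+\sin\xx}\Bigr)
=1-\sin\xx\,\log\frac{1+\sin\xx}{\cos\xx}+(\log 2-1)\sin\xx,
\end{equation*}
which is exactly $\phie+(\log 2-1)\phio$. The only delicate point in the whole argument is the bookkeeping of normalization constants so that (i), (iii), and (iv) are simultaneously consistent; beyond that, every step is a routine ODE calculation or Taylor expansion, and I do not anticipate a substantial analytic obstacle.
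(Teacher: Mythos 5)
Your argument is correct, and it takes a genuinely different route from the paper. The paper starts from the already-established fact (Lemma \ref{Lphieo}) that $\phie$ and $\phio$ lie in $\ker\Lcalp$, forms the combination $(\log 2-1)\phio+\phie$, substitutes $\dbold_{p_N}=\pi/2-\xx$, and observes by direct computation that this gives the explicit formula (iv) with the correct $(1+O(r^2))\log r$ singularity, which then serves as the definition of $G$; the remaining items (v)--(vii) are stated to follow by direct differentiation and estimation. You instead reduce the PDE to the radial ODE $\phi''+\cot(r)\phi'+2\phi=0$, take $\cos r$ as the regular solution, perform reduction of order to obtain the singular solution $\phi_2=1+\cos r\,\log\tfrac{\sin r}{1+\cos r}$, and then normalize by $(\log 2-1)\cos r$ to land exactly on (iv); item (iii) is recovered afterward by substitution, i.e.\ in the opposite direction from the paper. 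Your route is more self-contained (it does not presuppose that $\phie\in\ker\Lcalp$, only that $\cos r$ is, which is the elementary first-harmonic fact), and it makes clear from the ODE structure \emph{why} formula (iv) has the shape it has; the paper's route is shorter given that $\phie$ and $\phio$ were already in hand. Your uniqueness argument and the decomposition $G(r)=\cos r\,\log r+A(r)$ for (vi)--(vii) are both sound; the only small imprecision is the claim ``$A^{(k)}(r)=O(r^{2-k})$ on $(0,1)$,'' which for $k\geq 3$ should be read as the weaker but sufficient bound $A^{(k)}(r)=O(1)\leq O(r^{2-k})$ since $r<1$.
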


\begin{proof}
Since $\dbold_{p_N}=\frac\pi2-\xx$ we have by direct calculation 
using \ref{Dphieo} that
$$
(\log 2-1) \phio+\phie = 
\, 1- \, \cos \circ\dbold_{p_N}   \, + \, \cos \circ\dbold_{p_N}   \log\frac{2\sin \circ\dbold_{p_N}   }{1+\cos \circ\dbold_{p_N}  } =
(\, 1+O(\dbold_{p_N}^2) \,) \, \log\circ\dbold_{p_N}.
$$ 
This clearly implies (i-iv).
(v) follows from (iv) by direct calculation.
(vi) follows from (iv) and (v). 
(vii) follows from (vi).  
\end{proof}

For future reference we define a decomposition of functions on domains
of $\Spheq$ as follows.

\begin{definition}
\label{Dave}
Given a function $\varphi$ on some domain $\Omega\subset\Spheq$ we define
a rotationally invariant function $\varphi_\ave$ on 
the union $\Omega'$ of the parallel circles on which $\varphi$ is integrable
(whether contained in $\Omega$ or not),
by requesting that on each such circle $C$ 
$$
\left.\varphi_\ave\right|_C
:=\avg_C\varphi.
$$ 
We also define $\varphi_\osc$ on $\Omega\cap\Omega'$ by
$
\varphi_\osc:=\varphi-\varphi_\ave.
$
\end{definition}

\subsection*{Catenoidal bridges}
$\phantom{ab}$
\nopagebreak

Recall now that a catenoid of size $\tau$ in Euclidean three-space
can be parametrized conformally on a cylinder $\R\times\Sph^1(1)$ by 
\begin{equation}
\label{Ecatenoid}
\begin{gathered}
\Xhat_{cat}(t,\theta):=
(\tau\cosh t \cos\theta, \tau\cosh t \sin \theta, \tau \, t)
= (\rr(t)\cos\theta,\rr(t)\sin\theta,\zz(t)),
\\
\text{ where }
\quad
\rr(t):=\tau\cosh t,
\quad
\zz(t):=\tau\, t.
\end{gathered}
\end{equation}
Alternatively the part above the waist can be given as a radial graph of
a function $\varphi_{cat}:[\tau,\infty)\to\R$ defined by
\begin{multline}
\label{Evarphicat}
\varphi_{cat}(\rr):=
\tau\arccosh \frac \rr \tau=
\tau\left(\log \rr-\log \tau+\log\left(1+\sqrt{1-{\tau^2}{\rr^{-2}}\,}\right)\right)
= \\
=
\tau\left(\log \frac { 2 \rr } {\tau} 
+
\log\left(\frac12+\frac12\sqrt{1-\frac{\tau^2}{\rr^{2}}\,}\right)\right),
\end{multline}
where we denote by $(x^1,x^2,x^3)$ the standard Cartesian coordinates of $\R^3$ and
$\rr$ is the polar coordinate on the $x^1x^2$-plane defined by
$\rr:=\sqrt{(x^1)^2+(x^2)^2\,}$.
By direct calculation or balancing considerations we have for future
reference that
\begin{equation}
\label{Ecatder}
\frac{\partial\varphi_{cat}}{\partial\rr_{\phantom{cat}}}(\rr) 
=
\frac\tau{\sqrt{r^2-\tau^2\,}}.
\end{equation}

\begin{lemma}
\label{Lcatenoid}
$
\|\, \varphi_{cat}(\rr)
-
\tau \log \frac { 2 \rr } {\tau} 
\,
: C^{k}(\,
(9 \tau,\infty)\,,\,
r, dr^2,r^{-2}\,)\,\|
\le 
\, C(k) \, \tau^3.
$ 
\end{lemma}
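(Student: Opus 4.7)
The plan is to use formula \ref{Evarphicat} to write the difference explicitly as
$$
h(r) := \varphi_{cat}(r) - \tau\log\tfrac{2r}{\tau} \,=\, \tau F(\tau/r), \qquad F(s) := \log\!\left(\tfrac12 + \tfrac12\sqrt{1-s^2}\,\right),
$$
and to exploit the fact that the single function $F$ is smooth on $(-1,1)$ with $F(0)=F'(0)=0$ and Taylor expansion $F(s)=-s^2/4+O(s^4)$. Since on the domain $r>9\tau$ we have $\tau/r\in (0,1/9)$, the argument of $F$ stays strictly inside its interval of analyticity, so all derivatives $F^{(j)}$ are uniformly bounded on $[0,1/9]$; the extra vanishing $F(0)=F'(0)=0$ is precisely what produces the $\tau^3$ factor in the final bound.

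First I would verify from \ref{Evarphicat} that the subtraction of $\tau\log(2r/\tau)$ leaves exactly $\tau F(\tau/r)$; this is immediate from the identity already recorded there. Then, by iterated use of the chain rule (Fa\`a di Bruno),
$$
\partial_r^k h(r) \,=\, \sum_{j=1}^{k} a_{k,j}\,\tau^{j+1}\, r^{-k-j}\, F^{(j)}(\tau/r)
$$
for integer coefficients $a_{k,j}$ depending only on $k$. Multiplying by $r^{k+2}$ gives a sum of terms of the form $a_{k,j}\,\tau^{j+1} r^{2-j}\, F^{(j)}(\tau/r)$. For $j=1$ one uses $|F'(s)|\le C|s|$ (a consequence of $F'(0)=0$) to get $|a_{k,1}|\,\tau^2\cdot C(\tau/r)\cdot r=C\tau^3$; for $j\ge 2$ one rewrites $\tau^{j+1} r^{2-j}=\tau^3(\tau/r)^{j-2}\le\tau^3$ and uses the uniform bound on $F^{(j)}$ on $[0,1/9]$. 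This yields $r^{k+2}|\partial_r^k h(r)|\le C(k)\tau^3$ for all $r>9\tau$.

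Finally I would translate this pointwise bound into the weighted norm of Definition~\ref{D:newweightedHolder}. With $\rho(x)=x$, $g=dr^2$, $f(x)=x^{-2}$, the derivative of order $j$ in the scaled metric $\rho^{-2}(x)g$ is $x^j\partial_r^j$, so the norm to be controlled is (up to equivalence of $\sup$ and sum)
$$
\sup_{x>9\tau}\;\sup_{0\le j\le k}\;\sup_{y\in B_x\cap(9\tau,\infty)}\; x^{j+2}\,|\partial_r^j h(y)|.
$$
On the ball $B_x$ of radius $x/100$ in the metric $g$ one has $y\sim_{101/99} x$, so $x^{j+2}$ can be replaced by $y^{j+2}$ up to a constant independent of $\tau$, and the pointwise bound established above closes the proof. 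There is no essential obstacle in the argument; the one point requiring care is observing that $F'$ vanishes to first order at $0$, since otherwise the $j=1$ term would contribute at order $\tau^2$ rather than $\tau^3$ and the claimed gain would fail.
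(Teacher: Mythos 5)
Your proof is correct and takes essentially the same approach the paper intends: isolate the explicit correction term $\tau\log\bigl(\tfrac12+\tfrac12\sqrt{1-\tau^2/r^2}\bigr)$ from the second equality in \ref{Evarphicat}, observe that it vanishes to second order in $\tau/r$, and convert the resulting pointwise derivative bounds into the weighted norm of Definition~\ref{D:newweightedHolder}. (The paper's proof is the one-liner ``follows easily from \ref{Evarphicat} and \ref{Ecatder}''; you have supplied exactly the computation it is alluding to, using \ref{Evarphicat} directly in place of the derivative formula \ref{Ecatder} but arriving at the same estimate, and you correctly flagged the key point that $F'(0)=0$ is what upgrades the naive $\tau^2$ to the claimed $\tau^3$.)
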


\begin{proof}
This follows easily from 
\ref{Evarphicat} 
and 
\ref{Ecatder}. 
\end{proof}

Because of the rotational invariance it simplifies the presentation to
use exactly minimal catenoidal bridges in the construction of the minimal surfaces, 
unlike in \cite{kapouleas:clifford,wiygul:t,wiygul} where the catenoidal bridges 
used are only approximately minimal:

\begin{lemma}[$\Gtau$ and $\Gtaup$] 
\label{LGtaup}
For $\tau>0$ small enough 
there is a function 
$\Gtau \in C^0( \, [\tau ,9^{-2})\, ) \cap C^\infty(\, (\tau ,9^{-2})\, ) $
uniquely characterized by (i) and (ii) 
and moreover satisfying (iii):
\newline
(i). 
The initial conditions $\Gtau(\tau)=0$
and $\displaystyle\frac{\partial\Gtau}{dr}(r)\to\infty$
as $r\to\tau+$ hold.
\newline
(ii). 
For each $p\in\Spheq$
the graph of 
$\Gtaup:= \Gtau \circ  \dbold_p$
is minimal in $\Sph^3(1)$
(recall \ref{NT}).
\newline
(iii). 
If $\tau$ is small enough in terms of given $k\in\N$ and $\alpha\in(0,1/2)$, 
then there is a constant $b_{cat}$
such that 
$$
\| \, \Gtau - \varphi_{cat} - b_{cat} 
: C^{k}(\,
(9 \tau,9\,\tau^\alpha)\,,\,
r, dr^2 , r^2 \,)
\, \|
\le 
\, C(k,\alpha) \, \tau \, |\log \tau| \, , 
$$ 
where $b_{cat}$ depends only on $\tau$ and satisfies $|b_{cat}|<C\tau^2$. 
\end{lemma}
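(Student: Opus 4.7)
The plan is to set up the minimal surface equation for the rotationally symmetric graph $\zz=\Gtau(r)$ over a geodesic disc in $\Spheq$ (with $r=\dbold_p$), show existence and uniqueness via a standard regularization near the waist, and then obtain (iii) by perturbing off the Euclidean catenoid profile $\varphi_{cat}$.

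First I would derive the ODE for $\phi=\Gtau$. Using \ref{ETheta} and \ref{EThetag}, the surface $\Theta(\pi/2-r,\yy,\phi(r))$ has induced metric $(\cos^2\phi+\phi'^2)\,dr^2+\sin^2r\cos^2\phi\,d\yy^2$, and the minimality condition is the Euler--Lagrange equation of the area functional $\int \cos\phi\,\sin r\,\sqrt{\cos^2\phi+\phi'^2}\,dr$. This yields a second order quasilinear ODE $\phi''=F(r,\phi,\phi')$ whose formal Euclidean-limit (at $r,\phi\ll 1$) is exactly the catenoid ODE solved by $\varphi_{cat}$. Since $F$ is singular precisely where $\phi'=\infty$, I would handle the initial condition (i) by reparametrizing the surface as a rotationally symmetric immersion analogous to $\Xhat_{cat}$ in \ref{Ecatenoid}, i.e.\ writing $r=\rr(t)$, $\phi=\zz(t)$ with $\rr(0)=\tau$, $\zz(0)=0$, $\rr'(0)=0$, $\zz'(0)=\tau$. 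In this parametrization the minimal surface system is a regular ODE, so standard ODE theory gives unique local smooth existence, and a graphical inverse $r\mapsto\phi(r)$ exists for $r>\tau$. Continuation of the solution up to $r=9^{-2}$ follows for $\tau$ small because, as I show below, the solution stays close to $\varphi_{cat}$ (and then to a bounded smooth extension).

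For (iii), I would write $\Gtau=\varphi_{cat}+w$ on $(9\tau,9\tau^{\alpha})$ and Taylor-expand the spherical ODE $F(r,\phi,\phi')=0$ about $(r,\varphi_{cat},\varphi_{cat}')$. The ambient spherical geometry contributes an inhomogeneity whose size is controlled by the difference between the spherical and Euclidean area elements, namely $O(\tau r^2)$ relative to the catenoid equation (since the ambient sectional curvature enters the minimal surface equation as a zeroth order perturbation of magnitude $r^2$ times the undisturbed solution $\sim\tau\log(r/\tau)$). This yields for $w$ a linearization-type ODE $\Lcal_{cat}w=N(w)+E$, where $\Lcal_{cat}$ is the Jacobi operator of the Euclidean catenoid, $E=O(\tau r^2|\log(r/\tau)|)$, and $N(w)$ is quadratically small in $w$ and its derivatives (in the natural catenoidal scale). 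Because $\Lcal_{cat}$ on the catenoidal end has one bounded Jacobi field (the constant-shift-like mode, giving the $b_{cat}$ freedom) and one that grows, standard variation-of-parameters on the contraction mapping space $\|w-b_{cat}:C^{k}((9\tau,9\tau^{\alpha}),r,dr^2,r^2)\|\le C\tau|\log\tau|$ produces a unique fixed point, with the constant $b_{cat}$ determined by matching to the waist data and satisfying $|b_{cat}|\le C\tau^2$ (the improved $\tau^2$ size coming from the vanishing of the leading correction at $r=\tau$ due to the symmetric initial condition $\Gtau(\tau)=0$).

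The main obstacle is the estimate (iii) on the large interval $(9\tau,9\tau^{\alpha})$, across which the natural length scale $r$ ranges over many orders of magnitude relative to the neck scale $\tau$. Care is needed when choosing the weighted Hölder norm so that the inhomogeneity $E$, whose raw size $\tau r^2|\log(r/\tau)|$ is \emph{not} uniformly small in absolute terms, becomes uniformly small after division by the weight $r^2$, and simultaneously the growing Jacobi mode of $\Lcal_{cat}$ does not destabilize the fixed point. Once this weight is chosen, identifying $b_{cat}$ is equivalent to projecting out the (approximate) bounded Jacobi mode, and the bound $|b_{cat}|\le C\tau^2$ follows from integration-by-parts identities for $\Lcal_{cat}$ against this mode, combined with the initial data at $r=\tau$ provided by the regular reparametrization used above. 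Derivative estimates in $C^k$ then follow by standard interior Schauder estimates for the ODE, rescaled in each dyadic annulus of width comparable to $r$.
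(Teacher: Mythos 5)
Your proposal is correct in outline, but it takes a genuinely different route from the paper's proof, and the difference is worth noting because the paper's approach is considerably more economical for this rotationally symmetric problem. You set up the second-order Euler--Lagrange ODE for $\Gtau$, linearize about $\varphi_{cat}$ to get a Jacobi-type equation $\Lcal_{cat}w = N(w)+E$, and then run a contraction mapping in a weighted norm while managing the two rotationally symmetric Jacobi fields of the catenoid (one bounded, one log-growing) and finally extracting $b_{cat}$ by projecting onto the bounded mode. The paper instead exploits the rotational symmetry via the balancing (flux) formula: integrating the Killing field $(0,0,-x_4,x_3)$ along a coordinate circle of the graph gives a conserved quantity, which yields a \emph{first-order} quadratic relation $A(r)(\Gtau')^2 + 2B(r)\Gtau' = C(r)$ that one solves explicitly for $\Gtau'$. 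Comparing the resulting closed-form expression for $\Gtau'$ with the explicit $\varphi_{cat}'=\tau/\sqrt{r^2-\tau^2}$ directly gives $\Gtau'-\varphi_{cat}'=O(\tau r|\log\tau|)$ on the relevant range, and integrating gives (iii) with $b_{cat}:=\Gtau(9\tau)-\varphi_{cat}(9\tau)$; the bound $|b_{cat}|<C\tau^2$ then comes merely from smooth dependence of the ODE solution on its coefficients near the waist, where the spherical and Euclidean catenoid ODEs differ by $O(\tau^2)$. This sidesteps entirely the variation-of-parameters bookkeeping, the choice of contraction space that keeps the growing Jacobi mode under control, and the integration-by-parts identity you would need to pin down $b_{cat}$: all of which are the delicate points in your sketch and which you acknowledge but do not fully resolve. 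Your approach would generalize better if the base surface were not rotationally invariant (where no conserved flux quantity is available), but here the balancing argument is the right tool and substantially shortens the proof.
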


\begin{proof}
We can assume without loss of generality that $p=p_N$.
The graph of $\Gtaup$ can be parametrized on the portion
of a cylinder by
$$
Y(r,\theta)
=
(
\sin r \cos \theta \cos \Gtau(r) ,
\sin r \sin \theta \cos \Gtau(r) ,
\cos r \cos \Gtau(r) ,
\sin \Gtau(r) 
)
\in
\Sph^3(1)
\subset
\R^4,
$$
where clearly 
$\PiSph\circ Y(r,\theta)=
(
\sin r \cos \theta ,
\sin r \sin \theta ,
\cos r 
)
\in\Spheq
$ 
(recall \ref{EPiSph}). 
Straightforward calculation implies then that 
\begin{multline*}
\frac{\partial Y}{\partial r} (r,\theta)
=
(\cos r \cos \Gtau(r) - \frac{\partial \Gtau}{\partial r}(r) \, \sin r \, \sin \Gtau (r) \, )
\,
(\cos\theta,\sin\theta,0,0)
+
\\
+
(0,0, -\sin r\cos \Gtau(r)- \frac{\partial \Gtau}{\partial r}(r) \, \cos r \sin \Gtau (r) \, , \,
\frac{\partial \Gtau}{\partial r}(r) \, \cos \Gtau (r) \, ) 
,
\end{multline*}
which implies further that
$$
\left | 
\frac{\partial Y}{\partial r} (r,\theta)
\right |^2
=
\cos^2 \Gtau (r)
+
\left(
\frac{\partial \Gtau}{\partial r}(r)
\right)^2.
$$
We will apply the standard balancing formula 
(see for example \cite{kapouleas:survey,alm20})
with Killing field $\vec{K}$ given by
$$
\left. \vec{K}
\right|_{(x_1,x_2,x_3,x_4)}
:=(0,0,-x_4,x_3) .
$$
Using \ref{EThetag} we calculate that the length of the circle $Y(\{r\}\times\Sph^1)$ 
is 
$2\pi\sin r \cos \Gtau (r)$ and then we have 
\begin{multline*}
\int_{Y(\{r\}\times\Sph^1)} 
\vec{\eta}\cdot \vec{K}
=
2\pi\sin r \cos \Gtau (r)
\left(
\sin r \sin \Gtau (r) \cos \Gtau (r)
+
\frac{\partial \Gtau}{\partial r}(r) 
\cos r
\right)
\cdot
\\
\cdot
\left(
\cos^2 \Gtau (r)
+
\left(
\frac{\partial \Gtau}{\partial r}(r)
\right)^2
\right)^{-1/2}
\end{multline*}
By the balancing formula this is independent of $r$
and so equals the value at $r=\tau$.
We conclude then
\begin{multline*}
\sin r \cos \Gtau (r)
\left(
\sin r \sin \Gtau (r) \cos \Gtau (r)
+
\frac{\partial \Gtau}{\partial r}(r) 
\cos r
\right)
=
\\
=
\left(
\cos^2 \Gtau (r)
+
\left(
\frac{\partial \Gtau}{\partial r}(r)
\right)^2
\right)^{1/2}
\sin \tau \cos \tau.
\end{multline*}
By squaring both sides, calculating, and
solving for $\frac{\partial}{\partial r}\Gtau(r)$, we obtain
\begin{equation}
\label{EGder}
\begin{aligned}
A(r) \, &
\left( \displaystyle\frac{\partial \Gtau}{\partial r}(r) \right)^2
+
\, 2\, B(r)\,
\displaystyle\frac{\partial \Gtau}{\partial r}(r) 
=
\, C(r),
\\
&\displaystyle\frac{\partial \Gtau}{\partial r} 
=
-\frac B A + \sqrt{\frac C A + \frac{B^2}{A^2}  \, } ,
\qquad\text{ where }
\\
A(r):=& \, 
\sin^2 r\, \cos^2 r \, \cos^2\Gtau(r) -\sin^2\tau \, \cos^2\tau,
\\
B(r):=& \, 
\sin^3 r\, \cos r \, \sin  \Gtau(r) \, \cos^3\Gtau(r) ,
\\
C(r):=& \, 
\sin^2\tau \, \cos^2\tau  \, \cos^2\Gtau(r)   -\,   \sin^4 r\, \, \sin^2\Gtau(r)   \, \cos^4\Gtau(r).   
\end{aligned}
\end{equation}

Let $b_{cat}=\Gtau(9\tau)-\varphi_{cat}(9\tau)$.
Using the smooth dependence of ODE solutions on the coefficients
it is easy to confirm that 
$|b_{cat}|<C\tau^2$. 
Using also \ref{Evarphicat} we conclude 
$(9\tau,10\tau)\subset S'$, where 
$S':= \{r\in ( 9\tau, 9\,\tau^\alpha): \Gtau\le10\,\tau\log\frac r \tau\}$. 
Let $S$ be the connected component of $S'$ containing 
$(9\tau,10\tau)$.
We have then on $S$ that 
$\Gtau \le C\tau|\log\tau|$,
and therefore 
\begin{equation*}
\begin{aligned}
A(r)=& \, 
(r^2-\tau^2)\,(1+O(r^2+\tau^2\log^2 \tau\,)\,)
\\
B(r)=& \, 
O(r^3\tau|\log \tau|)
\\
C(r)=& \, 
\tau^2 \,(1+O(r^2+\tau^2\log^2 \tau\,)\,).
\end{aligned}
\end{equation*}
Using \ref{EGder} and \ref{Ecatder} we obtain that on $S$ 
\begin{equation}
\displaystyle\frac{\partial \Gtau}{\partial r} (r)
=
\frac{\partial\varphi_{cat}}{\partial\rr_{\phantom{cat}}}(\rr) 
+O(\tau \,r |\log\tau|\,).
\end{equation}
By integrating we conclude that on $S$ 
$$
\Gtau (r) 
=
\varphi_{cat}(\rr)
+
b_{cat}
+
O(\tau \,r^2 |\log\tau|\,).
$$
We conclude then that on $S$ we have 
$\Gtau\le 8\,\tau\log\frac r \tau$ 
and hence 
$S= (9\tau,9\,\tau^\alpha)$.
Finally using \ref{EGder} we can estimate the higher order derivatives and conclude (iii). 
\end{proof}

\begin{corollary}
\label{Cone}
For $\tau$ small enough 
in terms of given $k\in\N$ and $\alpha\in(0,1/2)$ the following holds.
$$
\|\, 
\Gtau 
-
\tau \log ( 2 \rr / {\tau} )
\,
: C^{k}(\,
(9 \tau,9\,\tau^\alpha)\,,\,
r, dr^2 ,  \tau^{2\alpha} \, |\log  \tau| + \tau^2 r^{-2} \,)
\, \| \, \le \,
C(k,\alpha) \, \tau \, , 
$$ 
\end{corollary}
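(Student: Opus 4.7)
The plan is to derive the estimate as a direct consequence of Lemma \ref{LGtaup}(iii), Lemma \ref{Lcatenoid}, and the bound $|b_{cat}| < C \tau^2$, by decomposing
\[
\Gtau - \tau \log ( 2 \rr / \tau )
\; = \; \bigl( \Gtau - \varphi_{cat} - b_{cat} \bigr) \, + \, b_{cat} \, + \, \bigl( \varphi_{cat} - \tau \log ( 2 \rr / \tau ) \bigr)
\]
and estimating each summand separately in the target weighted norm by the triangle inequality.

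First, Lemma \ref{LGtaup}(iii) bounds the $C^{k}(\,(9\tau, 9\,\tau^\alpha), r, dr^2, r^2\,)$ norm of the first summand by $C(k,\alpha)\,\tau|\log\tau|$. Since $r \le 9\,\tau^\alpha$ on the domain, the weight factor satisfies $r^2 \le 81\,\tau^{2\alpha}$, so this contribution is of the form $\tau \cdot \tau^{2\alpha}|\log\tau|$ and is absorbed by the first part of the target weight. Second, Lemma \ref{Lcatenoid} bounds the $C^{k}(\,(9\tau,\infty), r, dr^2, r^{-2}\,)$ norm of the third summand by $C(k)\tau^3$, yielding a contribution of the form $\tau^3 r^{-2} = \tau \cdot \tau^2 r^{-2}$, matching exactly the second part of the target weight times $\tau$. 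Third, since $b_{cat}$ is a constant, only its $k=0$ contribution is nontrivial; using the hypothesis $\alpha < 1/2$, for $\tau$ small enough we have $\tau^{1-2\alpha} \le |\log\tau|$ and hence $\tau^2 \le \tau \cdot \tau^{2\alpha}|\log\tau|$, again absorbed into the first part of the target weight.

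The argument presents no genuine obstacle: it is essentially a bookkeeping exercise using the scaling behavior built into Definition \ref{D:newweightedHolder}, with the derivative estimates carried through by the same reweighting at every order via \eqref{E:norm:derivative}. The only step worth flagging is the absorption of the constant $b_{cat}$, which is precisely what forces the $\tau^{2\alpha}|\log\tau|$ factor in the target weight and which genuinely requires the hypothesis $\alpha < 1/2$; if $\alpha$ were allowed to reach $1/2$ this step would fail and one would need a sharper identification of $b_{cat}$.
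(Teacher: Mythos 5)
Your proof is correct and follows the same route as the paper: decompose $\Gtau - \tau\log(2r/\tau)$ using Lemma \ref{LGtaup}(iii) and Lemma \ref{Lcatenoid} plus the bound $|b_{cat}|<C\tau^2$, then re-weight each piece on the interval $(9\tau,9\tau^\alpha)$ using $r^2\le 81\tau^{2\alpha}$ and $\tau^{1-2\alpha}\le|\log\tau|$ for $\tau$ small. The paper's own proof compresses the absorption of the $b_{cat}$ and $(\Gtau-\varphi_{cat}-b_{cat})$ contributions into the single inequality $\tau+r^2|\log\tau|\le 2\tau^{2\alpha}|\log\tau|$, which is exactly what you spell out term by term.
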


\begin{proof}
This follows by combining \ref{Lcatenoid} and \ref{LGtaup}.iii
and using that 
$\tau+r^2 |\log  \tau| \le 2 \tau^{2\alpha} \, |\log  \tau| $
on the interval under consideration.
\end{proof}

\begin{corollary}
\label{CGGest}
For $\tau$ small enough 
in terms of given $k\in\N$ and $\alpha\in(0,1/2)$ the following holds.
$$
\| \, \Gtau -\tau G +\tau\log(\tau/2)\,\cos r  
: C^{k}(\,
(\tau^\alpha,9\,\tau^\alpha)\,,\,
\tau^{-2\alpha}dr^2\,)
\, \|
\le 
\, C(k,\alpha) \, \tau^{1+2\alpha} \, |\log \tau| .
$$ 
\end{corollary}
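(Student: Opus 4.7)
The plan is to compare $\Gtau$ and $\tau G - \tau\log(\tau/2)\cos r$ by expressing both as perturbations of the common leading behavior $\tau\log(2r/\tau) \approx \tau\cos r\,\log r$, and to observe that the algebraic difference between these two representations of the leading behavior is precisely the residue that controls the whole expression.

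First I note that on the interval $(\tau^\alpha, 9\tau^\alpha)$ every point has $r\sim \tau^\alpha$, so the natural scale $r$ used in Corollary \ref{Cone} and Lemma \ref{LGp}(vi) is comparable (up to absolute constants) to the intrinsic scale $1$ of the rescaled metric $\tau^{-2\alpha}dr^2$; accordingly, bounds in the two frameworks translate into one another modulo constants depending on $k$ and $\alpha$. In particular the weight appearing in Corollary \ref{Cone} collapses on this interval:
\begin{equation*}
\tau^{2\alpha}|\log\tau| + \tau^2 r^{-2} \le C\,\tau^{2\alpha}|\log\tau|,
\end{equation*}
because $\alpha\in(0,1/2)$ forces $\tau^{2-2\alpha}\le\tau^{2\alpha}|\log\tau|$ for $\tau$ small. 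Thus Corollary \ref{Cone} yields
\begin{equation*}
\bigl\|\,\Gtau - \tau\log(2r/\tau) : C^k\bigl((\tau^\alpha, 9\tau^\alpha),\,\tau^{-2\alpha}dr^2\bigr)\,\bigr\| \le C(k,\alpha)\,\tau^{1+2\alpha}|\log\tau|,
\end{equation*}
and Lemma \ref{LGp}(vi), whose weight $r^2$ satisfies $r^2 \le C\tau^{2\alpha}$ on the interval, gives
\begin{equation*}
\bigl\|\,\tau G - \tau\cos r\,\log r : C^k\bigl((\tau^\alpha, 9\tau^\alpha),\,\tau^{-2\alpha}dr^2\bigr)\,\bigr\| \le C(k,\alpha)\,\tau^{1+2\alpha}.
\end{equation*}

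It remains to handle the algebraic residue. A direct computation yields the identity
\begin{equation*}
\tau\log(2r/\tau) \;-\; \tau\cos r\,\log r \;+\; \tau\log(\tau/2)\cos r \;=\; \tau\,(1-\cos r)\,\log(2r/\tau),
\end{equation*}
and since on $(\tau^\alpha, 9\tau^\alpha)$ we have $1-\cos r = O(r^2) = O(\tau^{2\alpha})$, $|\log(2r/\tau)|\le C|\log\tau|$, and derivatives of these factors in the rescaled metric are of the same or smaller order, this residue too has $C^k$-norm bounded by $C(k,\alpha)\,\tau^{1+2\alpha}|\log\tau|$. The conclusion then follows by the triangle inequality applied to the decomposition
\begin{equation*}
\Gtau - \tau G + \tau\log(\tau/2)\cos r = \bigl(\Gtau - \tau\log(2r/\tau)\bigr) - \bigl(\tau G - \tau\cos r\,\log r\bigr) + \tau(1-\cos r)\log(2r/\tau).
\end{equation*}
The only delicate point is the translation between the weighted norms of Corollary \ref{Cone} and Lemma \ref{LGp}(vi) on the one hand, and the unweighted rescaled norm of the present corollary on the other; since the natural scale $r$ of the former and the intrinsic scale $\tau^\alpha$ of the latter agree throughout the interval in question, this is a routine rescaling, and there is no substantive conceptual obstacle beyond this bookkeeping.
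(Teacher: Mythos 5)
Your proof is correct and follows essentially the same strategy as the paper: both use \ref{Cone} to control $\Gtau - \tau\log(2r/\tau)$, and both reduce the rest to estimating the discrepancy between $G$ and $\cos r\,\log(\tau/2) + \log(2r/\tau)$. The only difference is in how that second piece is handled: the paper computes the difference explicitly from the closed form \ref{LGp}(iv), obtaining
$
G-\cos r \,\log(\tau/2) - \log(2r/\tau)
=
(1-\cos r )\,(1+\log(\tau/2) -\log r ) +\cos r\,\log\tfrac{2\sin r}{r (1+\cos r) }
$
and reading off the $O(r^2|\log\tau|)$ bound directly, whereas you decompose through $\cos r\,\log r$ using \ref{LGp}(vi) and then separately estimate the algebraic residue $\tau(1-\cos r)\log(2r/\tau)$. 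Both are routine once the triangle-inequality decomposition is in place, and the resulting bound is identical; your variant trades a direct formula manipulation for an application of the already-packaged estimate \ref{LGp}(vi) plus one extra explicit term, which is a perfectly reasonable alternative.
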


\begin{proof}
We have 
$
G-\cos r \,\log\frac\tau2 - \log\frac{2r}{\tau}
\,=\,
(1-\cos r )\,(1+\log\frac\tau2 -\log r ) +\cos r\,\log\frac{2\sin r}{r (1+\cos r) } \, 
$
by an easy calculation based on \ref{LGp}.iv. 
This implies that 
$$
\|\, 
G-\cos r \,\log(\tau/2) - \log(2r/\tau)
\,
: C^{k}(\,
(9 \tau,9\,\tau^\alpha)\,,\,
r, dr^2 ,  \, r^2 \,)
\, \| \, \le \,
C(k,\alpha) \, |\log \tau| \, . 
$$ 
Combining this with \ref{Cone} we conclude the proof.
\end{proof}

\begin{convention}
\label{con:alpha}
We fix now some small $\alpha>0$ which we will assume as small in absolute terms
as needed.
\hfill $\square$
\end{convention}

\begin{definition}
\label{DExp}
For $\tau\in(0,1)$ and $p\in\Sph^3(1)$ we 
define 
$\Exp_\tp:= \RRR_{p,\tau}\circ \exp_{ g , p }$ 
where 
$\RRR_{p,\tau}:T_p\Sph^3(1)\to T_p\Sph^3(1)$ is defined by $\RRR_{p,\tau}(v)=\tau v$ 
and 
$\exp_{ g , p }$ denotes 
the exponential map of $(\Sph^3(1), g)$ at $p$.  
Let $B_\tp\subset T_p\Sph^3(1)$ be the ball 
such that the restriction of $\Exp_\tp$ to $B_\tp$ is a diffeomorphism onto $\Sph^3(1)\setminus \{-p\}$.
We define a metric on $B_\tp$ 
by 
$\gtilde_\tp \, := \, \Exp^*_\tp \, ( \tau^{-2}\, g )$. 
\end{definition}

\begin{lemma}
\label{LExp}
The estimate 
$\| \, \gtilde_\tp - h_p \, : \, C^k  ( \, B_0( 9^2 \tau^{\alpha-1} ) \setminus \{0\} \, , \, \Rtilde \, , \, h_p \, , \, \Rtilde^4\, ) \,\|
\, \le \, C(k) \, \tau^2 $ 
holds, 
where $B_0( 9^2 \tau^{\alpha-1} ) \, \subset T_p\Sph^3(1) $ is the ball centered at the origin and 
of radius $9^2 \tau^{\alpha-1}$ with respect to $h_p$,  
$\Rtilde$ denotes the distance from the origin in the $h_p$ metric, 
and $h_p$ is the Euclidean metric on $T_p\Sph^3(1)$ defined by $h_p := \left. g \right |_p = \left. \gtilde_\tp \right |_p$.  
\end{lemma}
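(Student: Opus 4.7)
\medskip

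\noindent\textbf{Proof proposal.} The plan is to reduce everything to a Taylor expansion of the metric in normal coordinates and then repackage the result in the weighted norm language of \ref{D:newweightedHolder}.

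First, identify $T_p\Sph^3(1)$ with $\R^3$ via an $h_p$-orthonormal frame; in the resulting normal coordinates $w$ on $B_0(\pi)\subset T_p\Sph^3(1)$, the pulled-back metric has the classical expansion
\begin{equation*}
(\exp_{g,p}^*\, g)_{ij}(w)=\delta_{ij}+Q_{ij}(w),\qquad Q_{ij}(w)=O(|w|^2),
\end{equation*}
with $Q_{ij}$ real-analytic on $B_0(\pi)$. (For the round $\Sph^3(1)$ one even has a closed form, but all that is needed is that $Q_{ij}$ vanishes to second order at $0$, together with uniform bounds on its derivatives on $B_0(9^2\tau^\alpha)$, which holds trivially for $\tau$ small since $9^2\tau^\alpha<1\ll\pi$.) Interpreting $\Exp_\tp=\exp_{g,p}\circ\RRR_{p,\tau}$, the defining relation $\gtilde_\tp=\Exp_\tp^*(\tau^{-2}g)$ and the chain rule give, in the same coordinates $v$ on $T_p\Sph^3(1)$,
\begin{equation*}
(\gtilde_\tp)_{ij}(v)=(\exp_{g,p}^*\, g)_{ij}(\tau v)=\delta_{ij}+Q_{ij}(\tau v),
\end{equation*}
so that $(\gtilde_\tp-h_p)_{ij}(v)=Q_{ij}(\tau v)$.

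Next, derive pointwise estimates for $T:=\gtilde_\tp-h_p$ and its Euclidean derivatives on $B_0(9^2\tau^{\alpha-1})\setminus\{0\}$. Since $|\tau v|\le 9^2\tau^\alpha$ stays small, the vanishing of $Q_{ij}$ to second order yields
\begin{equation*}
|\partial_v^j T_{ij}(v)|\;\le\; C_j\,\tau^j\,|\tau v|^{(2-j)_+}\;=\;\begin{cases} C\tau^2\Rtilde^2 & j=0,\\ C\tau^2\Rtilde & j=1,\\ C_j\tau^{\max(2,j)} & j\ge 2,\end{cases}
\end{equation*}
where $\Rtilde(v)=|v|_{h_p}$.

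Finally, convert these bounds to the weighted $C^k$ norm of \ref{D:newweightedHolder} with parameters $(\rho,g_{\mathrm{bg}},f)=(\Rtilde,h_p,\Rtilde^4)$. Fix $x$ with $\Rtilde(x)=R$; on the ball $B_x$ (of $h_p$-radius $R/100$) the rescaled background metric is $R^{-2}h_p$, which is flat with vanishing Christoffel symbols, so its Levi-Civita connection reduces to coordinate partials. A $(0,2{+}j)$-tensor has norm scaling $|\cdot|_{R^{-2}h_p}=R^{2+j}|\cdot|_{h_p}$ under the conformal constant rescaling. Combining with the pointwise estimates above (and using $\Rtilde\sim_2 R$ on $B_x$) gives, for every $j\le k$,
\begin{equation*}
\bigl\|\,\nabla^{j}T\,\bigr\|_{R^{-2}h_p}\;\le\; R^{2+j}\,C_j\,\tau^2\,R^{2-j}\,(\tau R)^{\max(0,j-2)}\;\le\; C_k\,\tau^2\,R^4,
\end{equation*}
because $\tau R\le 9^2\tau^\alpha\le 1$. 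Dividing by $f(x)=R^4$ and taking the supremum over $x$ yields the required bound $C(k)\tau^2$.

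The only step that is not purely formal is the transition from derivatives in $h_p$ to derivatives in $R^{-2}h_p$, and the reason it works cleanly is that the low-order derivatives of $T$ pick up extra powers of $\Rtilde$ from the double zero of $Q$ at the origin, exactly compensating the otherwise problematic $R^{2+j}$ factor from the tensor rescaling. This compensation is the content of the estimate and is really the only thing to be careful about.
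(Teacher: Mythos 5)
Your proof is correct, but it takes a genuinely different route from the paper. The paper exploits the explicit polar form $h_p = d\Rtilde^2 + \Rtilde^2 g_{\Sph^2(1)}$ and $\gtilde_\tp = d\Rtilde^2 + \tau^{-2}\sin^2(\tau\Rtilde)\,g_{\Sph^2(1)}$, writes the difference as the product
$$\gtilde_\tp - h_p = \bigl(\tau^{-2}\Rtilde^{-2}\sin^2(\tau\Rtilde) - 1\bigr)\cdot\bigl(\Rtilde^2 g_{\Sph^2(1)}\bigr),$$
estimates each factor in the weighted norm with weights $\tau^2$ and $\Rtilde^2$ respectively (the scalar factor is smooth in $\tau\Rtilde$ and $O(\tau^2\Rtilde^2)$, while $\Rtilde^2 g_{\Sph^2(1)}$ is homogeneous of degree two), and then invokes the multiplicative property \ref{E:norm:mult} to multiply the weights to $\tau^2\Rtilde^4$. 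You instead work in Cartesian normal coordinates, use only the general Taylor fact that the metric deviation $Q$ vanishes to second order at the origin, and carry out the rescaling bookkeeping directly from \ref{D:newweightedHolder}. What the paper's approach buys is brevity (a one-line factorization plus the product rule for the norm, thanks to the closed-form expression available for the round sphere); what your approach buys is generality, since it would give the same conclusion for an arbitrary smooth ambient metric $g$ without any constant-curvature identity. One small point worth making explicit if you polish this up: the claimed pointwise bound $|\partial^j T| \le C_j\tau^j|\tau v|^{(2-j)_+}$ relies on $Q$ and its first derivatives vanishing at $0$ together with uniform bounds for all higher derivatives on the fixed compact set $B_0(9^2\tau^\alpha)\subset B_0(1)$, both of which hold because $Q$ extends analytically to $B_0(\pi)$; you gesture at this but it is the hinge of the whole argument.
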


\begin{proof}
Clearly 
$
h_p = d\Rtilde^2 \, + \, \Rtilde^2 \, g_{\Sph^2(1) }
$
and
$
\gtilde_\tp = d\Rtilde^2 \, + \, \tau^{-2} \, \sin^2 ( \tau \Rtilde ) \,\, g_{\Sph^2(1) }. 
$
By calculating and using the definitions we obtain
$\| \, \Rtilde^2 \, g_{\Sph^2(1) }
 \, : \, C^k  ( \, B_\tp \setminus \{0\} \, , \, \Rtilde \, , \, h_p \, , \, \Rtilde^2\, ) \,\|
\, \le \, C(k) $ 
and 
\\ 
${ \| \, \tau^{-2} \, \Rtilde^{-2} \, \sin^2 ( \tau \Rtilde ) \, - \, 1 
 \, : \, C^k  ( \, B_0( 9^2 \tau^{\alpha-1} ) \setminus \{0\} \, , \, \Rtilde \, , \, h_p \, , \, \Rtilde^2\, ) \,\|
\, \le \, C(k) \, \tau^2 } $. 
Using \ref{E:norm:mult} we complete the proof.
\end{proof}

\begin{definition}
\label{Dtildecat}
We define the catenoidal bridge $\cattp$ centered at $p\in\Spheq$ and of waist size $\tau$
to be the union of the graphs of $\pm \Gtaup$ restricted to 
$D_p(9\tau^\alpha)\setminus D_p(\tau)$ where 
$\alpha$ is as in \ref{con:alpha}.
For $\tau>0$ small enough 
we define (recall \ref{DExp}) 
$
\tildecattp := \tildecatmtp:= 
\Exp^{-1}_\tp (\cattp). 
$
Finally we define $\tildecat_{\zp}$
to be the standard catenoid in the Euclidean space 
$( T_p \Sph^3(1) \, , \, h_p )$, 
appropriately placed so that $\tildecattp$ depends smoothly on $\tau$
for $|\tau|$ small enough.
\end{definition}

Note that the last statement above 
applies since 
$\tildecattp$ is controlled by 
an ODE with initial conditions at the waist independent of $\tau$ 
and coefficients smoothly depending on $\tau$.

\begin{definition}
\label{DtildePiK}
For $p\in\Sph^2$ 
we define $\tildePiKp$ to be the nearest point projection
from an appropriate neighborhood of $\tildecat_{\zp}$ in 
$( T_p \Sph^3(1) \, , \, h_p )$ 
to $\tildecat_{\zp}$.
We also define $\rtilde:T_p \Sph^3(1)\to\R$ 
to be the distance from the axis of $\widetilde{\cat}_{\zp}$ 
in $T_p \Sph^3(1)$ with respect to 
$h_p$. 
\end{definition}

\begin{lemma}
\label{LPiK}
For $\tau$ small enough the restriction of $\tildePiKp$ to $\tildecattp$ 
is well defined and is moreover a smooth diffeomorphism onto a domain 
$\Omegatilde_\tau\subset \widetilde{\cat}_{\zp}$.
Moreover $\tildecattp$ is the graph in the Euclidean space 
$( T_p \Sph^3(1) \, , \, h_p )$ 
over 
$\Omegatilde_\tau \subset \widetilde{\cat}_{\zp}$ 
of a function $\varphitilde_\tau$ which satisfies 
$$
\| \, \varphitilde_\tau
: C^{k}(\,
\Omegatilde_\tau \,,\,
\rtilde \, ,  \, \gtilde_0 \, , 
\, \tau+\,\tau^2\,\rtilde^2 \, |\log\tau| \,)
\, \|
\le 
\, C(k,\alpha) \, , 
$$ 
where $\gtilde_0$ is the metric on $\widetilde{\cat}_{\zp}$ 
induced by the Euclidean metric 
$h_p$ 
on 
$T_p \Sph^3(1)$ 
and $\rtilde$ is as in \ref{DtildePiK}.
\end{lemma}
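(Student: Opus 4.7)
The plan is to use the rotational symmetry of both surfaces to reduce the question to a one-dimensional comparison. Since $\Exp_\tp$ is equivariant under rotations of $\Sph^3(1)$ fixing $p$ and its antipode, both $\tildecattp$ and $\tildecat_{\zp}$ are surfaces of revolution in $(T_p\Sph^3(1),h_p)$ about a common axis, so $\varphitilde_\tau$ depends only on $\rtilde$.

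First, I would parametrize the upper half of $\cattp$ by the map $Y(r,\theta)$ from the proof of \ref{LGtaup}, producing $\Ytilde(r,\theta):=\Exp_\tp^{-1}(Y(r,\theta))$ as a parametrization of $\tildecattp$ for $r\in(\tau,9\tau^\alpha)$, and parametrize $\tildecat_{\zp}$ by $X_0(\rtilde,\theta):=(\rtilde\cos\theta,\rtilde\sin\theta,\arccosh\rtilde)$ in orthonormal Cartesian coordinates on $T_p\Sph^3(1)$ whose horizontal plane corresponds to $T_p\Spheq$. A direct Taylor expansion of the exponential map on $\Sph^3(1)\subset\R^4$ (consistent with \ref{LExp}) yields
\[
\Ytilde(r,\theta) \,=\, \tau^{-1}\bigl(\sin r\cos\Gtau(r)\cos\theta,\;\sin r\cos\Gtau(r)\sin\theta,\;\sin\Gtau(r)\bigr)\cdot\bigl(1+O(\tau^2\rtilde^2)\bigr),
\]
with $\rtilde:=r/\tau$, where the $O$-term encodes the cubic correction of $\exp_{g,p}$ along the sphere.

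Rewriting \ref{LGtaup}(iii) in rescaled form as $\Gtau(r)/\tau=\arccosh\rtilde+b_{cat}/\tau+O(\tau^2\rtilde^2|\log\tau|)$ with $|b_{cat}|/\tau\le C\tau$, and expanding the trigonometric factors, the vertical ($\xi_4$) component of $\Ytilde$ differs from $\arccosh\rtilde$ by $O(\tau+\tau^2\rtilde^2|\log\tau|)$, while the horizontal components differ from $(\rtilde\cos\theta,\rtilde\sin\theta)$ by $O(\tau^2\rtilde^3)$. The unit normal to $\tildecat_{\zp}$ at $X_0(\rtilde,\theta)$ is $\rtilde^{-1}(-\cos\theta,-\sin\theta,\sqrt{\rtilde^2-1})$, whose horizontal part has magnitude $1/\rtilde$, so projecting the horizontal error onto the normal contributes only $O(\tau^2\rtilde^2)$ to the normal distance. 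The combined normal distance is therefore bounded by $O(\tau+\tau^2\rtilde^2|\log\tau|)$, which is far smaller than the tubular-neighborhood radius of $\tildecat_{\zp}$ (which is $\gtrsim 1$ throughout $\rtilde\in[1,9\tau^{\alpha-1}]$). This implies $\tildePiKp|_{\tildecattp}$ is a smooth diffeomorphism onto some $\Omegatilde_\tau\subset\tildecat_{\zp}$ and $\tildecattp$ is the normal graph of a function $\varphitilde_\tau$ with the claimed pointwise bound. The full weighted $C^{k,\beta}$ estimate then follows by differentiating the explicit formula for $\Ytilde$ and invoking the $C^k$ statements in \ref{LGtaup}(iii), \ref{Cone}, and \ref{LExp}.

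The main obstacle I expect is geometric bookkeeping: a naive coordinate comparison produces a horizontal error of order $\tau^2\rtilde^3$ which, at the outer boundary $\rtilde\sim\tau^{\alpha-1}$, is of order $\tau^{3\alpha-1}$---far too large to match the claimed bound. The key observation is that this error lies almost entirely along the tangent plane of $\tildecat_{\zp}$, so it shrinks by a factor $1/\rtilde$ upon projection onto the normal direction. Equivalently, reparametrizing by a suitable $\rtilde'=\rtilde'(r)$ so that the horizontal displacement between $\Ytilde(r,\theta)$ and $X_0(\rtilde',\theta)$ vanishes absorbs the large horizontal error into a tangential shift, leaving only the vertical discrepancy of size $O(\tau+\tau^2\rtilde^2|\log\tau|)$, which is precisely what the nearest-point projection measures.
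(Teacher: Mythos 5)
Your argument follows the paper's strategy --- compare the $(r,\theta)$ parametrizations of $\tildecattp$ and $\tildecat_{\zp}$, then pass to the nearest-point projection and implicit function theorem --- but the ``obstacle'' you single out in your last paragraph is a phantom created by carrying the exponential-map expansion only to the $O(\tau^2\rtilde^2)$ precision of \ref{LExp}. The paper compares $X_\tp$ with $\Exp_\tp\circ\Xtilde_{\zp}$ directly in $\Sph^3$, where both radial components carry the same spherical $\sin r$-type factor (respectively $r\,\sin R/R$ with $R=\sqrt{r^2+\varphi^2_{cat}}\,$); that distortion cancels between the two sides and the radial discrepancy reduces to $r\bigl(\varphi^2_{cat}/6-\Gtau^2/2\bigr)+\ldots = O(r\tau^2\log^2\tau)$, already well within the weight $\tau(\tau+r^2|\log\tau|)$. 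Working after $\Exp^{-1}_\tp$ as you do, the same cancellation is present but hidden inside the conformal factor: writing $\cos\rho=\cos r\,\cos\Gtau$, one has $\rho/\sin\rho=1+(r^2+\Gtau^2)/6+\ldots$, and the $(r^2+\Gtau^2)/6$ cancels the $-r^2/6$ from $\sin r/r$ and the $-\Gtau^2/2$ from $\cos\Gtau$ to net $-\Gtau^2/3=O(\tau^2\log^2\tau)$ --- not $O(\tau^2\rtilde^2)$. Retaining only $1+O(\tau^2\rtilde^2)$ loses this cancellation and manufactures the spurious horizontal error of size $\tau^2\rtilde^3$. Your rescue via normal projection (equivalently, the reparametrization $\rtilde'(r)$ you mention at the end) is correct and produces the right bound, and the geometric insight --- that tangential displacements are invisible to the normal graph --- is a good one to keep; but it is not needed if the expansion is done to one more order. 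Two small precision points: for a displacement as large as $\tau^{3\alpha-1}$ (for $\alpha<1/3$) the first-order projection onto the normal at $X_0(r/\tau,\theta)$ is not a priori legitimate, so the reparametrization phrasing is in fact the rigorous version of your idea; and the higher $C^k$ bounds should be extracted through the implicit function theorem as the paper indicates, rather than by ``differentiating the explicit formula for $\Ytilde$'' alone.
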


\begin{proof}
We assume without loss of generality that $p=p_N$.
We identify then $T_p\Sph^3(1)$ with $\R^3$ so that for 
$\vec{u}= (u_1,u_2,u_3)\in\R^3$ 
we have
$$
\Exp_\tp (u_1,u_2,u_3) = 
\cos\tau |\vec{u}| \,\,(0,0,1,0)
\,\,+\,\,
\frac {\sin\tau |\vec{u}|} { |\vec{u}|} \,\,(u_1,u_2,0,u_3).
$$
where $|\vec{u}|=(u_1^2+u_2^2+u_3^2\,)^{1/2}$. 
The upper half of 
$\cattp$ 
can be parametrized by 
$X_\tp:[\tau,9\tau^\alpha)\times \Sph^1\to\Sph^3(1)$ 
defined by (recall \ref{ETheta})  
$$
X_\tp(r,\theta) 
=
\cos\Gtau(r)\,\, (\sin r \cos \theta, \sin r \sin \theta, \cos r, 0)
\,\,+\,\,
\sin\Gtau(r)\,\, (0,0,0,1).
$$
The upper half of 
$ \widetilde{\cat}_{\zp} $
can be parametrized by 
$\Xtilde_{\zp} :[\tau,\infty) \times \Sph^1\to T_p\Sph^3(1)$
defined by (recall \ref{Evarphicat})  
$$
\Xtilde_{\zp} (r,\theta) 
=
\tau^{-1} \,\,
(\, r\cos\theta\, ,\, r\sin\theta\, ,\, \varphi_{cat}(r)\,), 
$$
and therefore
\begin{multline}
\Exp_\tp \circ 
\Xtilde_{\zp} (r,\theta) 
=
\cos\sqrt{r^2+\varphi_{cat}^2(r)\,} \,\,(0,0,1,0)
\,\,+\,\,
\\
\,\,+\,\,
\frac 
{\sin\sqrt{r^2+\varphi_{cat}^2(r)\,} } 
{\sqrt{r^2+\varphi_{cat}^2(r)\,} } 
(\, r\cos\theta\, ,\, r\sin\theta\, ,\,0\,,\, \varphi_{cat}(r)\,). 
\end{multline}
Using \ref{LGtaup}.iii and \ref{Lcatenoid} we conclude 
$$
\| \, 
X_\tp 
-
\Exp_\tp \circ \Xtilde_{\zp} 
: C^{k}(\,
(9 \tau,9\,\tau^\alpha)\times\Sph^1 \,,\,
r\, , 
\,(\Exp_\tp \circ \Xtilde_{\zp})^* g \, ,\, \tau+r^2 |\log\tau| \,)
\, \|
\le 
\, C(k,\alpha) \, \tau \, . 
$$ 
This implies that the points of 
$\tildecattp = \Exp^{-1}_\tp (\cattp)$
are within distance 
$C\,\tau^{2\alpha}|\log\tau|$
from 
$\widetilde{\cat}_{\zp}$, 
where for the region $\{\rtilde<10\}$
we use the smooth dependence on $\tau$.
The restriction hence of $\tildePiKp$ to $\tildecattp$ is well defined.
Magnifying and using the implicit function theorem we conclude the proof.
\end{proof}

Since by \ref{LPiK} $\tildecattp$ is a small perturbation of a domain  
$\Omegatilde_\tau\subset \widetilde{\cat}_{\zp}$,
its first and second fundamental forms induced by $h_p$ 
are a small perturbation of those of 
$\Omegatilde_\tau$. 
However we are really interested in the first and second fundamental forms 
$\gtilde$ and $\Atilde$ of $\tildecattp$ induced by 
$\gtilde_\tp$ (defined in \ref{DExp}), 
or equivalently 
\begin{equation} 
\label{EgAtilde}
\gtilde=\tau^{-2}  \, \Exp^{*}_\tp g,
\qquad\quad
\Atilde=\tau^{-1}  \, \Exp^{*}_\tp A, 
\end{equation} 
where $g$ and $A$ denote the first and second fundamental forms of $\cattp\subset \Sph^3(1)$ 
induced by the standard metric of $\Sph^3(1)$ 
and 
$\Exp^{*}_\tp$ denotes pullback by the restriction of $\Exp_\tp$ to $\tildecattp$.  
The next corollary provides the estimates we will need. 

\begin{corollary}
\label{CPiK}
For 
$\Omegatilde_\tau$,  
$\gtilde_0$,  
and
$\rtilde$ as in \ref{LPiK} 
we have that
$$
\begin{aligned}
\| \, (\tildePiKp)_* \gtilde \, - \, \gtilde_0 \,
&: C^{k}(\,
\Omegatilde_\tau \,,\,
\rtilde \, ,  \, \gtilde_0 \, , 
\, \tau \rtilde +\,\tau^2\,\rtilde^4 \,)
\, \|
\le 
\, C(k,\alpha) \, , 
\\
\| \, (\tildePiKp)_* \Atilde \, - \, \Atilde_0 \,
&: C^{k}(\,
\Omegatilde_\tau \,,\,
\rtilde \, ,  \, \gtilde_0 \, , 
\, \tau +\,\tau^2\,\rtilde^3 \,)
\, \|
\le 
\, C(k,\alpha) \, , 
\end{aligned}
$$
where $(\tildePiKp)_*$ denotes the pushforward by $\tildePiKp$ restricted to $\tildecattp$, 
that is the pullback by its inverse,  
and 
$\gtilde$
and 
$\Atilde$
are as above. 
\end{corollary}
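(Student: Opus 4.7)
The plan is to split both $(\tildePiKp)_*\gtilde-\gtilde_0$ and $(\tildePiKp)_*\Atilde-\Atilde_0$ into two contributions: one coming from the deformation of the submanifold from $\Omegatilde_\tau\subset\widetilde{\cat}_{\zp}$ to $\tildecattp$ inside the fixed ambient Euclidean space $(T_p\Sph^3(1),h_p)$, and one coming from replacing the ambient metric $h_p$ by $\gtilde_\tp$ with the submanifold $\tildecattp$ fixed. Letting $X=j+\varphitilde_\tau\nu_0:\Omegatilde_\tau\to\tildecattp$ be the graph parametrization of \ref{LPiK} (here $j$ is the inclusion of $\Omegatilde_\tau$ and $\nu_0$ is a unit $h_p$-normal to $\widetilde{\cat}_{\zp}$), this decomposition reads
\begin{equation*}
X^*\gtilde_\tp-\gtilde_0 \;=\; \bigl(X^*h_p-\gtilde_0\bigr) \;+\; X^*(\gtilde_\tp-h_p),
\end{equation*}
with an analogous decomposition for the second fundamental forms.

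For the first contribution, standard Euclidean graph formulas give
\begin{equation*}
X^*h_p-\gtilde_0\;=\;-2\,\varphitilde_\tau\,\Atilde_0\,+\,\varphitilde_\tau^2\, B_0\,+\,d\varphitilde_\tau\otimes d\varphitilde_\tau,
\end{equation*}
where $B_0=\Atilde_0\cdot\gtilde_0^{-1}\cdot\Atilde_0$, together with a structurally similar formula for the second fundamental form whose leading terms are $\nabla^2_{\gtilde_0}\varphitilde_\tau$ and $\varphitilde_\tau\,B_0$, plus quadratic-or-higher corrections. Since $\widetilde{\cat}_{\zp}$ is the standard catenoid, its second fundamental form and its covariant derivatives satisfy $|\Atilde_0|=O(\rtilde^{-2})$ and $|B_0|=O(\rtilde^{-4})$ pointwise, which in the scaled norm of \ref{D:newweightedHolder} translate to weights $O(1)$ and $O(\rtilde^{-2})$ respectively. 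Combining with the bounds on $\varphitilde_\tau$ from \ref{LPiK} via \ref{E:norm:derivative} and \ref{E:norm:mult} yields contributions of size $O(\tau+\tau^2\rtilde^2|\log\tau|)$ to the first fundamental form and $O(\tau+\tau^2|\log\tau|)$ to the second.

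For the second contribution, \ref{LExp} bounds $\gtilde_\tp-h_p$ with weight $\tau^2\Rtilde^4$ in the $h_p$-metric; on $\tildecattp$ one has $\Rtilde\sim_C\rtilde$ because the height of the standard catenoid grows only logarithmically in $\rtilde$, so pulling back via $X$ contributes $O(\tau^2\rtilde^4)$ to the first fundamental form. For the second fundamental form the change of ambient metric affects the Levi-Civita connection through the first derivatives of $\gtilde_\tp-h_p$; by \ref{E:norm:derivative} these lose one factor of $\rtilde$ in weight, so the resulting contribution is $O(\tau^2\rtilde^3)$.

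Summing the two contributions gives the stated estimates once one verifies the elementary inequalities $\tau+\tau^2\rtilde^2|\log\tau|\le\tau\rtilde+\tau^2\rtilde^4$ and $\tau^2|\log\tau|\le\tau$ on $\Omegatilde_\tau$ (where $\rtilde\ge1$); the first follows from a short case analysis comparing $\rtilde$ with $|\log\tau|^{1/2}$, and the second is immediate for $\tau$ small. The proof is not conceptually deep, but the main obstacle is the careful bookkeeping of the weighted H\"older norms, especially for the second fundamental form where one must extract the $\tau^2\rtilde^3$ loss from one derivative of the bound in \ref{LExp}; once this bookkeeping is laid out, the rest of the argument is a routine application of \ref{E:norm:mult} and \ref{E:norm:derivative}.
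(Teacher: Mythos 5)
Correct, and essentially the same approach as the paper: both proofs decompose the discrepancy in the fundamental forms into a graph-perturbation contribution (controlled by \ref{LPiK}) and an ambient-metric-change contribution (controlled by \ref{LExp}), then combine and verify the elementary weight inequalities. The difference is presentational—you spell out the explicit perturbation formula for the induced metric and argue term by term, while the paper works at each point $q$ in the locally rescaled metric $\hhat_q=\rtilde^{-2}(q)h_p$ and appeals to the uniform boundedness of the conformal parametrization $\Xhat_{cat}$ to bound both fundamental forms together—but the underlying estimate (the two-term expression in \ref{Esm}) is the same.
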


\begin{proof}
Let $q\in \Omegatilde_\tau \subset \widetilde{\cat}_{\zp}\subset T_p\Sph^3(1)$ 
and consider the Euclidean metric 
$\hhat_q:= \rtilde^{-2}(q)\, h_p$ on $T_p\Sph^3(1)$.  
Consider Cartesian orthonormal coordinates on $( T_p\Sph^3(1) , \hhat_q ) $ 
so that $\Xhat_{cat}$ defined as in \ref{Ecatenoid} with $\tau=1$ 
provides a conformal parametrization of $\widetilde{\cat}_{\zp}$. 
We consider the geodesic disc $B'_q \subset \widetilde{\cat}_{\zp}$ 
with center $q$, radius $1/10$, and defined with respect to the metric induced $\hhat_q$. 
It is then easy to check that there is a constant $C(k)$ which depends only on $k$ 
such that on $\Xhat^{-1}_{cat}( \, B'_q )$ we have that the $C^k$ norms of the coordinates of 
$\Xhat_{cat}$ are bounded by $C(k)$ and also 
$g_{cyl} \le  \, C(k)\, \Xhat^*_{cat} \hhat_q$, 
where 
$g_{cyl}$ is the standard metric on the cylinder $\R\times \Sph^1(1)$ 
and $\Xhat^*_{cat} \hhat_q$ is the pullback by $\Xhat_{cat}$ of the metric induced by $\hhat_q$. 

Note that by \ref{LPiK} $\tildecattp$ 
is the graph in the Euclidean space 
$( T_p \Sph^3(1) \, , \, \hhat_q )$ 
over 
$\Omegatilde_\tau $ 
of the function $\frac1{\rtilde(q)} \,\varphitilde_\tau$. 
Since we have uniform bounds for the coordinate functions of $\Xhat_{cat}$
we can combine the estimates in \ref{LExp} and \ref{LPiK} 
to conclude that on $B'_q$ the 
norms of the differences of the fundamental forms induced by 
$\rtilde^{-2}(q)\, \gtilde_\tp$ 
on the graph versus the ones 
induced by $\hhat_q$ on 
$\widetilde{\cat}_{\zp}$, 
are bounded by a constant depending only on $k$ and $\alpha$ times  
\begin{equation}
\label{Esm}
\frac{\, \tau+\,\tau^2\,\rtilde^2(q) \, |\log\tau| \,} {\rtilde(q)} 
\, + \, 
\tau^2  \, \Rtilde^2(q)
\, \le \, C \,
( \,  \frac\tau{\rtilde(q)}
\, + \, 
\tau^2  \, \rtilde^2(q) \, ) \, 
\le \, 
C \, \tau^{2\alpha}, 
\end{equation} 
where for the last inequality we used that $\rtilde\le 9 \tau^{\alpha-1}$ by definition, 
and we also used that linear terms dominate because of the smallness of the last term in \ref{Esm}. 
By scaling and applying \ref{D:newweightedHolder} we conclude the proof.
\end{proof}

\section{Linearized doubling and initial surfaces}
\label{Sgeneral}
\nopagebreak

We expect that the approach developed in this paper,
which consists of finding appropriate linearized doubling (LD) solutions first,
and using them to ``build'' the desired minimal surfaces afterward,
can be modified 
to apply to general situations with little or no symmetry 
(see \ref{remarkone}).
Under this approach the difficulty is shifted 
to finding and understanding the appropriate 
LD solutions.

\subsection*{LD and MLD solutions}  
$\phantom{ab}$
\nopagebreak

We proceed now to describe the LD solutions 
for doubling constructions of $\Spheq$.
Note that our definitions although stated for $\Spheq$ can 
easily be modified to apply to any minimal surface. 
Note also that we can think of an LD solution $\varphi$ as a multi-Green's function,  
since clearly in the distributional sense $\Lcalp \varphi$ is a linear combination 
of delta functions: 

\begin{definition}[LD solutions] 
\label{DLDnoK}
Given a finite set $L\subset \Spheq$ and a function $\tau:L\to\R$,
we define a \emph{linearized doubling (LD) solution of configuration $(L,\tau)$} 
to be a function $\varphi\in C^\infty(\Spheq\setminus L)$ which satisfies the following conditions 
where $\tau_p$ denotes the value of $\tau$ at $p$: 
\newline
(i).
$\Lcalp\varphi=0$ on $\Spheq\setminus L$.
\newline
(ii).
$\forall p\in L$ 
there is 
a smooth extension across $\{p\}$
$$
\varphihat_p\in C^\infty(\{p\}\cup(\Spheq\setminus (L\cup\{-p\})\,)\,)
\quad\text{ such that }\quad
\varphihat_p=\varphi-\tau_p G_p
\quad\text{on}\quad
\Spheq\setminus (L\cup\{-p\}) . 
$$
\end{definition}

The main idea of our current approach is to construct initial surfaces 
by gluing catenoidal bridges centered at the points of $L$ to (appropriately modified) 
graphs of the LD solutions.  
This step requires a satisfactory matching of each LD solution 
to the catenoidal bridge at the annulus where the gluing occurs. 
The matching can be controlled by the first terms of the Taylor expansion 
of each $\varphihat_p$ at $p$. 
It turns out however that well matched LD solutions are not in sufficient supply for our purposes.  
For this reason we have to employ also LD solutions which are not well matched.  
Such solutions need to be modified so that they satisfy the matching conditions 
at the expense of not satisfying the exact linearized equation anymore. 
The solutions in this new class 
will only satisfy the linearized equation modulo 
a space $\skernel[L]$ which will be defined later in \ref{Dskernel}.   
$\skernel[L]$ depends smoothly on $L$ 
and plays also the role of the (extended) substitute kernel in the linear theory (see \ref{Plinear}).   

$\skernel[L]$ will be defined later in \ref{Dskernel} and will depend smoothly on $L$. 

\begin{definition}[LD solutions modulo $\skernel{[L]}$]  
\label{DLDmodK}
Given $L$ and $\tau$ as in \ref{DLDnoK},  
and also a finite dimensional space $\skernel[L]\subset C^\infty(\Spheq)$, 
we define a \emph{linearized doubling (LD) solution modulo $\skernel[L]$ of configuration $(L,\tau,\xiw)$} 
to be a function $\varphi\in C^\infty(\Spheq\setminus L)$ which satisfies the same conditions as in \ref{DLDnoK}, 
except that condition (i) is replaced by the following:  
\newline 
(i${}'$).
$\Lcalp\varphi=\xiw\in \skernel[L] \subset C^\infty(\Spheq)$ on $\Spheq\setminus L$.
\end{definition}

Note that LD solutions in the sense of 
\ref{DLDnoK}
are LD solutions in the sense of 
\ref{DLDmodK} 
with $w=0$. 
We describe now the matching conditions. 

\begin{definition}[Mismatch of LD solutions] 
\label{Dval}
Given $\varphi$ as in 
\ref{DLDnoK}
or 
\ref{DLDmodK} 
we define 
$\val[L]:=
\bigoplus_{p\in L} \val[p]$,
where $\val[p]:=\R\oplus T^*_p\Spheq$, and 
the mismatch of $\varphi$ by 
$$ 
\Bcal_L \varphi \, := 
\oplus_{p\in L} 
\left(
\, \varphihat_p(p)+\tau_p\log(\tau_p/2)\, , \, 
d_p\varphihat_p\, \right) 
\in \val[L].
$$ 
\end{definition}

Among all the LD solutions modulo $\skernel[L]$ we will be mainly interested in the ones 
which are well matched: 

\begin{definition}[MLD solutions] 
\label{DMLD}
We define a \emph{matched linearized doubling (MLD) solution modulo $\skernel[L]$ of configuration $(L,\tau,\xiw)$} 
to be some $\varphi$ as in \ref{DLDmodK}  
which moreover satisfies the conditions  
$\Bcal_L \varphi=0$ and $\tau_p>0$ $\forall p\in L$.   
\end{definition}

\begin{remark}
\label{remarkLD}
Note that given $\varphi$ and $L$ as in \ref{DLDmodK}, 
$\tau$, $\xiw$, each $\varphihat_p$, 
and the second components of $\Bcal_L \varphi$,   
are uniquely determined and depend
linearly on $\varphi$.
The first components of 
$\Bcal_L \varphi$ are not linear in $\varphi$ however and this makes the construction 
harder. 
\hfill $\square$
\end{remark}

\subsection*{The definition of $\skernel[L]$}
$\phantom{ab}$
\nopagebreak

In order to describe the support of the functions in $\skernel[L]$ we have first the following. 

\begin{convention}[The constants $\delta_p$] 
\label{con:L} 
Given $L$ as in \ref{DLDnoK} 
we assume that for each $p\in L$ we have chosen a constant $\delta_p>0$, 
where each $\delta_p$ is small enough so that any two $D_p(9\delta_p)$'s are disjoint for two different points $p\in L$.
\hfill $\square$
\end{convention}

\begin{definition}[The obstruction space ${\skernel[L]}$] 
\label{Dskernel}
Given $L$ and $\delta_p$'s as in \ref{con:L} 
we define 
$\skernel[L]\subset C^\infty(\Spheq)$ 
by 
$\skernel[L]:= \bigoplus_{p\in L} \skernel[p]$,
where $\skernel[p]$ is spanned by the following.
\newline
(i).
$
\Lcalp \Psibold\left[2\delta_p,\delta_p;\dbold_p\right]
(G_p,\log\delta_p \cos\circ\dbold_p\,)=
-
\Lcalp \Psibold\left[2\delta_p,\delta_p;\dbold_p \right]
(\,\log\delta_p \, \cos\circ\dbold_p\,,G_p).
$
\newline
(ii).
$
\Lcalp \Psibold\left[2\delta_p,\delta_p;\dbold_p  \right]
(0\,,u_p)
,
$
where $u_p$ is any first harmonic 
of $\Spheq$ vanishing at $p$.
\end{definition}

Note that the functions 
in $\skernel[L]$ are supported on $\bigsqcup_{p\in L} (D_p(2\delta_p)\setminus D_p(\delta_p) )$.
Clearly $\forall p\in L$ 
we have $\dim \skernel[p]=3$ 
and hence 
$\dim \skernel[L]=3|L|$
where $|L|$ is the number of points in $L$.

\subsection*{Symmetric LD solutions}
$\phantom{ab}$
\nopagebreak

Because of the symmetries imposed on our constructions we concentrate now on
LD solutions which are invariant under the action of $\grouptwo$ 
(recall \ref{Dgroup}).
In such a case we can write
\begin{equation}
\label{ELpar}
L=\Lmer\cap\Lpar,
\end{equation}
where
$\Lpar$ is the union of a finite
number of parallel circles and perhaps $\{p_N,p_S\}$,
symmetrically arranged around the equator so that 
$\grouptwo\Lpar=\Lpar$.
We assume that $\delta_p$'s have been chosen as in \ref{con:L} and so that they are $\grouptwo$-invariant.
We also define (recall \ref{DLDmodK} and \ref{Nsym}) 
\begin{equation}
\label{Eskernelsym}
\skernel_\sym[L]:=\skernel[L]\cap C^\infty_\sym(\Spheq),
\qquad
\skernelv_\sym[L]:=\{v\in C^\infty_\sym(\Spheq):\Lcalp v\in\skernel_\sym[L] \}.
\end{equation}
Note that because of the symmetries $\Lcalp$ has no kernel and therefore 
$\Lcalp$ restricted to $\skernelv_\sym[L]$ provides an isomorphism onto $\skernel_\sym[L]$.
The dimension of 
$\skernel_\sym[L]$ 
and $\skernelv_\sym[L]$ 
is clearly 
$k_{eq}+k_{poles}+2k_{other}$ where $k_{eq}=1$ if the equatorial circle is included in $\Lpar$ and $0$ otherwise, 
$k_{poles}=1$ if the poles are included and $0$ otherwise, 
and $\mmer= k_{eq}+k_{poles}+2k_{other}$.  
Note now that the symmetries imposed ensure that the configuration of an LD solution uniquely 
determines the LD solution as in the next lemma:

\begin{lemma}[Symmetric LD solutions] 
\label{LLD}
Given a finite $\grouptwo$-invariant set $L\subset\Spheq$,
a $\grouptwo$-invariant function $\tau:L\to\R$,
and $\xiw\in\skernel_\sym[L]$,
there is a unique 
$\grouptwo$-invariant 
LD solution modulo $\skernel[L]$ 
$\varphi=\varphi[L,\tau,\xiw]$ of configuration $(L,\tau,\xiw)$  
(recall \ref{DLDmodK}). 
Moreover the following hold.
\newline
(i).
$\varphi$ and each $\varphihat_p$ 
depend linearly on $(\tau,\xiw)$.
\newline
(ii).
$\varphi_\ave\in C^0(\Spheq\setminus(L\cap\{p_N,p_S\})\,)$
(recall \ref{Dave}) 
and 
$\varphi_\ave$ is smooth on $\Spheq\setminus \Lpar$ where
it satisfies the ODE $\Lcalp \varphi_\ave=\xiw_\ave$.

If $w=0$ then we also write $\varphi=\varphi[L,\tau]$ and $\varphi$ is 
the unique $\grouptwo$-invariant LD solution 
of configuration $(L,\tau)$ as in \ref{DLDnoK}. 
\end{lemma}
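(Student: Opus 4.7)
The plan is to reduce existence to a standard elliptic solvability problem on $\Spheq$, with the $\grouptwo$-symmetry serving to kill the kernel of $\Lcalp=\Delta+2$, and to obtain uniqueness from the same kernel-vanishing. The key algebraic fact is that no nonzero first harmonic of $\Spheq$ is $\grouptwo$-invariant: $\xbartilde\in\grouptwo$ sends $x_3\mapsto-x_3$, while the rotation of angle $2\pi/m$ about the $x_3$-axis (obtained as $\ybartilde\circ\ybartilde_{\pi/m}\in\grouptwo$) acts nontrivially on $\vspan\{x_1,x_2\}$ for $m\ge 2$. Since $\ker\Lcalp$ on $\Spheq$ is spanned by the first harmonics, and since averaging over $\grouptwo$ shows that any $\grouptwo$-invariant function is $L^2$-orthogonal to every first harmonic, self-adjointness of $\Lcalp$ implies that $\Lcalp$ restricts to a linear isomorphism $C^\infty_\sym(\Spheq)\to C^\infty_\sym(\Spheq)$.

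Uniqueness then follows from the defining properties alone: if $\varphi_1,\varphi_2$ are two solutions of the same configuration, then $\psi:=\varphi_1-\varphi_2$ equals $\varphihat_{p,1}-\varphihat_{p,2}$ near each $p\in L$ and so extends smoothly across every point of $L$, giving $\psi\in C^\infty_\sym(\Spheq)$; meanwhile $\Lcalp\psi=\xiw-\xiw=0$ forces $\psi\equiv 0$.

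For existence, fix a smooth $\chi\colon[0,\pi]\to[0,1]$ equal to $1$ near $0$ and to $0$ near $\pi$, and set $\chi_p:=\chi\circ\dbold_p$; the family $\{\chi_p\}_{p\in L}$ is automatically $\grouptwo$-equivariant because $\dbold_{\gamma p}=\dbold_p\circ\gamma^{-1}$. Define
\[
\Phi:=\sum_{p\in L}\tau_p\,\chi_p\,G_p.
\]
Near each $p\in L$ the difference $\Phi-\tau_p G_p$ is smooth (since $\chi_p\equiv 1$ near $p$ and all other summands $\chi_{p'}G_{p'}$ are smooth there), while near $-p$ the cutoff $\chi_p$ vanishes and the remaining summands are smooth; combined with $\Lcalp G_p\equiv 0$ on $\Spheq\setminus\{p,-p\}$ this shows that $\xi_0:=\Lcalp\Phi$ extends to a smooth $\grouptwo$-invariant function on all of $\Spheq$. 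By the isomorphism property above there is a unique $u\in C^\infty_\sym(\Spheq)$ with $\Lcalp u=\xiw-\xi_0$, and $\varphi:=\Phi+u$ satisfies all the requirements of \ref{DLDmodK}.

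Part (i) is immediate: every step in the construction (cutoff multiplication, applying $\Lcalp$, inverting $\Lcalp|_\sym$) is linear in $(\tau,\xiw)$, so $\varphi$ and each $\varphihat_p=\varphi-\tau_p G_p$ depend linearly on $(\tau,\xiw)$. For (ii), the logarithmic singularities of $\varphi$ at the points of $L$ are integrable along any parallel circle not reduced to a singular pole, so $\varphi_\ave$ is well-defined and, by dominated convergence, continuous on $\Spheq\setminus(L\cap\{p_N,p_S\})$; on the complement $\Spheq\setminus\Lpar$ the function $\varphi$ is smooth and longitudinal averaging commutes with $\Lcalp$, giving both smoothness of $\varphi_\ave$ and the ODE $\Lcalp\varphi_\ave=\xiw_\ave$. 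The case $\xiw=0$ is a specialization. The main obstacle—and the reason for the cutoffs $\chi_p$—is the parasitic second singularity of each $G_p$ at $-p$: the naive ansatz $\varphi=\sum_p\tau_p G_p+u$ would introduce unwanted log singularities at points of $-L\setminus L$, and circumventing this while preserving $\grouptwo$-symmetry is precisely where the symmetry hypothesis becomes essential, since on the full $C^\infty(\Spheq)$ the non-trivial kernel of $\Lcalp$ would itself obstruct the construction of singular solutions with prescribed singularities only at $L$.
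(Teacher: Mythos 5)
Your proof is correct and follows essentially the same route as the paper: a cutoff removes the parasitic singularity of $G_p$ at $-p$, the error $\Lcalp\Phi$ lands in $C^\infty_\sym(\Spheq)$, and the triviality of the symmetric kernel of $\Lcalp$ yields both solvability and uniqueness. The paper uses $\Psibold[\delta_p,2\delta_p;\dbold_p](G_p,0)$ in place of your $\chi_p G_p$, but the two cutoffs play the identical role and the rest of the argument is the same.
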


\begin{proof}
We define 
$\varphi_1\in C^\infty_\sym(\Spheq\setminus L)$
by requesting that it is supported on $\bigsqcup_{p\in L} (D_p(2\delta_p))$
and 
$
\varphi_1=
\Psibold\left[\delta_p,2\delta_p;\dbold_p \right]
(G_p,0)
$
on
$
D_p(2\delta_p)
$
for each $p\in L$.
Note that $\Lcalp \varphi_1\in C^\infty_\sym(\Spheq)$
(by assigning $0$ values on $L$) and it is supported on 
$\bigsqcup_{p\in L} (D_p(2\delta_p)\setminus D_p(\delta_p) )$.
Because the symmetries do not allow the first harmonics of the Laplacian on 
$\Spheq$,
there is $\varphi_2\in C^\infty_\sym(\Spheq)$ such that
$\Lcalp \varphi_2=-\Lcalp \varphi_1+\xiw$.
We can define then $\varphi:=\varphi_1+\varphi_2$.
Uniqueness and (i) follow then immediately.
To prove (ii) we need to check that $\varphi$ is integrable on each circle contained
in $\Lpar$ and that $\varphi_\ave$ is continuous there also.
But these follow easily by the logarithmic behavior of $G_p$ (recall \ref{LGp}). 
Since the case $w=0$ is clearly a special case of the general case the proof is complete.
\end{proof}

Next we will need the following.
\begin{definition}[The map ${\Ecal_L}$] 
\label{DEcal}
We define $\val_\sym[L]$ to be the subspace of $\val[L]$ (recall \ref{Dval})  
consisting of those elements which are invariant under 
the obvious action of $\grouptwo$.  
We define then a linear map $\Ecal_L : \skernelv_\sym[L] \to \val_\sym[L]$ by 
$\Ecal_L (v):= (v(p),d_pv)_{p\in L}\in\val_\sym[L]$ 
for $v\in \skernelv_\sym[L]$ (recall \ref{Eskernelsym}). 
\end{definition}

The following assumption is crucial for the construction and will be checked later. 
Note that besides being used in the linear theory later it also allows us to convert 
any LD solution $\varphi$ in the sense of \ref{DLDnoK} to an MLD in the sense of \ref{DMLD} 
by subtracting from it $\Ecal_L^{-1}\,\Bcal_L\varphi$: 

\begin{assumption}
\label{AEcal}
We assume that the map $\Ecal_L : \skernelv_\sym[L] \to \val_\sym[L]$
is a linear isomorphism.  
\end{assumption}

\begin{definition}
\label{DEcalnorm}
We denote by $\|\Ecalinv \|$ the operator norm of 
$\Ecalinv:\val_\sym[L] \to \skernelv_\sym[L]$ 
with respect to 
the $C^{2,\beta}(\Spheq,g)$ norm on the target 
and the maximum norm on the domain subject to the standard metric $g$ of $\Spheq$.
\end{definition}

\subsection*{Initial surfaces from $\grouptwo$-symmetric MLD solutions}
$\phantom{ab}$
\nopagebreak

In this subsection we construct the initial surfaces 
by gluing catenoidal bridges to appropriately modified graphs of MLD solutions.
More precisely we start by 
assuming given a $\grouptwo$-symmetric MLD solution,  
$\varphi=\varphi[L,\tau,\xiw]$ in the notation of \ref{LLD}. 
The first step in the construction is to modify the MLD solution so that its graph 
on an appropriate domain (corresponding to the complement of the catenoidal
bridges) is minimal except on the support of the elements of $\skernel_\sym[L]$. 
We then attach the catenoidal bridges and this way we obtain
an initial surface where the unwelcome mean curvature is supported on
small annuli where the gluing occurs.
We choose now the scale of the gluing annuli: 
\begin{definition}
\label{Edeltapp}
For each $p\in L$ we define
$\delta_p':=\tau_p^\gammagl$ 
where 
$\alpha$ is as in \ref{con:alpha}.
We will also use the notation
$\delta_{min}:=\min_{p\in L}\delta_p$, 
$\tau_{min}:=\min_{p\in L}\tau_p$, 
$\tau_{max}:=\max_{p\in L}\tau_p$, 
and 
$\delta_{min}':=\min_{p\in L}\delta_p'=\tau_{min}^\gammagl$. 
\end{definition}
To simplify the presentation and the construction 
it is convenient to assume the following 
which we will confirm later for the actual constructions we carry out (see \ref{L:h}.vii and \ref{L:heq}.v). 
\begin{convention}
\label{con:one}
We assume from now on that the following hold.  
\newline
(i). 
\ref{con:L} holds 
and $\tau_{max}$ is small enough in absolute terms as needed.
\newline
(ii).
$\forall p\in L$ we have $9 \delta_p' < \tau_p^{\alpha/9}< \delta_p \, $. 
\newline
(iii).
$\tau_{max}\le \tau_{min}^{1-\gammagl/9}$.  
\newline
(iv).
$\forall p\in L$ we have 
$(\delta_p)^{-2} \| \, \varphihat_p 
: C^{2,\beta}(\, \partial D_p(\delta_p)    ,\, (\delta_p)^{-2} g\,)\,\| 
\le
\tau_p^{1-\gammagl/9}$.
\newline
(v).
$\| \varphi:C^{3,\beta}_\sym ( \, \Spheq\setminus\disjun_{q\in L}D_q(\delta_q')    \, , \, 
g \, ) \, \|
\le
\tau_{min}^{8/9} \, $.
\newline
(vi).
On $\Spheq\setminus\disjun_{q\in L}D_q(\delta_q') $ we have
$\tau_{max}^{1+\alpha/5} \le \varphi$.  
\hfill $\square$
\end{convention}

\begin{remark}
\label{remark:emb}
Note that condition \ref{con:one}.vi is only needed to ensure embeddedness. 
For constructions of immersed surfaces which may not be embedded we could 
drop \ref{con:one}.vi.  
For such constructions we could also allow negative $\tau_p$'s 
by replacing $\tau_p>0$ in \ref{DMLD} with ``$\tau_p\ne0$'' 
and the first term on the right in \ref{Dval} with 
``$\varphihat_p(p)+\tau_p\log|\tau_p/2|=0$''.
Note that in such a case if \ref{con:one}.vi holds the positivity of $\tau_p$ is implied anyway. 
\hfill $\square$
\end{remark}

In order now to modify $\varphi$ 
which by definition satisfies the linearized condition \ref{DLDmodK}.i${}'$, 
to another function $\varphinl$ 
which satisfies the nonlinear condition \ref{Lnl}.i, 
we first define a cutoff function 
$\psi'' \in C^\infty_\sym(\Spheq)$ 
by $\psi''=1$ on
$\Spheq\setminus\disjun_{p\in L} D_p(2\delta_p')   $,
and on $D_p(2\delta_p')$ (for each $p\in L$)
$$
\psi''= 
\Psibold\left[\delta_p',2\delta_p';\dbold_p \right]
(0,1).
$$
We then define inductively sequences  
$\{u_n\}_{n=1}^\infty
\subset
C^{3,\beta}_\sym(\Spheq)$
and
$\{\phi_n\}_{n=-1}^\infty
\subset
C^{3,\beta}_\sym(\Spheq)$
by $\phi_{-1}=0$, $\phi_0:=\varphi$,
and for $n>0$
\begin{equation}
\label{Eun}
\phi_n = \phi_{n-1} + u_n, 
\qquad\quad
\Lcalp \, u_n \,
= \, 
\psi''\,( Q_{\phi_{n-2}} - Q_{\phi_{n-1}} ),
\end{equation}
where we define $Q_{\phi_k}$ to vanish on 
$\disjun_{p\in L} D_p( \delta_p')   $
and to satisfy 
$H_{ \phi_k }\, =\, \Lcalp  \phi_k      + Q_{ \phi_k     }$ 
on
$\Spheq\setminus\disjun_{p\in L} D_p( \delta_p')  \,$  , 
where $H_{ \phi_k    }$ is the mean curvature of the graph
of $ \phi_k    $ in $\Sph^3$ pushed forward to $\Spheq$ by 
the projection $\PiSph$ (recall \ref{EPiSph}).

\begin{lemma}
\label{Lnl}
Given a $\grouptwo$-symmetric MLD solution $\varphi=\varphi[L,\tau,\xiw]$ 
which is as in \ref{LLD} and \ref{DMLD} 
and where \ref{con:one} is satisfied, 
we can define
$\varphinl=\varphinl[L,\tau,\xiw]    \in C^\infty_\sym(\Spheq\setminus L)$
as the limit of the sequence 
$\phi_n$ defined above. 
Moreover the following hold.
\newline
(i).
$H_{\varphinl} = \Lcalp \varphi =\xiw$ on 
$\Spheq\setminus\bigsqcup_{p\in L}D_p(2\delta_p')$,
where 
$H_{\varphinl}$ 
is the mean curvature of the graph
of $\varphinl$ in $\Sph^3$ pushed forward to $\Spheq\setminus L$ by 
the projection $\PiSph$ (recall \ref{EPiSph}).
\newline
(ii).
$\varphinl-\varphi$ can be extended to a smooth function
on $\Spheq$ which satisfies
$$
\|\varphinl-\varphi:C^{3,\beta}_\sym(\Spheq,g) \|
\le
\, C \,
(\delta_{min}')^{-2}
\,
\| \varphi:C^{3,\beta}_\sym(\Spheq\setminus\textstyle\bigsqcup_{p\in L}D_p(\delta_p') ,
g)\|^2
\le
\tau_{min}^{3/2} .
$$
\end{lemma}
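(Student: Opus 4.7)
The plan is to treat the iteration \ref{Eun} as a contraction that corrects the MLD solution $\varphi$ into a true solution of the nonlinear mean curvature equation $H_\phi=\xiw$ on the exterior of the gluing annuli. Since $\grouptwo$-symmetry eliminates all first harmonics of the Laplacian on $\Spheq$, the operator $\Lcalp=\Delta+2$ restricts to an isomorphism $C^{3,\beta}_\sym(\Spheq)\to C^{1,\beta}_\sym(\Spheq)$ with a bounded inverse by standard Schauder theory. Each right-hand side $\psi''(Q_{\phi_{n-2}}-Q_{\phi_{n-1}})$ in \ref{Eun} is $\grouptwo$-symmetric and supported in $\disjun_{p\in L}(D_p(2\delta_p')\setminus D_p(\delta_p'))$, where by induction $\phi_{n-2}$ and $\phi_{n-1}$ are smooth; hence each $u_n$ lies in $C^{3,\beta}_\sym(\Spheq)$ and is uniquely determined.

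The core step is the initial estimate for $u_1=-\Lcalp^{-1}(\psi''Q_\varphi)$. Since $Q_\phi$ is pointwise quadratic in $(\phi,d\phi,\nabla^2\phi)$ with smooth coefficients, one has the bound $\|Q_\varphi:C^{1,\beta}\|\le C\|\varphi\|^2$ on the support of $\psi''$, with the second factor measured in $C^{3,\beta}_\sym(\Spheq\setminus\disjun_p D_p(\delta_p'),g)$. The cutoff $\psi''$ contributes derivatives of size $(\delta_p')^{-1}$ each, so the Leibniz rule yields $\|\psi''Q_\varphi:C^{1,\beta}_\sym(\Spheq,g)\|\le C(\delta_{min}')^{-2}\|\varphi\|^2$, and applying $\Lcalp^{-1}$ produces the bound claimed in (ii) for $u_1$. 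For $n\ge 2$, writing $Q_{\phi_{n-1}}-Q_{\phi_{n-2}}=\int_0^1 (DQ)_{\phi_{n-2}+s\,u_{n-1}}[u_{n-1}]\,ds$ with $DQ$ pointwise linear in its argument, the same estimation scheme gives $\|u_n\|\le C(\delta_{min}')^{-2}(\|\phi_{n-1}\|+\|\phi_{n-2}\|)\|u_{n-1}\|$; by \ref{con:one}.ii, iii, v this is dominated by $\tfrac12\|u_{n-1}\|$, so $\sum u_n$ converges geometrically to a limit in $C^{3,\beta}_\sym(\Spheq)$, which we define to be $\varphinl-\varphi$. The resulting estimate in (ii) follows from $(\delta_{min}')^{-2}\|\varphi\|^2\le\tau_{min}^{16/9-2\gammagl}\le\tau_{min}^{3/2}$, using $\gammagl=\alpha$ small as in \ref{con:alpha}.

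Part (i) is then obtained by an easy induction: on $\Spheq\setminus\disjun_{p\in L}D_p(2\delta_p')$ we have $\psi''\equiv 1$, so from \ref{Eun} and $\Lcalp\phi_0=\xiw$ one gets $\Lcalp\phi_n=\Lcalp\phi_{n-1}+(Q_{\phi_{n-2}}-Q_{\phi_{n-1}})=\xiw-Q_{\phi_{n-1}}$ on that region. Hence $H_{\phi_n}=\Lcalp\phi_n+Q_{\phi_n}=\xiw+(Q_{\phi_n}-Q_{\phi_{n-1}})$, and $C^{3,\beta}$-convergence $\phi_n\to\varphinl$ forces $Q_{\phi_n}-Q_{\phi_{n-1}}\to 0$, so $H_{\varphinl}=\xiw$ there. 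The main obstacle will be the initial quadratic-plus-cutoff estimate on $\psi''Q_\varphi$: one has to simultaneously track the quadratic dependence of $Q$ on $\varphi$, the cutoff-induced $(\delta_{min}')^{-2}$ scaling, and the fact that $\varphi$ is a priori controlled only on the exterior of the small disks $D_p(\delta_p')$. Fortunately there is considerable slack between $\tau_{min}^{16/9-2\alpha}$ and the desired $\tau_{min}^{3/2}$, so the iteration closes comfortably once this estimate is in place.
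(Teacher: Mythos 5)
Your proposal is correct and follows essentially the same route as the paper: the same iterative scheme \ref{Eun}, the same $(\delta'_{min})^{-2}$ factor from the cutoff $\psi''$ paired with the quadratic estimate for $Q$ on the exterior region $\Omega$, the same geometric-decay closure of the iteration using \ref{con:one}.v and \ref{con:alpha}, and the same telescoping identity $\Lcalp\phi_n=\xiw-Q_{\phi_{n-1}}$ on $\{\psi''\equiv 1\}$ to establish (i). The only cosmetic differences are that you package the increment $Q_{\phi_{n-1}}-Q_{\phi_{n-2}}$ via an integral of $DQ$ rather than the paper's direct algebraic estimate, and you cite \ref{con:one}.ii--iii for the contraction factor where what is actually used is the smallness from \ref{con:one}.i together with \ref{con:one}.v and $8/9-2\alpha>0$; neither changes the substance of the argument.
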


\begin{proof}
By standard linear theory, \ref{Eun},
and the triviality of the kernel of $\Lcalp$ on $\Spheq$ modulo the symmetries, 
we conclude that for $n\ge1$ we have
$$
\|u_n:C^{3,\beta}(\Spheq,g) \|
\le
\, C \,
\|\psi'':C^{1,\beta}(\Spheq,g)\|
\,\,
\|Q_{\phi_{n-1} } - Q_{\phi_{n-2} }:
C^{1,\beta }(\Omega,g)\|,
$$
where $\Omega:= \Spheq\setminus\bigsqcup_{p\in L}D_p(\delta_p')\supset \text{supp}\, \psi'' $. 
Since the quadratic (and higher) terms $Q_{\phi_k}$ can be expressed as an algebraic
expression involving geometric invariants of $\Spheq$, $\phi_k$, and the derivatives
of $\phi_k$, we have
$$
\|Q_{\phi_{n-1} } - Q_{\phi_{n-2} }:
C^{1,\beta }(\Omega ,g)\|
\le
\left\{
\begin{aligned}
&
\, C \,
\|\varphi:
C^{3,\beta }(\Omega, g)\|^2
\,\,\,
(n=1),
\\
&
\, C \,
\|\phi_{n-1}-\phi_{n-2}:
C^{3,\beta }(\Omega, g)\|
\,\,
\|\phi_{n-2}:
C^{3,\beta }(\Omega, g)\| 
\,\,\,
(n\ge2).
\end{aligned}
\right.
$$
Combining the last two estimates and substituting $u_{n-1}$ for 
$ \phi_{n-1}-\phi_{n-2} $ 
we conclude that 
$$
\|u_n:C^{3,\beta}(\Spheq,g) \|
\le
\left\{
\begin{aligned}
&
\, C \,
(\delta_{min}')^{-2}
\,
\|\varphi:
C^{3,\beta }(\Omega, g)\|^2
\quad
(n=1),
\\
&
\, C \,
(\delta_{min}')^{-2}
\,
\|u_{n-1}:
C^{3,\beta }(\Omega, g)\|
\,\,
\|\phi_{n-2}:
C^{3,\beta }(\Omega, g)\|
\quad
(n\ge2).
\end{aligned}
\right.
$$
Since $2\frac89-2\alpha>\frac32$ by \ref{con:alpha},  
we conclude inductively 
using \ref{con:one}.v that for $n\ge1$
$$
\|u_n:C^{3,\beta}(\Spheq,g) \|
\le
\,
2^{-n}
\, C \,
(\delta_{min}')^{-2}
\,
\| \varphi:C^{3,\beta}(\Omega,
g)\|^2
\le
2^{-n} \, 
\tau_{min}^{3/2} .
$$
Taking limits and sums and using standard regularity theory for the smoothness
we conclude the proof.
\end{proof}

\begin{definition}
\label{Dvarphiinit}
Given $\varphi=\varphi[L,\tau,\xiw] $ as above
we define 
a function
$$
\varphi_{init}
=
\varphi_{init}[L,\tau,\xiw] 
:\Spheq\setminus\textstyle\bigsqcup_{p\in L} D_p(\tau_p)\to[0,\infty)
$$
as follows:
\newline
(i). On $\Spheq\setminus\bigsqcup_{p\in L} D_p(3\delta'_p)$ we have
$\varphi_{init}:=\varphinl[L,\tau,\xiw]    $.
\newline
(ii).
For each $p\in L$ we have on $D_p(3\delta'_p) \setminus D_p(\tau_p)$
(recall \ref{EPsibold})
$$
\varphi_{init}:=
\Psibold\left[2\delta'_p,3\delta'_p;\dbold_p \right]
( \, \Gtpp\, , \,\varphinl[L,\tau,\xiw] \, ).
$$
\end{definition}

\begin{definition}
\label{Dinit}
Given an LD solution $\varphi$ as 
above we define the initial 
smooth surface $M[L,\tau,\xiw]    $
to be the union over
$\Spheq\setminus\bigsqcup_{p\in L} D_p(\tau_p)$ 
of the graphs of $ \pm \varphi_{init}[L,\tau,\xiw]    $.  
\end{definition}

\begin{remark}
\label{remarkone}
The approach developed so far is quite general and can be easily modified
to apply to doublings of minimal surfaces where the following hold:
\newline
(i).  
A reflection exists exchanging the two sides of the given surface.
\newline
(ii).
The linearized operator has no kernel on the given surface.

If those conditions are not satisfied the approach still
applies with further modifications we will describe elsewhere.
\hfill $\square$
\end{remark}

\subsection*{The regions of the initial surfaces}
$\phantom{ab}$
\nopagebreak

\begin{lemma}[The gluing region]
\label{LH} 
For $M=M[L,\tau,\xiw]    $ defined as in \ref{Dinit} 
and $\forall p\in L$ the following hold. 
\newline
(i). 
$
\|\,
\varphi_{init} - \Gtpp  \, 
: \, 
C^{3,\beta}(\, D_p(4\delta'_p)\,\setminus D_p( \delta'_p)\, , \, (\delta'_p)^{-2}\, g\,)\,\| 
\le
\, \tau_p^{1+\frac{15}8\gammagl} \,
.
$
\newline
(ii). 
$
\|\,
\varphi_{init} \, 
: \, 
C^{3,\beta}(\, D_p(4\delta'_p)\,\setminus D_p( \delta'_p)\, , \, (\delta'_p)^{-2}\, g\,)\,\| 
\le
\, C \, \tau_p \, |\log\tau_p| \, 
.
$
\newline
(iii). 
$
\|\,
(\delta'_p)^{2}\, H' \, 
: \,
C^{0,\beta}(\, D_p(3\delta'_p)\,\setminus D_p(2\delta'_p)\, , \, (\delta'_p)^{-2}\, g\,)\,\| 
\le
\, \tau_p^{1+\frac{15}8\gammagl} \,
,
$
where $H'$ denotes the pushforward of $H$ to $\Spheq$ by $\PiSph$ 
and $H$ the mean curvature of the initial surface $M$.
\end{lemma}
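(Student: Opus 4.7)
The plan is to reduce all three parts of the lemma to a single Schauder-plus-Taylor estimate for a $\Lcalp$-harmonic function vanishing to second order at $p$. The key observation is that $\cos\circ\dbold_p$ is a first harmonic on $\Spheq$ (it coincides with $\langle\cdot, p\rangle$ restricted to $\Spheq$), so $\Lcalp(\cos\circ\dbold_p) = 0$. Setting
$$\vcheck := \varphihat_p - \tau_p\log(2/\tau_p)\,\cos\circ\dbold_p,$$
three properties follow at once: (a) $\Lcalp\vcheck = 0$ on $D_p(\delta_p)$, since $\xiw$ vanishes there by the support property in \ref{Dskernel} and $\Lcalp G_p = 0$ away from $\{p,-p\}$; (b) $\vcheck(p) = 0$, from the matching equation $\varphihat_p(p) + \tau_p\log(\tau_p/2) = 0$ encoded in $\Bcal_L\varphi = 0$; and (c) $d_p\vcheck = 0$, from $d_p\varphihat_p = 0$ in $\Bcal_L\varphi = 0$ together with the criticality of $\cos\circ\dbold_p$ at $p$. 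Corollary \ref{CGGest} furnishes $\Gtpp = \tau_p G_p + \tau_p\log(2/\tau_p)\cos\circ\dbold_p + E_{cat}$ on $\dbold_p\in(\tau_p^\alpha, 9\tau_p^\alpha)$, with $E_{cat}$ of scaled $C^{3,\beta}$ norm of order $\tau_p^{1+2\alpha}|\log\tau_p|$; hence $\varphi - \Gtpp = \vcheck - E_{cat}$ on this annulus.

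The heart of the argument is the estimate for $\vcheck$. From \ref{con:one}.iv and a boundary-to-interior bound for $\Lcalp = \Delta + 2$ on the small disk $D_p(\delta_p)$ (valid because the first Dirichlet eigenvalue of $-\Delta$ exceeds $2$ for small enough $\delta_p$), one obtains $\|\varphihat_p\|_{C^0(D_p(\delta_p))} \le C\delta_p^2\tau_p^{1-\alpha/9}$, whereupon interior Schauder for $\Lcalp\varphihat_p = 0$ gives $\|\nabla^k\varphihat_p\|_{C^0(D_p(\delta_p/2))} \le C\delta_p^{2-k}\tau_p^{1-\alpha/9}$. Using that $\cos\circ\dbold_p$ has uniformly bounded derivatives and $\delta_p \ge \tau_p^{\alpha/9}$ from \ref{con:one}.ii, I infer $\|\nabla^k\vcheck\|_{C^0(D_p(\delta_p/2))} \le C\tau_p^{1-\alpha/9}$ for $k = 2, 3$ (any extra $\delta_p^{-1}$ for $k=3$ being absorbed via \ref{con:one}.ii). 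Taylor expansion along geodesics from $p$, using $\vcheck(p) = d_p\vcheck = 0$, then yields $|\vcheck|, \delta'_p |\nabla\vcheck|, (\delta'_p)^2|\nabla^2\vcheck| \le C(\delta'_p)^2\tau_p^{1-\alpha/9} = C\tau_p^{1+17\alpha/9}$ on $D_p(4\delta'_p)$, and hence
$$\|\vcheck\|_{C^{3,\beta}(D_p(4\delta'_p),\,(\delta'_p)^{-2}g)} \le C\tau_p^{1+17\alpha/9},$$
which is safely stronger than the claimed $\tau_p^{1+15\alpha/8}$ since $17/9 > 15/8$. Combining with the $E_{cat}$ bound and the very strong estimate $\|\varphinl - \varphi\|_{C^{3,\beta}(\Spheq, g)} \le \tau_{min}^{3/2}$ from Lemma \ref{Lnl}.ii, this transfers to $\varphinl - \Gtpp$; and since \ref{Dvarphiinit} writes $\varphi_{init} - \Gtpp$ on $D_p(4\delta'_p)\setminus D_p(\delta'_p)$ as a smooth cutoff (with $O(1)$ scaled $C^k$ norms in $(\delta'_p)^{-2}g$) times $\varphinl - \Gtpp$, part (i) follows. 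Part (ii) is then immediate by combining (i) with the estimate $\|\Gtpp\|_{C^{3,\beta}(\cdot,(\delta'_p)^{-2}g)} \le C\tau_p|\log\tau_p|$ coming directly from Corollary \ref{Cone}.

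For part (iii), I work on the transition annulus $D_p(3\delta'_p)\setminus D_p(2\delta'_p)$, where \ref{Lnl}.i and the fact that $\skernel[L]$ is supported outside $D_p(\delta_p) \supset D_p(3\delta'_p)$ make $\varphinl$ exactly minimal, while $\Gtpp$ is exactly minimal by Lemma \ref{LGtaup}. Writing $\varphi_{init} = \Gtpp + \psi\cdot(\varphinl - \Gtpp)$ with $\psi := \psicut[2\delta'_p, 3\delta'_p]\circ\dbold_p$ and expanding the mean curvature around the minimal $\Gtpp$,
$$H(\varphi_{init}) = \mathcal{M}_{\Gtpp}\bigl(\psi\,(\varphinl - \Gtpp)\bigr) + \mathcal{Q}\bigl(\Gtpp,\,\psi\,(\varphinl - \Gtpp)\bigr),$$
where the linearization $\mathcal{M}_{\Gtpp}$ differs from $\Lcalp$ by corrections of order $\|\Gtpp\|_{C^2} \le C\tau_p|\log\tau_p|$ and $\psi$ has $O(1)$ scaled $C^k$ norms in $(\delta'_p)^{-2}g$. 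Inserting the scaled estimate from (i) for $\varphinl - \Gtpp$ produces $\|H\|_{C^{0,\beta}(g)} \le C(\delta'_p)^{-2}\tau_p^{1+15\alpha/8}$, and multiplication by $(\delta'_p)^2$ yields the claimed bound in the scaled Hölder norm; the quadratic term $\mathcal{Q}$ contributes a negligible higher-order remainder. The main obstacle throughout is the Schauder-plus-Taylor step: balancing the $\tau_p^{1-\alpha/9}$ bound on second derivatives against the Taylor factor $(\delta'_p)^2 = \tau_p^{2\alpha}$ from the second-order vanishing of $\vcheck$ at $p$, while checking that the powers of $\delta_p$ appearing in intermediate estimates are all absorbed by \ref{con:one}.ii.
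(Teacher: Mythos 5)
Your proposal is correct and takes essentially the same approach as the paper, combining the matching conditions in \ref{DMLD} (for second-order vanishing at $p$), Corollary \ref{CGGest} (for the catenoid error), \ref{con:one}.iv (for boundary control on $\varphihat_p$), and \ref{Lnl}.ii in the same way, with the same exponent bookkeeping yielding $\tau_p^{1+17\alpha/9}\le\tau_p^{1+15\alpha/8}$. The only cosmetic differences are that you obtain the interior $(\delta'_p/\delta_p)^2$ decay via a Taylor expansion combined with Schauder estimates where the paper invokes separation of variables, and for part (iii) you expand the mean curvature about the exactly minimal graph $\Gtpp$ rather than the zero section, both of which lead to the identical estimates.
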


\begin{proof}
By the definitions we have for each $p\in L$
$$
\varphi_{init}=
\tau_p G_p - \tau_p\log\frac{\tau_p}2 \, \cos\circ\dbold_p + 
\Psibold\left[2\delta'_p,3\delta'_p;\dbold_p \right]
(\varphi_-,\varphi_+)
$$
on 
$\Omega_p:=D_p(4\delta'_p)\,\setminus D_p( \delta'_p)$, 
where 
\begin{equation*}
\begin{aligned}
\varphi_-:=& \,\,\,
\Gtpp  -\tau_p G_p +\tau_p\log(\tau_p/2)\,\cos \circ\dbold_p,
\\   
\varphi_+:=& \,\,\,
\varphihat_p + \tau_p\log(\tau_p/2)\,\cos \circ\dbold_p
+\varphinl-\varphi.
\end{aligned}
\end{equation*}
By scaling now the ambient metric to 
$\gtildep:= (\delta'_p)^{-2}\, g$
and expanding in linear and higher order terms we have
$$
(\delta'_p)^{2}\, H'
=
(\Delta_{\gtildep}+2 (\delta'_p)^{2} \,)
\varphi_{init}
+
\delta'_p \widetilde{Q}_{(\delta'_p)^{-1} \varphi_{init} }.
$$
Note that on $\Omega_p$ we have 
$$
\begin{aligned}
\varphi_{init} - \Gtpp  =& 
\Psibold\left[2\delta'_p,3\delta'_p;\dbold_p \right]( 0 , \varphi_+ - \varphi_- )\,,  
\\
\Lcalp{\varphi_{init} } =& 
\Lcalp \Psibold\left[2\delta'_p,3\delta'_p;\dbold_p \right](\varphi_-,\varphi_+)\,.  
\end{aligned}
$$
Using these (for the second and the third inequality below) 
and also \ref{LGp}.vii 
we clearly have 
\begin{equation*}
\begin{aligned}
\| {\varphi_{init} } \,\| 
\le 
&
\,C\, 
( \,
\tau_p|\log\tau_p|
+
\|\varphi_-\,\| 
+
\|\varphi_+\,\| 
),
\\
\| {\varphi_{init} } - \Gtpp  \,\| 
\le 
&
\,C\, 
( \,
\|\varphi_-\,\| 
+
\|\varphi_+\,\| 
),
\\
\|
(\Delta_{\gtildep}+2 (\delta'_p)^{2} \,)
\varphi_{init}
:C^{0,\beta}(\, \Omega_p ,\, (\delta'_p)^{-2} g\,)\,\| 
\le 
&
\,C\, 
( \,
\|\varphi_-\,\| 
+
\|\varphi_+\,\| 
),
\\
\| 
\delta'_p \widetilde{Q}_{(\delta'_p)^{-1} \varphi_{init} }
:C^{0,\beta}(\, \Omega_p ,\, (\delta'_p)^{-2} g\,)\,\| 
\le 
&
\,C\, (\delta'_p)^{-1} \,
\| {\varphi_{init} } \,\|^2, 
\end{aligned}
\end{equation*}
where in this proof when we do not specify the norm we mean the
$C^{3,\beta}(\, \Omega_p ,\, (\delta'_p)^{-2} g\,)$ norm.
We conclude that if $\|\varphi_\pm\|\le\delta_p'$ (to control the quadratic terms), 
then we have
$$
\|
(\delta'_p)^{2}\, H'
:C^{0,\beta}(\, \Omega_p ,\, (\delta'_p)^{-2} g\,)\,\| 
\le 
\,C\, 
( \,
(\delta_p')^{-1}\tau_p^2|\log\tau_p|^2
+
\|\varphi_-\,\| 
+
\|\varphi_+\,\| 
).
$$

By \ref{CGGest} we have
$$
\|\varphi_- \| 
\le
C\,\tau_p^{1+2\gammagl}\,|\log\tau_p|.
$$
By the definition of $\varphi_+$ we have 
$$
\|\varphi_+
\,\| 
\le \, 
\| \, \varphihat_p + \tau_p\log(\tau_p/2)\,\cos \circ\dbold_p 
\,\| 
\, + \, 
\|\varphinl-\varphi:C^{3,\beta}_\sym(\Spheq,g) \|
$$
By standard theory (with interior regularity for the gain of derivative) 
and separation of variables 
the matching condition in \ref{DMLD} implies that
$$
\| \, \varphihat_p + \tau_p\log(\tau_p/2)\,\cos \circ\dbold_p 
\,\| 
\le 
C\, 
(\delta'_p/\delta_p)^2\,
\| \, \varphihat_p 
: C^{2,\beta}(\, \partial D_p(\delta_p) ,\, (\delta_p)^{-2} g\,)\,\| .
$$
Using \ref{con:one}.iv, \ref{Lnl}.ii, and \ref{con:alpha}, we conclude that
$$
\|\varphi_+
\,\| 
\le C\, 
(\delta'_p)^2 \, 
\tau_p^{1-\gammagl/9}
+
\tau_{min}^{3/2}
\le 
\,
\tau_p^{1+\frac{17}9\gammagl}
\,.
$$
Combining the above we complete the proof.
\end{proof}

\begin{lemma}
\label{LMemb}
If \ref{con:one} holds then $M$ is embedded.
Moreover the following estimates hold.
\newline
(i).
On $\Spheq\setminus\disjun_{p\in L}D_p(\delta_p') $ we have
$\frac89\tau_{max}^{1+\alpha/5} \le \varphi_{init}$. 
\newline
(ii).
$\| \varphi_{init}:C^{3,\beta}_\sym ( \, \Spheq\setminus\disjun_{p\in L}D_p(\delta_p')    \, , \, 
g \, ) \, \|
\le
\frac98\tau_{min}^{8/9} \, $.
\newline
(iii).
$\forall p\in L$ we have 
$$
\|\, 
\varphi_{init}
-
\tau_p \log ( 2 \dbold_p / {\tau_p} )
\,
: C^{3,\beta}(\,
D_p( 4\tau_p^\alpha )\setminus D_p( 9\tau_p ) \, ,\,
\dbold_p \, , \, g \, , \,  
\tau_p^{\frac{15}8\gammagl} \,
+ \tau_p^2 \dbold_p^{-2} \,)
\, \| \, \le \,
C \, \tau_p \, . 
$$ 
\end{lemma}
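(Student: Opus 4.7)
My plan is to decompose $\Spheq\setminus\disjun_{p\in L}D_p(\tau_p)$ into three types of regions and treat each with its own tool: the \emph{far} region $\Spheq\setminus\disjun_p D_p(3\delta_p')$, on which $\varphi_{init}=\varphinl$; the \emph{gluing} annuli $D_p(3\delta_p')\setminus D_p(2\delta_p')$, on which $\varphi_{init}$ is a convex combination of $\Gtpp$ and $\varphinl$ because $\psicut[a,b]+\psicut[b,a]\equiv 1$; and the \emph{catenoidal} annuli $D_p(2\delta_p')\setminus D_p(\tau_p)$, on which $\varphi_{init}=\Gtpp$ exactly.

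For the lower bound (i), on the far region I would combine $\varphi\ge\tau_{max}^{1+\alpha/5}$ from \ref{con:one}.vi with $|\varphinl-\varphi|\le\tau_{min}^{3/2}$ from \ref{Lnl}.ii; the hypothesis $\tau_{max}\le\tau_{min}^{1-\alpha/9}$ in \ref{con:one}.iii together with the smallness of $\alpha$ forces $\tau_{min}^{3/2}\le\tfrac{1}{9}\tau_{max}^{1+\alpha/5}$, yielding $\varphi_{init}\ge\tfrac{8}{9}\tau_{max}^{1+\alpha/5}$. On the catenoidal annuli the bound is easy because \ref{Cone} makes $\Gtpp(\dbold_p)$ of order $\tau_p|\log\tau_p|$ on that scale, far larger than $\tau_{max}^{1+\alpha/5}$. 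On the gluing annuli a convex combination of two quantities both bounded below by $\tfrac{8}{9}\tau_{max}^{1+\alpha/5}$ inherits the same bound. Embeddedness of $M$ then follows from strict positivity of $\varphi_{init}$ on the interior of its domain, together with the fact that the catenoid $\cattp$ is embedded and matches the two graphs $\pm\varphi_{init}$ smoothly across $\partial D_p(\tau_p)$.

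For the upper bound (ii), on the far region the triangle inequality with \ref{Lnl}.ii and \ref{con:one}.v gives $\|\varphi_{init}\|_{C^{3,\beta}(g)}\le\tau_{min}^{8/9}+\tau_{min}^{3/2}\le\tfrac{9}{8}\tau_{min}^{8/9}$. On the remaining annuli I would apply \ref{LH}.ii, which bounds $\varphi_{init}$ by $C\tau_p|\log\tau_p|$ in the rescaled $(\delta_p')^{-2}g$ metric; unscaling to the ambient $g$ costs at most a factor of $(\delta_p')^{-3-\beta}=\tau_p^{-(3+\beta)\alpha}$, so the unscaled $C^{3,\beta}$ bound is $C\tau_p^{1-(3+\beta)\alpha}|\log\tau_p|$, which via \ref{con:one}.iii and the smallness of $\alpha$ is comfortably below $\tfrac{9}{8}\tau_{min}^{8/9}$.

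For the catenoidal estimate (iii), I would split at $\dbold_p=2\delta_p'$. On the inner piece $D_p(2\delta_p')\setminus D_p(9\tau_p)$, where $\varphi_{init}=\Gtpp$ identically, \ref{Cone} supplies the stated bound directly, once the one-dimensional $dr^2$ estimate there is promoted to a two-dimensional one by the rotational symmetry of $\Gtpp$ around $p$. On the outer piece $D_p(4\delta_p')\setminus D_p(2\delta_p')$ I would write the difference as $(\varphi_{init}-\Gtpp)+(\Gtpp-\tau_p\log(2\dbold_p/\tau_p))$ and bound the first term by \ref{LH}.i and the second by \ref{Cone}. The main obstacle will be the bookkeeping around the weighted H\"older norm of \ref{D:newweightedHolder}: one must verify that with scale $\rho=\dbold_p$ and weight $f=\tau_p^{15\alpha/8}+\tau_p^2\dbold_p^{-2}$ the $\tau_p^2\dbold_p^{-2}$ term of $f$ absorbs the catenoidal tail near the waist while the $\tau_p^{15\alpha/8}$ term absorbs the gluing error near the outer rim, and that the scaled estimates coming out of \ref{Cone} and \ref{LH}.i translate consistently into this weighted setting.
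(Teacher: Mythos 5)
Your proposal is correct and follows essentially the same route as the paper's proof: a region-by-region decomposition controlled by \ref{con:one}, \ref{Lnl}.ii, \ref{Dvarphiinit}, \ref{LH}, and \ref{Cone}, with the minor variant that for (i) on the gluing annuli you argue via the convex-combination property of $\Psibold$ rather than through the $C^{3,\beta}$ closeness estimate \ref{LH}.i. For the embeddedness the paper additionally cites \ref{LPiK} to compare the catenoidal regions to exact Euclidean catenoids, which is the rigorous content behind your statement that $\cattp$ is embedded and matches the two graphs smoothly across $\partial D_p(\tau_p)$.
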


\begin{proof}
We first prove the estimates (i-iii):
(i) 
on $\Spheq\setminus\disjun_{p\in L}D_p(3\delta_p') $ 
follows from \ref{con:one}.vi, \ref{Lnl}.ii, and \ref{Dvarphiinit}.i, 
and 
on $D_p(4\delta'_p)\,\setminus D_p( \delta'_p)\,$ for $p\in L$ 
from \ref{LH}.i, \ref{Cone}, and \ref{con:one}.iii. 
(ii) 
on $\Spheq\setminus\disjun_{p\in L}D_p(3\delta_p') $ 
follows from \ref{con:one}.v, \ref{Lnl}.ii, and \ref{Dvarphiinit}.i, 
and 
on $D_p(4\delta'_p)\,\setminus D_p( \delta'_p)\,$ for $p\in L$ 
from \ref{LH}.ii, \ref{Cone}, and \ref{con:one}.iii. 
\ref{Dvarphiinit}.ii and \ref{LH}.i allow us to replace 
$\varphi_{init}$ with $\Gtpp$ in (iii). 
(iii) follows then from \ref{Cone}.
Finally the embeddedness of $M$ follows from (i) 
and by comparing the rest of $M$ with standard catenoids using \ref{LPiK}.
\end{proof}

Our general methodology requires that we subdivide the initial surfaces into various regions
\cite{kapouleas:clifford,haskins:kapouleas:invent,kapouleas:finite,kapouleas:wente,kapouleas:cmp,kapouleas:jdg,kapouleas:annals}. 
Because of the modified approach we 
only need some of the regions.
Because of the linearized doubling approach we also need 
to define the projections of some regions by $\PiSph$ (recall \ref{EPiSph}):

\begin{definition}
\label{D:regions}
We define the following 
for $x\in[0,4]$.
\begin{subequations}
\label{E:regions}
\begin{align} 
\label{ESp}
\Sp_x &:= \Spheq\setminus \disjun_{p\in L} D_p( 2 \delta_p /(1+x) ),
\\
\label{EStildep}
\Stildep_x &:= \Spheq\setminus \disjun_{p\in L} D_p( b \tau_p (1+x) )
\\
S_x[p] &:= M \cap \PiSph^{-1}\left( \overline{D_p( b \tau_p (1+x) )}\,\right) \qquad \forall p\in L \, ,
\\ 
S_x[L] &:= \disjun_{p\in L} S_x[p], 
\\
\label{Shatp} 
\Shat_x[p] &:= M \cap \PiSph^{-1}\left( \overline{D_p(2\delta'_p /(1+x) )}\, \right)\subset \cat_{\tpp} \qquad \forall p\in L \, ,
\\ 
\Shat_x[L] &:= \disjun_{p\in L}\Shat_x[p], 
\end{align}
\end{subequations}
where $b$ is a large constant independent of the $\tau$ parameters
which is to be chosen appropriately later. 
When $x=0$ we may omit the subscript.
\end{definition}

We define now precise Euclidean catenoids approximating the 
appropriate\-ly sca\-led ca\-te\-no\-i\-dal regions of the initial surface $M$,
and also auxiliary notation for future reference.  

\begin{definition}
\label{DPiK}
We define a map (recall \ref{Dtildecat})
$\Pi_{\cat,p} := \tildePiKp \circ \Exp^{-1}_{\tpp}
: \Shat[p] \to \widetilde{\cat}_{\zp}$.  
We also define 
$\tildecat_L := \disjun_{p\in L} \tildecat_{\zp}$  
and 
$\Pi_\cat : 
\Shat[L]\to\
\tildecat_L$ 
by taking the restriction of $\Pi_\cat$ to each
$\Shat[p]$ to be $\Pi_{\cat,p}$. 
\end{definition}

Clearly by \ref{LPiK} $\Pi_{\cat,p}$ is a diffeomorphism from
$\Shat[p]$ to a domain of $\widetilde{\cat}_{\zp}$. 
$\Pi_\cat$ is also a diffeomorphism from 
$\Shat[L]$ to a domain of $\tildecat_L$.
To incorporate now the symmetries into the discussion observe that
we can clearly define uniquely an action of $\groupthree$ on 
$\disjun_{p\in L}T_p\Spheq \supset 
\widetilde{\cat}_L$,
so that $\Pi_\cat$ is equivariant under the actions of the $\groupthree$.
Because of the importance of the scaling we will need the following.

\begin{definition}
\label{DtauK}
We define $\tau:\tildecat_L\to \R$ by $\tau=\tau_p$ on $\tildecatztp$.
\end{definition}

We extend now the notation in \ref{Nsym} to apply to functions
on domains of $M$ or $\tildecat_L$ as follows.

\begin{notation}
\label{NsymM}
Suppose $X$ is a function space consisting of functions defined on 
a domain $\Omega\subset M$ 
or $\Omega\subset\tildecat_L$.
If $\Omega$ is invariant under the action of $\groupthree$ acting on $M$ or  $\tildecat_L$
(recall \ref{Dgroup}),
then  we use a subscript ``$\sym$'' to denote the subspace $X_\sym\subset X$
consisting of those functions in $X$ which are invariant under the action of $\groupthree$. 
\hfill $\square$
\end{notation}

\section{The linearized equation and the nonlinear terms on the initial surfaces}
\label{Slinear}
\nopagebreak

\subsection*{The definition of $\Rcal_{M,appr}$}
$\phantom{ab}$
\nopagebreak

In this section we state and prove proposition \ref{Plinear} and lemma \ref{Lquad}. 
In \ref{Plinear} we solve with estimates the linearized equation on an initial surface 
$M=M[L,\tau,\xiw] $ defined as in \ref{Dinit},
where $\varphi[L,\tau,\xiw] $ 
is a $\grouptwo$-symmetric MLD solution 
of configuration
$(L,\tau,\xiw)$
defined as in \ref{DMLD}. 
In \ref{Lquad} we estimate the nonlinear terms on the initial surface $M$. 
To streamline the presentation we have the following.

\begin{convention}
\label{conventionb}
From now on we assume that $b$ (recall \ref{D:regions}) 
is as large as needed in absolute terms.
We also fix some $\beta\in(0,1)$ and $\gamma\in (1,2)$ 
satisfying $1-\frac\gamma2>2\alpha$ and $(1-\alpha)\,(\gamma-1)>2\alpha$,
for example $\gamma=\frac32$.
We will
suppress the dependence of various constants on $\beta$ and $\gamma$.
\hfill $\square$
\end{convention}

We construct now 
a linear map (recall \ref{NsymM})
\begin{equation}
\label{ERcalMappr}
\Rcal_{M,appr} : 
C^{0,\beta}_\sym(M) 
\to 
C^{2,\beta}_\sym(M) 
\oplus
\skernel_\sym[L] 
\oplus
C^{0,\beta}_\sym(M), 
\end{equation}
where if $E\in C^{0,\beta}_\sym(M)$ 
and 
$\Rcal_{M,appr} E =(u_1,\zw_{E,1},E_1)$,
then $u_1$ is an approximate solution to the linearized equation modulo
the ``extended substitute kernel'',
that is the equation
\begin{equation}
\label{ELcal}
\Lcal u=E+\zwE\circ\PiSph \qquad \text{where} \qquad \zwE\in \skernel_\sym[L],
\qquad
\Lcal:= \Delta+|A|^2+2,
\end{equation}
$\zw_{E,1}$ is the $\skernel_\sym[L]$ term,
and $E_1$ is the approximation error defined by
\begin{equation}
\label{EEone}
E_1:= \Lcal u_1 - E - \zw_{E,1} \circ \PiSph .
\end{equation}
The approximate solution $u_1$ 
is constructed by combining semi-local approximate solutions.
Before we proceed with the construction 
we define some cut-off functions we will need.

\begin{definition}
\label{DpsiM}
We define 
$\psi'\in C^{\infty}_\sym(\Spheq)$ 
and
$\widetilde{\psi}, \psihat \in C^{\infty}_\sym(M)$ 
by requesting the following.
\newline
(i). $\widetilde{\psi}= (1-\psi')\circ\PiSph$ on $M$.
\newline
(ii). 
$\widetilde{\psi}$ is supported on
$S_1[L]$,
$\psihat$ is supported on
$\Shat[L]$,
and $\psi'$ on 
$\Stildep$ (recall \ref{D:regions}).
\newline
(iii).
$\psi'=1$ on $\Stildep_1$
and for each $p\in L$ we have 
\begin{equation*}
\begin{aligned}
\psi'=&\, 
\Psibold\left[b \tau_p  , 2 b \tau_p ; \dbold_p  \right]
(0,1)
\quad\text{on}\quad
D_p(\,2b\,\tau_p\,),
\\
\psihat=&\, 
\Psibold\left[2\delta'_p , \delta'_p ; \, \dbold_p \circ \PiSph \,   \right]
(0,1)
\quad\text{on}\quad
\Shat[p].
\end{aligned}
\end{equation*}
\end{definition}

Given now 
$E\in C^{0,\beta}_\sym(M)$, 
we define 
$E'\in C^{0,\beta}_\sym(\Spheq)$ 
by requiring that it is supported on $\Stildep$, 
and that on $M$ we have the decomposition
\begin{equation}
\label{Edecom}
E=\widetilde{\psi}\,E\,+E'\circ\PiSph.
\end{equation}

Because of \ref{AEcal} there are unique 
$u'\in C^{2,\beta}_\sym(\Spheq)$ 
and
$\zw_{E,1}\in \skernel_\sym[L]$ 
such that
\begin{equation}
\label{Eup}
\Lcalp u'= E'+\zw_{E,1} \text{ on } \Spheq
\quad
\text{and} 
\quad
\forall p\in L \quad
u'(p)=0,
\quad
d_pu'=0.
\end{equation}
We define now 
$\widetilde{E}\in C^{0,\beta}_\sym(\tildecat_L)$, 
supported on 
$\Pi_\cat ( S_1[L] ) $, 
by
\begin{equation}
\label{EEpp}
\widetilde{E} \circ \Pi_\cat = 
\, \widetilde{\psi}\,E+
\,\{\, [\psi',\Lcalp]\, u'  \, + \,(1-\psi')\, E' \,\}\,
\circ\PiSph \,  
\quad\text{on } S_1[L]. 
\end{equation}
We introduce a decomposition
\begin{equation}
\label{EEtildedecom}
\widetilde{E}
=
\widetilde{E}_{low}
+
\widetilde{E}_{high},
\end{equation}
where
$ \widetilde{E}_{low} \in C^{0,\beta}_{\sym,low}(\tildecat_L)$ 
and
$ \widetilde{E}_{high} \in C^{0,\beta}_{\sym,high} (\tildecat_L)$ 
are supported on $\Pi_\cat ( S_1[L] )$. 
Note that here we use subscripts ``$low$'' and ``$high$'' to
denote subspaces of functions which satisfy the condition that their
restrictions to a meridian of a $\widetilde{\cat}_{\zp}$  
belong or are orthogonal respectively 
to the the span of the constants and the first harmonics on the meridian.
Let 
$\Lcal_\tildecat $ denote the linearized operator on $\tildecat_L$,   
and let 
$ \widetilde{u}_{low} \in C^{2,\beta}_{\sym,low}(\tildecat_L)$ 
and
$ \widetilde{u}_{high} \in C^{2,\beta}_{\sym,high} (\tildecat_L)$ 
be solutions of (recall \ref{DtauK})
\begin{equation}
\label{Etildeueq}
\Lcal_\tildecat  \,  \widetilde{u}_{low}  = \tau^2\, \widetilde{E}_{low},
\qquad 
\Lcal_\tildecat  \,  \widetilde{u}_{high}  = \tau^2\, \widetilde{E}_{high},
\end{equation}
determined uniquely as follows.
By separating variables the first equation amounts to 
uncoupled ODE equations which are solved uniquely
by assuming vanishing initial data on the waist of the catenoids.
For the second equation we can as usual change the metric conformally
to $h=\frac12|A|^2g=\nu^*g_{\Spheq}$, 
and then we can solve uniquely because the inhomogeneous term is clearly
orthogonal to the kernel.
We conclude now the definition of 
$\Rcal_{M,appr}$:
  
\begin{definition}
\label{DRMappr}
We define $\Rcal_{M,appr}$
as in \ref{ERcalMappr}
by taking 
$\Rcal_{M,appr} E =(u_1,\zw_{E,1},E_1)$,
where $\zw_{E,1}$ was defined in \ref{Eup},
$E_1$ in \ref{EEone}, 
and
$u_1
:=
\psihat\, \widetilde{u} \circ
\Pi_\cat
\, + \,
( \, \psi' \, u' \, ) \circ \PiSph
\in
C^{2,\beta}_\sym(M)$,
where
$
\widetilde{u}  :=  \widetilde{u}_{low} + \widetilde{u}_{high}  
\in C^{2,\beta}_{\sym}(\tildecat_L)
$. 
\end{definition}

\subsection*{Norms and approximations}
$\phantom{ab}$
\nopagebreak

We introduce now some abbreviated notation 
for the norms we will be using.
\begin{definition}
\label{D:norm}
For $k\in\N$, $\betahat\in(0,1)$, 
$\gammahat\in\R   $,
and $\Omega$ a domain in $\Spheq$, 
$M$, or $\tildecat_L$ 
(recall \ref{DPiK}), 
we define
$\|u\|_{k,\betahat,\gammahat;\Omega}
:=
\|u:C^{k,\betahat}(\Omega ,\rho,g,\rho^\gammahat)\|$,
where 
$\rho:= \dbold_L $ 
and $g$ is the standard metric on $\Spheq$ 
when $\Omega\subset \Spheq$, 
$\rho:= \dbold_L \circ \PiSph$ 
and $g$ is 
the metric induced on $M$ by the standard metric on $\Sph^3(1)$  
when $\Omega\subset M$, 
and $\rho=\rtilde$ (recall \ref{DtildePiK})
and $g$ is 
the metric induced by the Euclidean metric $h_p$ on $T_p\Sph^3(1)$ as in \ref{DExp} 
when $\Omega\subset \tildecat_L$. 
\end{definition}
Note that these definitions are equivalent to more popular definitions 
but we find these definitions more intuitive. 
We compare now norms on some nearby surfaces.

\begin{lemma}
\label{L:norms}
(i). If $\tau_{max}$ is small enough in terms of given $\epsilon>0$, 
$\Omegatilde$ is a domain in $\Pi_\cat(\Shat[L])$, 
$\Omega:=\Pi_\cat^{-1}(\Omegatilde)\subset \Shat[L] \subset M$, 
$k=0,2$, $\gammahat\in\R$,
and $f\in C^{k,\beta}(\Omegatilde)$, 
then we have 
(recall \ref{Dsimc} and \ref{DtauK}):
$
\| \, f\circ\Pi_\cat \, \|_{k,\beta,\gammahat;\Omega}
\, \sim_{1+\epsilon}
\| \, \tau^{-\gammahat}\, f \, \|_{k,\beta,\gammahat;\Omegatilde} \,$.
\newline
(ii). If $b$ is large enough in terms of given $\epsilon>0$, 
$\tau_{max}$ is small enough in terms of $\epsilon$ and $b$,
$\Omega'$ is a domain in $\Stildep = \Spheq\setminus \disjun_{p\in L} D_p( b \tau_p )$ 
(recall \ref{EStildep}),  
$\Omega:=\PiSph^{-1}(\Omega')\cap M$, 
$k=0,2$, $\gammahat\in\R$,
and $f\in C^{k,\beta}(\Omega')$, 
then 
$
\| \, f\circ\PiSph \, \|_{k,\beta,\gammahat;\Omega}
\, \sim_{1+\epsilon} \,
\|f\|_{k,\beta,\gammahat;\Omega'} \, $.
\end{lemma}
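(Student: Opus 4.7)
Both parts are local comparison statements: the claim is that the maps $\Pi_\cat$ (respectively $\PiSph$) are approximate isometries from $(\Omega,\rho^{-2}g)$ to $(\Omegatilde,\rtilde^{-2}\gtilde_0)$ (respectively $(\Omega',\rho^{-2}g_{\Spheq})$) in the appropriate $C^k$ sense as $\tau_{max}\to 0$ (and $b\to\infty$ in (ii)), so the weighted H\"older norms on corresponding scaled geodesic balls agree up to a factor $1+\epsilon$.

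For part (i), I would start from Corollary \ref{CPiK}, which says that on $\Omegatilde_\tau$ the pushed-forward first fundamental form $(\tildePiKp)_*\gtilde$ differs from $\gtilde_0$ by an amount which, after rescaling into the metric $\rtilde^{-2}\gtilde_0$ and using the estimate \ref{Esm}, is of size $O(\tau_{max}^{2\alpha})$. Recalling $\gtilde=\tau^{-2}\Exp^*_\tp g$ from \ref{EgAtilde}, the pushforward $(\Pi_\cat)_*g$ equals $\tau^2(\tildePiKp)_*\gtilde$, so $(\Pi_\cat)_*g$ is $(1+\epsilon)$-close to $\tau^2\gtilde_0$ in the scaled sense. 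Meanwhile, using Lemma \ref{LExp} and Lemma \ref{LPiK}, corresponding points $x\in\Omega$ and $y=\Pi_\cat(x)\in\Omegatilde$ satisfy $\rho(x)=\dbold_L(\PiSph(x))\,\sim_{1+\epsilon}\,\tau_p\,\rtilde(y)$. Combining, $\rho^{-2}(x)\,g$ pulls back under $\Pi_\cat^{-1}$ to a metric $(1+\epsilon)$-close to $\rtilde^{-2}(y)\,\gtilde_0$, so $\Pi_\cat$ sends the scaled ball $B_x\subset M$ onto a set $(1+\epsilon)$-close to $B_y\subset\tildecat_L$ and is an approximate isometry between them. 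Hence the two H\"older seminorms on these balls agree up to $1+\epsilon$, and dividing by $\rho^\gammahat(x)=(1+\epsilon)\,\tau_p^\gammahat\,\rtilde^\gammahat(y)$ converts the normalization factor on the catenoid side and produces exactly the prefactor $\tau^{-\gammahat}$ in the conclusion.

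For part (ii), on $\Omega=\PiSph^{-1}(\Omega')\cap M$ the initial surface $M$ is the graph of $\varphi_{init}$ over $\Omega'\subset\Stildep$, and $\PiSph|_\Omega$ is the inverse of this graph parametrization; moreover $\rho=\dbold_L$ is unchanged by $\PiSph$. It therefore suffices to show that the graph metric on $M$ is $(1+\epsilon)$-close to $g_{\Spheq}$ in the scaled metric $\rho^{-2}g$ on each unit scaled ball. Away from the bridges, where $\rho$ is bounded below by a fixed constant, this follows from Lemma \ref{LMemb}.ii, since $\|\varphi_{init}\|_{C^{3,\beta}}\lesssim\tau_{min}^{8/9}$ is negligible. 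In the catenoidal approach region $D_p(4\tau_p^\alpha)\setminus D_p(b\tau_p)$, Lemma \ref{LMemb}.iii gives $|\nabla\varphi_{init}|\lesssim\tau_p/\rho$, hence in the scaled metric $|\nabla\varphi_{init}|_{\text{scaled}}\lesssim \tau_p/\rho\le 1/b$ throughout $\Stildep$. Taking $b$ large in terms of $\epsilon$, and then $\tau_{max}$ small in terms of $b$ and $\epsilon$, the graph metric is $(1+\epsilon)$-close to $g_{\Spheq}$ in the scaled sense, and (ii) follows exactly as in (i).

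The only genuinely delicate bookkeeping---what I would call the main obstacle, though it is technical rather than conceptual---is verifying that the scaled geodesic ball $B_x$ prescribed by Definition \ref{D:newweightedHolder} is mapped by $\Pi_\cat$ (respectively $\PiSph$) to a set $(1+\epsilon)$-close to the corresponding scaled ball $B_{\Pi_\cat(x)}$ (respectively $B_{\PiSph(x)}$) at the image point. This is a soft consequence of the approximate isometry of scaled metrics established above together with the injectivity-radius hypothesis in \ref{D:newweightedHolder}; the slight size mismatch between the two balls is absorbed into the $1+\epsilon$ factor of the conclusion.
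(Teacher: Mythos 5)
Your proposal is correct and follows essentially the same route as the paper: part (i) rests on Corollary \ref{CPiK} together with the bound \ref{Esm} (hence on Lemmas \ref{LExp} and \ref{LPiK}), while part (ii) rescales the metric and the graphing function and invokes Lemma \ref{LMemb} to show the scaled graphing function (the paper's $\varphi_{:q}=(\dbold_L(q))^{-1}\varphi_{init}$) has small $C^{3,\beta}$ norm, of order $b^{-1}\log b$ near the bridges and $\tau_{min}^{8/9}$ away from them. The only cosmetic discrepancy is that you quote a slope bound $\lesssim 1/b$ while the sharp $C^{3,\beta}$ bound used in the paper is $Cb^{-1}\log b$; both vanish as $b\to\infty$, so the conclusion is unaffected.
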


\begin{proof}
Note that by assuming $\tau_{max}$ small enough we can ensure that 
$9 \, C(k,\alpha)\, \tau_{max}^{2\alpha}\le \epsilon$.  
(i) follows then from the definitions, \ref{CPiK}, and \ref{Esm}. 
To prove (ii) let $q\in \Stildep$ and consider the metric 
$\ghat_q:= \, (\,  \dbold_L(q)\, )^{-2} \, g $ on $\Sph^3(1)$, 
where $g$ is the standard metric on $\Sph^3(1)$. 
In this metric $M$ is the union of the graphs of $\pm  \varphi_{:q}$ 
where $\varphi_{:q}:= \, (\,  \dbold_L(q)\, )^{-1} \, \varphi_{init}$. 
Let $B'_q$ be the geodesic disc in $(\Spheq\, , \, \ghat_q )$ of center $q$ and radius $1/10$. 
Note that 
$$
\|\, 
\log(2r/\tau) 
\,
: C^{k}(\,
(9 \tau,9\,\tau^\alpha)\,,\,
r, dr^2 ,  \, \log (r /\tau) \,)
\, \| \, \le \,
C(k) \, . 
$$
By \ref{LMemb} we have then that 
\begin{equation}
\label{Efw}
\|\,\varphi_{:q} \, : \, 
C^{3,\beta}(B'_q, \ghat_q)\,\|
\, \le \, C \, f_{weight}(q) 
\, \le \, C \, b^{-1} \log b , 
\end{equation} 
where $f_{weight}(q)=  \frac{\log(\dbold_p(q)/\tau_p\,) }{\dbold_p(q)/\tau_p}  $ if 
$q\in D_p( 3 \delta'_p)$ for some $p\in L$ 
(where we used that if $b>10$ then 
$ \tau_p^{1+\frac{15}8\gammagl} \dbold_p^{-1}(q) \, + \tau_p^3 \dbold_p^{-3}  \,
\le \, \frac{\log(\dbold_p(q)/\tau_p\,) }{\dbold_p(q)/\tau_p}  $ )  
and 
$f_{weight}(q)=  2 \tau_{min}^{8/9}$ 
otherwise (where we used  \ref{Lnl}.ii and \ref{con:one}.v).  
By comparing the metrics and using the definitions we complete the proof. 
\end{proof}

We reformulate now the estimate for the mean curvature from \ref{LH} to
an estimate stated in terms of the global norm we just defined. 

\begin{lemma}
\label{LHM}
The function 
$
H-\xiw \circ \PiSph
$
on the initial surface $M=M[L,\tau,\xiw]    $ 
is supported on 
$\PiSph^{-1}\left(\bigsqcup_{p\in L}(\,D_p(3\delta'_p)\,\setminus D_p(2\delta'_p) \,)\right)$.
Moreover it satisfies the estimate 
$$
\|\, H - \xiw \circ \PiSph\, \|_{0,\beta,\gamma-2;M} 
\le
\, \tau_{max}^{1 + {\alpha}/3} \,
.
$$
\end{lemma}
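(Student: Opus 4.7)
The plan is to split the argument into a support statement and a quantitative estimate, using the piecewise construction of $\varphi_{init}$ in Definition \ref{Dvarphiinit} together with \ref{LH}.iii.

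\emph{Support.} The transition convention for $\Psibold$ makes $\varphi_{init}=\Gtpp$ whenever $\dbold_p<\tfrac{7}{3}\delta'_p$. On $\PiSph^{-1}$ of such a region the initial surface coincides with the catenoidal bridge $\cattp$, which is exactly minimal by Lemma \ref{LGtaup}; at the same time $\xiw\circ\PiSph$ vanishes there, because every element of $\skernel[L]$ is supported in $\disjun_{p\in L}(D_p(2\delta_p)\setminus D_p(\delta_p))$ and $3\delta'_p\ll\delta_p$ by Convention \ref{con:one}.ii. On the outer region $\Spheq\setminus\disjun_{p\in L}D_p(\tfrac{8}{3}\delta'_p)$ we have $\varphi_{init}=\varphinl$, so Lemma \ref{Lnl}.i gives $H=\xiw$ and hence $H-\xiw\circ\PiSph=0$. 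Combining these two cases leaves the support of $H-\xiw\circ\PiSph$ inside $\PiSph^{-1}(\disjun_{p\in L}(D_p(3\delta'_p)\setminus D_p(2\delta'_p)))$, as claimed.

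\emph{Estimate.} On the support region, the scale function $\rho=\dbold_L\circ\PiSph$ is comparable to $\delta'_p=\tau_p^{\gammagl}$, so the local rescaling $\rho^{-2}(x)\,g$ appearing in Definition \ref{D:norm} is uniformly comparable to the rescaling $(\delta'_p)^{-2}g$ used in \ref{LH}.iii. Since $b\tau_p\ll\delta'_p$, the support lies well inside $\Stildep$, so Lemma \ref{L:norms}(ii) transfers the estimate of \ref{LH}.iii for the push-forward $H'$ on $\Spheq$ to one for $H$ on $M$ up to a factor $1+\epsilon$. Dividing by the weight $\rho^{\gamma-2}(x)\sim(\delta'_p)^{\gamma-2}$, the pointwise contribution at $x$ is bounded by
$$
C\,(\delta'_p)^{-\gamma}\,\tau_p^{1+\frac{15}{8}\gammagl}=C\,\tau_p^{\,1+\gammagl(\frac{15}{8}-\gamma)}.
$$

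With the choice $\gamma=\tfrac{3}{2}$ allowed by Convention \ref{conventionb}, the exponent simplifies to $1+\tfrac{3\gammagl}{8}$, which strictly exceeds $1+\tfrac{\gammagl}{3}$; consequently $\tau_p^{1+3\gammagl/8}\le\tau_p^{\gammagl/24}\,\tau_{max}^{1+\gammagl/3}$, and absorbing $C\,\tau_p^{\gammagl/24}\le 1$ for $\tau_{max}$ small enough closes the estimate. The only step requiring care is the bookkeeping between the locally scaled H\"older norm in \ref{LH}.iii and the global weighted norm of Definition \ref{D:norm}; no conceptual obstacle is expected, since all the necessary comparison tools (\ref{CPiK}, \ref{L:norms}) have already been assembled.
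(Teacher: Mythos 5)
Your proposal is correct and follows the same route as the paper: the support claim is reduced to the exact minimality of the catenoidal bridges (\ref{LGtaup}.ii), the exact equation $H_{\varphinl}=\xiw$ on the outer region (\ref{Lnl}.i), and the transition profile of $\Psibold$; the estimate is obtained by transferring \ref{LH}.iii to the global weighted norm of \ref{D:norm} via \ref{L:norms}.ii. Your explicit bookkeeping of the exponent $1+\alpha(\tfrac{15}{8}-\gamma)$ with $\gamma=\tfrac32$, and the absorption of the comparison constant into a small positive power of $\tau_p$, fills in precisely what the paper's terse proof leaves to the reader.
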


\begin{proof}
The statement on the support follows from \ref{LGtaup}.ii, 
\ref{Lnl}.i, and the definitions.
Combining now \ref{LH}.iii, \ref{D:norm}, and \ref{L:norms}.ii we complete the proof. 
\end{proof}

\begin{lemma}
\label{L:appr}
(i). 
If $\tau_{max}$ is small enough 
and $f\in C^{2,\beta}( \, \Pi_\cat(\Shat[L]) \, )$, 
then we have 
$$ 
\| \,
\Lcal  
\, (\, f \,  \circ
\Pi_\cat \, )\, 
-
\tau^{-2}\,
\, (\, \Lcal_\tildecat  
f \, )  
\circ \Pi_\cat 
\, \|_{0,\beta,\gammahat-2; \, \Shat[L] } 
\, \le \,
C\, \tau_{max}^{2\alpha}\,
\|\, \tau^{-\gammahat}\,f 
\, \|_{2,\beta,\gammahat; \, \Pi_\cat( \Shat[L] ) } 
\, . 
$$ 
(ii). 
If $\tau_{max}$ is small enough 
and $f\in C^{2,\beta}(\Stildep\,)$,
then for $\epsilon_1\in[0,1/2]$
we have 
$$ 
\| \,
\Lcal
\,\{ \, f  \circ \PiSph \,\}\,
-
\{\, \Lcalp \, f \,\}\, \circ \PiSph 
\, \|_{0,\beta,\gammahat-2 ; \, \PiSph^{-1}(\Stildep\,)  } 
\, \le \,
C \,  
b^{\epsilon_1-1}\, \log b \,\, \tau_{max}^{\epsilon_1} \, 
\|\, f 
\, \|_{ 2 , \beta , \gammahat + \epsilon_1 ; \,\Stildep  } 
\, . 
$$ 
\end{lemma}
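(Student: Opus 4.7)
The plan is to localize each estimate in a natural rescaled metric so that the comparison reduces to a standard perturbation estimate for a second order linear operator. In both cases the pointwise geometric input has already been prepared in earlier lemmas, and the remaining task is bookkeeping with the weighted norms of \ref{D:norm}.

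For part (i), on each catenoidal component $\Shat[p]\subset\Shat[L]$ I transport the computation to the Euclidean space $(T_p\Sph^3(1),h_p)$ through the chain $\Shat[p]\xrightarrow{\Exp_\tpp^{-1}}\tildecattp\xrightarrow{\tildePiKp}\widetilde{\cat}_{\zp}$. Under the rescaling $g\mapsto\gtilde_\tpp=\tau_p^{-2}\Exp_\tpp^{*}g$ the Laplacian and $|A|^2$ on $M$ both pick up an overall factor of $\tau_p^{-2}$, which produces the $\tau^{-2}$ in the statement (the ambient $+2$ term contributes an extra $2\tau_p^2\widetilde F$ with $\widetilde F:=f\circ\tildePiKp$, and is absorbed below). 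The difference operator acting on $\widetilde F$ is then a second order linear operator in $f$ whose principal, first order, and zeroth order coefficients are respectively controlled by the differences of the metric, its Christoffel symbols, and $|A|^2$ between $\tildecattp$ and $\widetilde{\cat}_{\zp}$ pulled back by $\tildePiKp$. Corollary \ref{CPiK} provides exactly these pointwise bounds; as in the calculation leading to \ref{Esm}, converted to pointwise tensorial norms in $\gtilde_0$ these differences are uniformly dominated by $C\,\tau^{2\alpha}$. Translating back to the $M$-side weighted norms via \ref{L:norms}.i yields (i).

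For part (ii), I localize around each $q\in\Stildep$ in the scaled metric $\ghat_q:=\dbold_L(q)^{-2}g$ and the geodesic ball $B'_q$ of $\ghat_q$-radius $1/10$, exactly as in the proof of \ref{L:norms}.ii. Inside $B'_q$ the surface $M$ is the union of the graphs of $\pm\varphi_{:q}:=\dbold_L(q)^{-1}\varphi_{init}$ over a domain of $\Spheq$, and \ref{Efw} (with $f_{weight}$ as introduced there) gives
\[
\|\varphi_{:q}\|_{C^{3,\beta}(B'_q,\ghat_q)}\,\le\,C\,f_{weight}(q)\,\le\,C\,b^{-1}\log b.
\]
Expanding the rescaled operator on the graph and subtracting the rescaled $\Lcalp$ pulled back by $\PiSph$ yields a second order linear operator in $f$ whose coefficients vanish at $\varphi_{:q}=0$ and are therefore pointwise bounded by $C\,\|\varphi_{:q}\|_{C^{3,\beta}(B'_q,\ghat_q)}\le C\,f_{weight}(q)$. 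Passing from this local scaled estimate to the weighted norms of \ref{D:norm}, using weight $\dbold_L^{\gammahat+\epsilon_1}$ on $f$ and weight $\dbold_L^{\gammahat-2}$ on the error, reduces (ii) to the supremum bound
\[
\sup_{q\in\Stildep}\dbold_L(q)^{\epsilon_1}\,f_{weight}(q)\,\le\,C\,b^{\epsilon_1-1}\log b\cdot\tau_{max}^{\epsilon_1}.
\]

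The main obstacle is establishing this final supremum estimate, which I would handle by case analysis. For $q\in D_p(3\delta'_p)\cap\Stildep$ set $s:=\dbold_p(q)/(b\tau_p)\ge 1$, so that $f_{weight}(q)=\log(bs)/(bs)$ and
\[
\dbold_p(q)^{\epsilon_1}\,f_{weight}(q)\,=\,b^{\epsilon_1-1}\,\tau_p^{\epsilon_1}\,s^{\epsilon_1-1}\log(bs);
\]
the factor $s^{\epsilon_1-1}\log(bs)=s^{\epsilon_1-1}\log b+s^{\epsilon_1-1}\log s$ is uniformly bounded by $C\log b$ for all $s\ge1$ and $\epsilon_1\in[0,1/2]$, since the term $s^{\epsilon_1-1}\log s$ is maximized at $s=e^{1/(1-\epsilon_1)}$ with value at most $2/e$. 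Outside the bridge region $\bigcup_{p\in L}D_p(3\delta'_p)$ we have $f_{weight}(q)\le 2\tau_{min}^{8/9}$ and $\dbold_L(q)$ bounded, so this contribution is absorbed into the claimed bound after invoking \ref{con:one}.iii to control $\tau_{max}/\tau_{min}$ and using $8/9>\epsilon_1$. Assembling the cases completes the proof of (ii).
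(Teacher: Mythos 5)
Your proposal is correct and follows essentially the same route as the paper's proof: for (i), localize via $\Exp_\tpp$ and $\tildePiKp$, treat the difference of linearized operators as a second-order perturbation whose coefficients are controlled pointwise by \ref{CPiK} (with the extra $2\tau_p^2$ from the ambient curvature absorbed since $\tau_p^2\le\tau_p^{2\alpha}$), and convert back with \ref{L:norms}.i; for (ii), work at scale $\ghat_q$ on $B'_q$, bound the coefficient perturbation by $f_{weight}(q)$ via \ref{Efw}, and reduce to $\sup_q\dbold_L(q)^{\epsilon_1}f_{weight}(q)\le Cb^{\epsilon_1-1}\log b\,\tau_{max}^{\epsilon_1}$. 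Your verification of this last supremum (parametrizing by $s=\dbold_p(q)/(b\tau_p)$ and maximizing $s^{\epsilon_1-1}\log s$, plus handling the region away from the bridges) is a slightly more explicit version of the paper's observation that $x^{\epsilon_1-1}\log x$ is decreasing for $x\ge b$.
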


\begin{proof}
(i). 
In analogy with \ref{DPiK} we define the map 
$\Pitilde_\cat: \disjun_{p\in L} \tildecat_{\tau_p,p} \to \tildecat_L $ 
by requesting that its restriction to $\tildecat_{\tau_p,p}$ for $p\in L$ 
is the restriction of  
$\tildePiKp$ to $\tildecat_{\tau_p,p}$ (recall \ref{LPiK} and \ref{DtildePiK}). 
We also define $\Lcaltilde$ 
to be the linearized operator on $\disjun_{p\in L} \tildecat_{\tau_p,p}$ 
with respect to the ambient metric which $\forall p\in L$ on $B_\tpp\subset T_p\Sph^3(1)$ 
equals $\gtilde_\tpp$ defined as in \ref{DExp}. 
We have then 
$$
\tau^{2-\gammahat}\, \{ \,
\Lcal  
\, (\, f \,  \circ
\Pi_\cat \, )\, \}
\circ \Pi_\cat^{-1} \,
-
\, \tau^{-\gammahat}
\, (\, \Lcal_\tildecat  
f \, )  
\, = \,
\tau^{-\gammahat}\, \, [ \, \{ \,
\Lcaltilde  
\, (\, f \,  \circ
\Pitilde_\cat \, )\, \}
\circ \Pitilde_\cat^{-1} \,
-
\, \Lcal_\tildecat  
f \, \, ] \,.
$$
Using then \ref{L:norms}.i and that $\tau$ is locally constant proving (i) 
reduces to proving 
$$ 
\| \,
\{ \,
\Lcaltilde  
\, (\, f \,  \circ
\Pitilde_\cat \, )\, \}
\circ \Pitilde_\cat^{-1} \,
-
\, \Lcal_\tildecat  
f 
\, \|_{0,\beta,\gammahat-2; \, \Pi_\cat( \Shat[L] ) } 
\, \le \,
C\, \tau_{max}^{2\alpha}\,
\|\, f 
\, \|_{2,\beta,\gammahat; \, \Pi_\cat( \Shat[L] ) } 
\, . 
$$ 
We fix now a $p\in L$ and we apply \ref{CPiK} and 
the notation and the observations in its proof (with $\tau_p$ instead
of $\tau$) including \ref{Esm}: 
We have then that the $C^{0,\beta}$ norm on $B'_q$ with respect to the metric 
induced by $\hhat_q$ of the corresponding difference of linearized operators 
applied on $f$ is bounded by 
$$
C \, \tau_{max}^{2\alpha} \, 
\|\, f : C^{2,\beta}(B'_q, h_p)\,\|.  
$$
Using scaling and the definitions we conclude the proof of (i). 

(ii). In this case we apply the notation and observations in the proof of \ref{L:norms}.ii.  
By \ref{Efw} and 
by using scaling for the left hand side, 
we conclude that for $q\in \Stildep$, 
we have 
$$ 
(\,  \dbold_L(q)\, )^{2} \, 
\| \,
\Lcal
\,\{ \, f  \circ \PiSph \,\}\,
-
\{\, \Lcalp \, f \,\}\, \circ \PiSph 
\, : C^{0,\beta} ( \PiSph^{-1}( B'_q \,),   \, \ghat_p \, ) \, \| 
\, \le \,
C \,  
f_{weight}(q) \, 
\|\, f 
\, : C^{2,\beta} ( B'_q, \, \ghat_p \, ) \, \| 
\, . 
$$ 
By the definitions it is enough then to check that 
$\forall q\in \Stildep$ 
we have 
$$
f_{weight}(q) \, 
(\,  \dbold_L(q)\, )^{\epsilon_1} \, 
\le \, 
C \, b^{\epsilon_1-1}\, \log b \,\, \tau_{max}^{\epsilon_1}. 
$$
This follows from the definition of $f_{weight}$ 
(given in the proof of \ref{L:norms}) 
and the observation that $x^{\epsilon_1-1}\log x$ 
is decreasing in $x$ for $x\ge b$.  
This completes the proof.
\end{proof}

\subsection*{The main Proposition}
$\phantom{ab}$
\nopagebreak

\begin{prop}
\label{Plinear}
Recall that we assume that \ref{con:alpha}, \ref{con:one}, \ref{conventionb}, and \ref{AEcal} hold.
Suppose further that 
\begin{equation}
\label{EEcal}
\delta_{min}^{-4} \,\tau_{max}^\alpha \,
\|\Ecalinv\| \,
\le \,1.
\end{equation}

A linear map 
$
\Rcal_M: C^{0,\beta}_\sym(M) \to C^{2,\beta}_\sym(M) \times \skernel_\sym[L] 
$ 
can be defined then 
by 
$$
\Rcal_M E
:=
(u,\zwE) 
:=
\sum_{n=1}^\infty(u_n,\zw_{E,n})
\in C^{2,\beta}_\sym(M) \times \skernel_\sym[L]
$$
for 
$E\in C^{0,\beta}_\sym(M)$, 
where the sequence 
$\{(u_n,\zw_{E,n},E_n)\}_{n\in \N}$
is defined inductively for $n\in \N$ by 
$$
(u_n,\zw_{E,n},E_n) := - \Rcal_{M,appr} E_{n-1}, 
\qquad\quad
E_0:=-E.
$$
Moreover the following hold.
\newline
(i). 
$\Lcal u= E+ \zwE \circ \PiSph$.
\newline
(ii).
$
\|u\|_{2,\beta,\gamma;M}
\le  \, C(b)  \, 
\delta_{min}^{-2-\beta} \,
\|\Ecalinv\|\,
\|\, E\, \|_{0,\beta,\gamma-2;M}.
$
\newline
(iii).
$
\|\zwE  : C^{0,\beta}(\Spheq,g)\|
\le  \, C \, 
\delta_{min}^{\gamma-2-\beta} \,
\|\Ecalinv\|\,
\|E\|_{0,\beta,\gamma-2;M}.
$
\newline
(iv).
$
\Rcal_M
$ 
depends continuously on the parameters of $\varphi$.
\end{prop}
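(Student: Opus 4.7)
The plan is to prove Proposition \ref{Plinear} by establishing that $\Rcal_{M,appr}$ is an approximate right inverse of $\Lcal$ modulo $\skernel_\sym[L]$ whose error operator is a contraction in the norm $\|\cdot\|_{0,\beta,\gamma-2;M}$. More precisely, I will show for every $E \in C^{0,\beta}_\sym(M)$ and $(u_1,\zw_{E,1},E_1):=\Rcal_{M,appr}E$ three bounds:
\begin{equation*}
\|u_1\|_{2,\beta,\gamma;M} \le C(b)\,\delta_{min}^{-2-\beta}\,\|\Ecalinv\|\,\|E\|_{0,\beta,\gamma-2;M},
\end{equation*}
\begin{equation*}
\|\zw_{E,1}: C^{0,\beta}(\Spheq,g)\| \le C\,\delta_{min}^{\gamma-2-\beta}\,\|\Ecalinv\|\,\|E\|_{0,\beta,\gamma-2;M},
\qquad
\|E_1\|_{0,\beta,\gamma-2;M} \le \tfrac12 \|E\|_{0,\beta,\gamma-2;M}.
\end{equation*}
Given these, the iteration becomes a geometric series: absolute convergence in the target spaces follows, the telescoping identity $\sum_{n=1}^{N}(\Lcal u_n - \zw_{E,n}\circ\PiSph) = -E_0 - E_N$ together with $E_N\to 0$ gives (i), and (ii)--(iii) follow by summation.

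The first step is to estimate the building blocks. For $u'$ in \ref{Eup}, existence and uniqueness are a consequence of \ref{AEcal}: one first inverts $\Lcalp$ on $(\Spheq,g)$ in $C^{2,\beta}_\sym$ (the symmetries rule out the two-dimensional kernel of $\Lcalp$), and then removes the residual jet data at the points of $L$ by subtracting an element of $\skernelv_\sym[L]$, which introduces $\zw_{E,1}=\Lcalp v\in\skernel_\sym[L]$ with $v\in\skernelv_\sym[L]$ controlled by $\|\Ecalinv\|$. Schauder theory in the weighted norm of \ref{D:norm} then produces the stated bound for $u'$; the factor $\delta_{min}^{-2-\beta}$ reflects the comparison between the unweighted and weighted norms near $L$. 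For $\widetilde{u}$ on $\tildecat_L$, the low-mode part reduces to uncoupled second-order ODEs along the generators with zero Cauchy data at the waists, and the high-mode part is inverted after the conformal change $h = \tfrac12|A|^2 g = \nu^* g_\Spheq$, where the pulled-back Jacobi operator has no kernel on the high-mode subspace since the kernel of $\Delta_{g_\Spheq}+2$ consists of first harmonics. Combining these with \ref{L:norms} yields the bound for $u_1 = \psihat\,\widetilde{u}\circ\Pi_\cat + (\psi' u')\circ\PiSph$.

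The heart of the proof is the contraction estimate for $E_1=\Lcal u_1 - E - \zw_{E,1}\circ\PiSph$. Using the decomposition \ref{Edecom}--\ref{EEpp}, the errors split into three families. On the graphical region $\PiSph^{-1}(\Stildep)$, \ref{L:appr}(ii) replaces $\Lcal((\psi' u')\circ\PiSph)$ by $(\Lcalp(\psi' u'))\circ\PiSph$ with a multiplicative gain $b^{\epsilon_1-1}\log b\,\tau_{max}^{\epsilon_1}$; expanding $\Lcalp(\psi' u')=\psi'(E'+\zw_{E,1})+[\Lcalp,\psi']u'$ matches the leading terms against $E'\circ\PiSph$ and $\zw_{E,1}\circ\PiSph$, and the commutator $[\Lcalp,\psi']u'$ is exactly the piece folded into $\widetilde{E}$ in \ref{EEpp}. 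On the catenoidal region $\Shat[L]$, \ref{L:appr}(i) replaces $\Lcal(\psihat\,\widetilde{u}\circ\Pi_\cat)$ by $\tau^{-2}(\Lcal_\tildecat\widetilde{u})\circ\Pi_\cat$ up to a gain $\tau_{max}^{2\alpha}$, after which \ref{Etildeueq} together with \ref{EEpp} cancels this contribution against $\widetilde{\psi} E$ and the residual terms from the graphical region; the commutator $[\Lcal,\psihat]$ contributes only on the annulus where $\widetilde{u}$ is already small by the ODE/conformal estimates. Assembling the three contributions and feeding in the $u_1$-bound gives
\begin{equation*}
\|E_1\|_{0,\beta,\gamma-2;M} \le C(b)\,\bigl(\tau_{max}^{2\alpha}+b^{\epsilon_1-1}\log b\,\tau_{max}^{\epsilon_1}\bigr)\,\delta_{min}^{-2-\beta}\,\|\Ecalinv\|\,\|E\|_{0,\beta,\gamma-2;M},
\end{equation*}
which is bounded by $\tfrac12\|E\|_{0,\beta,\gamma-2;M}$ once $b$ is fixed large as in \ref{conventionb}, using hypothesis \ref{EEcal} together with $1-\gamma/2>2\alpha$ from \ref{conventionb} to absorb the remaining weight factors.

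The main obstacle is the bookkeeping in this last paragraph: one has to verify that every cutoff commutator and every mismatch between $\Lcal$ and its two model operators produces a genuine smallness factor compatible with the weighted norm \ref{D:norm}, whose weight function $\rho=\dbold_L$ (respectively $\rho=\rtilde$) degenerates at $L$ (respectively at infinity on $\tildecat_L$); the matching conditions built into an MLD solution and the precise choices of cutoff scales $\delta_p,\delta'_p$ in \ref{con:one} and \ref{Edeltapp} are what make the algebraic cancellations above come out with the correct powers. Finally, (iv) is immediate from continuous dependence on parameters of $\Ecalinv$, of the catenoidal ODE and conformally-reduced inverse, and of the metric on $M$, together with the uniform convergence of the geometric series.
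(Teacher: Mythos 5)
Your outline reproduces the paper's structure faithfully, but the contraction estimate for $E_1$ as you have written it has a genuine gap. You apply \ref{L:appr}.ii a single time, with a single $\epsilon_1$, to the whole of $\psi' u'$, and you place the factor $\delta_{min}^{-2-\beta}\|\Ecalinv\|$ in front of \emph{every} term of the resulting bound. Neither choice of $\epsilon_1$ makes this close. If $\epsilon_1=0$, the graphical error is $Cb^{-1}\log b\,\delta_{min}^{-2-\beta}\|\Ecalinv\|\,\|E\|$; by \ref{EEcal} the middle factor is only controlled by $\delta_{min}^{2-\beta}\tau_{max}^{-\alpha}$, which blows up as $\tau_{max}\to 0$ with $b$ fixed, so the iteration cannot be made a contraction. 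If instead you take $\epsilon_1>0$, the application of \ref{L:appr}.ii requires bounding $\|\psi' u'\|_{2,\beta,\gamma+\epsilon_1;\Stildep}$. The part $u''$ of $u'$ (the cut-off local Dirichlet solution) decays only at rate $\gamma$ near $L$ because its source $E'$ is no better than $O(\rho^{\gamma-2})$, so passing to the $\gamma+\epsilon_1$-weight costs an extra factor of order $(b\tau_{min})^{-\epsilon_1}$, which cancels the $\tau_{max}^{\epsilon_1}$ gain and, after \ref{con:one}.iii, leaves a contribution that does not vanish as $\tau_{min}\to0$.

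This is exactly why the paper further splits $u' = u'' + (u'''+v)$ and treats the two pieces of $E_{1,III}$ differently. The piece $u''$ carries no $\|\Ecalinv\|$ factor, so \ref{L:appr}.ii with $\epsilon_1=0$ gives the clean term $Cb^{-1}\log b\,\|E\|_{0,\beta,\gamma-2;M}$, which is made small by choosing $b$ alone. The piece $u'''+v$ does carry $\|\Ecalinv\|$, but it solves the homogeneous equation on each $D_p(\delta_p)$ with vanishing $1$-jet at $p$, hence enjoys the \emph{strengthened} decay $\gamma'=(\gamma+2)/2$; applying \ref{L:appr}.ii with $\epsilon_1=\gamma'-\gamma>0$ to this piece produces $Cb^{\gamma'-\gamma-1}\log b\,\tau_{max}^{\gamma'-\gamma}\,\delta_{min}^{\gamma-\gamma'-2-\beta}\|\Ecalinv\|\,\|E\|$, and this is what \ref{EEcal} together with $\gamma'-\gamma>\alpha$ renders small. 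The pairing of ``no big constant'' with $\epsilon_1=0$ and of ``big constant'' with the extra decay is the whole point of Step 1's decomposition and cannot be collapsed into your one-line bound. The remaining parts of your argument — the bound for $u_1$, the $\widetilde u$ inversions, and the telescoping sum (modulo the sign: $\sum_{n\le N}(\Lcal u_n - \zw_{E,n}\circ\PiSph)=E_N-E_0$, not $-E_0-E_N$, though this is harmless since $E_N\to0$) — are consistent with the paper.
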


\begin{proof}
We subdivide the proof into five steps:

\textit{Step 1: Estimates on $u'$ and $\zw_{E,1}$:}
We start by decomposing $E'$ and $u'$  (defined as in \ref{Edecom} and \ref{Eup}) 
into various parts which will be estimated separately. 
We clearly have by the definitions and the equivalence of the norms 
as in \ref{L:norms} that 
$$
\|E'\|_{0,\beta,\gamma-2;\Spheq}
\le \, C \,
\|E\|_{0,\beta,\gamma-2;M}.
$$
We first solve uniquely for each $p\in L$ the equation $\Lcalp u'_p=E'$ 
on $D_p(2\delta_p)$ by requiring that 
$u'_p(p)=0$, $d_pu'_p=0$, and that the restriction of $u_p$
on $\partial D_p(2\delta_p)$ is a linear combination of constants and first harmonics.
Clearly then by standard theory and separation of variables we have
$$
\|u'_p\|_{2,\beta,\gamma;D_p(2\delta_p)}
\le \, C \,
\|E'\|_{0,\beta,\gamma-2;D_p(2\delta_p)}.
$$
We define now 
$u''\in C^{2,\beta}_\sym(\Spheq)$ 
supported on $\disjun_{p\in L} D_p( 2\delta_p )$
by requesting that for each $p\in L$ we have 
$$
u''=\,
\Psibold\left[2 \delta_p , \delta_p ; \dbold_p  \right]
(0,u_p')
\quad\text{on}\quad
D_p( 2\delta_p).
$$
We clearly have then 
$$
\|u''\|_{2,\beta,\gamma;\,\Spheq}
\le \, C \,
\|E\|_{0,\beta,\gamma-2;M}.
$$
$E'-\Lcalp u''$ 
vanishes on 
$\disjun_{p\in L} D_p( \delta_p )$
and therefore it is supported on 
$\Spheq\setminus \disjun_{p\in L} D_p( \delta_p ) = \Sp_1$
(recall \ref{ESp}).
Moreover it satisfies 
$$
\|E'-\Lcalp u''\|_{0,\beta,\gamma-2;\,\Spheq}
\le \, C \,
\|E\|_{0,\beta,\gamma-2;M}.
$$
Using the definition of the norms and 
the restricted support $\Sp_1$ 
we conclude that 
$$
\|E'-\Lcalp u'': C^{0,\beta}(\Spheq,g)\|
\le \, C \, 
\delta_{min}^{\gamma-2-\beta} \,
\|E'-\Lcalp u''\|_{0,\beta,\gamma-2;\,\Spheq}.
$$
The last two estimates and standard linear theory imply that 
the unique by symmetry solution 
$u'''\in C^{2,\beta}_\sym(\Spheq)$ to $\Lcalp u'''=E'-\Lcalp u''$ 
satisfies
$$
\|u''' : C^{2,\beta}(\Spheq,g)\|
\le \, C \, 
\delta_{min}^{\gamma-2-\beta} \,
\|E\|_{0,\beta,\gamma-2;M}. 
$$

By \ref{AEcal} there is a unique 
$v\in \skernelv_\sym[L]$ 
(recall \ref{Eskernelsym})
such that $u'''+v$ and $d(u'''+v)$ vanish at each $p\in L$. 
Moreover by the last estimate and 
\ref{DEcalnorm} 
$v$ satisfies the estimate 
\begin{equation*}
\|v : C^{2,\beta}(\Spheq,g)\|
+
\| \Lcalp v : C^{0,\beta}(\Spheq,g)\|
\le  \, C \, 
\delta_{min}^{\gamma-2-\beta} \,
\|\Ecalinv\| \,
\|E\|_{0,\beta,\gamma-2;M}. 
\end{equation*}
By the definition of $u'''$ 
we conclude that $\Lcalp(u''+u'''+v) = E'+\Lcalp v $. 
By the definitions of $u''$ and $v$ we clearly have 
that $u''+u'''+v$ satisfies also the vanishing conditions in \ref{Eup} 
and hence 
$$
u'=u''+u'''+v
\qquad \text{and} \qquad 
\zw_{E,1} = \Lcalp v .
$$

Note now that 
$\Lcalp u'''=E'-\Lcalp u''$ 
vanishes on 
$\disjun_{p\in L} D_p( \delta_p )$
and 
by \ref{Dskernel} and \ref{Eskernelsym} 
so does $\Lcalp v\in\skernel_\sym[L]$. 
We conclude that for each $p\in L$ 
we have $\Lcalp (u'''+v)=0$ on $D_p( \delta_p )$, 
and since we know already that $u'''+v$ and $d(u'''+v)$ vanish at $p$, 
we can use standard theory and separation of variables to estimate with decay 
$u'''+v$ on $D_p( \delta_p )$ in terms of the Dirichlet data on $\partial D_p( \delta_p )$. 
Combining with the earlier estimates for $u'''$ and $v$ 
we conclude that
$$
\|u'''+v \|_{2,\beta,\gamma';\Spheq}
\le  \, C \, 
\delta_{min}^{\gamma-\gamma'-2-\beta} \,
\|\Ecalinv\| \,
\| \, E \, \|_{0,\beta,\gamma-2;M}, 
$$
where $\gamma'=\frac{\gamma+2}2\in(\gamma,2)$.
We need the stronger decay for estimating $E_1$ later.
A similar estimate holds with $\gamma$ instead of $\gamma'$. 
Note that by \ref{DEcalnorm} $ \|\Ecalinv\| \ge 1 $. 
Combining with the earlier estimate for $u''$ we conclude that 
$$
\|u'\|_{2,\beta,\gamma;\Spheq}
\le  \, C \, 
\delta_{min}^{-2-\beta} \,
\|\Ecalinv\| \,
\| \, E \, \|_{0,\beta,\gamma-2;M}. 
$$

\textit{Step 2: Estimates on $\widetilde{u}$:}
By the definitions and \ref{L:norms} (with $\epsilon=1$) we have that
$$
\| \, \tau^{2-\gamma} \, \widetilde{E} \, \|_{0,\beta,\gamma-2;\tildecat_L}
\, \le \,
C\, 
(\, \|\, {E} \, \|_{0,\beta,\gamma-2;M}
\, + \,
\|\, u' \, \|_{2,\beta,\gamma;\, \Spheq}
\,).
$$
By considering the standard conformal parametrization of the catenoid on a cylinder 
it is easy to conclude that 
$$
\| \, \tau^{-\gamma} \, \widetilde{u}_{low} \, \|_{2,\beta,1;\tildecat_L}
\, \le \,
C(b) \, 
\| \, \tau^{2-\gamma} \, \widetilde{E} \, \|_{0,\beta,\gamma-2;\tildecat_L}.
$$
Similarly by standard linear theory and the obvious $C^0$ bound on $\widetilde{u}_{high}$ 
we conclude  
$$
\| \, \tau^{-\gamma} \, \widetilde{u}_{high} \, \|_{2,\beta,0;\tildecat_L}
\, \le \,
C(b) \, 
\| \, \tau^{2-\gamma} \, \widetilde{E} \, \|_{0,\beta,\gamma-2;\tildecat_L}.
$$
Combining the above we conclude that 
$$
\| \, \tau^{-\gamma} \, \widetilde{u} \, \|_{2,\beta,\gamma;\tildecat_L}
\, \le \,
\| \, \tau^{-\gamma} \, \widetilde{u} \, \|_{2,\beta,1;\tildecat_L}
\, \le \,
C(b) \, 
\delta_{min}^{-2-\beta} \, \|\Ecalinv\| \, \| \, E \, \|_{0,\beta,\gamma-2;M}. 
$$

\textit{Step 3: A decomposition of $E_1$:}
Using \ref{EEone} and \ref{DRMappr}, \ref{Edecom}, \ref{Etildeueq}, and \ref{EEtildedecom}, 
we obtain 
\begin{equation}
\label{EoneA}
E_1=
E_{1,I}
+
\psihat\, 
\Lcal  
\, ( \, \widetilde{u}  \circ
\Pi_\cat \, ) \, 
+
E_{1,III}
+
\{\, \Lcalp \, ( \, \psi' \, u' \, ) \,\}\, \circ \PiSph 
- 
E
-
\zw_{E,1}\circ\PiSph,
\end{equation}
where 
$E_{1,I} , E_{1,III}\in 
C^{0,\beta}_\sym(M)$ 
are supported on 
$\Shat[L]\setminus\Shat_1[L]$
and
$\Stildep$ respectively by \ref{DpsiM}.ii,
and where they are defined by 
\begin{equation}
\label{EEoneI}
\begin{aligned}
E_{1,I} \, :=& \,
[\Lcal,\psihat] 
\,( \, \widetilde{u} \circ
\Pi_\cat \,) \, ,
\\
E_{1,III} \, :=& \,
\Lcal
\,\{ \, ( \, \psi' \, u' \, ) \circ \PiSph \,\}\,
-
\{\, \Lcalp \, ( \, \psi' \, u' \, ) \,\}\, \circ \PiSph. 
\end{aligned}
\end{equation}
Using \ref{Edecom}, \ref{Etildeueq}, \ref{EEtildedecom}, and \ref{EEpp},  
we obtain 
\begin{multline}
\label{EoneB}
E_1=
E_{1,I}+E_{1,II}
+
\, \widetilde{\psi}\,E+
\,\{\, [\psi',\Lcalp]\, u'  \, + \,(1-\psi')\, E' \,\}\,
\circ\PiSph \,  
+
\\
+
E_{1,III}
+
\{\, [\Lcal',\psi']\,u' + \psi' \,  \Lcal' u' \, \} \circ \PiSph 
-
\widetilde{\psi}\,E\,
- 
E'\circ\PiSph
- 
\zw_{E,1}\circ\PiSph,
\end{multline}
where 
$E_{1,II} \in C^{0,\beta}_\sym(M)$ 
is supported 
by \ref{DpsiM}.ii
on 
$\Shat[L]$
where it satisfies 
\begin{equation}
\label{EEoneII}
E_{1,II} =
\psihat\, 
\left(
\Lcal  
\, (\, \widetilde{u} \,  \circ
\Pi_\cat \, )\, 
-
\tau^{-2}\,
\, (\, \Lcal_\tildecat  
\widetilde{u} \, )  
\circ \Pi_\cat 
\right) \, = \, 
\psihat\, 
\Lcal  
\, (\, \widetilde{u} \,  \circ
\Pi_\cat \, )\, 
-
\widetilde{E} \circ \Pi_\cat  \, 
\end{equation}
By \ref{DpsiM}.ii and \ref{Eup} we have 
$\psi' \Lcalp u'= \psi' E'+\zw_{E,1} $.
Using this  
and canceling terms  
we conclude that 
\begin{equation}
\label{EEoneF}
E_1=E_{1,I}+E_{1,II}+E_{1,III}.
\end{equation}

\textit{Step 4: Estimates on $u_1$ and $E_1$:}
Using the definitions, \ref{L:norms} with $\epsilon=1$, 
and the estimates for $u'$ and $\widetilde{u}$ above 
we conclude that 
$$
\| \, u_1 \, \|_{2,\beta,\gamma;M} 
\, \le \,
C(b) \, 
\delta_{min}^{-2-\beta} \, \|\Ecalinv\| \, 
\| \, E \, \|_{0,\beta,\gamma-2;M}. 
$$

By \ref{L:norms} we have 
$\|\, 
\, (\, \tau^{1-\gamma} \widetilde{u} \,) \circ \Pi_\cat \, \|_{2,\beta,1;\,
\Shat[L]\setminus\Shat_1[L]}
\sim_2
\| \, \tau^{-\gamma} \, \widetilde{u} \, \|_{2,\beta,1;\,\Pi_\cat\,(\,
\Shat[L]\setminus\Shat_1[L]\,)}
$.
Using the definitions \ref{D:norm} and \ref{D:regions} we conclude that 
$$
\|\, 
\, \widetilde{u} \circ \Pi_\cat \, \|_{2,\beta,\gamma;\,
\Shat[L]\setminus\Shat_1[L]}
\, \le \,
C\, 
\tau_{max}^{(1-\alpha)(\gamma-1)} 
\, \,
\| \, \tau^{-\gamma} \, \widetilde{u} \, \|_{2,\beta,1;\tildecat_L}, 
$$
and therefore we have by the definition of $E_{1,I}$ that 
$$
\| \, E_{1,I} \, \|_{0,\beta,\gamma-2;\,M}
\, \le \,
C\, 
\tau_{max}^{(1-\alpha)(\gamma-1)} 
\, \,
\| \, \tau^{-\gamma} \, \widetilde{u} \, \|_{2,\beta,1;\tildecat_L}. 
$$
Applying now \ref{L:appr}.i with $f=\widetilde{u}$ and $\gammahat=\gamma$ 
and using the definition of $\psihat$ we conclude that  
$$
\| \, E_{1,II} \, \|_{0,\beta,\gamma-2;\,M}
\, \le \,
C\, \tau_{max}^{2\alpha}\,
\, \,
\| \, \tau^{-\gamma} \, \widetilde{u} \, \|_{2,\beta,\gamma;\tildecat_L}. 
$$

We decompose now $E_{1,III}= E''_{1,III}+E'''_{1,III}$ 
where $E''_{1,III}$ and $E'''_{1,III}$ are defined 
the same way as $E_{1,III}$ but with $u'$ replaced by 
$u''$ and $u'''+v$ respectively. 
Applying \ref{L:appr}.ii with $\epsilon_1=0$, $f=u''$, and $\gammahat=\gamma$, 
we conclude that 
$$
\| \, E''_{1,III} \, \|_{0,\beta,\gamma-2;\,M}
\, \le \,
C\, b^{-1}\, \log b \,\, 
\|u''\|_{2,\beta,\gamma;\Spheq}.
$$
Applying \ref{L:appr}.ii with $\epsilon_1=\gamma'-\gamma$, $f=u''$, and $\gammahat=\gamma$, 
$$
\| \, E'''_{1,III} \, \|_{0,\beta,\gamma-2;\,M}
\, \le \,
C\, b^{\gamma'-\gamma-1}\, \log b \,\, \tau_{max}^{\gamma'-\gamma} \,
\|u'''+v \|_{2,\beta,\gamma';\Spheq}.
$$

Combining the above with the earlier estimates
and using \ref{EEcal}  and \ref{conventionb} we conclude 
that 
$$
\| \, E_{1} \, \|_{0,\beta,\gamma-2;\,M}
\, \le \,
(\, C(b)\, \tau_{\max}^{\alpha/2} 
\, + \,
C\, b^{-1}\, \log b \,\, 
\, + \,
C\, b^{-1/2}\, \log b \,\, \tau_{max}^{\gamma'-\gamma-\alpha} \,
\,)\,
\| \, E \, \|_{0,\beta,\gamma-2;\,M}.
$$

\textit{Step 5: The final iteration:}
By assuming $b$ large enough 
and $\tau_{max}$ small enough in terms of $b$ we conclude using $\gamma'-\gamma-\alpha>0$ and induction 
that
$$
\| \, E_{n} \, \|_{0,\beta,\gamma-2;\,M}
\, \le \,
2^{-n}
\, \| \, E \, \|_{0,\beta,\gamma-2;\,M}.
$$
The proof is then completed by using the earlier estimates.
\end{proof}

\subsection*{The nonlinear terms} 
$\phantom{ab}$
\nopagebreak

If $\phi\in C^1_\sym(M)$ is appropriately small,
we denote by $M_\phi$ the perturbation of $M$ by $\phi$,
defined as the image of $I_\phi:M\to\Sph^3$, 
where $I:M\to\Sph^3(1)$ is the inclusion map of $M$ and 
$I_\phi$ is defined by $I_\phi(x):=\exp_x(\,\phi(x)\,\nu(x)\,)$ 
where $\nu:M\to T\Sph^3(1)$ is the unit normal to $M$. 
Clearly then (recall \ref{NsymM}) 
$M_\phi$ is invariant under the action of $\groupthree$
on the sphere $\Sph^3(1)$.
Using now rescaling 
we prove a global estimate for the nonlinear terms
of the mean curvature of $M_\phi$ as follows
(see \cite[Lemma 5.1]{kapouleas:clifford} for a similar statement):

\begin{lemma}
\label{Lquad}
If $M$ is as in \ref{Plinear} and 
$\phi\in C^{2,\beta}_\sym(M)$ 
satisfies $\|\phi\|_{2,\beta,\gamma;M} \, \le \, \tau_{max}^{1+\alpha/4} $,  
then $M_\phi$ is well defined as above, 
is embedded, 
and if 
$H_\phi$ is the mean curvature of $M_\phi$ pulled back to $M$ by $I_\phi$ 
and $H$ is the mean curvature of $M$, then we have 
$$
\|\, H_\phi-\, H - \Lcal \phi \, \|_{0,\beta,\gamma-2;M}
\, \le \, C \, 
\|\, \phi\, \|_{2,\beta,\gamma;M}^2.
$$
\end{lemma}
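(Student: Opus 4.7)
The proof is by the standard technique of expanding the mean curvature of a normal graph and rescaling at each point. There is a universal pointwise expansion
$H_\phi = H + \Lcal \phi + Q(\phi)$,
where $\Lcal = \Delta + |A|^2 + 2$ is the Jacobi operator on $M\subset \Sph^3(1)$ and $Q(\phi)$ is a smooth algebraic function of $\phi$, $\nabla\phi$, $\nabla^2\phi$, the induced metric and the second fundamental form of $M$, vanishing to second order at $(\phi,\nabla\phi,\nabla^2\phi)=0$. Consequently, provided $|\phi|$, $|\nabla\phi|$, $|\nabla^2\phi|$ stay below a threshold $\epsilon_0$ determined by local bounds on $|A|$ and the metric coefficients, Taylor's theorem supplies a pointwise bound $|Q(\phi)|\le C(|\phi|^2+|\nabla\phi|^2+|\nabla^2\phi|^2)$ with the same $C$, together with the analogous bound on the $C^{0,\beta}$-seminorm on a ball of bounded geometry.

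To upgrade this to the weighted H\"older estimate I would argue pointwise at each $q\in M$. Setting $\rho := \dbold_L\circ\PiSph$, I fix $q$ and consider the rescaled ambient metric $g_q := \rho^{-2}(q)\, g$ on the geodesic ball $B_q$ of $g_q$-radius $1/100$ around $q$. The crucial point is that in this rescaled metric $M$ has bounded geometry uniformly in $q$ and $\tau_{max}$: inside the catenoidal regions $\Shat[L]$ this follows from \ref{LPiK} and \ref{CPiK}, which identify $M$ up to $O(\tau_{max}^{2\alpha})$ with a fixed portion of the standard Euclidean catenoid $\tildecat_{\zp}$ in the rescaled picture; on the complement of $\Shat[L]$ it follows from \ref{LMemb}; and on the annular overlap from the combination of the two. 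Under the conformal rescaling $g\mapsto g_q$ a direct computation (using that the ambient Ricci of $\Sph^3(1)$ rescales correctly since the ambient metric is also rescaled) gives $H_{g_q} = \rho(q)\, H_g$ and $\Lcal_{g_q} = \rho^2(q)\,\Lcal_g$ at $q$, so that if $\phi_q := \phi/\rho(q)$ is the normal-graph function of $M_\phi$ in the rescaled picture, then $Q_{g_q}(\phi_q) = \rho(q)\, Q_g(\phi)$.

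By the definition of the weighted norm, $\|\phi : C^{2,\beta}(B_q,g_q)\|\le \rho^\gamma(q)\,\|\phi\|_{2,\beta,\gamma;M}$, and hence $\|\phi_q : C^{2,\beta}(B_q,g_q)\|\le \rho^{\gamma-1}(q)\,\|\phi\|_{2,\beta,\gamma;M}$. Since $\gamma>1$ and $\rho\le\pi$, the hypothesis $\|\phi\|_{2,\beta,\gamma;M}\le \tau_{max}^{1+\alpha/4}$ keeps this rescaled norm below $\epsilon_0$ once $\tau_{max}$ is small, so the pointwise quadratic bound applies in $g_q$:
$$\|Q_{g_q}(\phi_q) : C^{0,\beta}(B_q,g_q)\|\le C\,\|\phi_q : C^{2,\beta}(B_q,g_q)\|^2 \le C\,\rho^{2(\gamma-1)}(q)\,\|\phi\|_{2,\beta,\gamma;M}^2.$$
Dividing by $\rho(q)$ to pass from $Q_{g_q}(\phi_q)$ to $Q_g(\phi)$, then by the weight $\rho^{\gamma-2}(q)$ and taking the supremum over $q$, the remaining factor $\rho^{\gamma-1}(q)$ is bounded by $\pi^{\gamma-1}$ (using $\gamma>1$), yielding $\|Q(\phi)\|_{0,\beta,\gamma-2;M}\le C\,\|\phi\|_{2,\beta,\gamma;M}^2$ as required.

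For embeddedness, $M$ is embedded by \ref{LMemb} and admits a tubular neighborhood of width comparable to $\rho$ near the necks and to $1$ elsewhere, while the bounds $|\phi|\le\rho^\gamma\,\|\phi\|_{2,\beta,\gamma;M}$ and $|\nabla\phi|\le\rho^{\gamma-1}\,\|\phi\|_{2,\beta,\gamma;M}$, both $O(\tau_{max}^{1+\alpha/4})$ up to bounded powers of $\rho$, force $I_\phi$ to be a smooth embedding for $\tau_{max}$ small. The only non-routine technical point is the uniformity of the bounded-geometry statement for $M$ in the rescaled picture across the transition between $\Shat[L]$ and its complement, which is exactly what \ref{CPiK} together with \ref{LMemb} provide; modulo that, everything reduces to a standard rescaling argument.
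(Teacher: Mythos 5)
Your argument is essentially the same as the paper's: a pointwise rescaling by $\rho=\dbold_L\circ\PiSph$, the Taylor/bounded-geometry bound $|Q|\lesssim\|\phi_q\|^2_{C^{2,\beta}}$ in the magnified picture, the scaling identities $H_{g_q}=\rho(q)H_g$, $\Lcal_{g_q}=\rho^2(q)\Lcal_g$, $Q_{g_q}(\phi_q)=\rho(q)Q_g(\phi)$, and then the weight bookkeeping $2\gamma-3-(\gamma-2)=\gamma-1>0$ to absorb the leftover $\rho^{\gamma-1}$. The citations \ref{CPiK}, \ref{LPiK}, \ref{LMemb} for uniform bounded geometry in the rescaled metric are the right ones and correspond to what the paper invokes.

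The one place that deserves care is the embeddedness sentence. Your claim that $M$ ``admits a tubular neighborhood of width comparable to $\rho$ near the necks and to $1$ elsewhere'' is not correct as stated: away from the catenoidal bridges the limiting constraint is not the second fundamental form but the separation between the two sheets, which is $2\varphi_{init}$, bounded below only by $\frac{16}{9}\tau_{max}^{1+\alpha/5}$ (\ref{LMemb}.i), not by a constant. The conclusion is nevertheless reachable because $|\phi|\le\pi^\gamma\tau_{max}^{1+\alpha/4}\ll\tau_{max}^{1+\alpha/5}$ — this is precisely where the strong exponent $1+\alpha/4$ (as opposed to $1+\alpha/5$) in the hypothesis is used, and also the reason the paper remarks that the strong bound on $\phi$ is needed only for embeddedness. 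The cleanest way to phrase it, as the paper does, is that local embeddedness holds by the immersion estimate, and the only possible global failure is the two $\zbartilde$-symmetric sheets meeting at $\Spheq$, which \ref{LMemb}.i and the bound on $|\phi|$ rule out.
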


\begin{proof}
Note that such a strong bound on $\phi$ is only needed for ensuring  
the embeddedness of $M_\phi$. 
Following the notation in the proof of \ref{L:norms} and by \ref{Efw} we have 
that for 
$q\in \Stildep$ 
the graph $B''_q$ of $\varphi_{:q}$ over $B'_q$ in $(\Spheq\, , \, \ghat_q )$ 
can be described by an immersion $X_{:q}: B'_q \to B''_q= X_{:q}(B'_q)$ 
such that there are coordinates on $B'_q$ and a neighborhood in $\Sph^3(1)$ of $B''_q$ 
which are uniformly bounded and the immersion in these coordinates has 
uniformly bounded $C^{3,\beta}$ norms, the standard Euclidean metric on the domain is
bounded by $C X_{:q}^* \ghat_q$, and the coefficients of $\ghat_q$ in the target coordinates 
have uniformly bounded $C^{3,\beta}$ norms. 
By the definition of the norm and since 
$\|\phi\|_{2,\beta,\gamma;M} \, \le \, \tau_{max}^{1+\alpha/4} $,  
we have that the restriction of $\phi$ on $B''_q$ satisfies 
$$
\|\, \dbold_L^{-1}(q) \, \phi \, : \, C^{2,\beta}(\, B''_q\, , \, \ghat_q \,) \, \|
\, \le \,  C \, \dbold_L^{\gamma-1}(q) 
\, \|\, \phi\, \|_{2,\beta,\gamma;M}. 
$$
Since the right hand side is small in absolute terms we conclude that $I_\phi$ is well defined 
on $B''_q$, its restriction to $B''_q$ is an embedding, and by using scaling for the left hand 
side that 
$$
\|\, \dbold_L(q) \, ( H_\phi-\, H - \Lcal \phi )  \, 
\, : \, C^{0,\beta}(\, B''_q \, , \, \ghat_q \,) \, \|
\, \le \,  C \, \dbold_L^{2\gamma-2}(q) 
\, \|\, \phi\, \|_{2,\beta,\gamma;M}^2 . 
$$
Since $2\gamma-3-(\gamma-2)=\gamma-1>0$ we conclude that 
$$
\dbold_L^{2-\gamma}(q) \, \|\,  H_\phi-\, H - \Lcal \phi   \, 
\, : \, C^{0,\beta}(\, B''_q \, , \, \ghat_q \,) \, \|
\, \le \,  C 
\, \|\, \phi\, \|_{2,\beta,\gamma;M}^2 . 
$$

Note now that $B''_q$ is very close to the geodesic disc of radius $1/10$ in $\ghat_q$ and with 
a center a point of $M$ which projects by $\PiSph$ to $q$. 
It remains to establish similar estimates for such discs $B''_x\subset M$ with centers at 
points $x\in S[L]$. 
Note that the components of $S[L]$ appropriately scaled are small perturbations of a fixed compact 
region of the standard catenoid by smooth dependence on each $\tau_p$. 
This allows us to repeat the arguments above in this case and combining with the earlier estimates 
we conclude by the definitions the estimate in the statement of the lemma. 

It remains to prove the global embeddedness of $M_\phi$. 
Given the local embeddedness we already know, global embeddedness could only fail if there was 
a nontrivial intersection between $M_\phi$ and $\Spheq$. 
Using the estimates in \ref{LMemb} we can exclude this possibility and the proof is complete.
\end{proof}

\section{LD and MLD solutions in the two-circle case}
\label{Stwocircle}
\nopagebreak

\subsection*{Basic definitions}
$\phantom{ab}$
\nopagebreak

We concentrate now to the case where $\Lpar$ consists of only two circles.
Because of the invariance under $\grouptwo$ there exists $\xx_1\in(0,\pi/2)$
such that
\begin{equation}
\label{Etwopar}
\Lpar=
\Lpar[\xx_1]
:=
(\Cir_{\sin\xx_1}\cup\Cir_{-\sin\xx_1})=
\Theta(\{\xx=\pm\xx_1,\zz=0\}).
\end{equation}
We define
\begin{equation}
\label{EtwoL}
L:=
L[\xx_1,m]=
\Lmer[m]\cap\Lpar[\xx_1]
=\grouptwo \, p_1
\quad 
\text{ where } 
\quad 
p_1:=\Theta(\xx_1,0,0).
\end{equation}
Clearly $L$
consists of $2m$ points,
$m$ of them at latitude $\xx_1$,
and the other $m$ at latitude $-\xx_1$.
We define (recall \ref{con:L})
\begin{equation}
\label{Edelta}
\delta_p:=\deltaL:=\frac1{9m} \cos\xx_\riza \quad (p\in L),
\end{equation}
where we assume from now on 
\begin{equation}
\label{Exxone}
\xx_1\in \, (\, \xxbal/2 \, ,\, ( \xx_\riza + \xxbal )/2 \, ),
\end{equation} 
where $\xx_\riza$ was defined in \ref{Lphieo} 
and
$\xxbal$ will be defined in \ref{L:hhat}.
\ref{Exxone} ensures that the condition in \ref{con:L} is satisfied.
Clearly $\skernel_\sym[L]$ is two-dimensional and spanned by
$W,W'\in\skernel_\sym[L]$, both of which are supported on 
$D_L(2\delta_1)\setminus D_L(\delta_1)$, 
and are defined by requesting that on
$D_{p_1}(2\delta_1)$
we have
\begin{equation}
\label{EWW}
\begin{aligned}
W=W[\xx_1,m]
:=&
\Lcalp\,
\Psibold\left[2\delta_1,\delta_1 ; \dbold_{p_1} \right]
(G_{p_1},\,\log\delta_{1} \, \cos\circ\dbold_{p_1}\,),
\\
W'=W'[\xx_1,m]
:=&
\Lcalp\,
\Psibold\left[2\delta_1,\delta_1 ; \dbold_{p_1} \right]
(0,u\,),
\end{aligned}
\end{equation}
where $u$ is the first harmonic on $\Spheq$ 
characterized by $u(p_1)=0$ and $d_{p_1} u = d_{p_1} \xx$.
Because of the symmetries we only consider constant $\tau:L\to \R$.
\begin{definition}
\label{Etwovarphi}
We define an LD solution $\Phi =\Phi[\xx_1,m]:=\varphi[L[\xx_1,m],\,1] \in C^\infty_\sym(\Spheq\setminus L)$ ,
and $V=V[\xx_1,m],V'=V'[\xx_1,m]\in \skernelv[L]$ by $\Lcalp V= W$ and $\Lcalp V'=W'$  
(recall \ref{LLD}, \ref{Eskernelsym}, and \ref{EWW}).  
\end{definition}
Clearly $\skernelv_\sym[L]$ is spanned by $V$ and $V'$.

\subsection*{The rotationally invariant part $\phi:=\Phi_\ave$}
$\phantom{ab}$
\nopagebreak

To help with the presentation we first introduce some notation.

\begin{notation}
\label{Npartial}
If a function $u$ is defined on a neighborhood of $\Lpar$
and has one-sided partial derivatives at $\xx=\xx_1$,
then we 
we use the notation 
(so that if $u$ is $C^1$ then $\partial_{1+\,} u + \partial_{1-\,} u = 0 $)  
$$
\partial_{1+\,} u :=\left. \frac{\partial u  }{\partial \xx}\right|_{\xx=\xx_1+},
\qquad\qquad
\partial_{1-\,} u :=-\left. \frac{\partial u  }{\partial \xx}\right|_{\xx=\xx_1-}.
$$
\hfill $\square$
\end{notation}

\begin{lemma}
\label{Lphiave}
For $\xx_1$ as in \ref{Exxone}
we have 
that $\phi:=\Phi_\ave[\xx_1,m]$ 
is given by (recall \ref{Dphieo} and \ref{Dave})
\begin{equation*}
\begin{gathered}
\phi=\left\{
\begin{gathered}
\frac{\phi_1}{\phie(\xx_1)}\phie
\quad\text{on}\quad
\{|\xx|\le\xx_1\}\subset\Spheq,
\\
\frac{\phi_1}{\phio(\xx_1)}\phio
\quad\text{on}\quad
\{\xx_1\le\xx\}\subset\Spheq,
\end{gathered}
\right.
\quad\text{ where }\quad
\phi_1=\frac{m}{\cos\xx_1\,(h_{1+}+h_{1-})},
\\
h_{1+}:=\frac1{\phio(\xx_1)}\frac{\partial\phio}{\partial\xx}(\xx_1)
=\frac1{\phi_1} \, \partial_{1+\,} \phi
\, > \, 0 \,
,
\\
h_{1-}:=-\frac1{\phie(\xx_1)}\frac{\partial\phie}{\partial\xx}(\xx_1)
=\frac1{\phi_1} \, \partial_{1-\,}\phi 
\, > \, 0 \,
.
\end{gathered}
\end{equation*}
Moreover we have $\phi\ge\phi_1>0$
on $\Spheq$.
\end{lemma}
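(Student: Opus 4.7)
The plan is to reduce the problem to an ODE on the latitude coordinate $\xx$, use the $\grouptwo$-symmetry together with regularity at the poles to pin down the form of $\phi=\Phi_\ave$ on each of the three regions up to a single constant $\phi_1$, and then determine $\phi_1$ from a distributional jump computation across $\xx=\xx_1$. By \ref{LLD}.ii applied with $w=0$, since $L\cap\{p_N,p_S\}=\emptyset$ we have $\phi\in C^0(\Spheq)$ and $\phi$ is smooth on $\Spheq\setminus\Lpar$ where it satisfies the rotationally invariant form of the linearized equation, namely $L\phi:=\phi''-\tan\xx\,\phi'+2\phi=0$. Invariance of $\Phi$ under $\xbartilde\in\grouptwo$ (see \ref{Dgroup}) forces $\phi$ to be even in $\xx$.

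By \ref{Lphieo} the general solution of $Lu=0$ is spanned by $\phie$ and $\phio$. On $\{|\xx|<\xx_1\}$ evenness of $\phi$ forces the $\phio$-coefficient to vanish, and on $\{\xx_1<\xx<\pi/2\}$ continuity of $\phi$ at the pole $p_N\notin L$ forces the $\phie$-coefficient to vanish since $\phie\to-\infty$ at $\pm\pi/2$. Setting $\phi_1:=\phi(\xx_1)$, continuity at $\xx=\xx_1$ then determines the ratios and reproduces the piecewise formula in the statement; the region $\{\xx\le-\xx_1\}$ is handled by applying evenness and the oddness of $\phio=\sin\xx$.

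To compute $\phi_1$, the plan is to invoke the distributional identity $\Lcalp\Phi=2\pi\sum_{p\in L}\delta_p$ on $\Spheq$, which follows from $\tau_p=1$, the leading behavior $G_p=(1+O(\dbold_p^2))\log\dbold_p$ of \ref{LGp}.i (giving a point mass of weight $2\pi$ under $\Lcalp$), and the fact that $\Phi-G_p$ is smooth near each $p\in L$ by \ref{DLDnoK}.ii. Testing this identity against a test function $\chi(\xx)$ depending only on $\xx$ (viewed as a function on $\Spheq$), and using that the operator $L$ is self-adjoint with respect to the measure $\cos\xx\,d\xx$ together with $\int_0^{2\pi}\partial_\yy^2\Phi\,d\yy=0$ by periodicity, yields $L\phi=(m/\cos\xx_1)(\delta_{\xx_1}+\delta_{-\xx_1})$ as a distribution on $(-\pi/2,\pi/2)$ with Lebesgue measure. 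The standard formula for the distributional second derivative of a continuous piecewise $C^2$ function identifies the jump $\phi'(\xx_1+)-\phi'(\xx_1-)=m/\cos\xx_1$, and computing this jump from the explicit piecewise formula yields $\phi_1(h_{1+}+h_{1-})=m/\cos\xx_1$, which is the claimed value of $\phi_1$.

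For the positivity statements, $h_{1+}=\phio'(\xx_1)/\phio(\xx_1)=\cot\xx_1>0$, while $h_{1-}>0$ since \ref{Lphieo} gives $\phie'(\xx_1)<0$; here $\phie(\xx_1)>0$ as long as $\xx_1<\xx_\riza$, which follows from \ref{Exxone} once $\xxbal<\xx_\riza$ is verified in \ref{L:hhat}. Thus $\phi_1>0$. The inequality $\phi\ge\phi_1$ then follows because $\phie$ attains its minimum on $[-\xx_1,\xx_1]$ at the endpoints (by evenness and the decreasing property on $[0,\pi/2)$), while $\phio=\sin\xx$ is increasing on $[\xx_1,\pi/2]$, with the southern hemisphere handled by symmetry. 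I expect the main technical care to be needed in the jump computation, specifically in keeping track of the $\cos\xx$ factors coming from the spherical volume form and from the self-adjointness of $L$, so that the normalization $m/\cos\xx_1$ emerges correctly.
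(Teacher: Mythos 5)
Your proposal is correct and follows essentially the same route as the paper: both arguments first pin down the piecewise form of $\phi$ from evenness, regularity at the poles, and \ref{Lphieo}, and then determine $\phi_1$ by a flux/balance computation that matches the logarithmic singularities of the $m$ Green's functions at latitude $\xx_1$ against the normal-derivative jump of $\phi$ across $\Cir_{\sin\xx_1}$. The only difference is presentational: the paper integrates $\Lcalp\Phi=0$ over the shrinking annulus $D_P(\epsilon_2)\setminus D_L(\epsilon_1)$ and takes $\epsilon_1\to0$ then $\epsilon_2\to0$, whereas you project the distributional identity $\Lcalp\Phi=2\pi\sum_{p\in L}\delta_p$ onto rotationally invariant test functions (using self-adjointness of $L$ with respect to $\cos\xx\,d\xx$) to obtain the ODE jump $\partial_{1+}\phi+\partial_{1-}\phi=m/\cos\xx_1$ directly; the two computations are the same flux balance in different guises.
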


\begin{proof}
To simplify the notation for this proof we define domains of $\Spheq$,
$\Omega_N:=\{\xx_1\le\xx\}$ and
$\Omega_{eq}:=\{|\xx|\le\xx_1\}$,
which are neighborhoods of the North pole and the equator respectively.
Because of the symmetries it is clear that $\phi=A_+ \phio$ on $\Omega_N$
and $\phi=A_- \phie$ on $\Omega_{eq}$ for some constants $A_+$ and $A_-$.
Because of the continuity of $\phi$ at 
$P:= \Omega_N\cap\Omega_{eq} = \Cir_{\sin \,\xx_1}$
by \ref{LLD}.ii
we have 
$$
A_-\phie(\xx_1)=A_+\phio(\xx_1).
$$
For $0<\epsilon_1<<\epsilon_2$ we
consider now the domain 
$\Omega_{\epsilon_1,\epsilon_2}:=
D_P(\epsilon_2) 
\setminus 
D_L(\epsilon_1) 
$.
By integrating $\Lcalp \Phi=0$ on $\Omega_{\epsilon_1,\epsilon_2}$
and integrating by parts we obtain
$$
\int_{\partial\Omega_{\epsilon_1,\epsilon_2} }
\frac\partial{\partial\eta}\Phi
\, + \,
2 \int_{\Omega_{\epsilon_1,\epsilon_2} }
\Phi
= 0.
$$
By taking the limit as $\epsilon_1\to 0$ first
and then as $\epsilon_2\to 0$ we obtain using the logarithmic
behavior near $L$ that 
$$
2\pi m = 2 \pi \cos \xx_1 \, 
( \, A_+ \, \phio(\xx_1) \, h_{1+}\,  +\,  
 A_- \, \phie(\xx_1) \, h_{1-} \, ).
$$
Solving the system of the two equations for $A_{\pm}$ and using the monotonicity
of $\phie$ and $\phio$ (recall \ref{Lphieo})
we conclude the proof.
\end{proof}

Motivated by the above lemma we have the following definition.
Note for later applications that $\phiunder$ depends linearly on 
$(\atilde,\btilde)\in\R^2$ 
and $\junder$ on $\btilde\in\R$:

\begin{definition}
\label{Dphiunder}
Given $\atilde,\btilde\in\R$ we define
\begin{equation*}
\begin{aligned}
\phiunder=\phiunder[\atilde,\btilde;\xx_1]\in \,\,
&
C^\infty_\xx( \, \{\xx\in[0,\pi/2)\} \, )
\bigcap
C^0_\xxx( \, \Spheq\setminus\{p_N,p_S\} \, ),
\\
\junder=\junder[\btilde;\xx_1]\in \,\,
&
C^\infty_\xx( \{\xx\in[\xx_1,\pi/2)\} \,)
\bigcap 
C^\infty_\xx(  \{\xx\in(0,\xx_1]\} \,)
\bigcap 
C^0_\xxx(\Spheq\setminus\{p_N,p_S\}\,),
\end{aligned}
\end{equation*}
by requesting they satisfy the initial data 
$$
\phiunder(\xx_1)=\atilde,
\qquad
\left. \frac{ \partial\phiunder }{ \partial\xxtilde } \right|_{\xx=\xx_1} =
\frac1m \frac{ \partial\phiunder }{ \partial\xx } (\xx_1) =  \btilde ,
\qquad
\junder(\xx_1)=0,
\qquad
\partial_{1+}\junder=\partial_{1-}\junder=m \btilde,
$$
and the ODEs $\Lcalp\phiunder=0$ on
$\{\xx\in[0,\pi/2)\}$, 
and $\Lcalp\junder=0$ on $\{\xx\in[\xx_1,\pi/2)\}\subset\Spheq$ and on
$\{\xx\in[0,\xx_1]\}\subset\Spheq$.
\end{definition}
To simplify the presentation we define also the function $\hhat:(0,\pi/2)\to\R$ by
\begin{equation}
\label{Ehhat}
\hhat(\xx):=
\frac{1}{2\cos\xx}
\frac
{\frac1{\phio(\xx)}\frac{\partial\phio}{\partial\xx}(\xx)
+
\frac1{\phie(\xx)}\frac{\partial\phie}{\partial\xx}(\xx)}
{\frac1{\phio(\xx)}\frac{\partial\phio}{\partial\xx}(\xx)
-
\frac1{\phie(\xx)}\frac{\partial\phie}{\partial\xx}(\xx)}
.
\end{equation}

\begin{cor}
\label{Cphiave}
On $\{\xx\in(-\xx_1,\pi/2)\}$ 
we have that 
$$
\phi:=\Phi_\ave[\xx_1,m]
=\phiunder[\phi_1,
\hhat(\xx_1)
;\xx_1]
+
\junder[
\textstyle\frac1{2\cos\xx_1}
;\xx_1].
$$
\end{cor}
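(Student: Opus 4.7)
The plan is to verify the identity by reducing both sides to solutions of the same linear ODE on each of the two intervals $\{\xx\in(\xx_1,\pi/2)\}$ and $\{\xx\in(-\xx_1,\xx_1)\}$ with matching Cauchy data at $\xx=\xx_1$, and then invoke uniqueness. Denote the right-hand side by $\psi:=\phiunder[\phi_1,\hhat(\xx_1);\xx_1]+\junder[\tfrac1{2\cos\xx_1};\xx_1]$. Both $\phi$ and $\psi$ are rotationally invariant and, by \ref{Dphiunder} and \ref{Lphiave}, satisfy $\Lcalp u=0$ on each of the two intervals. So it suffices to check that the value at $\xx_1$ and both one-sided derivatives $\partial_{1+}$ and $\partial_{1-}$ agree.

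For the value: $\phiunder(\xx_1)=\phi_1$ and $\junder(\xx_1)=0$, so $\psi(\xx_1)=\phi_1=\phi(\xx_1)$ by \ref{Lphiave}. For the one-sided derivatives, since $\phiunder$ is smooth one has $\partial_{1+}\phiunder=-\partial_{1-}\phiunder^{\text{(sign)}}$, i.e.\ in the notation of \ref{Npartial}, $\partial_{1+}\phiunder=m\hhat(\xx_1)$ and $\partial_{1-}\phiunder=-m\hhat(\xx_1)$; while $\partial_{1+}\junder=\partial_{1-}\junder=\tfrac{m}{2\cos\xx_1}$ by \ref{Dphiunder}. Hence
\[
\partial_{1+}\psi=m\hhat(\xx_1)+\tfrac{m}{2\cos\xx_1},\qquad
\partial_{1-}\psi=-m\hhat(\xx_1)+\tfrac{m}{2\cos\xx_1}.
\]
On the other hand $\partial_{1\pm}\phi=\phi_1 h_{1\pm}$ by \ref{Lphiave}.

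It remains to verify the two numerical identities
\[
\phi_1 h_{1+}=m\hhat(\xx_1)+\tfrac{m}{2\cos\xx_1},\qquad
\phi_1 h_{1-}=-m\hhat(\xx_1)+\tfrac{m}{2\cos\xx_1}.
\]
Adding these gives $\phi_1(h_{1+}+h_{1-})=m/\cos\xx_1$, which is exactly the formula for $\phi_1$ in \ref{Lphiave}. Subtracting them gives $\phi_1(h_{1+}-h_{1-})=2m\hhat(\xx_1)$, which after substituting the definition of $\phi_1$ reads
\[
\hhat(\xx_1)=\frac{h_{1+}-h_{1-}}{2\cos\xx_1\,(h_{1+}+h_{1-})},
\]
and this is precisely the formula \eqref{Ehhat} expressed in terms of $h_{1\pm}$ (using $h_{1+}=\tfrac1{\phio}\partial_\xx\phio$ and $h_{1-}=-\tfrac1{\phie}\partial_\xx\phie$ at $\xx_1$). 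Thus the Cauchy data of $\phi$ and $\psi$ at $\xx_1$ agree on each side, and ODE uniqueness concludes the proof. No step is a substantive obstacle; the identity is essentially a bookkeeping exercise, arranged so that $\phiunder$ captures the common (symmetric) part of the Cauchy data and $\junder$ captures the jump across $\Lpar$.
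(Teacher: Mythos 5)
Your proof is correct and follows essentially the same route as the paper's: compare the Cauchy data (value and both one-sided $\partial_\xx$-derivatives) at $\xx_1$, reduce the matching conditions to the defining formulas for $\phi_1$ and $\hhat(\xx_1)$, and conclude by ODE uniqueness on each of the two intervals. The only cosmetic difference is that you organize the two scalar conditions via their sum and difference, which makes the bookkeeping slightly more transparent, and the clause ``$\partial_{1+}\phiunder=-\partial_{1-}\phiunder^{\text{(sign)}}$'' is awkwardly phrased but harmless since the correct values appear immediately after.
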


\begin{proof}
By \ref{Lphiave}
on $\Cir_{\sin \,\xx_1}$ we have 
$$
\phi=\phi_1, 
\qquad
\partial_{1+\,} \phi = \phi_1\,h_{1+},
\qquad
\partial_{1-\,} \phi = \phi_1\,h_{1-}.
$$
By \ref{Dphiunder} the corresponding initial data for the right
hand side are
$$
\phi_1, 
\qquad
m\,(\,\hhat(\xx_1) + \textstyle\frac1{2\cos\xx_1} \,),
\qquad
m\,(\,-\hhat(\xx_1) + \frac1{2\cos\xx_1} \,).
$$
Using the definitions we calculate that 
$$
\hhat(\xx_1)=\textstyle\frac1{2m} (h_{1+}-h_{1-})\phi_1,
\qquad
\frac1{2\cos\xx_1}=\textstyle\frac1{2m} (h_{1+}+h_{1-})\phi_1.
$$
This implies that the initial data are the same for both
sides and therefore the corollary follows by the uniqueness of ODE solutions.
\end{proof}

The following is important for horizontal balancing considerations.
\begin{lemma}
\label{L:hhat}
$\frac{d\hhat}{d\xx} <0$ 
on $(0,\xxroot)$. 
$\hhat$ has a unique root in 
$(0,\xxroot)$
which we will denote by 
$\xxbal$.
\end{lemma}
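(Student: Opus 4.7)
The plan is to first reduce $\hhat$ to a cleaner form by exploiting the Wronskian of the linearized ODE. Both $\phio$ and $\phie$ satisfy the rotationally invariant equation $\phi''-\tan\xx\,\phi'+2\phi=0$, so the Wronskian $W:=\phio\phie'-\phie\phio'$ satisfies $W'=\tan\xx\cdot W$; combined with the initial values $\phio(0)=0,\phio'(0)=1,\phie(0)=1,\phie'(0)=0$ this gives $W\equiv -\sec\xx$. Writing the numerator and denominator of the fraction inside $\hhat$ over the common denominator $\phio\phie$, I expect to find
$$
\frac{\tfrac{\phio'}{\phio}+\tfrac{\phie'}{\phie}}{\tfrac{\phio'}{\phio}-\tfrac{\phie'}{\phie}}
=\frac{(\phio\phie)'/(\phio\phie)}{-W/(\phio\phie)}=\cos\xx\,(\phio\phie)',
$$
and therefore $\hhat(\xx)=\tfrac{1}{2}(\phio\phie)'(\xx)$. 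This identity is the crucial simplification; attempting to control $\hhat$ directly from the explicit formula $\phie=1-\sin\xx\log\tfrac{1+\sin\xx}{\cos\xx}$ would require awkward case-splitting around $\xx=\pi/4$, so the Wronskian step is really what makes the argument clean and is the main thing to get right.

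Next I would differentiate once more and use the ODE to eliminate second derivatives. Since $\phio''=-\sin\xx$ and $\phie''=\tan\xx\,\phie'-2\phie$, a direct calculation should give
$$
(\phio\phie)''=\phio''\phie+2\phio'\phie'+\phio\phie''
=-3\sin\xx\,\phie+\phie'\cdot\frac{1+\cos^2\xx}{\cos\xx}.
$$
On $(0,\xxroot)$ I have $\sin\xx>0$, $\cos\xx>0$, and by Lemma \ref{Lphieo} $\phie>0$ with $\phie'<0$ (strictness at $\xxroot$ follows from uniqueness of solutions of the linear ODE: if $\phie'(\xxroot)$ vanished then $\phie\equiv 0$). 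Hence both summands are strictly negative on $(0,\xxroot)$, giving $\hhat'=(\phio\phie)''/2<0$ throughout the interval.

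Finally I would read off the endpoint values of $\hhat$ to locate the root. At $\xx=0$, $\hhat(0)=\tfrac{1}{2}\bigl(\phio'(0)\phie(0)+\phio(0)\phie'(0)\bigr)=\tfrac12>0$, while at $\xx=\xxroot$ one has $\phie(\xxroot)=0$, so $\hhat(\xxroot)=\tfrac{1}{2}\sin\xxroot\cdot\phie'(\xxroot)<0$ by the sign of $\phie'(\xxroot)$ just noted. Combined with the strict monotonicity $\hhat'<0$ on $(0,\xxroot)$, the intermediate value theorem yields a unique zero $\xxbal\in(0,\xxroot)$, as claimed.
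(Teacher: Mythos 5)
Your proof is correct and takes a genuinely different route from the paper. The paper proves the lemma by brute-force computation from the explicit expression $\phie=1-\sin\xx\log\tfrac{1+\sin\xx}{\cos\xx}$: it differentiates to obtain $2\hhat=\cos\xx-\tfrac{\sin^2\xx}{\cos\xx}-\sin2\xx\log\tfrac{1+\sin\xx}{\cos\xx}$, differentiates again, and then uses the inequality $\sin\xx\log\tfrac{1+\sin\xx}{\cos\xx}<1$ on $(0,\xxroot)$ (equivalent to $\phie>0$) to extract a slightly stronger bound $2\tfrac{d\hhat}{d\xx}<-\sin\xx$. Your approach instead exploits the ODE structure: the Wronskian identity $\phio\phie'-\phie\phio'=-\sec\xx$ immediately collapses $\hhat$ to $\tfrac12(\phio\phie)'$, and a second application of the ODE gives $(\phio\phie)''=-3\sin\xx\,\phie+\phie'\,\tfrac{1+\cos^2\xx}{\cos\xx}$, whose sign is transparent. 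This is cleaner and more conceptual (and indeed, the paper's explicit $2\hhat$ is exactly $(\phio\phie)'$, so you have unearthed the structure behind the paper's formula). The only cost is that you get only the qualitative $\hhat'<0$ rather than the paper's quantitative $2\hhat'<-\sin\xx$, which the lemma does not require.

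One small inaccuracy: you write that ``both summands are strictly negative,'' which requires $\phie'<0$ strictly on $(0,\xxroot)$, whereas Lemma \ref{Lphieo} only literally gives strict decrease, i.e. $\phie'\le 0$. This is harmless here --- the first summand $-3\sin\xx\,\phie$ is already strictly negative on $(0,\xxroot)$ since $\phie>0$ there, so $(\phio\phie)''<0$ follows with only $\phie'\le 0$; and at the endpoint $\xxroot$, where strict negativity of $\phie'$ is actually needed for $\hhat(\xxroot)<0$, your uniqueness argument ($\phie(\xxroot)=\phie'(\xxroot)=0$ would force $\phie\equiv 0$) supplies it. If you did want $\phie'<0$ on the open interval, note that at any critical point $\xx_0\in(0,\xxroot)$ the ODE gives $\phie''(\xx_0)=-2\phie(\xx_0)<0$, a strict local maximum, contradicting strict decrease; or simply read it off from $\phie'=-\cos\xx\log\tfrac{1+\sin\xx}{\cos\xx}-\tan\xx<0$.
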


\begin{proof}
By direct calculation using \ref{Dphieo} and \ref{Lphieo} we have 
$$
\begin{aligned}
2\hhat(\xx)
&=
\cos\xx
-\frac{\sin^2\xx}{\cos\xx}
-\sin2\xx\,\log\frac{1+\sin\xx}{\cos\xx},
\\
\lim_{\xx\to0+} \hhat(\xx) &=1/2
\qquad\quad
\hhat(\xxroot) =
-\frac{\cos\xxroot}2-\frac{\sin^2\xxroot}{2\cos\xxroot}<0.
\\
2\frac{d\hhat}{d\xx} 
&=
-5\sin\xx-\frac{\sin^3\xx}{\cos^2\xx}-2\log\frac{1+\sin\xx}{\cos\xx}+4\sin^2\xx\log\frac{1+\sin\xx}{\cos\xx}.
\end{aligned}
$$
We clearly have $\log\frac{1+\sin\xx}{\cos\xx}>0$
and by \ref{Dphieo} and \ref{Lphieo} 
$\sin\xx\log\frac{1+\sin\xx}{\cos\xx}<1$ on $(0,\xxroot)$.
It follows that on $(0,\xxroot)$
$
2\frac{d\hhat}{d\xx}<-\sin\xx<0 
$
which allows us to complete the proof.
\end{proof}

It will be very important that we often work 
with solutions which are ``almost'' symmetric with respect to
reflection in the $\xx$ coordinate across $\xx_1$ in a sense
made precise later.
In this spirit we define an ``antisymmetrization''
$\Acalxx     $ as follows.

\begin{definition}
\label{DAcal}
We define 
$\Omega_1 = \Omega_1[\xx_1,m] := \, D_{\Lpar[\xx_1]\,}(3/m)$ 
and given $u\in C_\sym^0(\Omega_1[\xx_1,m] )$ 
we define a function
$\Acalxx     u \in C_\sym^0(\Omega_1[\xx_1,m]  \,)$ 
by
$$
\Acalxx     u(\xx_1+\xx',\yy)=
u(\xx_1+\xx',\yy)
-
u(\xx_1-\xx',\yy),
\qquad
\text{ for }\quad
\xx'\in(-3/m \, , \, 3/m),\quad
\yy\in\R.
$$
\end{definition}

\begin{lemma}
\label{Lode}
The following estimates hold.
\newline
(i). 
$\|  \, \phiunder[1, 0 ;\xx_1] -1 \, : \, C_\sym^{2}( \, \Omega_1[\xx_1,m]\,,\gtilde \, )\, \|\, \le \, C \, /m^2 $.
\newline
(ii).
$\|  \, \junder[1;\xx_1] - m \,|\,|\xx|-\xx_1\,|  \, : \, C_\sym^{2}( \, \Omega_1[\xx_1,m] \setminus \Lpar[\xx_1] \,,\gtilde \, ) \, \| \, \le \, C \, /m \, $.
\newline
(iii).
$\| \, \Acalxx  \,   \phiunder[1, 0 ;\xx_1] \, : C_\sym^{2}( \, \Omega_1[\xx_1,m]\, , \gtilde \, ) \, \|\le \,  C \,  /m^2$.
\newline
(iv).
$\|  \, \Acalxx  \,   \junder[1;\xx_1] \, : \, C_\sym^{2}( \, \Omega_1[\xx_1,m] \setminus \Lpar[\xx_1] \,,\gtilde \, ) \, \| \, \le  \, C \, /m $.
\newline
(v). 
$\|  \, \phiunder[0, 1 ;\xx_1] - m \,(\,|\xx|-\xx_1\,)  \, : \, C_\sym^{2}( \, \Omega_1[\xx_1,m]\,,\gtilde \, )\, \|\, \le \, C \, /m $.
\end{lemma}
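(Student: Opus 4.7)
The plan is to reduce all five estimates to ODE analysis on a uniformly bounded interval, using the rescaled local coordinate $s:=m(\xx-\xx_1)$. In this coordinate $\Omega_1\cap\{\xx>0\}$ corresponds to $|s|<3$, and for rotationally invariant functions the $C^2_\sym(\Omega_1,\gtilde)$-norm is equivalent to the flat $C^2$-norm in $s$. The equation $\Lcalp\phi=0$ restricted to such functions becomes
\begin{equation*}
\phi_{ss}-a(s)\,\phi_s+b\,\phi=0,\qquad a(s):=\frac{\tan(\xx_1+s/m)}{m},\quad b:=\frac{2}{m^2},
\end{equation*}
so the ODE is a small perturbation of $\phi_{ss}=0$: all $C^k$-norms of $a$ and $b$ on $|s|\le 3$ are of order $1/m$ and $1/m^2$ respectively. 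Once set up this way, parts (i), (ii), (v) follow from standard ODE stability on $|s|\le 3$ and the parity extension $\xx\mapsto|\xx|$.

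More precisely, for (i) the unperturbed problem with data $(1,0)$ has solution $\equiv 1$, and Gronwall on the bounded interval $|s|\le 3$ gives $|\phiunder[1,0;\xx_1]-1|=O(1/m^2)$ in $C^2(s)$. For (v), the unperturbed solution with data $(0,1)$ is $s=m(\xx-\xx_1)$; stability gives the $O(1/m)$ correction. For (ii), $\junder[1;\xx_1]$ solves the ODE separately on $\{s>0\}$ with data $(0,1)$ and on $\{s<0\}$ with data $(0,-1)$ (recall $\partial_{1-}\junder=m$ means $\frac{\partial\junder}{\partial\xx}(\xx_1-)=-m$); in each case the solution is $|s|+O(1/m)=m|\xx-\xx_1|+O(1/m)$. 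Since $\junder\in C^0_\xxx$ is even in $\xx$, this extends to $m\bigl|\,|\xx|-\xx_1\,\bigr|+O(1/m)$ on all of $\Omega_1\setminus\Lpar$.

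The slightly more delicate parts are (iii) and (iv), which measure the failure of reflection symmetry about $\xx_1$. For (iii), write $\phi(\xx_1+s/m)=f^s(s)+f^a(s)$ with $f^s$ even and $f^a$ odd in $s$, and decompose $a=a^s+a^a$ accordingly; note $a^s=O(1/m)$ and, since $a(s)=\tan\xx_1/m+\sec^2\xx_1\,s/m^2+O(s^2/m^3)$, one has $a^a=O(s/m^2)$. The odd part of the equation gives
\begin{equation*}
(f^a)''-a^a(f^a)'+b\,f^a=a^s(f^s)',
\end{equation*}
with zero initial data, while from (i) one knows $(f^s)'=O(1/m^2)$. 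Hence the forcing is $O(1/m^3)$, and integration yields $f^a=O(1/m^3)$ in $C^2(|s|\le 3)$, which is well within the claimed $C/m^2$ bound. For (iv), let $f_\pm$ be the two branches of $\junder[1;\xx_1]$ on $\pm s>0$, and set $\tilde f(s):=f_-(-s)$ for $s>0$. Then $\tilde f$ has the same data $(0,1)$ as $f_+$ but satisfies the ODE with $a(s)$ replaced by $-a(-s)$; consequently $h:=f_+-\tilde f$ solves
\begin{equation*}
h''-a(s)\,h'+b\,h=2\,a^s(s)\,\tilde f'(s),\qquad h(0)=h'(0)=0,
\end{equation*}
with $a^s=O(1/m)$ and $\tilde f'=O(1)$. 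The RHS is $O(1/m)$, so $h=O(1/m)$ in $C^2(|s|\le 3)$, which is exactly $\Acalxx\junder[1;\xx_1]$ on the half-neighborhood, extended by evenness in $\xx$.

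No single step is hard; the only real care needed is handling the corner of $\junder$ at $\xx=\xx_1$ in (ii) and (iv), and tracking the parities of the ODE coefficients in (iii) and (iv) so as to see that the forcing of the antisymmetric part genuinely gains a power of $1/m$ beyond the naive estimate.
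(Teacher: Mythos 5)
Your proposal is correct and rests on the same core observation as the paper: after passing to the rescaled variable $s=\xxhat=m(\xx-\xx_1)$ the ODE $\Lcalp\phi=0$ becomes $\phi_{ss}-m^{-1}\tan(\xx_1+s/m)\,\phi_s+2m^{-2}\phi=0$, a uniformly small perturbation of $\phi_{ss}=0$ on the bounded interval $|s|\le 3$, and parts (i), (ii), (v) follow from ODE stability. Where you part ways with the paper is in the mechanics and in the treatment of (iii), (iv). For (i)--(ii) the paper uses a continuity/bootstrap argument (consider the subset of $\Omega_1$ where $|\phiunder_1-1|<1/2$ and $|\partial_{\xxhat}\phiunder_1|<1/m$, observe the equation forces $|\partial_{\xxhat}^2\phiunder_1|\le C/m^2$ there, and conclude the subset is all of $\Omega_1$), which is equivalent in spirit to your Gronwall argument but avoids writing an integral inequality. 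More notably, for (iii) and (iv) the paper simply observes that they follow immediately from (i) and (ii): since the constant $1$ and the reference function $m\,|\,|\xx|-\xx_1\,|$ are exactly invariant under reflection about $\xx_1$, the operator $\Acalxx$ annihilates them, and $\Acalxx$ applied to a quantity already controlled in $C^2$ by $C/m^2$ (resp.\ $C/m$) trivially satisfies the same bound. Your parity decomposition $\phi=f^s+f^a$ with the computation that $a^s(f^s)'$ is $O(m^{-3})$ is correct and in fact proves a stronger $O(m^{-3})$ bound for (iii) than the lemma asserts; it is a genuine refinement but not needed here, and does not shorten (iv), where the forcing term $2a^s\tilde f'$ is only $O(m^{-1})$ and you recover exactly the triangle-inequality bound. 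In short: the approach is the same, your proof of (iii) is sharper than required, and your proofs of (iii)--(iv) are substantially longer than the one-line deduction the paper uses.
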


\begin{proof}
Let 
$\phiunder_1:=\phiunder[1, 0 ;\xx_1]$. 
$\phiunder_1$ satisfies 
the ODE 
$ \Lcalp \phiunder_1 = 0$
which in the notation of \ref{Eghat} 
amounts to 
$$
\partial_{\xxhat}^2 \, \phiunder_1 - m^{-1}\, \tan(\xx_1+m^{-1}\xxhat)\,\partial_{\xxhat}\,  \phiunder_1 +2m^{-2} \,\phiunder_1 
\, = \,
0\, .
$$
Consider $\Omega$, 
the subset of $\Omega_1[\xx_1,m]$ where $|\phiunder_1-1|<1/2$ and  
$|\partial_{\xxhat}\,  \phiunder_1| < 1/m $.
Using the equation we have 
$|\,\partial_{\xxhat}^2 \, \phiunder_1 \,|\,\le C/m^2$ 
on that subset, 
and therefore we obtain a contradiction unless it is the whole $\Omega_1[\xx_1,m]$.  
This proves (i).
The proof of (ii) is similar 
(note that $m \, \partial_{\xxhat}\,  | \, |\xx|-\xx_1\,|  \,=\,\pm 1\,$). 
(iii) follows then from  (i) and (iv) from (ii).
(v) is equivalent to (ii). 
\end{proof}

\subsection*{Estimates on $\Phi=\Phi[\xx_1,m]$}
$\phantom{ab}$
\nopagebreak

Lemma \ref{Lphiave} provides explicit information on 
$\Phi_\ave$.
We need to estimate $\Phi_\osc$ also.
To this end we introduce a new decomposition 
$\Phi=\Pp+\Ptp$ 
as follows.

\begin{definition}
\label{DV}
\label{EV}
We define $\Pp\in C^\infty_\sym(\Spheq\setminus L)$ by 
requesting that 
\begin{equation*}
\Pp:=
\{\,\Psibold[2\deltaL,3\deltaL; I_{\R^+}]
(G,A_1)
\,\}
\circ
\dbold_L,
\quad\text{where}\quad
A_1:=\,\log\deltaL \, .
\end{equation*}
\end{definition}

Observe that by \ref{LGp} for each $p\in L$ we have on $D_p(2\deltaL)$ 
that $\Pp=G_p$.
This, \ref{LLD}, \ref{DLDnoK}.ii, and 
the definition of $\Phi$ in \ref{Etwovarphi},
imply that $\Phi-\Pp$ can be extended smoothly
across $L$. 
This allows us to have the following.

\begin{definition}
\label{DW}
\label{EW}
We define 
$\Ptp,E''\in C^\infty_\sym(\Spheq)$ by requesting that on
$\Spheq\setminus L$ 
$$
\Phi=\Pp+\Ptp,
\qquad
E'':=-\Lcalp \Pp.
$$
\end{definition}

Note that by \ref{EV} and \ref{LGp} $E''$ vanishes on $D_L(2\deltaL)$.
By \ref{Dave} we have 
\begin{equation}
\label{EWdec}
\begin{aligned}
\Ptp_\ave&,\Ptp_\osc,E''_\ave,E''_\osc\in C^\infty_\sym(\Spheq),\\
\text{and on }\Spheq \quad &
\Ptp=\Ptp_\ave+ \Ptp_\osc,
\quad
E''=E''_\ave+ E''_\osc.
\end{aligned}
\end{equation}

Since $\Lcalp\Phi$ vanishes by \ref{LLD}
and \ref{Etwovarphi},
and $\Lcalp$ is rotationally covariant, 
we conclude from \ref{EW}
and \ref{EWdec}
that
\begin{equation}
\label{ELW}
\Lcalp \Ptp = E'',
\qquad
\Lcalp \Ptp_\ave = E''_\ave,
\qquad
\Lcalp \Ptp_\osc = E''_\osc
\quad\text{ on } \Spheq.
\end{equation}
Moreover since 
$\Phi_\ave\in C^0(\Spheq)$
by \ref{LLD} and \ref{Etwovarphi},
and 
$\Ptp_\ave\in C^\infty_\sym(\Spheq)$ by \ref{EWdec},
we conclude by \ref{EW} that 
\begin{equation}
\label{EWosc}
\begin{aligned}
\Pp_\ave\in C^0(\Spheq),
\qquad & 
\Ptp_\ave= \Phi_\ave-\Pp_\ave=\phi-\Pp_\ave
\quad\text{ on }\quad
\Spheq,
\\
\Phi=&\,\phi+\Pp_\osc+\Ptp_\osc
\quad\text{ on }\quad
\Spheq\setminus L.
\end{aligned}
\end{equation}
Using \ref{EWdec} we conclude that
\begin{equation}
\label{EWposc}
\Ptp=\phi-\Pp_\ave + \Ptp_\osc
\quad \text{ on } \quad
\Spheq.
\end{equation}
Note that in this expression although $\phi$ and $\Pp_\ave$
are not smooth because of a derivative jump at $\xx=\xx_1$,
we do have
$\phi-\Pp_\ave =\Ptp_\ave\in C^\infty_\sym(\Spheq)$
because the derivative jumps cancel out.
Note also that neither $\Lcalp \Ptp_\ave$ nor $\Lcalp \Ptp_\osc$ have to vanish
on $D_L(2\deltaL)$ but their sum does.

$\phi$ is known explicitly by \ref{Lphiave}.
We need to estimate $\Pp_\ave$ and $\Ptp_\osc$.
$\Pp$ is almost explicit and $\Ptp_\osc$ can be estimated by estimating $E''_\osc$
and using \ref{ELW}.
We first estimate $E''$ as follows. 
Note that 
$(\,\Acalxx     E''\,)_\osc \, = \Acalxx     (\, E''_\osc \,) \, $
by the definitions. 

\begin{lemma}
\label{LHest}
The following hold (recall \ref{Exxone} and \ref{Egtilde}).
\newline
(i).
$\|\Pp- A_1:C_\sym^{k}( \, \Spheq \setminus D_L(\deltaL)\,,\gtilde \, )\|\le C(k)\,$
and $\Pp-A_1$ vanishes on $\Spheq\setminus D_L(3\deltaL)$.
\newline
(ii).
$\|m^{-2}E'':C_\sym^{k}( \, \Spheq ,\gtilde \, )\|\le C(k)\,$, 
$\|m^{-2}E''_\ave:C_\sym^{k}( \, \Spheq ,\gtilde \, )\|\le C(k)\,$, 
and
$E''$ vanishes on $D_L(2\deltaL)$. 
\newline
(iii).
$\|m^{-2}E''_\osc:C_\sym^{k}( \, \Spheq ,\gtilde \, )\|\le C(k)\,$
and 
$E''_\osc$ is supported on $D_{\Lpar}(3\deltaL)\subset \Omega_1[\xx_1,m]\, $.
\newline
(iv).
$\|m^{-2}\Acalxx     E'' :C_\sym^{k}( \, \Omega_1[\xx_1,m]\, , \gtilde \, )\|\le C(k)\,/m$ 
and
\newline 
$\|m^{-2}\Acalxx     E''_\ave :C_\sym^{k}( \, \Omega_1[\xx_1,m]\, , \gtilde \, )\|\le C(k)\,/m$.  
\newline
(v).
$\|m^{-2}\Acalxx     E''_\osc :C_\sym^{k}( \, \Omega_1[\xx_1,m]\, , \gtilde \, )\|\le C(k)\,/m$
and 
$\Acalxx     E''_\osc$ is supported on $D_{\Lpar}(3\deltaL)$.
\end{lemma}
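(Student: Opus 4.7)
The plan is to exploit the explicit formula for $\Pp$ from \ref{EV}, use the identity $\Lcalp = m^2 \Lgtp$ to reduce estimates on $m^{-2} E''$ to estimates on $\Pp$ in the scaled metric $\gtilde$, and derive the antisymmetrization bounds in (iv)--(v) from a direct computation showing that $\dbold_p$ is almost symmetric across $\Cir_{\sin\xx_1}$, with errors of the required order. I would prove (i) first, derive (ii) and (iii) immediately, and then tackle the main obstacle (iv), with (v) following.

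For (i), the function $\Pp - A_1$ vanishes on $\Spheq \setminus D_L(3\deltaL)$, equals the cutoff blend of $G_p$ and $A_1$ on $D_p(3\deltaL) \setminus D_p(2\deltaL)$ for each $p\in L$, and equals $G_p - \log\deltaL$ on $D_p(2\deltaL) \setminus D_p(\deltaL)$. On the last region the scaled distance $m\dbold_p$ lies in the bounded range $[\cos\xxroot/9, 2\cos\xxroot/9]$, so \ref{LGp}.vi--vii yields the $C^k(\cdot,\gtilde)$-bound; on the transition annulus the scaled derivatives of $\psicut[2\deltaL,3\deltaL]\circ \dbold_p$ are $O(1)$ and those of $G_p$ are again controlled by \ref{LGp}, so \ref{E:norm:mult} delivers the bound. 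Part (ii) follows from the decomposition $m^{-2} E'' = -\Lgtp(\Pp - A_1) - 2 m^{-2} A_1$: the first term is bounded by (i) on $\Spheq \setminus D_L(\deltaL)$ and extends trivially to $D_L(2\deltaL)$ by the vanishing of $E''$ there, while $|2m^{-2}A_1| \lesssim m^{-2}\log m$. Averaging gives $E''_\ave$. For (iii), $\Pp$ is a function of $\dbold_p$ only on each $D_p(3\deltaL)$ and equals the constant $A_1$ elsewhere, so $E''$ is rotationally invariant outside $\disjun_{p\in L} D_p(3\deltaL)$; a parallel circle $C$ not meeting any $D_p(3\deltaL)$ carries the constant value $E''|_C \equiv -2A_1$, giving $E''_\osc|_C\equiv 0$, and hence $\supp E''_\osc \subset D_{\Lpar}(3\deltaL)$.

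The crux is (iv), which requires extracting an extra factor $1/m$ from the near-symmetry of the configuration across $\Cir_{\sin\xx_1}$. For $p = \Theta(\xx_1, \yy_0, 0)\in L$, the spherical law of cosines gives the exact identity
\begin{equation*}
\cos\dbold_p(\xx_1+\xx', \yy_0+\yy') - \cos\dbold_p(\xx_1-\xx', \yy_0+\yy') \, = \, 2\sin\xx_1\cos\xx_1 \, \sin\xx' \, (1-\cos\yy'),
\end{equation*}
which is $O(m^{-3})$ for $(\xx',\yy') = O(1/m)$; dividing by $\sin\dbold_p \sim \dbold_p \sim 1/m$ shows that $\Acalxx \dbold_p$ has $\gtilde$-size $O(1/m)$, with analogous bounds on $\gtilde$-derivatives, and an identical argument handles the points of $L$ near $\Cir_{-\sin\xx_1}$. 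Since $\Pp$ on $D_p(3\deltaL)$ is a function of $\dbold_p$ alone, the chain rule combined with \ref{E:norm:mult}, \ref{LGp}.vi, and the $\gtilde$-bounds on $\psicut\circ\dbold_p$ propagates the $1/m$ factor through $\Pp - A_1$ and therefore through $m^{-2}\Acalxx E'' = -\Lgtp\,\Acalxx(\Pp - A_1)$, the constant contribution of $A_1$ cancelling under $\Acalxx$. Part (v) then follows from (iv) by subtracting the rotationally invariant $\Acalxx E''_\ave$, which is bounded by the average of $|\Acalxx E''|$, and inheriting the support property from (iii).
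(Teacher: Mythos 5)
Your approach for (i)--(iii) matches the paper's in structure, with more detail supplied. For (iv), you replace the paper's smooth-dependence argument (expressing $\gtilde$ in the coordinate $\xxhat = \xxtilde - m\xx_1$ as $\ghat_t = d\xxhat^2 + \cos^2(\xx_1 + t\xxhat)\,d\yytilde^2$ with $t = 1/m$, and deducing $\|\Acalxx \dbold_L\| \le C(k)/m$ because $\Acalxx\dbold_{(0,0)}$ vanishes at $t=0$) with an explicit and exact spherical-law-of-cosines identity. That identity is correct and verifiably gives the same $C^0$ bound $\Acalxx\dbold_p = O(m^{-2})$, i.e.\ $\gtilde$-size $O(1/m)$; it is a legitimate alternative, though you would still need to verify the higher $\gtilde$-derivative bounds, which the paper's smoothness-in-$t$ argument yields for free.

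There is, however, a genuine gap in how you pass from $\Acalxx(\Pp - A_1)$ to $\Acalxx E''$. You write $m^{-2}\Acalxx E'' = -\Lgtp\,\Acalxx(\Pp - A_1)$, but this requires $\Lgtp$ and $\Acalxx$ to commute, and they do not: $\Acalxx$ is reflection across $\xx = \xx_1$ composed with subtraction, and that reflection is not an isometry of $\gtilde$ (the coefficient $\cos^2\xx$ in front of $d\yy^2$ is not even in $\xx$ about $\xx_1$). What is actually true is $m^{-2}\Acalxx E'' = -\Acalxx\,\Lgtp(\Pp - A_1)$, with $\Acalxx$ \emph{outside}. To finish you must either (a) estimate the commutator $[\Acalxx, \Lgtp]$ acting on $\Pp - A_1$, which is indeed $O(1/m)$ in $\gtilde$ because the metric coefficients differ from their reflections by $O(1/m)$ and $\Pp - A_1$ has $O(1)$ derivative bounds by (i) -- but this is an additional step you have not carried out -- or (b) take the route the paper takes: observe that $E''$ itself factors through $\dbold_L$ (because $|\nabla\dbold_p| = 1$ and $\Delta\dbold_p = \cot\dbold_p$ is a function of $\dbold_p$, so $\Lcalp(g\circ\dbold_p)$ is again a function of $\dbold_p$), and then apply the chain rule and your $\Acalxx\dbold_L$ bound \emph{directly} to $E''$, with no $\Lgtp$-vs-$\Acalxx$ commutation needed. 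Route (b) is cleaner and avoids the issue entirely; as written, your argument skips the commutator.
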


\begin{proof}
(i) follows from \ref{LGp}.vi and the definitions.
By \ref{ELcalp} and \ref{EW} we have
$$
m^{-2}E''=
-\Lcalp_\gtilde \Pp =
-\Lcalp_\gtilde ( \Pp - A_1 ) - 2 m^{-2} A_1.
$$
As mentioned earlier $E''$ vanishes on $D_L(2\deltaL)$ 
and clearly 
$-1 < m^{-2} A_1 < 0$ 
by \ref{Edelta}. 
(ii) follows then from (i). 
The second part of (iii) follows 
from (i) which implies that 
$\Pp=A_1$ on $\Spheq\setminus D_L(3\deltaL)$.
The first part of (iii) follows then from (ii) 
and the second part. 

Recall now the coordinates defined in \ref{Egtilde} 
and define a new coordinate $\xxhat:=\xxtilde-m\xx_1$.
The metric $\gtilde$ then in coordinates $(\xxhat,\yytilde)$
is equal to the metric
\begin{equation}
\label{Eghat}
\ghat_t:=d\xxhat^2+\cos^2(\xx_1+t\xxhat)\,d\yytilde^2
\end{equation}
with $t=1/m$.
$\ghat_t$ clearly depends smoothly on $t$ for $|t|$ small,
and for $t=0$ is the Euclidean metric with $(\xxhat,\yytilde)$ 
the standard coordinates.
This implies that $\dbold_{(0,0)}$ as a function of $(t,\xxhat,\yytilde)$
is smooth for small $|t|$ and bounded $(\xxhat,\yytilde)$ independently of $m$.
Since 
$\Acalxx     \dbold_{(0,0)}$ clearly vanishes for $t=0$ we conclude that
$\|\Acalxx      \dbold_L:C_\sym^{k}(D_L(3\deltaL)\, \setminus D_L(2\deltaL)\, , \gtilde \, )\|\le C(k)\,/m$.

Note now that 
$\Acalxx     E''$ is supported on 
$D_L(3\deltaL)\, \setminus D_L(2\deltaL)$. 
By \ref{EV} and \ref{EW} $E''$ factors through $\dbold_L$.
(iv) follows then from (ii) (with a loss of one derivative)
by the estimate on 
$\Acalxx      \dbold_L$
above.
By averaging over the circles then and subtracting we obtain the estimate in (v).
The statement on the support follows from the support of $E''_\osc$ in (iii).
\end{proof}

\begin{lemma}
\label{Lsol}
Given 
$E\in C^{0,\beta}_\sym(\Spheq)$ 
with 
$E_\ave\equiv 0$ 
and $E$ supported on 
$\Omega_1[\xx_1,m]$, 
there is a unique 
$v \in C^{2,\beta}_\sym(\Spheq)$
characterized by (i) below 
and satisfying the following.
\newline
(i).
$\Lcalp  v=E$, or equivalently $\Lgtp v=m^{-2}E$.
\newline
(ii). 
$v_\ave=0$.
\newline
(iii).
$\|v:C^{2,\beta}_\sym(\Spheq,\gtilde, \fdecay )\|
\le
C \,
\|m^{-2} E:C_\sym^{0,\beta}( \Omega_1[\xx_1,m]\, ,\gtilde \, )\|
$
(recall \ref{D:newweightedHolder}),
where we have 
$\fdecay:= e^{-c_1 m \min ( \left|\vphantom{A^A} |\xx|-\xx_1\right| , c_2\, ) } $ 
for some absolute constants $c_1,c_2>0$.
\newline
(iv).
$\|\Acalxx      v:C^{2,\beta}_\sym( \Omega_1[\xx_1,m]\, ,\gtilde \, )\|
\le
$
\newline
$
\phantom{\|\Acalxx      v:C^{2,\beta}_\sym}
$ 
\hfill 
$
C \,\left(m^{-1} \|m^{-2} E:C_\sym^{0,\beta }( \Omega_1[\xx_1,m]\, ,\gtilde \, )\| +
\|m^{-2} \Acalxx     E : C_\sym^{0,\beta }( \Omega_1[\xx_1,m]\, ,\gtilde \, )\|\right)$.
\end{lemma}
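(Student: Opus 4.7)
The plan is to reduce the equation to ODEs via Fourier decomposition in $\yy$. By $\grouptwo$-invariance, any $\grouptwo$-symmetric function on $\Spheq$ expands into $\yy$-modes $\cos(km\yy)$ for $k\in\Z$ (the $\sin$-modes being killed by $\ybartilde$), and the hypothesis $E_\ave\equiv 0$ kills the $k=0$ mode. For each $k\ge 1$ the equation $\Lcalp v=E$ decouples into the ODE $f_k''-\tan\xx\,f_k'-(k^2m^2\sec^2\xx-2)\,f_k=E_k$ on $(-\pi/2,\pi/2)$, whose zero-order coefficient is strictly negative for $m$ large; standard ODE theory gives a unique smooth $f_k$, regular at the poles. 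Reassembling yields (i); since every surviving mode has $k\ge 1$, $v_\ave\equiv 0$, giving (ii). Uniqueness in the $C^{2,\beta}_\sym(\Spheq)$ class uses triviality of $\ker\Lcalp$ on $\grouptwo$-symmetric functions noted in the proof of \ref{LLD}.

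For (iii), each mode ODE has effective mass at least $m\sec\xx\ge m$. A standard barrier (or variation-of-parameters) argument gives $|f_k(\xx)|\le C(km)^{-2}\|E_k\|_\infty$ on the support of $E_k$ and decay by $e^{-c_1km\,\min(|\,|\xx|-\xx_1\,|,c_2)}$ outside it. Summing over $k\ge 1$ (with the $(km)^{-2}$ factor suppressing higher modes) gives the weighted $C^0$-bound with decay factor $\fdecay$. The $C^{2,\beta}$-bound follows from interior Schauder estimates applied in balls of radius $1/100$ in the scaled metric $\gtilde$; in these balls $\Lgtp$ has $C^{0,\beta}$ coefficients with norms bounded uniformly in $m$, so the Schauder constants are uniform.

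For (iv), work in scaled coordinates $(\xxtilde,\yytilde)=(m\xx,m\yy)$, where
\begin{equation*}
\Lgtp=\partial_{\xxtilde}^{2}+\sec^{2}\xx\,\partial_{\yytilde}^{2}-m^{-1}\tan\xx\,\partial_{\xxtilde}+2m^{-2}.
\end{equation*}
Let $R$ denote the reflection $\xxtilde\mapsto 2m\xx_1-\xxtilde$. A direct computation of $\Lgtp(v\circ R)-(\Lgtp v)\circ R$ on $\Omega_1[\xx_1,m]$ gives
\begin{equation*}
\Lgtp\,\Acalxx v=m^{-2}\,\Acalxx E+\mathcal{P}v,
\end{equation*}
where $\mathcal{P}$ is first order in $\gtilde$-coordinates with coefficients $O(m^{-1})$ on $\Omega_1$: the $\partial_{\yytilde}^{2}$-coefficient $\sec^{2}\xx-\sec^{2}R\xx$ is $O(m^{-1})$ because $|\xx-R\xx|=O(1/m)$ there, and the $\partial_{\xxtilde}$-coefficient $m^{-1}(\tan\xx+\tan R\xx)$ is $O(m^{-1})$ thanks to the explicit prefactor. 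Since $\Acalxx v$ vanishes on $\{\xx=\pm\xx_1\}$ and is bounded on the outer part of $\partial\Omega_1$ by $2\|v\|_{C^0}$, controlled via (iii), Schauder estimates in $\gtilde$ on each component of $\Omega_1\setminus\Lpar[\xx_1]$ yield the bound: $m^{-2}\Acalxx E$ contributes its $C^{0,\beta}$-norm directly, and $\mathcal{P}v$ contributes $Cm^{-1}\|v\|_{C^{1,\beta}}\le Cm^{-1}\|m^{-2}E\|_{C^{0,\beta}}$ by (iii).

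The main obstacle is (iv). Although $\tan\xx+\tan R\xx$ is $O(1)$ at $\xx_1$ and does not itself vanish, the explicit $m^{-1}$ produced by rescaling $\Lcalp$ to $\Lgtp$ is exactly what renders the full first-order part of the reflection commutator $O(m^{-1})$. This is the cancellation that makes each circle of $\Lpar$ an approximate axis of symmetry for $v$ and transfers the smallness of $\Acalxx E$ to $\Acalxx v$; the remaining steps (barrier estimates and Schauder) are then standard.
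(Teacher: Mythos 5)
Your treatment of (i)--(iii) is sound and is a legitimate variant of the paper's route: where the paper first transplants $\Spheq\setminus\{p_N,p_S\}$ conformally to a flat cylinder and then separates variables, you separate variables directly into modes $\cos(k\yytilde)$, $k\ge1$; the barrier/Schauder mechanism underlying the decay factor $\fdecay$ and the $C^{2,\beta}$ regularity is essentially the same in both accounts.

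For (iv) there is a genuine gap. Your plan is to estimate $\Acalxx v$ on $\Omega_1$ via Schauder for the equation $\Lgtp\,\Acalxx v=m^{-2}\Acalxx E-\mathcal{P}v$, using as boundary data that $\Acalxx v$ vanishes on $\Lpar[\xx_1]$ and is ``controlled via (iii)'' on the outer part of $\partial\Omega_1$. (A small slip first: $\mathcal{P}$ is not first order --- the $\sec^2\xx-\sec^2R\xx$ coefficient multiplies $\partial^2_{\yytilde}$ --- but since that coefficient is still $O(1/m)$ on $\Omega_1$ your accounting goes through with $\|v\|_{C^{2,\beta}}$ in place of $\|v\|_{C^{1,\beta}}$, which (iii) controls anyway.) The real problem is the outer boundary term. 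Estimate (iii) gives $|v|\le Ce^{-c_1m||\xx|-\xx_1|}\|m^{-2}E:C^{0,\beta}_\sym\|$, so on $\{\,||\xx|-\xx_1|=3/m\,\}$ one only has $|\Acalxx v|\le Ce^{-3c_1}\|m^{-2}E:C^{0,\beta}_\sym\|$; here $c_1$ is an absolute (not large) constant fixed by the equation, so $e^{-3c_1}$ is a fixed number, not $O(m^{-1})$. Feeding this into any Schauder or maximum-principle estimate on $\Omega_1$ reproduces only the trivial bound $\|\Acalxx v:C^{2,\beta}(\Omega_1,\gtilde)\|\le C\|m^{-2}E:C^{0,\beta}\|$, which already follows from (iii), and not the improved bound $Cm^{-1}\|m^{-2}E\|+C\|m^{-2}\Acalxx E\|$ that (iv) asserts. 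Nothing in your argument makes the outer boundary data $O(m^{-1})$--small --- $v$ is not known a priori to be approximately $\xx$-reflection-symmetric across $\xx_1$, which is the very content of (iv), so invoking it would be circular.

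The paper avoids this by working globally rather than on $\Omega_1$. It passes to the full cylinder, splits $v'=v'_++v'_-$ according to the support of $E$ in $\{s\gtrless0\}$, and replaces $\Acalxx$ by a globally defined operator $\Acal$ built from a modified reflection $s_-$ which agrees with the $\xx$-reflection on $\Omega_1$ but transitions (via cutoffs) to the exact $s$-reflection $s\mapsto 2s(\xx_1)-s$ outside $\{|\xx-\xx_1|<6/m\}$. Since $s$-reflection commutes exactly with $\Delta_\chi=\partial_s^2+\partial_\theta^2$, the commutator $[\Delta_\chi,\Acal]$ together with the source is supported in a thin band with $O(1/m)$ coefficients, and $\Acal v'_+$ is then controlled on the whole cylinder by the exponential-decay linear estimate with no boundary contribution at all. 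The residual $v'_-$ near $\xx_1$ is suppressed below $e^{-s(\xxbal)}\ll m^{-2}$ by decay over the $s$-distance $\gtrsim m\xxbal$. To repair your argument you would need either to reproduce this global structure, or to run a barrier/iteration on a domain of $\gtilde$-width at least of order $\log m$ so the boundary factor becomes $O(m^{-1})$ --- but on such a domain the commutator coefficients grow linearly with the distance from $\xx_1$, so a more careful barrier than the one you sketch would be required.
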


\begin{proof}
The existence and uniqueness of $v$ is clear by the symmetries.
(ii) follows also because $\Lcalp v_\ave=E_\ave=0$.

For (iii) observe first that 
$\Spheq\setminus \{p_N,p_S\}$
equipped with the metric (recall \ref{EThetag})
$$
\chi:=
m^2\cos^{-2}\xx \, g
\ = \,
m^2\cos^{-2}\xx \, d\xx^2+m^2\,d\yy^2, 
$$
can be isometrically identified with the cylinder
$\R\times \Sph^1$ 
equipped with the metric $\chi=ds^2+d\theta^2$, 
where $(s,\theta)$ (with $\theta$ defined modulo $2\pi m$) 
denotes the standard coordinates of the cylinder.
Under this identification we can assume that $s$ is an odd function of $\xx$
and $\theta=m \yy = \yytilde$.
By (ii) $u(p_N)=u(p_S)=0$ and on the cylinder the equation
(i) is equivalent to 
$$
\left( \Delta_\chi  + 2m^{-2}\cos^2\xx \right) v
\, = \,
m^{-2}\cos^2\xx \, E.
$$
Because of the symmetries we can work with $\theta$ modulo $2\pi$ 
instead of $2\pi m$.
Let $v'$ be the solution on the cylinder of 
$$
\Delta_\chi v'   
\, = \,
m^{-2}\cos^2\xx \, E 
$$
subject to the condition $v'\to0$ as $s\to\pm\infty$.
By standard theory and separation of variables, 
and using also \ref{Exxone} to ensure the uniform equivalence of
$\chi$ and $\gtilde$ on the support of $E$, we have 
$$
\|v':C^{2,\beta}_\sym(\R\times\Sph^1,\chi , 
e^{-  \left|\vphantom{A^A} |s(\xx)|-s(\xx_1) \right| \,  /2 } 
)\|
\le
C \,
\|m^{-2} E:C_\sym^{0,\beta}( \Omega_1[\xx_1,m]\, ,\gtilde \, )\| . 
$$ 
$v-v'$ now satisfies the equation
$$
\left( \Delta_\chi + 2m^{-2}\cos^2\xx \right) (v-v')  
\, = \,
- 2m^{-2}\cos^2\xx \, v'.  
$$
Note that $v'_\ave$ clearly vanishes. 
Using the smallness of the perturbation introduced by the coefficient
$ 2m^{-2}\cos^2\xx $, 
and the estimate on $v'$ above, 
we conclude that
$$
\|v-v':C^{2,\beta}_\sym(\R\times\Sph^1,\chi , 
e^{-  \left|\vphantom{A^A} |s(\xx)|-s(\xx_1) \right| \,  /2 } 
)\|
\le
C \,
m^{-2} \,
\|m^{-2} E:C_\sym^{0,\beta}( \Omega_1[\xx_1,m]\, ,\gtilde \, )\|.
$$ 
We have then 
$$
\|v:C^{2,\beta}_\sym(\R\times\Sph^1,\chi , 
e^{-  \left|\vphantom{A^A} |s(\xx)|-s(\xx_1) \right| \,  /2 } 
)\|
\le
C \,
\|m^{-2} E:C_\sym^{0,\beta}( \Omega_1[\xx_1,m]\, ,\gtilde \, )\|.
$$ 
By choosing now $c_1$ and $c_2$ appropriately (iii) follows easily.

To prove (iv) recall first that we are working on the cylinder 
$\R\times \Sph^1$ 
equipped with the metric $\chi=ds^2+d\theta^2$, 
where $(s,\theta)$ denotes the standard coordinates on the cylinder 
with $\theta$ defined modulo $2\pi$. 
$\Spheq\setminus \{p_N,p_S\}$ is then an $m$ to $1$ covering 
of the cylinder, and the coordinate $\xx$ can be considered as a function 
on the cylinder as well with 
$$ 
\frac{ds}{d\xx}=\frac{m}{\cos\xx}.  
$$ 
We define following \ref{EPsibold} $s_-:\R\to\R$ by 
$$
s_-(\xx) :=
\Psibold \left[3/m \, , \, 6/m \, , \, |\xx-\xx_1| \, \right] \, ( \, s(2\xx_1-\xx) \,  , \, 2 s(\xx_1) - s(\xx) \, ).
$$
For $u\in C^0(\R\times\Sph^1)$ we define 
$\Acal u \in C^0(\Omega_1[\xx_1,m]  \,)$ 
by 
$$
\Acal u(s,\theta) := u (s,\theta)-u(\, s_-(s) \, ,\theta) .
$$ 
Note that $\Acal u$ agrees 
on $\Omega_1\cap\{\xx>0\}$ 
with 
$\Acalxx     u$ defined as in \ref{DAcal}. 
We define now $v'_\pm,E'_\pm \in C^0(\R\times\Sph^1)$ by requesting 
$$
\Delta_\chi v'_\pm    
\, = \,
m^{-2}\cos^2\xx \, E'_\pm,  
\qquad
E=E^+ + E_- , 
\qquad
v=v_+ + v_-,  
$$
where $E=E_+$ on $\{s>0\}$ and $v'\to0$ as $s\to\pm\infty$.
We have then that $v_+(s,\theta)\equiv v_-(-s,\theta)$ 
and 
$$
\Delta_\chi  \Acal v'_+ 
= 
\Acal\left( m^{-2}\cos^2\xx \, E'_+ \right) 
+
\left[ \Delta_\chi  , \Acal \right] v'_+. 
$$ 
Using the definitions it is clear that $\frac{d s_-}{ds_{\phantom{-}}}+1$ and both terms on the right of the equation 
are supported on $\{|\xx-\xx_1|<6/m\}$. 
Moreover by an easy calculation 
$$
\left\| \frac{d s_-}{ds_{\phantom{-}}}+1  \,:\, C^3 \, \right\| 
\le 
\, C/m. 
$$ 
Estimating first $v'_+$ 
and then $\Acal v'_+$ 
(both with exponential decay $e^{-|s-s(\xx_1)|}$),   
we conclude that (iv) is valid with $v$ replaced by $v'_+$. 
Since 
$s(\xxbal)>m\xxbal$ we have 
$e^{-s(\xxbal)\,} << m^{-2} $ 
and therefore we conclude that (iv) is valid with $v$ replaced by $v'$. 
Combining with the earlier estimate for $v-v'$ we conclude the proof. 
\end{proof}

\begin{lemma}
\label{LWoscest}
$\Ptp_\osc$ satisfies the following estimates.
\newline
(i).
$\|\Ptp_\osc:C^{2,\beta}_\sym(\Spheq,\gtilde, \fdecay )\|
\le
C$,
where $\fdecay$ is as in \ref{Lsol}.iii.
\newline
(ii).
$\|\Acalxx      \Ptp_\osc:C^{2,\beta}_\sym(\Omega_1[\xx_1,m]\, ,\gtilde \, )\|
\le
C \,/m$
(recall \ref{DAcal}).
\end{lemma}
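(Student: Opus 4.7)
The plan is to apply Lemma \ref{Lsol} to $E := E''_\osc$ and identify the resulting solution with $\Ptp_\osc$. First I would observe that by \ref{EWdec} we have $\Ptp_\osc \in C^\infty_\sym(\Spheq)$, and by \ref{ELW} it satisfies $\Lcalp \Ptp_\osc = E''_\osc$. Moreover $(\Ptp_\osc)_\ave = 0$ by \ref{Dave}. Hence $\Ptp_\osc$ satisfies conditions (i) and (ii) of Lemma \ref{Lsol} with $E = E''_\osc$.

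Next I would verify that the hypotheses of Lemma \ref{Lsol} apply to $E''_\osc$. By \ref{EWdec}, $E''_\osc \in C^\infty_\sym(\Spheq)$, and by \ref{Dave} we have $(E''_\osc)_\ave \equiv 0$. By \ref{LHest}.iii, $E''_\osc$ is supported on $D_{\Lpar}(3\deltaL) \subset \Omega_1[\xx_1,m]$. Since the operator $\Lcalp = \Delta + 2$ has no kernel among $\grouptwo$-symmetric functions (the eigenvalue-$2$ eigenfunctions of $\Delta$ on $\Spheq$ are first harmonics, which are excluded by the symmetry), uniqueness within $C^{2,\beta}_\sym(\Spheq)$ gives $\Ptp_\osc = v$, where $v$ is the solution produced by Lemma \ref{Lsol}.

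Finally I would plug in the quantitative bounds. By \ref{LHest}.iii, $\|m^{-2}E''_\osc : C^{0,\beta}_\sym(\Omega_1[\xx_1,m], \gtilde)\| \le C$, and by \ref{LHest}.v, $\|m^{-2}\Acalxx E''_\osc : C^{0,\beta}_\sym(\Omega_1[\xx_1,m], \gtilde)\| \le C/m$. Applying Lemma \ref{Lsol}.iii to $v = \Ptp_\osc$ then yields
\[
\|\Ptp_\osc : C^{2,\beta}_\sym(\Spheq, \gtilde, \fdecay)\| \le C \cdot C = C,
\]
which is (i). Applying Lemma \ref{Lsol}.iv gives
\[
\|\Acalxx \Ptp_\osc : C^{2,\beta}_\sym(\Omega_1[\xx_1,m], \gtilde)\| \le C\bigl(m^{-1}\cdot C + C/m\bigr) \le C/m,
\]
which is (ii).

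There is essentially no obstacle here: the work has already been front-loaded into \ref{LHest} (which establishes the required symmetry, support, size, and near-reflection-symmetry of $E''$, hence of $E''_\osc$) and into \ref{Lsol} (which gives the weighted exponential-decay estimate for the linear equation, together with the improved bound on the antisymmetric part). The only subtlety worth flagging is the factor-$m^{-1}$ gain in (ii), which comes from two sources in Lemma \ref{Lsol}.iv: the bound on $\Acalxx E''_\osc$ itself (which exploits the near-reflection-symmetry of the background geometry across $\xx = \xx_1$, i.e.\ the smooth dependence on the parameter $t = 1/m$ in \ref{Eghat}), and the off-support exponential decay of $v$ which makes the ``leakage'' from the symmetric part negligible at the relevant scale.
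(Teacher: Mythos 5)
Your proof is correct and follows exactly the same route as the paper's: both identify $\Ptp_\osc$ with the solution $v$ of Lemma \ref{Lsol} applied to $E = E''_\osc$ (using $\Lcalp \Ptp_\osc = E''_\osc$ from \ref{ELW}), and then read off (i) and (ii) from \ref{Lsol}.iii--iv together with the bounds on $E''_\osc$ and $\Acalxx E''_\osc$ in \ref{LHest}.iii and \ref{LHest}.v. The paper's proof is merely a one-line compression of the verification you spell out.
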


\begin{proof}
Since $\Lcalp \Ptp_\osc = E''_\osc$ by \ref{ELW}
we can use the estimates of \ref{LHest} to conclude the proof
by appealing to \ref{Lsol}. 
\end{proof}

It helps with the presentation of our estimates to introduce one more
decomposition which holds in the vicinity of $L$:

\begin{definition}
\label{DPhippp}
\label{EU}
We define 
$\Phip\in C^\infty_\sym(\Omega_1[\xx_1,m])$
by requesting that (recall \ref{Dphiunder})
$$
\Ptp=\Phip+\phiunder[\phi_1- A_1,
\hhat(\xx_1) \,
;\xx_1]
\quad\text{ on }\quad
\Omega_1[\xx_1,m].
$$
\end{definition}

Using \ref{Dphiunder} we have then
for $p\in L\cap\Omega_1[\xx_1,m]$ 
\begin{equation}
\label{Eppp}
\Ptp(p)=
\,\Phip(p)+\phi_1-A_1,
\qquad
d_p\Ptp=
\,
d_p\Phip+
\hhat(\xx_1) \,
d\xxtilde
\, .
\end{equation}

\begin{lemma}
\label{Lppp}
The following estimates hold.
\newline
(i). 
$\|\Phip:C_\sym^{2,\beta}(\Omega_1[\xx_1,m]\,,\gtilde \, )\|\le C$.
\newline
(ii).
$\|\Acalxx     \Phip : C_\sym^{2,\beta}(\Omega_1[\xx_1,m]\, , \gtilde \, )\|\le C/m$.
\end{lemma}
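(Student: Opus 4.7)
The first step is to re-express $\Phip$ via the decomposition $\Ptp = \Phi - \Pp = \phi - \Pp_\ave + \Ptp_\osc$ of \ref{EWposc} together with Corollary \ref{Cphiave}. Substituting into the definition in \ref{DPhippp} and using the linearity of $\phiunder$ in its first argument to cancel the $\phiunder[\phi_1,\hhat(\xx_1);\xx_1] - \phiunder[\phi_1-A_1,\hhat(\xx_1);\xx_1]$ difference, one obtains on $\Omega_1[\xx_1,m]$
\begin{equation*}
\Phip \;=\; A_1\,\phiunder[1,0;\xx_1] \;+\; \junder[\tfrac{1}{2\cos\xx_1};\xx_1] \;-\; \Pp_\ave \;+\; \Ptp_\osc.
\end{equation*}
The $\Ptp_\osc$ piece is already controlled by \ref{LWoscest}, supplying $O(1)$ to (i) and $O(1/m)$ to (ii). The $A_1\,\phiunder[1,0;\xx_1]$ piece is handled by \ref{Lode}.i and \ref{Lode}.iii. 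Hence (i) and (ii) reduce to showing, respectively,
\begin{equation*}
\bigl\|\,w\,\bigr\|_{C^{2,\beta}_\sym(\Omega_1,\gtilde)}\le C,\qquad
\bigl\|\,\Acalxx\! w\,\bigr\|_{C^{2,\beta}_\sym(\Omega_1,\gtilde)}\le C/m,
\end{equation*}
where $w:=\junder[\tfrac{1}{2\cos\xx_1};\xx_1]-\Pp_\ave$.

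For (i), $w$ is rotationally invariant and smooth on $\Omega_1\setminus\Lpar[\xx_1]$. Away from $\Lpar[\xx_1]$, $\junder$ satisfies the ODE $\Lcalp\junder=0$ while $\Lcalp\Pp_\ave=-E''_\ave$ by \ref{ELW} and \ref{EW}; hence $\Lgtp w=m^{-2}E''_\ave$, whose right-hand side is $C^{0,\beta}_\sym$-bounded on $(\Omega_1,\gtilde)$ by \ref{LHest}.ii. The $C^0$ bound $|w|\le C$ follows from the explicit form of $\junder$ on $\Omega_1$ (a piecewise-ODE-solution which is $O(1)$ at the $\gtilde$-scale by \ref{Lode}.ii and \ref{Lode}.v) together with the observation that on the circle $C_{\xx_1}$, $\Pp_\ave(\xx_1)-A_1$ is the average of $G-\log\deltaL$ over $m$ equally spaced logarithmic singularities, which by \ref{LGp}.vi and an elementary integration equals $\log(9\cos\xx_1/\cos\xxroot)+O(1)=O(1)$. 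The key regularity point is that $w$ and $\partial w/\partial\xx$ extend continuously across $\xx=\pm\xx_1$, i.e.\ $\partial_{1+}w+\partial_{1-}w=0$. By the balancing/flux argument used in the proof of \ref{Lphiave} (integration by parts of $\Lcalp\Pp_\ave$ on a shrinking annulus about $\Cir_{\sin\xx_1}$ and using the logarithmic behavior of $G$), one obtains $\partial_{1+}\Pp_\ave+\partial_{1-}\Pp_\ave = m/\cos\xx_1$, matching exactly the prescription $\partial_{1\pm}\junder[\tfrac{1}{2\cos\xx_1};\xx_1]=m/(2\cos\xx_1)$ from \ref{Dphiunder}. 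With continuity of $w$ and $\partial_\xx w$ established and with a bounded $\Lgtp$-source, standard interior Schauder estimates on the $\gtilde$-bounded domain $\Omega_1$ yield (i).

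For (ii), applying $\Acalxx$ term by term: \ref{Lode}.iii bounds $\Acalxx(A_1\phiunder[1,0;\xx_1])$ by $C|A_1|/m^2$; \ref{Lode}.iv bounds $\Acalxx\junder[\tfrac{1}{2\cos\xx_1};\xx_1]$ by $C/m$; and \ref{LWoscest}.ii bounds $\Acalxx\Ptp_\osc$ by $C/m$. For $\Acalxx\Pp_\ave$, recall from \ref{DV} that $\Pp$ factors through $\dbold_L$, so $\Pp_\ave$ depends only on $\xx$ and its reflection-asymmetry across $\xx=\xx_1$ is produced solely by the deviation of the ambient metric $\gtilde$ from the Euclidean metric $\ghat_0$ in \ref{Eghat}, which is $O(1/m)$ at the $\gtilde$-scale. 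Repeating the argument in the last two paragraphs of the proof of \ref{LHest} — where the estimate $\|\Acalxx\dbold_L\|\le C/m$ on the transition annulus was obtained — gives $\|\Acalxx\Pp_\ave\|_{C^{2,\beta}_\sym(\Omega_1,\gtilde)}\le C/m$, completing (ii).

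\textbf{Anticipated obstacle.} The main delicate step is the derivative-jump matching $\partial_{1+}\Pp_\ave+\partial_{1-}\Pp_\ave=m/\cos\xx_1$. This is not automatic from the definitions of $\Pp$ and $\junder$ but reflects the underlying fact that $\Pp$ has been chosen to carry exactly the logarithmic singularity of the multi-Green's function $\Phi$ at each of the $m$ points of $L\cap C_{\xx_1}$; carrying out this flux calculation carefully, with due attention to the cutoff transition region $D_L(3\deltaL)\setminus D_L(2\deltaL)$ and its $O(1/m)$ contributions, is the technical heart of the lemma.
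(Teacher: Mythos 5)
Your decomposition $\Phip = A_1\,\phiunder[1,0;\xx_1] + \junder[\tfrac1{2\cos\xx_1};\xx_1] - \Pp_\ave + \Ptp_\osc$ on $\Omega_1$ is correct and coincides with the paper's (the paper writes $\Phip_\ave=\phiunder[A_1,0;\xx_1]+\junder-\Pp_\ave$ and $\Phip_\osc=\Ptp_\osc$), and your flux observation matching the derivative jump of $\junder$ to that of $\Pp_\ave$ is right. However, the \emph{reduction} is flawed: neither $A_1\,\phiunder[1,0;\xx_1]$ nor $w:=\junder-\Pp_\ave$ is $O(1)$ on $\Omega_1$ in the $\gtilde$ metric. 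Indeed $A_1=\log\deltaL = \log\bigl(\tfrac{\cos\xxroot}{9m}\bigr)\approx -\log m$, so by \ref{Lode}.i the first piece has size $\approx|A_1|\approx\log m$; and $\Pp_\ave\equiv A_1$ on $\Omega_1\setminus D_{\Lpar}(3\deltaL)$ by \ref{LHest}.i, so $w\approx -A_1\approx\log m$ there as well. Your own $C^0$ argument proves only that $\Pp_\ave(\xx_1)-A_1=O(1)$, which bounds $w+A_1$, not $w$: since $\junder(\xx_1)=0$ you get $w(\xx_1)=-\Pp_\ave(\xx_1)=-A_1+O(1)\to+\infty$, contradicting the claim $|w|\le C$. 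The two unbounded constants cancel only in the combination, so the decomposition that actually yields bounded pieces is
\begin{equation*}
\Phip_\ave \;=\; A_1\bigl(\phiunder[1,0;\xx_1]-1\bigr)\;+\;\bigl(\junder[\tfrac1{2\cos\xx_1};\xx_1]-(\Pp_\ave-A_1)\bigr),
\end{equation*}
where the first term is $O(|A_1|/m^2)\to 0$ by \ref{Lode}.i and the second is what the paper (equivalently) controls via the ODE $\Lcalp\Phip_\ave=E''_\ave$ with bounded initial data at $\partial\Omega_1$ and the bounded source from \ref{LHest}.ii. Once the bookkeeping of $A_1$ is corrected the argument for (i) is sound and essentially matches the paper's.

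For (ii) you also diverge from the paper: you try to bound $\Acalxx\Pp_\ave$ termwise, whereas the paper applies $\Acalxx$ to the whole $\Phip_\ave$ and derives an ODE for $U:=\Acalxx\Phip_\ave$ (with source $m^{-2}\Acalxx E''$ estimated by \ref{LHest}.iv and initial data from \ref{Lode}), avoiding any analysis of $\Acalxx\Pp_\ave$ near the cone singularity of $\Pp_\ave$ at $\xx=\xx_1$. Your route is not obviously impossible — the leading $|\xx'|$ cone of $\Pp_\ave$ is even in $\xx'$ and so cancels under $\Acalxx$ — but you would need to verify that the $C^{2,\beta}$ norm of $\Acalxx\Pp_\ave$ is genuinely $O(1/m)$ through the singularity, which is a nontrivial additional step the paper does not need. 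I recommend adopting the paper's ODE-for-$U$ approach to keep the estimate near $\xx_1$ under control.
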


\begin{proof}
By combining 
\ref{EWosc}, \ref{DPhippp}, and \ref{Cphiave}, we conclude
$$
\Phip_\ave=
\phiunder[A_1, 0 ;\xx_1]
+
\junder[
\textstyle\frac1{2\cos\xx_1}
;\xx_1]
- \Pp_\ave ,
\qquad\qquad
\Phip_\osc =\Ptp_\osc.
$$
Note that the discontinuities on the right hand side of the first equation cancel and the left hand side is smooth.
Moreover by \ref{EU} and \ref{ELW} we have 
$ \Lcalp \Phip_\ave = \, E''_\ave $ 
which in the notation of \ref{Eghat} 
amounts to the ODE
$$
\partial_{\xxhat}^2 \, \Phip_\ave - m^{-1}\, \tan(\xx_1+m^{-1}\xxhat)\,\partial_{\xxhat}\,  \Phip_\ave  +2m^{-2} \,\Phip_\ave 
\, = \,
m^{-2} E''_\ave .
$$
Using then 
\ref{Lode} and that $\Pp_\ave = A_1$ on  
$\Spheq\setminus D_{\Lpar}(3\deltaL)$
by \ref{LHest}.i, 
we conclude that at $\partial \Omega_1[\xx_1,m]$ 
we have
$|\, \Phip_\ave \, | \, \le \, C \, $ 
and 
$|\, \partial_{\xxhat}\,  \Phip_\ave \, | \, \le \, C \,$. 
Using these as initial data for the ODE 
and \ref{LHest}.ii 
we estimate $\Phip_\ave$.
By this estimate 
together with 
\ref{LWoscest}.i 
we conclude (i).

To prove (ii) 
it is enough to prove the estimate for 
$
U:=\Acalxx     \Phip_\ave
$
instead of 
$ 
\Acalxx     \Phip
$
because 
$
\Acalxx      \Phip_\osc =
\Acalxx      \Ptp_\osc
$
satisfies the estimate by 
\ref{LWoscest}.ii. 
To estimate $U$ we calculate that it satisfies
\begin{multline*}
\partial_{\xxhat}^2 U- m^{-1}\, \tan(\xx_1- m^{-1} \xxhat)\,\partial_{\xxhat} U +2m^{-2} \,U
+ 
\\
+
m^{-1}\, (\,
\tan(\xx_1-m^{-1} \xxhat)
-
\tan(\xx_1+m^{-1} \xxhat)
\,) \, 
\partial_{\xxhat} \Phip_\ave
\,=\,
m^{-2}\Acalxx     E'' .
\end{multline*}
Using \ref{Lode} we obtain estimates for the initial data 
for $U$ on 
$\partial \Omega_1[\xx_1,m]$. 
These estimates and \ref{LHest}.iv imply the required estimate.  
\end{proof}

\begin{lemma}
\label{LPtp}
(i).
$\|\Ptp :C_\sym^{k,\beta}(\,\Spheq\,,\gtilde \, )\|\le C(k) \, m     $.
\newline
(ii).
$C' \,m\le \frac89 \phi_1\le\Ptp$ on $\Spheq$. 
\end{lemma}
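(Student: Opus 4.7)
The plan is to prove both parts by exploiting the decomposition $\Ptp = (\phi - \Pp_\ave) + \Ptp_\osc$ provided by \ref{EWposc} and assembling the estimates already in hand.

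For (i), I will split $\Spheq$ into the ``transition region'' $\Omega_1[\xx_1,m]$ near $\Lpar$ and its complement. Off $\Omega_1$, the support statement in \ref{LHest}.i forces $\Pp\equiv A_1$, hence $\Pp_\ave\equiv A_1$ as well, so there $\Ptp = \phi - A_1 + \Ptp_\osc$; the explicit formula from \ref{Lphiave} gives a $C^0$ bound on $\phi$ of order $\phi_1=O(m)$, with each $\xxtilde$-derivative absorbing a factor of $m^{-1}$ so that $\|\phi\|_{C^{k,\beta}(\Spheq\setminus\Omega_1,\gtilde)}\le C(k)m$, while $|A_1|=O(\log m)$ contributes an additive $O(m)$, and $\Ptp_\osc$ is bounded in $C^{2,\beta}$ by \ref{LWoscest}.i, upgraded to $C^{k,\beta}$ by bootstrapping $\Lcalp\Ptp_\osc=E''_\osc$ off the $C^{k-2,\beta}$ bound on $E''_\osc$ from \ref{LHest}.iii. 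On $\Omega_1$ I will use instead the representation $\Ptp=\Phip+\phiunder[\phi_1-A_1,\hhat(\xx_1);\xx_1]$ from \ref{DPhippp}: the first term is $C$-bounded by \ref{Lppp}, while by linearity in the initial data the second equals $(\phi_1-A_1)\,\phiunder[1,0;\xx_1]+\hhat(\xx_1)\,\phiunder[0,1;\xx_1]$, so \ref{Lode}.i and \ref{Lode}.v yield an $O(|\phi_1-A_1|+|\hhat(\xx_1)|)=O(m)$ bound. Patching the two regions via the common $C^{k,\beta}$ scale produces (i).

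For (ii), the lower bound $\phi_1\ge C'm$ is immediate from the explicit formula $\phi_1=m/(\cos\xx_1(h_{1+}+h_{1-}))$ in \ref{Lphiave}, because assumption \ref{Exxone} confines $\xx_1$ strictly between $0$ and $\xxroot$, uniformly bounding $\cos\xx_1$ and $\phie(\xx_1),\phio(\xx_1)$ away from zero, hence $h_{1\pm}$ from above. For the inequality $\Ptp\ge\tfrac{8}{9}\phi_1$ I will combine the pointwise lower bound $\phi\ge\phi_1$ from \ref{Lphiave} with $-\Pp_\ave\ge -A_1-C=\log m-C$ (obtained from $A_1=\log\deltaL$ together with the $C^0$ control $|\Pp_\ave-A_1|\le C$ coming from averaging \ref{LHest}.i over parallels) and the $C^0$ bound on $\Ptp_\osc$ from \ref{LWoscest}.i; for $m$ large this yields $\Ptp\ge\phi_1+\log m-C\ge\phi_1\ge\tfrac{8}{9}\phi_1$ with ample margin.

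The main obstacle is essentially bookkeeping: tracking the $m$-dependence consistently under the two scalings ($g$ versus $\gtilde=m^2 g$), ensuring the derivative jumps of $\phi$ and $\Pp_\ave$ at $\xx=\pm\xx_1$ cancel so that $\Ptp_\ave$ is genuinely smooth, and boosting $k$ by iterated elliptic regularity off the $C^{2,\beta}$ control already established. No new idea is needed beyond assembling the ODE-based estimates of \ref{Lphiave}, \ref{Cphiave}, \ref{Lode} with the global linear estimates of \ref{LHest}, \ref{LWoscest}, and \ref{Lppp}.
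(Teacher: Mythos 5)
Your approach to (i) matches the paper's in its essentials: split $\Spheq$ into $\Omega_1[\xx_1,m]$ and $\Spheq\setminus D_L(3\deltaL)$, use the representation $\Ptp=\Phip+\phiunder[\phi_1-A_1,\hhat(\xx_1);\xx_1]$ from \ref{DPhippp} together with \ref{Lppp}.i and \ref{Lode} on $\Omega_1$, and the decomposition $\Ptp=\phi-A_1+\Ptp_\osc$ off $D_L(3\deltaL)$. (The paper actually derives (i) more economically as a corollary of (ii), $\Lcalp\Ptp=E''$, and \ref{LHest}.ii via interior Schauder theory, rather than bounding each piece of the decomposition to order $k$ separately, but both routes work.)

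For (ii) there is a genuine gap. You switch to the global decomposition $\Ptp=\phi-\Pp_\ave+\Ptp_\osc$ and assert $|\Pp_\ave-A_1|\le C$ ``coming from averaging \ref{LHest}.i over parallels.'' But \ref{LHest}.i controls $\Pp-A_1$ only on $\Spheq\setminus D_L(\deltaL)$; on circles of latitude within $\deltaL$ of $\pm\xx_1$ the circle passes through $D_L(\deltaL)$, where $\Pp=G_p$ and $G_p\approx\log\dbold_p\to-\infty$, so nothing in \ref{LHest}.i bounds that part of the integrand. The estimate $|\Pp_\ave-A_1|\le C$ is in fact true---each singular arc integrates to $\deltaL(\log\deltaL+O(1))$ and there are $m$ of them with $m\deltaL$ a fixed constant---but this requires an explicit computation not supplied by the cited lemma. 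You can avoid the issue entirely: on $\Omega_1$, reuse the very representation $\Ptp=\Phip+\phiunder[\phi_1-A_1,\hhat(\xx_1);\xx_1]$ that you already invoked for (i); by \ref{Lppp}.i and \ref{Lode}.i,v this gives $\Ptp=\phi_1-A_1+O(1)$ directly, which is $\ge\tfrac89\phi_1$ for $m$ large since $-A_1=\log m+O(1)>0$. Combining this with the bound you derived on $\Spheq\setminus D_L(3\deltaL)$ (where $\Pp_\ave\equiv A_1$, so no averaging argument is needed) covers all of $\Spheq$, exactly as the paper does.
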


\begin{proof}
By \ref{EU}, \ref{Lppp}.i, and \ref{Lode}, 
we have that 
$C'\,m\le \frac89 \phi_1\le\Ptp\le C\, m\,$ 
on $\Omega_1[\xx_1,m] $. 
By \ref{EWposc} and \ref{LHest}.i we have on $\Spheq\setminus D_L(3\deltaL)$ 
that 
$ \Ptp=\phi- A_1+ \Ptp_\osc $.
By \ref{LWoscest}.i and \ref{Lphiave} we conclude that
$C' \,m\le \frac89 \phi_1\le\Ptp \le C \, m \,$ 
on $ \Spheq\setminus D_L(3\deltaL) $.   
Since $\Omega_1[\xx_1,m]$ and $\Spheq\setminus D_L(3\deltaL)$       
cover $\Spheq$ the proof of (ii) 
is complete and (i) follows by standard interior regularity theory and \ref{LHest}.ii.
\end{proof}

\subsection*{Estimates on $V,V',W,W'$}
$\phantom{ab}$
\nopagebreak

\begin{lemma}
\label{LVVp}
$V,V'\in C^\infty_\sym(\Spheq)$ defined as in \ref{Etwovarphi} 
satisfy the following.
\newline
(i).
On 
$\Spheq\setminus D_L(2\delta_1)$
we have $V=\Phi$ and $V'=0$.
\newline
(ii).
With $p_1$ as in \ref{EtwoL} and $u$ as in \ref{EWW} we have that on 
$D_{p_1}(2\delta_1)$
the following hold. 
$$
\begin{aligned}
V&=
\Psibold\left[2\delta_1,\delta_1; \dbold_{p_1} \right]
(G_{p_1},\,\log \delta_{1} \, \cos\circ\dbold_{p_1}\,)
+\Ptp,
\\
V'&=
\Psibold\left[2\delta_1,\delta_1; \dbold_{p_1} \right]
(0,u\,).
\end{aligned}
$$
(iii).
$V(p_1)=\phi_1+\Phip(p_1) \, \sim_C \, m $,
$\quad\displaystyle\frac{\partial V}{\partial\xxtilde}(p_1)=
\hhat(\xx_1)
+
\displaystyle\frac{\partial \Phip}{\partial\xxtilde}(p_1)\,
\,\sim_C\,1 
$ and 
$\left|\displaystyle\frac{\partial V}{\partial\xxtilde}(p_1)\right| 
\,\le\, C$,
for some absolute constant $C>1$.
We also have 
$\quad V'(p_1)=0$,
$\quad\displaystyle\frac{\partial V'}{\partial\xxtilde}(p_1)=m^{-1}$.
\newline
(iv). 
$\|V: C_\sym^{k}(\,\Spheq\,,\gtilde \, )\| \, \le  \, C(k) \, m     $
and
$\|V': C_\sym^{k}(\,\Spheq\,,\gtilde \, )\| \, \le  \, C(k) / m     $.
\newline
(v). 
\ref{AEcal}  holds and 
$\|\Ecalinv \| \,  \le \, C \, m^{2+\beta} \, $ (recall \ref{DEcalnorm}). 
\newline
(vi). 
For $\mu,\mu'\in\R$ we have that 
$|\mu| m + |\mu'| \, \le \,C \, \|\, \mu W + \mu' W'  \, : \, C_\sym^{0,\beta}(\,\Spheq\,,g \, )\| $.
\end{lemma}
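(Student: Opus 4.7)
The plan is to construct $V$ and $V'$ explicitly from the cutoff formulas defining $W$ and $W'$, and then combine these with the estimates on $\Phi$, $\Ptp$, $\Phip$ already established. For (i)--(ii), set $\hat V:=\Phi$, $\hat V':=0$ on $\Spheq\setminus D_L(2\delta_1)$, and take $\hat V$, $\hat V'$ equal to the right-hand sides of (ii) on each $D_p(2\delta_1)$. Using $\Pp=G_p$ on $D_p(2\delta_1)$ (by \ref{DV}) and the fact that $\Psibold[2\delta_1,\delta_1;\dbold_p](G_p,\log\delta_1\cos\circ\dbold_p)$ equals $G_p$ for $\dbold_p\ge 5\delta_1/3$, the two pieces match smoothly across $\partial D_p(2\delta_1)$; near $p$ the cutoffs collapse to $\log\delta_1\cos\circ\dbold_p$ and to $u$ respectively, so $\hat V,\hat V'$ extend smoothly across $L$. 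Piecewise application of $\Lcalp$, using $\Lcalp\Ptp=E''=0$ on $D_p(2\delta_1)$ and $\Lcalp\Phi=0$ off $L$, gives $\Lcalp\hat V=W$, $\Lcalp\hat V'=W'$; uniqueness in \ref{LLD} identifies $\hat V=V$, $\hat V'=V'$. Part (iii) is then immediate: at $p_1$ the cutoff for $V$ reduces to $\log\delta_1\cos\circ\dbold_{p_1}=A_1$ with vanishing differential (since $p_1$ maximizes $\cos\circ\dbold_{p_1}$), so by \ref{Eppp} we get $V(p_1)=A_1+\Ptp(p_1)=\phi_1+\Phip(p_1)$ and $d_{p_1}V=d_{p_1}\Phip+\hhat(\xx_1)\,d\xxtilde$; for $V'$ the cutoff equals $u$ near $p_1$, giving $V'(p_1)=0$ and $d_{p_1}V'=d\xx=m^{-1}d\xxtilde$. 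The size $V(p_1)\sim_C m$ uses $\phi_1\sim m$ from \ref{Lphiave} and $|\Phip(p_1)|\le C$ from \ref{Lppp}. Part (iv) follows by adding \ref{LPtp}.i to direct $C^k_\sym(\Spheq,\gtilde)$ estimates on the cutoff pieces, where the transition annulus has $\gtilde$-width $O(1)$.

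For (v) I compute the matrix of $\Ecal_L$ in the basis $\{V,V'\}$ of $\skernelv_\sym[L]$ and the basis of $\val_\sym[L]$ consisting of the value at $p_1$ and the $d\xxtilde$-component of the differential at $p_1$ (the $d\yytilde$-component is killed by the stabilizer reflection $\ybartilde$ of $p_1$). By (iii) this matrix is lower triangular with diagonal $(\phi_1+\Phip(p_1),\,m^{-1})\sim(m,\,m^{-1})$, whence $\det\sim 1$ and $\Ecal_L$ is invertible, verifying \ref{AEcal}. Inverting and converting $\|V\|_{C^{2,\beta}(\Spheq,\gtilde)}\le Cm$, $\|V'\|_{C^{2,\beta}(\Spheq,\gtilde)}\le C/m$ from (iv) to $g$-norms via $\|\cdot\|_{C^{2,\beta}(\Spheq,g)}\le m^{2+\beta}\|\cdot\|_{C^{2,\beta}(\Spheq,\gtilde)}$ yields $\|\Ecalinv\|\le Cm^{2+\beta}$.

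The main obstacle is (vi), which demands a \emph{lower} bound on $N:=\|\mu W+\mu'W'\|_{C^{0,\beta}(\Spheq,g)}$ in terms of $|\mu|$ and $|\mu'|$. The key structural observation is that $W$ is rotationally symmetric around each $p\in L$ (depending only on $\dbold_p$), while $W'$ has mean zero on every geodesic circle centered at $p$ (since $u$ is a first harmonic vanishing at $p$, its restriction to such a circle is a pure first angular mode). Averaging $\mu W+\mu'W'$ over a geodesic circle of radius $r_0\in(\delta_1,2\delta_1)$ around $p_1$ therefore recovers $\mu W(r_0)$, and extracting the first Fourier cosine coefficient in the angular variable recovers the leading part of $\mu'W'(r_0,\cdot)$. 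A direct computation from $W=[\Lcalp,a]\,(\log\delta_1\cos\circ\dbold_p-G_p)$, with $a$ the transition cutoff, using the explicit formulas \ref{LGp}.iv--v shows $\sup|W|\sim m^2$ on the annulus; similarly $W'=2\nabla a\cdot\nabla u+u\,\Delta a$ yields $\sup|W'|\sim m$. Combined with the Fourier separation, this gives $|\mu|\,m^2\le CN$ and $|\mu'|\,m\le CN$, which implies (vi) since $m\ge 1$.
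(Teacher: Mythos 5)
Your treatment of parts (i)--(v) follows essentially the same route as the paper: define the candidate functions through the cutoff formulas, observe that applying $\Lcalp$ reproduces $W,W'$ since $G_{p_1}$, $\cos\circ\dbold_{p_1}$, $u$, $\Phi$ are all killed by $\Lcalp$ away from $L$, and conclude by the uniqueness in \ref{LLD}; then (iii) falls out of \ref{Eppp} and the fact that $\cos\circ\dbold_{p_1}$ is critical at $p_1$, (iv) from \ref{LPtp} and \ref{LHest}, and (v) from the lower-triangular matrix of $\Ecal_L$ in the $\{V,V'\}$ basis together with (iii)--(iv) and the $\gtilde\to g$ norm conversion. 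All of this matches the paper's argument, including the explicit inverse $\Ecalinv(1,0)=(\phi_1+\Phip(p_1))^{-1}\{V-(\hhat(\xx_1)+\partial_{\xxtilde}\Phip(p_1))\,mV'\}$, $\Ecalinv(0,d_p|\xxtilde|)=mV'$.

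For (vi) you take a genuinely different and more delicate route. The paper's proof is a two-line chain: $|\mu|m+|\mu'|\le C\,\|\mu V+\mu'V':C^{2,\beta}_\sym(\Spheq,g)\|$ from pointwise evaluation at $p_1$ via (iii), followed by the global Schauder estimate $\|\mu V+\mu'V':C^{2,\beta}_\sym(\Spheq,g)\|\le C\,\|\mu W+\mu'W':C^{0,\beta}_\sym(\Spheq,g)\|$, which holds because $\Lcalp(\mu V+\mu'V')=\mu W+\mu'W'$ and $\Lcalp$ has trivial kernel on $\grouptwo$-symmetric functions (the first harmonics are excluded by the symmetries). This needs no information about the pointwise size of $W,W'$ at all. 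Your route instead isolates the Fourier-zero and Fourier-one angular modes of $\mu W+\mu'W'$ near $p_1$ and requires \emph{lower} bounds $\sup|W|\gtrsim m^2$ and $\sup_{r_0}|W'_1(r_0)|\gtrsim m$. The upper bounds come straightforwardly from the commutator formula and \ref{LGp}.iv--v; the lower bounds do not, because the two terms $f\,\Delta a$ and $2\,\nabla a\cdot\nabla f$ could in principle cancel. They are in fact true -- e.g.\ one can integrate $W$ over $D_{p_1}(2\delta_1)$, apply the divergence theorem and the logarithmic behavior of $G_{p_1}$ to see $\int W\approx 2\pi$ while $\operatorname{supp}W$ has area $O(\delta_1^2)$, forcing $\sup|W|\gtrsim\delta_1^{-2}\sim m^2$; or one can evaluate at the midpoint of the transition annulus, where $a''=0$ by \ref{DPsi}.iii so both surviving terms carry the same sign. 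But your proposal asserts the two-sided bounds via ``direct computation'' without addressing the cancellation issue, and that is the one genuine gap. Your approach, once the lower bounds are secured, yields the stronger estimate $|\mu|m^2+|\mu'|m\le CN$, whereas the paper's softer Schauder argument gives exactly the stated $|\mu|m+|\mu'|\le CN$ with much less work.
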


\begin{proof}
Let $V_{new}$ and $V'_{new}$ be defined by the expressions for $V$ and $V'$ in (i) and (ii). 
To establish (i) and (ii) we need to prove that $V=V_{new}$ and $V'=V'_{new}$. 
Note first that the expressions for $V_{new}$ and $V'_{new}$ in (i) and (ii) match 
because by \ref{DV} and \ref{DW}
$V_{new}=\Phi$ and $V'_{new}=0$ on a neighborhood of 
$\partial D_{p_1}(2\delta_1)$.
Since $\Lcalp \Phi=0$ by \ref{Etwovarphi}, 
we conclude that by \ref{EWW} we have 
that 
$\Lcalp V_{new}=W$ and $\Lcalp V'_{new}= W'$,
which characterize 
$V$ and $V'$ defined as in \ref{Etwovarphi}.
This completes the proof of (i) and (ii).
The equalities in (iii) follow from (ii) by using \ref{Eppp}, the definition
of $u$ in \ref{EWW}, and the definition of $A_1$ in \ref{DV}.
Clearly by \ref{Lphiave} and \ref{Exxone} we have $C' \, m < \phi_1<C\, m$.
Because of \ref{Exxone} we have also that $ | \hhat(\xx_1) | < C$.
The proof of (iii) is completed then by using \ref{Lppp}.  

(iv) is implied by (i) and (ii) by using \ref{LPtp} and \ref{LHest}.i.
By direct calculation using (iii) we have  (recall \ref{DEcal})
$$
\begin{aligned}
\Ecalinv(\,(1,0)_{p\in L}\,)
&=
\,(\phi_1+\Phip(p_1)\,)^{-1} \!\!\!
& \left\{ 
V
-
(\,
\hhat(\xx_1)
+
\frac{\partial \Phip}{\partial\xxtilde}(p_1)
\,)
\,m\, 
V'
\right\}
&,
\\
\Ecalinv(\,(0,d_p |\xxtilde|)_{p\in L}\,)
&=
&
m\, V' \phantom{,} &.
\end{aligned}
$$
(v) follows then by using (iii) and (iv).

We have 
$
|\mu| m + |\mu'|  
\,\le\,
\,C \, \|\, \mu V + \mu' V'  \, : \, C_\sym^{2,\beta}(\,\Spheq\,,g \, )\| 
\,\le\,
\,C \, \|\, \mu W + \mu' W'  \, : \, C_\sym^{0,\beta}(\,\Spheq\,,g \, )\|,  
$
where the first inequality follows easily from (iii) and the second inequality 
from the symmetries and $\Lcal\,( \mu V + \mu' V' ) = \mu W + \mu' W' $.  
(vi) follows then and the proof is complete.
\end{proof}

\subsection*{The family of MLD solutions}
$\phantom{ab}$
\nopagebreak

We determine now the family of MLD solutions we need.
The parameters of the family are 
$\zetabold=(\zeta,\zeta')\in\R^2$
and their range is specified by
\begin{equation}
\label{Ezetarange}
|\zeta|,|\zeta'|\le \cunder_1,
\end{equation}
where $\cunder_1>1$ is a 
constant independent of $m$ and $\tau$ which will be specified later.
We want to construct MLD solutions $\varphi[L,\tau,\xiw]$ where
$L = L[\xx_1[\zetabold,m],m]$, 
$\tau=\tau[\zetabold,m]$, 
and $\xiw = \xiw[\zetabold,m]$, 
that is the parameters of $\varphi$ are functions of $\zetabold$ and $m$. 
Clearly then $\varphi=\tau\Phi+\mu V+\mu' V'$ 
where 
$\xiw= \mu W+\mu' W'$ 
(see also \ref{E:varphi} below). 
Note that by 
\ref{DLDnoK}.ii, \ref{EV}, \ref{EW}, and \ref{E:varphi}, 
we have that 
\begin{equation}
\label{EPhip}
\forall p\in L
\qquad\qquad
\varphihat_p\,=\,\tau\,\Ptp+\mu V+\mu' V'  
\qquad
\text{ on } \quad
D_p(2\deltaL)\, .
\end{equation}
The matching condition in \ref{DMLD} 
amounts to a system of two equations with $\xx_1$ and $\tau$ 
as the unknowns where we assume $\mu$ and $\mu'$ 
given in terms of $\zetabold$ and $m$. 
We will write later this system explicitly by using 
\ref{EPhip}, \ref{Eppp}, and  \ref{LVVp}.iii. 
We consider now the following simplified approximate version 
which is obtained by treating $\Phip$ as an error term to be ignored 
and making appropriate simple choices for $\mu$ and $\mu'$ 
(recall also $d\xx=\frac1m d\xxtilde$).
$$
\tau\,(\phi_1- A_1) +\tau\log(\tau/2)=\tau\zeta,
\qquad\quad
\tau\, m \, \hhat(\xx_1) d\xx=\tau\zeta' d\xx. 
$$
By straightforward calculation 
using \ref{Lphiave} 
this is equivalent to 
$\tau = 2 \, e^{\zeta} \, e^{A_1-\phi_1}$
and
$m\,{\hhat(\xx_1)} = \zeta' $.
To ensure a simplified expression we use a modified 
(by replacing $A_1$ with $A'_1$) 
version of these conditions to define $\xx_1$ and $\tau$ in \ref{Eparameters} below. 

\begin{lemma}
\label{L:h}
For $m$ large enough depending on $\cunder_1$,
and $\zetabold=(\zeta,\zeta')$ as in \ref{Ezetarange},
there are unique 
$\xx_1=\xx_1[\zetabold,m]\in(0,\xxroot)$ 
and 
$\tau=\tau[\zetabold,m]>0$ 
satisfying 
\begin{equation}
\label{Eparameters}
\tau = 2 \, e^{\zeta} \, e^{A'_1-\phi_1}
=
\frac1m  \, e^{\zeta} \, e^{ -\frac{m}{\cos\xx_1\,(h_{1+}+h_{1-})} }, 
\qquad\qquad
{\hhat(\xx_1)}
=
\frac{\zeta'}m,
\end{equation}
where $ A'_1:=-\log 2m $ .
Moreover there is a unique
$$
\xiw=\xiw[\zetabold,m] 
:=
\mu[\zetabold,m]\, W[\xx_1,m]
+
\mu'[\zetabold,m]\, W'[\xx_1,m]
$$
such that
(recall \ref{LLD})
\begin{equation}
\label{E:varphi}
\begin{aligned}
\varphi= \varphi[[\zetabold,m]]
:=&
\tau[\zetabold,m] \, \Phi[\xx_1,m] \, +\, \mu[\zetabold,m] \,  V[\xx_1,m] \,  +\,  \mu'[\zetabold,m] \,  V'[\xx_1,m] \, 
\\
=&
\varphi[\, L[\xx_1[\zetabold,m],m]\,,\,\tau[\zetabold,m]\,,\,\xiw[\zetabold,m]\,]
\end{aligned}
\end{equation}
is an MLD solution as in \ref{DMLD}.
Furthermore the following hold.
\newline
(i).
$\xx_1=\xx_1[\zetabold,m]$,
$\tau=\tau[\zetabold,m]$, 
$\mu=\mu[\zetabold,m]$,
and
$\mu'=\mu'[\zetabold,m]$ 
depend continuously on $\zetabold$.
\newline
(ii).
$| \xx_1[\zetabold,m] - \xxbal | \le 
\,C\, |\zeta' | /m
\le
\,C\, \cunder_1 /m$.
\newline
(iii).
In the notation of \ref{Dsimc}  
we have 
$$
m \sim_2 |\log\tau| \cos\xxbal \, \left.  (h_{1+}+h_{1-})  \right|_{\xx=\xxbal},
\qquad\quad 
\tb \, \sim_{C(\cunder_1)} \,   {\tau[\zetabold,m]} ,
$$
where $\tb:=\tau[\, (0,0)\, ,m]$ 
and
${C(\cunder_1)}>1$ 
depends only on $\cunder_1$. 
\newline
(iv).
$ | \, \zeta + \mu \, \phi_1/\tau \, | \le C $
and 
$ | \, \zeta' + \mu' \, / \tau \, | \le C $.
\newline
(v).
$ \| \, \varphi[[\zetabold,m]] : 
C^{3,\beta}_\sym(\Spheq\setminus D_L(\deltaLp)    , g)\|
\le
\tau^{1-4\gammagl}
\le
\tau^{8/9}$.
\newline
(vi).
$c\,m\,\tau\, \le \frac78 \phi_1\,\tau\, \le \, \varphi$ on 
$\Spheq\setminus D_L(\deltaLp)   $ 
for some absolute constant $c>0$.
\newline
(vii).
$\varphi$ satisfies the conditions in \ref{con:one},  
\ref{AEcal}, and \ref{EEcal}.
\end{lemma}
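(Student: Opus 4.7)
The plan is to proceed in three stages: first determine $\xx_1$ and $\tau$ from the two explicit equations \ref{Eparameters}, then determine $\mu, \mu'$ from the two matching conditions of \ref{DMLD}, and finally verify the qualitative properties (i)--(vii). I first invert the horizontal balancing equation $\hhat(\xx_1) = \zeta'/m$: by \ref{L:hhat}, $\hhat$ is strictly decreasing near its unique root $\xxbal$ with derivative bounded away from zero there, so the inverse function theorem produces a unique continuous $\xx_1 = \xx_1[\zetabold, m]$ satisfying $|\xx_1 - \xxbal| \le C|\zeta'|/m \le C\cunder_1/m$, which gives (ii) and, for $m$ large, guarantees \ref{Exxone}. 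I then define $\tau[\zetabold, m]$ directly by the explicit formula in \ref{Eparameters}; continuous dependence of the ``top level'' parameters in (i) follows immediately. Taking logarithms gives $\log(m\tau) = \zeta - \phi_1$ with $\phi_1 = m/[\cos\xx_1(h_{1+}+h_{1-})] \sim m$ by \ref{Lphiave}, and since the $O(1/m)$ variation of $\xx_1$ produces only an $O(1)$ variation of $\phi_1$, both asymptotics in (iii) follow.

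To determine $\mu, \mu'$, I write $\varphi = \tau\Phi + \mu V + \mu' V'$ and substitute into the matching conditions at $p_1$. Using \ref{EPhip}, \ref{Eppp}, and \ref{LVVp}.iii, the value condition reduces to
\begin{equation*}
\tau\bigl(\Phip(p_1) + \phi_1 - A_1\bigr) + \mu\bigl(\Phip(p_1) + \phi_1\bigr) + \tau\log(\tau/2) = 0,
\end{equation*}
while $d_{p_1}\varphihat_{p_1} = 0$ reduces (its $d\yy$ component vanishes automatically by $\grouptwo$-symmetry) to the single scalar equation
\begin{equation*}
(\tau + \mu)\bigl(\partial_\xxtilde\Phip(p_1) + \hhat(\xx_1)\bigr) + \mu'/m = 0.
\end{equation*}
Substituting $\tau\log(\tau/2) = \tau(\zeta + A'_1 - \phi_1)$ and $\hhat(\xx_1) = \zeta'/m$ from \ref{Eparameters}, I solve the first equation for $\mu$ and then the second for $\mu'$. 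Direct algebra, together with $|\Phip(p_1)| \le C$ from \ref{Lppp}.i, $\phi_1 \sim m$, and the fact that $A_1 - A'_1$ is an absolute constant, yields the first inequality in (iv) and shows $|\mu| \le C\tau/m$. The second inequality in (iv) is the subtle one: it rests on the bound $|\partial_\xxtilde\Phip(p_1)| \le C/m$, which I extract from \ref{Lppp}.ii by observing that $\Acalxx\Phip$ vanishes at $\xx = \xx_1$ while its $\partial_\xxtilde$-derivative there equals $2\partial_\xxtilde\Phip(p_1)$, so the $C^{2,\beta}(\gtilde)$-estimate $C/m$ transfers pointwise. With this in hand, $\mu' = -\tau\zeta' + O(\tau)$, and continuity of $\mu, \mu'$ in $\zetabold$, completing (i), is immediate from the explicit formulas.

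Properties (v)--(vii) then follow by combining estimates already established. For (v) I use $\Phi = \Pp + \Ptp$ together with $\|\Ptp: C^{3,\beta}(\Spheq, \gtilde)\| \le Cm$ from \ref{LPtp}.i, the asymptotics of $G$ from \ref{LGp}, and the bounds on $V, V'$ from \ref{LVVp}.iv; the dominant contribution on $\Spheq \setminus D_L(\deltaLp)$ is of order $\tau|\log\tau|^{O(1)} \le \tau^{1-4\gammagl}$ for $\tau$ small. For (vi), \ref{LPtp}.ii gives $\Ptp \ge \tfrac{8}{9}\phi_1 \sim m$, while $|\tau\Pp| \le C\gammagl\tau|\log\tau|$ is negligible by \ref{con:alpha}, and the $\mu V + \mu' V'$ contribution is $O(\tau)$, so $\varphi \ge \tfrac{7}{8}\phi_1\tau \ge cm\tau$. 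For (vii): \ref{con:one}(i), (iii) are immediate; (ii) follows from $\delta_1 \sim 1/|\log\tau|$ and $\delta'_1 = \tau^\gammagl$; (iv) follows from the matching conditions together with the bound $\|\varphihat_p: C^{2,\beta}(\partial D_p(\delta_p), \delta_p^{-2}g)\| \le C\tau m$ (using $\delta_p^{-2}g \sim \gtilde$), which gives $\delta_p^{-2} \cdot C\tau m \le C\tau|\log\tau|^3 \le \tau^{1-\gammagl/9}$; (v), (vi) are precisely the lemma's (v), (vi); \ref{AEcal} is \ref{LVVp}.v; and \ref{EEcal} reduces to $|\log\tau|^{6+\beta}\tau^\alpha \le 1$, valid for small $\tau$.

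The main technical obstacle throughout is the $O(1/m)$ control of $\partial_\xxtilde\Phip(p_1)$ obtained from the antisymmetrization estimate \ref{Lppp}.ii; without this improved bound one would only get $\mu' = O(m\tau)$, which would destroy both the second inequality in (iv) and the positivity bound (vi) needed for embeddedness and for the later perturbation argument of Section 4.
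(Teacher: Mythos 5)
Your proposal is correct and follows essentially the same route as the paper's proof: invert the horizontal balancing equation via \ref{L:hhat}, write down the two matching conditions using \ref{EPhip}, \ref{Eppp}, \ref{LVVp}.iii, solve them explicitly for $\mu,\mu'$, and then verify (v)--(vii) from the component estimates \ref{LPtp}, \ref{LGp}.vii, \ref{EmuV}, and \ref{LVVp}.v. In particular you correctly identify the role of the antisymmetrization estimate \ref{Lppp}.ii in extracting the bound $|\partial_{\xxtilde}\Phip(p_1)|\le C/m$ --- the paper cites \ref{Lppp} without highlighting this point, so making it explicit is a genuine improvement in exposition, not a deviation.
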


\begin{proof}
The existence and uniqueness of $\xx_1$ and $\tau$, 
their smoothness, and also (ii), 
follow from \ref{Eparameters} and \ref{L:hhat}.
(iii) follows from (ii), \ref{Eparameters}, and \ref{Ezetarange}.
Using 
\ref{EPhip}, \ref{Eppp}, and  \ref{LVVp}.iii, 
we conclude that the matching conditions in \ref{DMLD} 
amount to 
$$
\begin{gathered}
\Phip(p)+\phi_1-A_1
+
\frac{\mu}{\tau} (\phi_1+\Phip(p)\,)
+\log\frac\tau2 
=0,
\\
\left(1+\frac{\mu}{\tau  } \right)
\,
\left( \hhat(\xx_1)
+
\displaystyle\frac{\partial \Phip}{\partial\xxtilde}(p) \,\right)
+\frac{\mu'}{m\tau}
=0.
\end{gathered}
$$
By further calculation and \ref{Eparameters} these conditions are equivalent to
$$
\mu= 
\,  - \tau \, \frac {A'_1-A_1+\zeta+\Phip(p)} { \phi_1+\Phip(p) },
\qquad
\mu'= 
\tau \, \left(  \frac {A'_1-A_1+\zeta+\Phip(p)} { \phi_1+\Phip(p) } -1
\right)
\left(
\zeta'
+
m\,\displaystyle\frac{\partial \Phip}{\partial\xxtilde}(p) \, \right).
$$
$\xiw$ is uniquely determined by these conditions and (i) follows.
Using \ref{Lppp} and the definition of $\phi_1$ in \ref{Lphiave} 
we conclude that
$$
 | \, \zeta + \mu \, \phi_1/\tau \, | \le \, C + C \cunder_1 /m,
\qquad
| \, \zeta' + \mu' \, / \tau \, | \le 
\, C + C \cunder_1 /m.
$$
(iv) then follows.
By \ref{DW} we have that 
$$
\varphi=
\tau \,\Pp
+
\tau\, \Ptp 
+
\mu\,
V
+
\mu'
\, V' .
$$
By (iv) and \ref{LVVp}.iv we have
\begin{equation}
\label{EmuV} 
\| \, \mu V+\mu' V' : C_\sym^{3,\beta}(\,\Spheq\,,\gtilde \, )\|\le C \, \cunder_1 \, \tau \, .      
\end{equation} 
Using \ref{LGp}.vii we obtain
$ \| \, \Pp \, : 
C^{3,\beta}_\sym(\Spheq\setminus D_L(\deltaLp)    , g)\|
\le
\, C \,(\deltaLp)^{-3-\beta}\,|\log\deltaLp|\,$.
Combining the above with \ref{LPtp}.i for $\Ptp$ 
we conclude that 
$$
\|\,\varphi\, : 
C^{3,\beta}_\sym(\Spheq\setminus D_L(\deltaLp)    , g)\|
\le
\, C \,( \, \cunder_1 \, m^{3+\beta} + m^{4+\beta} + 
\,(\deltaLp)^{-3-\beta}\,|\log\deltaLp|\, ) \, \tau\, .
$$
Using (iii) and $\deltaLp=\tau^\alpha$  we conclude (v) by assuming $m$ is large enough
(equivalently $\tau$ is small enough).
By \ref{LGp}.vii we conclude that $|\Pp|\le \,C\,\alpha\,m $ on  
$\Spheq\setminus D_L(\deltaLp)   $.
By \ref{con:alpha} we can assume $\alpha$ small enough 
so that (vi) follows by using \ref{LPtp}.ii and \ref{EmuV}.

Finally we prove (vii): 
\ref{con:one}.i follows from \ref{Edelta}, \ref{Eparameters}, and by choosing $m$ large enough.
\ref{con:one}.ii-iii are obvious. 
\ref{con:one}.iv follows from \ref{EPhip} by using \ref{LPtp}.i and \ref{EmuV}.
\ref{con:one}.v-vi follow from (v) and (vi). 
\ref{AEcal}  and \ref{EEcal} follow from \ref{LVVp}.v.  
\end{proof}

\section{LD and MLD solutions in the equator-poles case}
\label{Sequatorpole}
\nopagebreak

\subsection*{Basic definitions}
$\phantom{ab}$
\nopagebreak

We proceed now to study the LD and MLD solutions we need in the case that the catenoidal
bridges are located on the equatorial circle and the two poles.
The construction of these solutions parallels that of the LD and MLD solutions in the
two-circle case as presented in the previous section.
The main differences are that now we have two different catenoidal
bridges modulo the symmetries. 
On the other hand we have no horizontal forces and no horizontal sliding 
because the symmetries fix the location of the catenoidal bridges completely.
The configuration now
consists of $m+2$ points,
$m$ of which lie on the equator and the other two
are the poles: 
\begin{equation}
\label{EeqL}
\begin{gathered}
\Lep =
\Lep[m]:=
L_0   [m] \cup L_2,  
\qquad \text{ where }
\\
L_0   =
L_0   [m]:=
\Lmer[m]\cap\Cir_0
=\grouptwo \, p_0,
\quad
L_2   :=
\{p_N,p_S\} 
=\grouptwo \, p_2,
\end{gathered}
\end{equation}
where 
$p_0:=\Theta(0,0,0)=(1,0,0,0)$ and $p_2:=p_N$ (recall \ref{Eeqone}).
We define
\begin{equation}
\label{Edeltaz}
\delta_p:=\delta_0:= 1/9m \quad (p\in L_0   ),
\qquad
\delta_p:=\delta_2:= 1/100 \quad (p\in \{p_N,p_S\}).
\end{equation}
Clearly $\skernel_\sym[\Lep]$ is two-dimensional 
and spanned by
$W_j:=W_j[m] \in\skernel_\sym[L]$ 
for $j=0,2$, 
where $W_j$ is defined by requesting 
(recall \ref{Dskernel} and \ref{Eskernelsym})
that it is supported on 
$D_{L_j  }(2\delta_j)\setminus D_{L_j  }(\delta_j)$ and
satisfies on $D_{p_j}(2\delta_j)$ 
\begin{equation}
\label{EWWeq}
W_j:=\Lcalp\,
\Psibold\left[2\delta_j,\delta_j ; \dbold_{p_j} \right]
(G_{p_j},\,\log\delta_{j} \, \cos\circ\dbold_{p_j}\,). 
\end{equation}
Because of the symmetries each $\tau:\Lep\to \R$ we consider 
takes only two values: 
$\tau_0:=\tau_{p_0}$ taken on $L_0  $ 
and $\tau_2:=\tau_{p_N}$ taken on the poles.
In analogy with \ref{Etwovarphi} we have: 
\begin{definition}
\label{Eeqvarphi}
For $j=0,2$ we define 
LD solutions 
$ 
\Phi_j=\Phi_j[m]:=\varphi[\, L_j   [m], \,1,\,0] \in C^\infty_\sym(\Spheq\setminus L_j   ). 
$ 
We also define 
$V_j=V_j[m] \in \skernelv_\sym[\Lep]$ by $\Lcalp V_j=W_j$. 
(recall \ref{LLD}, \ref{Eskernelsym} and \ref{EWW}).  
\end{definition}
Clearly $\skernelv_\sym[\Lep]$ is spanned by $V_0$ and $V_2$.

\subsection*{The rotationally invariant parts}
$\phantom{ab}$
\nopagebreak

\begin{lemma}
\label{Leqphiave}
We have that 
$\phi_{eq}:=(\Phi_0[m])_\ave = \frac{m}2\,\sin|\xx| $
on 
$\Spheq$.
\end{lemma}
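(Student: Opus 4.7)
The plan is to mimic the proof of Lemma \ref{Lphiave}, exploiting the rotational invariance of $\phi_{eq}$ together with a balancing/flux calculation over a thin neighborhood of the equator.

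First I would observe that since the configuration $L_0 \subset \Cir_0$ lies entirely on the equator, the LD solution $\Phi_0[m]$ is smooth on $\Spheq\setminus\Cir_0$, and its average $\phi_{eq}$ is rotationally invariant and satisfies $\Lcalp\phi_{eq}=0$ on $\{0<|\xx|<\pi/2\}$. Moreover $\phi_{eq}$ is even in $\xx$ by the $\grouptwo$-symmetry (which includes $\xbartilde$), and $\phi_{eq}$ extends smoothly across the poles $p_N,p_S$ since these points do not belong to $L_0$. Therefore $\phi_{eq}$ is a solution of the ODE $\Lcalp\phi_{eq}=0$ on $(0,\pi/2)$ which remains bounded at $\xx=\pi/2$. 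By \ref{Lphieo}, $\phie$ has a logarithmic singularity at the poles while $\phio=\sin\xx$ is smooth there, so $\phi_{eq}=C\,\phio$ on $\{\xx\ge 0\}$ for some constant $C\in\R$, and by evenness $\phi_{eq}=C\sin|\xx|$ on all of $\Spheq$. Continuity at the equator gives $\phi_{eq}(0)=0$, consistent with this expression.

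It remains to identify $C=m/2$. For this I would apply the argument used in the proof of Lemma \ref{Lphiave}: for $0<\epsilon_1\ll\epsilon_2$ consider the domain
$$\Omega_{\epsilon_1,\epsilon_2}:=D_{\Cir_0}(\epsilon_2)\setminus D_{L_0}(\epsilon_1),$$
integrate $\Lcalp\Phi_0=0$ over it and integrate by parts:
$$\int_{\partial\Omega_{\epsilon_1,\epsilon_2}}\frac{\partial\Phi_0}{\partial\eta}+2\int_{\Omega_{\epsilon_1,\epsilon_2}}\Phi_0=0.$$
Sending $\epsilon_1\to 0$ first, each of the $m$ small circles $\partial D_p(\epsilon_1)$ ($p\in L_0$) contributes $-2\pi$ via the logarithmic behavior $\Phi_0=G_p+\varphihat_p$ with $G_p\sim\log\dbold_p$ near $p$ (recall \ref{LGp}); the smooth part contributes $o(1)$. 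Then sending $\epsilon_2\to 0$ the area integral vanishes, and the two components of $\partial D_{\Cir_0}(\epsilon_2)$ contribute $2\pi\cos\epsilon_2\cdot\frac{d(C\sin\xx)}{d\xx}(\epsilon_2)$ each (one from each side of the equator, with outward normal $\pm\partial_\xx$), totaling $4\pi C$ in the limit. Thus $-2\pi m+4\pi C=0$, i.e. $C=m/2$, which gives the claim.

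I expect no serious obstacle; the main point is the straightforward bookkeeping in the flux identity, and the observation that smoothness at the poles forces the $\phie$-component of $\phi_{eq}$ to vanish. This is simpler than Lemma \ref{Lphiave} because there is no interior circle across which derivatives jump, only the equator itself, where the logarithmic singularities of $\Phi_0$ are concentrated.
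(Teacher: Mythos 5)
Your proof is correct and follows essentially the same route as the paper's: establish the form $\phi_{eq}=C\sin|\xx|$ via rotational symmetry, evenness, and regularity at the poles, then determine $C$ through the same flux/integration-by-parts identity over $D_{\Cir_0}(\epsilon_2)\setminus D_{L_0}(\epsilon_1)$. The only difference is that you spell out the reasoning (boundedness at the poles excluding the $\phie$ component) which the paper leaves as "clear."
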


\begin{proof}
The proof is similar to the one for \ref{Lphiave} but simpler: 
Because of the smoothness on each hemisphere and the rotational symmetry it 
is clear that $\phi_\eq = A \sin |\xx|$ for some $A\in \R$. 
For $0<\epsilon_1<<\epsilon_2$ 
let 
$\Omega_{\epsilon_1,\epsilon_2}:=
D_{\Cir_0} (\epsilon_2) 
\setminus 
D_{L_0  }(\epsilon_1) 
$.
By integrating $\Lcalp \Phi_0=0$ on $\Omega_{\epsilon_1,\epsilon_2}$, 
integrating by parts, and
taking the limit as $\epsilon_1\to 0$ first
and then as $\epsilon_2\to 0$, 
we obtain using the logarithmic behavior near $L_0  $ that 
$
2\pi m = 4 \pi \, A  
$, 
which implies the lemma. 
\end{proof}

Note that if we extended the notation of \ref{Dphiunder} 
in the obvious way we would have 
$\phi_{eq}=
\junder[
1/2
;0]
$.

\begin{lemma}
\label{Lpphiave}
We have that 
(recall \ref{Dphieo})
\begin{equation*}
\Phi_2[m]= \phie= G_{p_N}+ (1-\log2)\phio
\in
C^\infty_\xxx(\Spheq\setminus\{p_N,p_S\} \,).
\end{equation*}
\end{lemma}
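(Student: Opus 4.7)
The plan is to appeal to the uniqueness assertion of Lemma \ref{LLD} applied to the configuration $(L_2, \tau\equiv 1)$, by verifying that the explicit function $\phie$ satisfies all the defining properties of the LD solution $\Phi_2[m]$. The second equality, $\phie = G_{p_N} + (1-\log 2)\phio$, is just Lemma \ref{LGp}.iii rearranged, so the only real content is the first equality $\Phi_2[m]=\phie$.

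First I would record that $\phie$ is $\grouptwo$-invariant: by \ref{Dphieo} it lies in $C^\infty_\xxx(\Spheq\setminus\{p_N,p_S\})$, i.e.\ is rotationally invariant and even in $\xx$, hence invariant under the full group $\grouptwo$ generated by $\xbartilde$, $\ybartilde$, and $\ybartilde_{\pi/m}$. By \ref{Lphieo} we have $\Lcalp \phie = 0$ on $\Spheq\setminus\{p_N,p_S\}$, so condition (i) of \ref{DLDnoK} holds with $L=L_2$ and $w=0$.

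Next I would check the logarithmic matching at each pole (condition (ii) of \ref{DLDnoK}). At $p_N$, the identity from \ref{LGp}.iii rewrites as
$$
\phie - G_{p_N} = (1-\log 2)\,\phio,
$$
and the right-hand side is smooth on all of $\Spheq$ (it is a first harmonic), so the difference extends smoothly across $p_N$. For $p_S$, I would exploit the reflection $\zbartilde \circ \Theta = \Theta\circ\zbar$ from \ref{LThetasymmetries}: under $\xx\mapsto-\xx$, $\phie$ is preserved, $\phio$ changes sign, and $G_{p_N}$ is mapped to $G_{p_S}$. Applying this to the relation above yields
$$
\phie - G_{p_S} = -(1-\log 2)\,\phio,
$$
which is again smooth across $p_S$. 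Thus $\phie$ satisfies both matching conditions at $\pm p_N$ with weights $\tau_{p_N}=\tau_{p_S}=1$.

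Having verified that $\phie$ is a $\grouptwo$-invariant LD solution of configuration $(L_2,1)$, the uniqueness part of Lemma \ref{LLD} immediately gives $\phie = \varphi[L_2[m], 1, 0] = \Phi_2[m]$, completing the proof. There is no real obstacle here — the argument is a direct application of \ref{LLD} plus the explicit decomposition in \ref{LGp}.iii and the reflection symmetry across the equator.
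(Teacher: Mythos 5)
Your argument is correct and is the same as the paper's (which simply cites \ref{LGp}.iii and \ref{LLD}); you just spell out the verification of \ref{DLDnoK}.i--ii in full. One small slip: the reflection that sends $\xx\mapsto-\xx$ and exchanges $p_N,p_S$ is $\xbar$ (with $\xbartilde\circ\Theta=\Theta\circ\xbar$), not $\zbar$/$\zbartilde$, since $\zbartilde$ fixes $\Spheq$ pointwise; the transformation you describe is the right one, only the symbol is wrong.
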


\begin{proof}
The second equality is just \ref{LGp}.iii and it implies 
clearly the first equality by the definitions and \ref{LLD}. 
\end{proof}

\subsection*{Estimates on $\Phi_0=\Phi_0[m]$}
$\phantom{ab}$
\nopagebreak

Since $\Phi_2$ is rotational invariant and well understood by 
\ref{Lpphiave}
and $(\Phi_0[m])_\ave = \phi_{eq}$ is well understood by \ref{Leqphiave},   
the main remaining task is estimating $(\Phi_0[m])_\osc$.
Our approach for this is similar to the one for estimating
$\Phi_\osc$ in the previous section, 
except that the situation is simplified by the extra symmetry.
In analogy with \ref{DV} and \ref{DW} we have now the following.

\begin{definition}
\label{DVeq}
Let $\Pp_0\in C^\infty_\sym(\Spheq\setminus L_0   )$, 
$\Ptp_0,E''_0,
\Ptp_\zave,\Ptp_\zosc,E''_\zave,E''_\zosc
\in C^\infty_\sym(\Spheq)$,  
$\Pp_2 \in C^\infty_\xxx(\Spheq\setminus\{p_N,p_S\} \,)$, 
and $\Ptp_2, E''_2 \in C^\infty_\xxx(\Spheq \,)$ 
be defined 
by 
requesting that for $j=0,2$ 
\begin{equation}
\label{EVeq}
\Pp_j:=
\{\,\Psibold[2\deltaz,3\deltaz; I_{\R^+}]
(G,A_j)
\,\}
\circ
\dbold_{L_j  },
\quad\text{where}\quad
A_j:=\,\log\delta_j ,
\end{equation}
\vspace{-.54cm}
\begin{equation}
\\
\label{EWeq}
\begin{gathered}
\Phi_j=\,\Pp_j+\Ptp_j,
\qquad
E_j'':=-\Lcalp \Pp_j
\qquad \text{ on } 
\Spheq\setminus  L_j,     
\\
\Ptp_\zave:=(\Ptp_0)_\ave, 
\,\,
\Ptp_\zosc:=(\Ptp_0)_\osc,
\,\,
E''_\zave :=(E''_0)_\ave,
\,\,
E''_\zosc :=(E''_0)_\osc,
\quad\text{ on } \Spheq.
\end{gathered}
\end{equation}
\end{definition}

Note that by \ref{EVeq} $E''_0$ vanishes on $D_{L_0   }(2\deltaz)$.
Moreover $E''_0$ is constant on
$\Spheq\setminus D_{\Cir_0}(3\deltaz)$
and therefore
$E''_\zosc$ is supported on $D_{\Cir_0}(3\deltaz)$.
Since $\Lcalp\Phi_0$ vanishes by \ref{LLD}
and \ref{Eeqvarphi},
and $\Lcalp$ is rotationally covariant, 
we conclude from \ref{EWeq}
that
\begin{equation}
\label{ELWeq}
\Lcalp \Ptp_0 = E''_0,
\qquad
\Lcalp \Ptp_\zave = E''_\zave,
\qquad
\Lcalp \Ptp_\zosc = E''_\zosc
\quad\text{ on } \Spheq.
\end{equation}
Moreover since 
$\Phi_\zave\in C^0_\sym(\Spheq)$
by \ref{LLD} and \ref{Eeqvarphi},
and 
$\Ptp_\zave\in C^\infty_\sym(\Spheq)$ by \ref{EWeq},
we conclude by \ref{EWeq} that 
\begin{equation}
\label{EWeqosc}
\begin{aligned}
\Pp_\zave\in C^0(\Spheq),
\qquad & 
\Ptp_\zave= \Phi_\zave-\Pp_\zave=\phi_{eq}-\Pp_\zave
\quad\text{ on }\quad
\Spheq,
\\
\Phi_0=&\,\phi_{eq}+\Pp_\zosc+\Ptp_\zosc
\quad\text{ on }\quad
\Spheq\setminus L_0   .
\end{aligned}
\end{equation}
Using \ref{EWeq} we conclude that
\begin{equation}
\label{EWposceq}
\Ptp_0=\phi_{eq}-\Pp_\zave + \Ptp_\zosc
\quad \text{ on } \quad
\Spheq .
\end{equation}
Note that in this expression although $\phi_{eq}$ and $\Pp_\zave$
are not smooth because of a derivative jump at the equator $\Cir_0$,
we do have
$\phi_{eq}-\Pp_\zave =\Ptp_\zave\in C^\infty_\sym(\Spheq)$
because the derivative jumps cancel out.
Note also that neither $\Lcalp \Ptp_\zave$ nor $\Lcalp \Ptp_\zosc$ 
have to vanish on $D_p(2\deltaL)$ but their sum does.

\begin{lemma}
\label{LHeq}
The following hold where 
$\Omega_{eq}:= D_{\Cir_0\,}(3/m)$.  
\newline
(i).
$\|\Pp_0 - A_0 :C_\sym^{k}(\Omega_{eq}\setminus D_{L_0   }( \deltaz) \,,\gtilde \, )\|\le C(k)\,$ 
and $\Pp_0-A_0$ vanishes on $\Spheq\setminus D_{L_0  }(3\deltaz)$.
\newline
(ii).
$\|m^{-2}E''_0:C_\sym^{k}(\Omega_{eq}\,,\gtilde \, )\|\le C(k)\,$, 
$\|m^{-2}E''_\zave:C_\sym^{k}( \, \Spheq ,\gtilde \, )\|\le C(k)\,$, 
and
$E''_0$ vanishes on $D_{L_0  }(2\deltaz)$. 
\newline
(iii).
$\|m^{-2}E''_\zosc:C_\sym^{k}( \, \Spheq ,\gtilde \, )\|\le C(k)\,$
and 
$E''_\zosc$ is supported on $D_{\Cir_0}(3\deltaz)\subset \Omega_\eq $.
\end{lemma}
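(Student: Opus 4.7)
The plan is to mirror the proof of Lemma \ref{LHest} from the two-circle case, exploiting the fact that the present setup has extra symmetry (reflection across the equator), which eliminates the need for the antisymmetrization estimates \ref{LHest}.iv-v that complicated the earlier argument. Everything reduces to the Green's function expansion \ref{LGp}.vi together with the identity $\Lgtp = m^{-2}\Lcalp$ from \ref{ELcalp}.

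For (i), I would observe that by the definitions \ref{EVeq} and \ref{EPsibold}, on each $D_p(3\deltaz)$ for $p\in L_0$ the function $\Pp_0-A_0$ equals $(\psicut[3\deltaz,2\deltaz]\circ \dbold_p)\cdot(G_p-A_0)$, while it vanishes identically on $\Spheq\setminus D_{L_0}(3\deltaz)$. Since $A_0=\log\deltaz$ and $\deltaz=1/9m$, the estimate \ref{LGp}.vi gives $\|G_p-A_0\|_{C^k(D_p(3\deltaz)\setminus D_p(\deltaz),\gtilde)}\le C(k)$ after rescaling to $\gtilde=m^2 g$. The cutoff $\psicut[3\deltaz,2\deltaz]$ and its derivatives are uniformly bounded with respect to $\gtilde$ because $\deltaz\sim 1/m$, so (i) follows from \ref{E:norm:mult}.

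For (ii), since $\Lgtp=m^{-2}\Lcalp$ by \ref{ELcalp}, write
\begin{equation*}
m^{-2}E''_0 \;=\; -\Lgtp\Pp_0 \;=\; -\Lgtp(\Pp_0-A_0) \;-\; 2m^{-2}A_0.
\end{equation*}
The first term on the right is controlled on $\Omega_\eq$ by (i) with a loss of two derivatives, and vanishes on $\Spheq\setminus D_{L_0}(3\deltaz)$. The second term is just the constant $-2m^{-2}\log\deltaz$, which satisfies $|2m^{-2}\log\deltaz|\le C$ by \ref{Edeltaz}. That $E''_0$ vanishes on $D_{L_0}(2\deltaz)$ is immediate from $\Pp_0=G_{p}$ there together with \ref{LGp}.ii. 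The estimate on $E''_\zave$ follows by averaging: on $\Omega_\eq$ it inherits the bound from the pointwise estimate on $E''_0$, and on $\Spheq\setminus \Omega_\eq$ every parallel circle lies outside $D_{L_0}(3\deltaz)$, so $E''_0$ is the constant $-2A_0$ there and the average equals that same constant.

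For (iii), note that on $\Spheq\setminus D_{\Cir_0}(3\deltaz)$ every parallel circle avoids $D_{L_0}(3\deltaz)$ (since $p\in L_0\subset\Cir_0$ implies $D_p(3\deltaz)\subset D_{\Cir_0}(3\deltaz)$), so $E''_0\equiv -2A_0$ is constant on each such circle and consequently $E''_\zosc=E''_0-E''_\zave=0$ there. The required inclusion $D_{\Cir_0}(3\deltaz)\subset\Omega_\eq$ is clear since $3\deltaz=1/3m<3/m$. The $C^k$ estimate for $E''_\zosc$ then follows immediately from $E''_\zosc=E''_0-E''_\zave$ on $\Omega_\eq$ together with the bounds of (ii). The main (minor) obstacle is merely bookkeeping the supports carefully; the absence of an $\Acalxx$-statement reflects the fact that averaging across parallel circles here has no residual anti-symmetric part to control.
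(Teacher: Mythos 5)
Your proof is correct and follows essentially the same route as the paper: (i) from the Green's function expansion \ref{LGp}.vi and the definitions, (ii) from the identity $m^{-2}E''_0=-\Lgtp(\Pp_0-A_0)-2m^{-2}A_0$ together with $|m^{-2}A_0|<1$, and (iii) from the support structure. The extra details you supply (the averaging bookkeeping for $E''_\zave$ and the remark on the absence of an $\Acalxx$ estimate) are consistent with the paper's argument, which the author leaves at the level of ``(ii) follows then from (i)''.
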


\begin{proof}
(i) follows from \ref{LGp}.vi and the definitions.
By \ref{ELcalp} and \ref{EWeq} we have
$$
m^{-2}E''_0=
-\Lcalp_\gtilde \Pp_0 =
-\Lcalp_\gtilde ( \Pp_0 - A_0 ) - 2 m^{-2} A_0.
$$
As mentioned earlier $E''_0$ vanishes on $D_{L_0  }(2\deltaz)$ 
and clearly 
$-1 < m^{-2} A_0 < 0$ 
by \ref{Edeltaz}. 
(ii) follows then from (i). 
The second part of (iii) follows 
from (i) which implies that 
$\Pp_0=A_0$ on $\Spheq\setminus D_L(3\deltaz)$.
The first part of (iii) follows then from (ii) 
and the second part. 
\end{proof}

\begin{lemma}
\label{LWoscesteq}
$\Ptp_\zosc$ satisfies 
$\|\Ptp_\zosc:C^{2,\beta}_\sym(\Spheq,\gtilde, \fdecayz )\|
\le
C  $
where we have 
$\fdecayz:= e^{-c_1 m \min ( \, | \xx | \, ,\,  c_2\, ) } $ 
for some absolute constants $c_1,c_2>0$.
\end{lemma}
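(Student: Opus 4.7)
The plan is to follow the strategy used in Lemma \ref{LWoscest}(i), which in turn relied on Lemma \ref{Lsol}; the present statement is the equator-poles analog, obtained essentially by taking $\xx_1=0$ in that argument. By \ref{ELWeq} we have $\Lcalp \Ptp_\zosc = E''_\zosc$, by \ref{EWeq} we have $(\Ptp_\zosc)_\ave=0$, and by \ref{LHeq}(iii) the inhomogeneous term $E''_\zosc$ is supported in $D_{\Cir_0}(3\deltaz)$ with $\|m^{-2}E''_\zosc : C^{0,\beta}_\sym(\Spheq,\gtilde)\|\le C$.

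First I would set up the cylinder parametrization exactly as in the proof of \ref{Lsol}: identify $\Spheq\setminus\{p_N,p_S\}$ with $\R\times\Sph^1$ equipped with $\chi=ds^2+d\theta^2$, where $s$ is the arclength coordinate with $ds/d\xx=m/\cos\xx$ (so $s$ is odd in $\xx$, vanishing on $\Cir_0$) and $\theta=m\yy$ is taken modulo $2\pi$ by $\grouptwo$-symmetry. In these coordinates the equation becomes
\[
(\Delta_\chi + 2m^{-2}\cos^2\xx)\,\Ptp_\zosc \; = \; m^{-2}\cos^2\xx\, E''_\zosc.
\]
Since $E''_\zosc$ has zero mean on every circle $\{s\}\times\Sph^1$, the auxiliary problem $\Delta_\chi v' = m^{-2}\cos^2\xx\, E''_\zosc$ with $v'\to 0$ as $s\to\pm\infty$ is solved modewise by separation of variables; each nonzero Fourier mode $e^{ik\theta}$ gives a decoupled ODE whose solution decays like $e^{-|k||s|}$ away from the (compact) support of the right-hand side. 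Standard estimates, combined with the uniform equivalence of $\chi$ and $\gtilde$ on $\Omega_\eq$, then yield
\[
\|v' : C^{2,\beta}_\sym(\R\times\Sph^1,\chi,\,e^{-|s|/2})\| \;\le\; C\,\|m^{-2}E''_\zosc : C^{0,\beta}_\sym(\Omega_\eq,\gtilde)\|.
\]

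Next I would treat the zeroth-order term $2m^{-2}\cos^2\xx$ as a small perturbation: $u:=\Ptp_\zosc - v'$ satisfies $(\Delta_\chi+2m^{-2}\cos^2\xx)u = -2m^{-2}\cos^2\xx\, v'$, and since the coefficient is $O(m^{-2})$ a Neumann-series / contraction argument identical to the one in \ref{Lsol} produces $u$ in the same weighted space with an extra factor $m^{-2}$. Adding the two contributions gives the weighted estimate in the $\chi$-metric. Finally I would convert the $s$-decay into the claimed $\xx$-decay using $|s(\xx)|\ge m|\xx|/C$ for $|\xx|$ bounded away from $\pi/2$, and choose $c_1,c_2>0$ so that $e^{-|s|/2}\le \fdecayz=e^{-c_1 m\min(|\xx|,c_2)}$ everywhere on $\Spheq$.

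The only point requiring a small additional remark is the region near the poles, where the cylinder parametrization degenerates: there the weight $\fdecayz$ saturates at $e^{-c_1 m c_2}$, so the desired bound on $\{|\xx|\ge c_2\}$ follows by applying standard interior elliptic regularity for $\Lcalp$ on $\Spheq$ to $\Ptp_\zosc$, whose $C^0$-norm on this region is already controlled by the cylinder estimate on the annulus $\{|\xx|=c_2\}$ and whose right-hand side $E''_\zosc$ vanishes there by \ref{LHeq}(iii). There is no real obstacle; the argument is strictly simpler than that of \ref{Lsol} because the symmetry under $\zbar$ eliminates the horizontal-sliding parameter and there is no analog of part (iv) to prove.
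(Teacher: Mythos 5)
Your proposal is correct and follows essentially the same route the paper intends: the paper's proof simply invokes Lemma \ref{Lsol} in a version adapted to $\xx_1=0$, applied to $\Lcalp\Ptp_\zosc=E''_\zosc$ with the bounds from \ref{LHeq}, and leaves the modifications implicit. Your filling-in of those modifications — the modewise $\Delta_\chi v'$ analysis on the cylinder, the Neumann-series treatment of the $O(m^{-2})$ zeroth-order term, and the interior Schauder bound in the polar region where the conformal factor $\cos^{-2}\xx$ relating $\chi$ and $\gtilde$ degenerates — is exactly the intended unwinding. (One small slip: the reflection that makes part (iv) of \ref{Lsol} vacuous here is $\xbar$, i.e.\ $\xx\mapsto-\xx$, not $\zbar$, which exchanges the two sides of $\Spheq$.)
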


\begin{proof}
The proof is similar to the proof of \ref{LWoscest}, 
where we apply an appropriately modified version of \ref{Lsol} 
on \ref{LHeq}. 
Since the modifications are clear 
we omit the details.
\end{proof}

\begin{definition}
\label{DPhipppeq}
We define 
$\Phip_0\in C^\infty_\sym(\Omega_{eq})$
by requesting that 
$
\Ptp_0=\Phip_0 -A_0\phie 
$
on
$\Omega_{eq}$.
\end{definition}
\ref{DPhipppeq} corresponds to 
\ref{DPhippp} with $-A_0$ and $\phie$ corresponding to $\phi_1-A_1$ and $\phiunder[1,0;0]$. 
Using now \ref{EWeq} we obtain
\begin{equation}
\label{EPpppeq}
\Phi_0=
\Pp_0+ \Phip_0 -A_0\phie 
\quad\text{ on }\quad
\Omega_{eq}\setminus L_0   .
\end{equation}

\begin{lemma}
\label{Lpppeq}
$\Phip_0$ satisfies the estimate
$\|\Phip_0:C_\sym^{2,\beta}(\Omega_{eq}\,,\gtilde \, )\|\le \, 
C$.
\end{lemma}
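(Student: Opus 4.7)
The plan is to follow the template established in the proof of Lemma \ref{Lppp}, exploiting the extra simplifications coming from the additional symmetry in the equator-poles case. First I would split
\[
\Phip_0 = (\Phip_0)_\ave + (\Phip_0)_\osc
\]
and treat the two pieces separately. For the oscillatory part, note that $\phie\in C^\infty_\xxx(\Spheq\setminus\{p_N,p_S\}\,)$ is rotationally invariant, so $(A_0\phie)_\osc=0$; combining with \ref{EWeq} and \ref{DPhipppeq} this gives $(\Phip_0)_\osc=\Ptp_\zosc$, and the required bound on this term follows directly from Lemma \ref{LWoscesteq} (the decay factor $\fdecayz$ is clearly bounded by $1$).

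For the average part, combining \ref{EWposceq} and \ref{DPhipppeq} gives
\[
(\Phip_0)_\ave = \phi_{eq} - \Pp_\zave + A_0\phie,
\]
which is smooth on $\Spheq$ because the derivative jumps of $\phi_{eq}$ and $\Pp_\zave$ across $\Cir_0$ cancel (this is the analogue of the remark in the proof of \ref{Lppp}). Applying $\Lcalp$ and using \ref{ELWeq} together with $\Lcalp\phie=0$ (Lemma \ref{Lphieo}), we see
\[
\Lcalp (\Phip_0)_\ave = E''_\zave.
\]
In the rescaled coordinate $\xxhat=m\xx$ of \ref{Eghat} this is the ODE
\[
\partial_{\xxhat}^2 (\Phip_0)_\ave - m^{-1}\tan(m^{-1}\xxhat)\,\partial_{\xxhat}(\Phip_0)_\ave + 2m^{-2}(\Phip_0)_\ave = m^{-2}E''_\zave,
\]
and by Lemma \ref{LHeq}.ii the right hand side satisfies $\|m^{-2}E''_\zave\|\le C$ uniformly in $m$ on $\Omega_\eq$.

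The main point is then to bound the initial data on $\partial\Omega_\eq=\{|\xx|=3/m\}$. By \ref{LHeq}.i, on this set we have $\Pp_\zave=A_0=-\log(9m)$ and $\partial_\xx\Pp_\zave=0$. Using $\phi_{eq}(\xx)=\tfrac{m}{2}\sin|\xx|$ from Lemma \ref{Leqphiave} and the Taylor expansion $\phie(\xx)=1+O(\xx^2\log\xx)$ from Definition \ref{Dphieo}, a direct computation gives at $|\xx|=3/m$:
\[
(\Phip_0)_\ave = \tfrac{m}{2}\sin(3/m) + A_0(\phie(3/m)-1) = \tfrac{3}{2}+O(m^{-2}\log m),
\]
and similarly $\partial_{\xxhat}(\Phip_0)_\ave = O(1)$. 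Thus the initial data are bounded uniformly in $m$. Feeding these bounds into the ODE together with the uniform bound on the inhomogeneous term, standard ODE estimates (exactly as used in Lemma \ref{Lode} and in the proof of \ref{Lppp}) yield $\|(\Phip_0)_\ave\|_{C^{2,\beta}_\sym(\Omega_\eq,\gtilde)}\le C$. Combining with the estimate for the oscillatory part completes the proof.

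The routine obstacle is simply verifying the initial data bound at $\partial\Omega_\eq$, which requires noting that the apparent largeness of $A_0\sim\log m$ is compensated by the smallness of $1-\phie(3/m)=O(m^{-2}\log m)$; no genuinely new ideas beyond those in \ref{Lppp} are needed.
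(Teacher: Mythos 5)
Your proposal is correct and follows essentially the same approach as the paper: decompose $\Phip_0$ into average and oscillatory parts, bound the initial data for the average part at $\partial\Omega_\eq$ using $\Pp_\zave = A_0$ there and the known expressions for $\phi_\eq$ and $\phie$, then propagate via the ODE $\Lcalp(\Phip_0)_\ave = E''_\zave$, and handle the oscillatory part via Lemma \ref{LWoscesteq}. One very minor slip: since $\phie = 1 - \sin\xx\,\operatorname{arctanh}(\sin\xx)$, the expansion near $0$ is $\phie(\xx) = 1 - \xx^2 + O(\xx^4)$ with no logarithmic factor, so $1-\phie(3/m)=O(m^{-2})$; this is even stronger than what you claim and does not affect the conclusion after multiplying by $A_0\sim -\log m$.
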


\begin{proof}
By using the definitions we have 
$$
\Phip_\zave=
\phi_\eq
+ 
A_0\phie 
- 
\Pp_\zave ,
\qquad\qquad
\Phip_\zosc =\Ptp_\zosc,
\qquad\qquad
\Lcalp \Phip_\zave = \, E''_\zave . 
$$
Note that the discontinuities on the right hand side of the first equation cancel and the left hand side is smooth.
Using then that $\Pp_\zave = A_0$ on  
$\Spheq\setminus D_{\Cir_0}(3\deltaz)$
by \ref{LHeq}.i
and \ref{Dphieo} 
we conclude that at $\partial \Omega_\eq$ 
we have
$|\, \Phip_\zave \, | \, \le \, C \, $ 
and 
$|\, \partial_{\xxtilde}\,  \Phip_\zave \, | \, \le \, C \,$. 
Using these as initial data for the ODE 
and \ref{LHest}.ii 
we estimate $\Phip_\zave$.
By this estimate 
together with 
\ref{LWoscesteq} 
we conclude the proof. 
\end{proof}

\begin{lemma}
\label{LPlast}
(i).
$\|\, \Phi_0- \frac{m}2\,\sin|\xx| \, :C_\sym^{k,\beta}(\, \Spheq \setminus D_{\Cir_0} ( \delta_0) \, ,\gtilde \, )\|\le C(k) $.
\newline
(ii).
$\|\, \Phi_0 \, :C_\sym^{k,\beta}(\, \Omega_\eq \setminus D_{L_0  } (\delta_0) \, ,\gtilde \, )\|\le C(k) $ 
(recall $\Omega_\eq = D_{\Cir_0} (3/m) \supset D_{\Cir_0} (3\delta_0) $).
\newline
(iii).
$\| \, \Ptp_0 + A_0  \,  :C_\sym^{k,\beta}(\, \Omega_\eq \, , \, \gtilde \, )\|\le C(k) $.
\end{lemma}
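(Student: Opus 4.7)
My plan is to prove the three estimates in reverse order: first (iii), then deduce (ii) immediately from (iii) together with \ref{LHeq}.i, and finally prove (i) using the oscillatory decomposition \ref{EWeqosc}. All three estimates concern the scaled metric $\gtilde$, so scaling considerations through $\xxtilde=m\xx$ will drive the bookkeeping. A preliminary task is to extend \ref{Lpppeq} and \ref{LWoscesteq} from $C^{2,\beta}$ to arbitrary $C^{k,\beta}$ in the scaled metric; this is immediate by standard interior elliptic bootstrapping since the higher-$k$ bounds on $E''_0$ (and hence on $E''_\zave$ and $E''_\zosc$) are already supplied by \ref{LHeq}.ii-iii.

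For (iii), I start from \ref{DPhipppeq}, which gives $\Ptp_0+A_0 = \Phip_0 - A_0(\phie - 1)$. Using \ref{Dphieo}, a Taylor expansion near $\xx=0$ yields $\phie - 1 = -\sin\xx\,\log\frac{1+\sin\xx}{\cos\xx}=O(\sin^2\xx)$, so on $\Omega_\eq=D_{\Cir_0}(3/m)$ we have $|\phie-1|\le C/m^2$ and, because $\xxtilde=m\xx$ is the natural coordinate of $(\Spheq,\gtilde)$, the $C^k_\sym(\Omega_\eq,\gtilde)$ norm of $\phie-1$ is bounded by $C(k)/m^2$. Since $|A_0|=\log(9m)$, this gives $\|A_0(\phie-1):C^k_\sym(\Omega_\eq,\gtilde)\|\le C(k)\log m/m^2\le C(k)$. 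Combined with the (bootstrapped) version of \ref{Lpppeq} for $\Phip_0$, this yields (iii).

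For (ii), I write $\Phi_0 = (\Pp_0-A_0)+(\Ptp_0+A_0)$, using the decomposition in \ref{EWeq}. On $\Omega_\eq\setminus D_{L_0}(\delta_0)$, the first summand is bounded in $C^k_\sym(\gtilde)$ directly by \ref{LHeq}.i, and the second by (iii); summing gives (ii). For (i), \ref{EWeqosc} together with \ref{Leqphiave} gives $\Phi_0-\tfrac{m}{2}\sin|\xx| = \Pp_\zosc+\Ptp_\zosc$. The term $\Ptp_\zosc$ is controlled on all of $\Spheq$ by the (bootstrapped) \ref{LWoscesteq} with exponentially decaying weight $\fdecayz$, so it is $\le C$ on any subdomain. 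For $\Pp_\zosc = (\Pp_0-A_0)-(\Pp_\zave-A_0)$: on parallel circles with $|\xx|$ large enough to lie entirely in $\Spheq\setminus D_{L_0}(3\delta_0)$, $\Pp_0=A_0$ by \ref{LHeq}.i, whence $\Pp_\zave=A_0$ and $\Pp_\zosc=0$ there; on the remaining part of $\Spheq\setminus D_{\Cir_0}(\delta_0)$, which is a subset of $\Omega_\eq\setminus D_{L_0}(\delta_0)$, both $\Pp_0-A_0$ and its parallel-circle average $\Pp_\zave-A_0$ are bounded in $C^k_\sym(\gtilde)$ by \ref{LHeq}.i, so $\Pp_\zosc$ is bounded there as well.

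The main obstacle I anticipate is not conceptual but organizational: keeping track of which subregion each estimate lives on, and in particular verifying that $\Spheq\setminus D_{\Cir_0}(\delta_0)$ splits naturally into the ``far'' piece (where $\Pp_\zosc$ vanishes identically) and the ``near'' piece (which sits inside $\Omega_\eq\setminus D_{L_0}(\delta_0)$ where \ref{LHeq}.i applies). Once this geometric bookkeeping is set up, the logarithmic factor $\log m$ from $A_0$ is always absorbed by a power of $1/m$ from either the scaled Taylor expansion of $\phie-1$ or the exponential decay weight $\fdecayz$, so no sharp tracking of constants is needed.
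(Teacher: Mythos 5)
Your proof is correct and takes essentially the same approach as the paper: the same three decompositions ($\Ptp_0 + A_0 = \Phip_0 - A_0(\phie-1)$ for (iii), $\Phi_0 = (\Pp_0 - A_0) + (\Ptp_0 + A_0)$ for (ii), and $\Phi_0 - \tfrac{m}{2}\sin|\xx| = \Pp_\zosc + \Ptp_\zosc$ for (i)), the same ingredients (\ref{LHeq}, \ref{Lpppeq}, \ref{LWoscesteq}, the Taylor expansion of $\phie$), and the same bootstrapping via interior regularity to pass from $k=2$ to general $k$. The only difference is the order of the parts, which is immaterial.
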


\begin{proof}
By \ref{EWeqosc}, \ref{LHeq}.i, and \ref{Leqphiave} we have 
$\Phi_0- \frac{m}2\,\sin|\xx| =\Ptp_\zosc  + \, (\Pp_0 - A_0)_\osc  $ on 
$\Spheq \setminus D_{\Cir_0}( \deltaz)$. 
By \ref{LWoscesteq} \ref{LHeq}.i 
we conclude (i) for $k=2$.

By \ref{EPpppeq} we have 
$\Phi_0= \Pp_0 -A_0 + \Phip_0 -A_0\, (\phie -1)$ 
on $\Omega_{eq}\setminus L_0   $ 
and by \ref{DPhipppeq} 
$\Ptp_0+A_0 =\Phip_0 -A_0 \, (\phie -1)$ 
on
$\Omega_{eq}$.
(ii) and (iii) follow then for $k=2$ 
from \ref{LHeq}.i, \ref{Lpppeq}, and \ref{Dphieo}. 
Using interior regularity and \ref{LHeq}.iii we complete the proof. 
\end{proof}

\begin{corollary}
\label{CPlast}
(i). 
$\|\, \Phi_0 \, :C_\sym^{k,\beta}(\, \Spheq \setminus D_{L_0  } ( \delta_0) \, ,\gtilde \, )\|\le C(k) \, m $.  
\newline 
(ii).
$\|\, \Ptp_0 \, :C_\sym^{k,\beta}(\, \Spheq \, ,\gtilde \, )\|\le C(k) \, m $. 
\newline 
(iii).
$|\, \Phi_0- \frac{m}2\,\sin|\xx| \, | \,\le \, C$ on $\Spheq \setminus D_{L_0  } ( \delta_0) $.  
\end{corollary}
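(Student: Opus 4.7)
The plan is to deduce all three parts of Corollary \ref{CPlast} from Lemma \ref{LPlast}, using elementary estimates on $\frac{m}{2}\sin|\xx|$ and the constant $A_0 = \log\delta_0 = -\log(9m)$, together with the elementary covering $\Spheq\setminus D_{L_0}(\delta_0) \subset (\Spheq\setminus D_{\Cir_0}(\delta_0))\cup (\Omega_\eq \setminus D_{L_0}(\delta_0))$ (and similarly for all of $\Spheq$ when needed in (ii)). The argument is mechanical once we keep track of where each of the three statements in \ref{LPlast} applies.

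I would dispatch (iii) first since it is purely a $C^0$ statement. On $\Spheq \setminus D_{\Cir_0}(\delta_0)$, the inequality $|\Phi_0 - \tfrac{m}{2}\sin|\xx|| \le C$ is immediate from the $k=0$ case of \ref{LPlast}.i. On the complementary region $\Omega_\eq\setminus D_{L_0}(\delta_0)$, the latitude satisfies $|\xx|\le 3/m$, so $\tfrac{m}{2}|\sin|\xx||\le 3/2$, and \ref{LPlast}.ii gives $|\Phi_0|\le C$; the triangle inequality finishes.

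For (i) I would decompose $\Phi_0 = (\Phi_0 - \tfrac{m}{2}\sin|\xx|) + \tfrac{m}{2}\sin|\xx|$ on $\Spheq\setminus D_{\Cir_0}(\delta_0)$. The first summand has $C^{k,\beta}_\sym$ norm $\le C(k)$ by \ref{LPlast}.i. For the second, working in the scaled coordinates $(\xxtilde,\yytilde)=(m\xx,m\yy)$ of $\gtilde$, one has $\partial_{\xxtilde}^j(\tfrac{m}{2}\sin|\xx|) = \tfrac12 m^{1-j}\partial_{\xx}^{j}\sin|\xx|$ away from the equator, yielding a $C^{k,\beta}_\sym$ bound $\le C(k)m$. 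On the remaining region $\Omega_\eq\setminus D_{L_0}(\delta_0)$ the bound $C(k)\le C(k)m$ is directly \ref{LPlast}.ii. Combining via the above covering proves (i).

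For (ii) I would use the definition \ref{EVeq} of $\Pp_0$, which implies $\Pp_0\equiv A_0$ on $\Spheq\setminus D_{L_0}(3\delta_0)\supset \Spheq\setminus\Omega_\eq$ (since $3\delta_0 = 1/(3m) < 3/m$). Hence $\Ptp_0 = \Phi_0 - A_0$ off $\Omega_\eq$, and applying (i) together with $|A_0|=\log(9m)\le Cm$ (noting that a constant has $C^{k,\beta}$ norm equal to its absolute value) gives $\|\Ptp_0\|_{C^{k,\beta}_\sym(\Spheq\setminus \Omega_\eq, \gtilde)}\le C(k)m$. On $\Omega_\eq$, \ref{LPlast}.iii yields $\|\Ptp_0\|\le \|\Ptp_0+A_0\|+|A_0|\le C(k)+\log(9m)\le C(k)m$. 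Covering $\Spheq$ by these two sets yields (ii). The only mild subtlety is that the $C^{k,\beta}$ norm of a function on a union is controlled by the norms on the pieces thanks to the local nature of the norm in \ref{D:newweightedHolder} (the supremum is over balls of radius $1/100$ in the rescaled metric, while $\Omega_\eq$ has $\gtilde$-scale width $3$), so no genuine obstacle arises.
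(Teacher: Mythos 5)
Your argument is correct and follows essentially the same route as the paper, which in its proof simply records that (i) and (iii) follow from Lemma \ref{LPlast}.i,ii and that (ii) follows from (i), \ref{LPlast}.iii, the identity $\Ptp_0=\Phi_0-A_0$ on $\Spheq\setminus\Omega_\eq$, and $|A_0|<m$; you are just supplying the routine details (the bound $\|\tfrac{m}2\sin|\xx|:C^{k,\beta}(\gtilde)\|\le C(k)m$, the triangle inequality, and the patching of overlapping regions) that the paper leaves implicit.
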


\begin{proof}
(i) follows from \ref{LPlast}.i,ii.
(ii) follows from (i) and \ref{LPlast}.iii 
by using also that on $\Spheq\setminus\Omega_\eq$ we have $\Ptp_0=\Phi_0-A_0$
and $|A_0| < m$. 
(iii) follows from \ref{LPlast}.i,ii.
\end{proof} 

\begin{lemma}
\label{LPtwo}
(i).
$\|\, \Phi_2 \, :C_\sym^{k,\beta}(\, \Spheq \setminus D_{L_2} ( \delta_2) \, , g \, )\|\le C(k) $.
\newline
(ii). 
$\| \, \Ptp_2 \,  :C_\sym^{k,\beta}(\, \Spheq \, , \, \gtilde \, )\|\le C(k) $ 
and moreover $\Ptp_2=\, (1-\log 2) \,\sin|\xx|$ on $D_{L_2} ( \delta_2)$.  
\end{lemma}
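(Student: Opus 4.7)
The plan is to derive both parts directly from the explicit identity $\Phi_2=\phie=G_{p_N}+(1-\log 2)\phio$ supplied by Lemma \ref{Lpphiave}, combined with the definition \ref{EVeq} of $\Pp_2$. The essential feature of the pole case, which makes it much simpler than its equatorial counterpart \ref{CPlast}, is that all parameters entering $\Pp_2$ (namely $\delta_2=1/100$ and $A_2=\log\delta_2$) are absolute constants independent of $m$; consequently $\Phi_2$, $\Pp_2$, and hence $\Ptp_2$ are fixed functions on $\Spheq$ that do not depend on $m$. Part (i) then follows directly from the explicit formula $\phie(\xx)=1-\sin\xx\log\frac{1+\sin\xx}{\cos\xx}$ in \ref{Dphieo}, since $\phie$ is smooth on $\Spheq\setminus\{p_N,p_S\}$ and on $\Spheq\setminus D_{L_2}(\delta_2)$ we have $|\cos\xx|\ge\sin\delta_2$, an absolute positive constant, so each derivative is bounded by a constant depending only on $k$.

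For (ii), I first establish the pointwise identity on $D_{L_2}(\delta_2)$. Since $\delta_2<2\delta_2$, the $\Psibold$ cutoff in \ref{EVeq} reduces to $\Pp_2=G_{p_N}$ on $D_{p_N}(\delta_2)$, and so by \ref{Lpphiave},
\[
\Ptp_2 \;=\; \Phi_2-\Pp_2 \;=\; (1-\log 2)\,\phio \;=\; (1-\log 2)\sin\xx \;=\; (1-\log 2)\sin|\xx|,
\]
the last equality using $\xx>0$ on $D_{p_N}(\delta_2)$; the $\xbartilde$-symmetry of both $\Phi_2$ and $\Pp_2$ transfers the identity to $D_{p_S}(\delta_2)$.

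For the global $C^{k,\beta}$ bound, the decisive observation is that the logarithmic singularities of $\phie$ and of $\Pp_2$ at the two poles cancel exactly, so $\Ptp_2$ extends to a smooth function on all of $\Spheq$: on $D_{L_2}(\delta_2)$ via the explicit expression $(1-\log 2)\sin|\xx|$, which is smooth near each pole, and elsewhere as a combination of $\phie$ (smooth by (i)) and $\Pp_2$ (a smooth bounded cutoff of $G$, with bounds depending only on $A_2$ and $\sup|G|$ on a fixed annulus bounded away from the poles, all absolute constants). Hence $\Ptp_2$ is a fixed element of $C^\infty_\xxx(\Spheq)$ independent of $m$, giving $\|\Ptp_2:C^{k,\beta}(\Spheq,g)\|\le C(k)$; since the rescaling $\gtilde=m^2g$ only decreases norms of derivatives, the stated bound with respect to $\gtilde$ follows. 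I foresee no substantive obstacle; the only point requiring care is the exact cancellation of singularities at the poles, which is immediate from \ref{Lpphiave}.
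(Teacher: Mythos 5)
Your proof is correct and unpacks exactly what the paper's one-line argument (``This follows easily from the definitions and \ref{Lpphiave}'') intends: the identity $\Phi_2=\phie=G_{p_N}+(1-\log 2)\phio$ from \ref{Lpphiave} gives the pointwise formula for $\Ptp_2$ near the poles, shows the logarithmic singularities cancel, and since all quantities are absolute constants the $C^{k,\beta}(\Spheq,g)$ bound is immediate and dominates the $\gtilde$-bound because $\gtilde=m^2 g$ with $m\ge1$.

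One point you should flag explicitly rather than pass over silently: as written, \ref{EVeq} defines $\Pp_j$ using the cutoff interval $[2\deltaz,3\deltaz]$ with $\deltaz=\delta_0=1/(9m)$ \emph{for both} $j=0$ and $j=2$, even though the constant plugged into the cutoff is $A_j=\log\delta_j$. Taken literally for $j=2$, that would cut off $G$ at scale $\sim 1/m$ and leave $\Pp_2=A_2=\log\delta_2$ outside $D_{L_2}(3\delta_0)$; in the transition annulus of width $\delta_0$ the jump from $G(r)\approx\log r\approx-\log m$ to $A_2=-\log 100$ is of size $\log m$, so $|\nabla\Pp_2|_g\sim m\log m$ there, and even after dividing by $m$ for the $\gtilde$ norm one retains a $\log m$ factor. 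In particular, both the uniform $C^{k,\beta}(\Spheq,\gtilde)$ bound and the pointwise identity $\Ptp_2=(1-\log 2)\sin|\xx|$ on all of $D_{L_2}(\delta_2)$ would fail. Your reading, with the cutoff at scale $\delta_2$ (so that $\Pp_2=G_{p_N}$ throughout $D_{p_N}(\delta_2)\subset D_{p_N}(\tfrac73\delta_2)$), is the one that makes the lemma true and is consistent with the rest of the paper --- for instance the proof of \ref{L:heq} simply declares $\Ptp_2:=(1-\log 2)\phio$, and \ref{LVVpeq}.ii matches $V_2$ to $\Phi_2$ at scale $\delta_2$. So you have silently corrected a typo in \ref{EVeq} (it should be $\delta_j$, not $\deltaz$); that correction is right, but in a careful write-up you should state it rather than assert ``$\delta_2<2\delta_2$'' against a formula that literally says $\delta_0$.
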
 

\begin{proof}
This follows easily from the definitions and \ref{Lpphiave}. 
\end{proof}

\subsection*{Estimates on $V_0,V_2,W_0,W_2$}
$\phantom{ab}$
\nopagebreak

\begin{lemma}
\label{LVVpeq}
$V_0,V_2\in C^\infty_\sym(\Spheq)$ satisfy the following.
\newline
(i).
We have 
$V_0=\Phi_0$ 
on 
$\Spheq\setminus D_{L_0   }(2\deltaz)$
and 
$V_2=\Phi_2=\phie$ 
on 
$\Spheq\setminus D_{\{p_N,p_S\}}(2\deltat)$. 
\newline
(ii).
We have
$$
\begin{aligned}
V_0&=
\Psibold\left[2\delta_0,\delta_0 ; \dbold_{p_0} \right]
(G_{p_0},\,\log\delta_{0} \, \cos\circ\dbold_{p_0}\,)
+\Phip_0
-A_0\phie
\quad
\text{ on }
D_{p_0}(2\deltaz),
\\
V_2&=
\Psibold\left[2\delta_2,\delta_2 ; \dbold_{p_N} \right]
(G_{p_N},\,\log\delta_{2} \, \cos\circ\dbold_{p_N}\,)
+(1-\log 2)\phio
\quad
\text{ on }
D_{p_N}(2\deltaz).
\end{aligned}
$$
(iii).
$V_0(p_0)=\Phip_0(p_0)$,
$\quad V_0(p_N)={m}/2$,
$\quad V_2(p_0)=1$,
$\quad V_2(p_N)= 1+\log(\delta_2/2)$.
Moreover 
$|V_0(p_0)| \le \, C$ and 
$|V_2(p_N)| \le \, C$.  
\newline
(iv). 
$\|V_0: C_\sym^{k}(\,    D_{L_0  } ( \delta_0)     \, ,\gtilde \, )\| \, \le  \, C(k) $, 
$\|V_0: C_\sym^{k}(\,\Spheq\,,\gtilde \, )\| \, \le  \, C(k) \, m     $, \hfill 
\newline
and 
$\|V_2: C_\sym^{k}(\,\Spheq\,,\gtilde \, )\| \, \le  \, C(k) $.
\newline
(v). 
\ref{AEcal}  holds and 
$\|\Ecalinvep \| \,  \le \, C \, m^{2+\beta} \, $ (recall \ref{DEcalnorm}). 
\newline
(vi). 
For $\mu_0,\mu_2\in\R$ we have that 
$|\mu_0| m + |\mu_2| \, \le \,C \, \|\, \mu_0 W_0 + \mu_2 W_2  \, : \, C_\sym^{0,\beta}(\,\Spheq\,,g \, )\| $.
\end{lemma}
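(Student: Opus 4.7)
\textbf{Proof proposal for Lemma \ref{LVVpeq}.} The strategy will mirror the proof of \ref{LVVp}, adapted to the equator-poles configuration. For (i) and (ii), I will define candidate functions $V_{0,new}, V_{2,new}$ by the expressions on the right-hand sides of (i) and (ii) on the respective domains and check two things: first that they match on the boundaries $\partial D_{p_0}(2\delta_0)$ and $\partial D_{p_N}(2\delta_2)$, which follows from \ref{EPpppeq} (combined with the definitions in \ref{DVeq}) and from \ref{Lpphiave} respectively; and second that $\Lcalp V_{j,new} = W_j$ globally. The latter holds because on the closed-up pieces away from the transition annuli one has $\Lcalp V_{j,new} = \Lcalp \Phi_j = 0$ (using \ref{Eeqvarphi}), while inside $D_{p_j}(2\delta_j)$ the extra terms $\Phip_0 - A_0\phie$ and $(1-\log 2)\phio$ contribute nothing since $\Lcalp \phie = \Lcalp \phio = 0$ by \ref{Lphieo}, and the remaining computation reproduces $W_j$ exactly by \ref{EWWeq}. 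Uniqueness of the $\grouptwo$-invariant solution to $\Lcalp V_j = W_j$ then gives $V_{j,new} = V_j$.

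Part (iii) is then a direct evaluation: at $p_0$ the cutoff $\Psibold$ is in its ``near $p_0$'' regime so evaluates to $\log\delta_0 \cos 0 = A_0$, and since $\phie(p_0) = 1$ the $A_0$ terms cancel leaving $V_0(p_0) = \Phip_0(p_0)$, which is $O(1)$ by \ref{Lpppeq}. At $p_N$, (i) and \ref{Leqphiave} give $V_0(p_N) = \Phi_0(p_N) = \phi_{eq}(p_N) = m/2$ (the oscillatory piece vanishes at the pole since $p_N$ is fixed by the whole rotation subgroup). The values $V_2(p_0) = \phie(0) = 1$ and $V_2(p_N) = \log\delta_2 + (1-\log 2) = 1 + \log(\delta_2/2)$ follow similarly. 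For (iv), the global bound on $V_0$ is immediate from (i) and \ref{CPlast}.i outside the disk, and inside $D_{p_0}(2\delta_0)$ from \ref{Lpppeq} plus the observation that $A_0 G_{p_0}$-type terms are $O(\log m)$. The local bound on $D_{L_0}(\delta_0)$ is the most delicate: in this region the dominant contribution is $A_0(\cos\circ\dbold_{p_0} - \phie)$, and one checks by Taylor expansion at $p_0$ that both harmonics agree to first order so the difference is quadratic of order $O(|\xxtilde|^2/m^2)$ in the scaled coordinates, which absorbs the $|A_0| \sim \log m$ factor to give a uniform bound in $\gtilde$. The $V_2$ bound is straightforward since $\delta_2$ is an absolute constant.

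For (v) and (vi), the key observation is that $\val_\sym[\Lep]$ is only $2$-dimensional: rotational invariance around the polar axis forces the differentials at $p_N$ to vanish, while the double reflection symmetry $\xbartilde, \ybartilde$ at $p_0$ likewise kills the differential there. Thus $\Ecal_{\Lep}$ is represented on the basis $\{V_0,V_2\}$ by the $2\times 2$ matrix
\[
\begin{pmatrix} V_0(p_0) & V_2(p_0) \\ V_0(p_N) & V_2(p_N) \end{pmatrix}
=
\begin{pmatrix} \Phip_0(p_0) & 1 \\ m/2 & 1+\log(\delta_2/2) \end{pmatrix},
\]
whose determinant is $-m/2 + O(1)$, nonzero for large $m$. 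Inverting and using the scaling $\gtilde = m^2 g$ (which contributes the $m^{2+\beta}$ factor when converting the $C^{2,\beta}(\gtilde)$ bounds from (iv) into $C^{2,\beta}(g)$ bounds) gives $\|\Ecalinvep\| \le C m^{2+\beta}$ as in the proof of \ref{LVVp}.v. Part (vi) will follow from standard elliptic regularity for $\Lcalp$ on $\Spheq$ modulo the symmetries (which kill its kernel), combined with the two-point evaluation estimate above: from $\Lcalp(\mu_0 V_0 + \mu_2 V_2) = \mu_0 W_0 + \mu_2 W_2$ we bound the $C^{2,\beta}(\Spheq,g)$-norm of $\mu_0 V_0 + \mu_2 V_2$ by the $C^{0,\beta}$-norm of the right-hand side, and then recover $|\mu_0| m + |\mu_2|$ by solving the same $2\times 2$ system applied to the values at $p_0$ and $p_N$.

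The only step that requires real care is the $C^k_\sym(D_{L_0}(\delta_0),\gtilde)$ estimate in (iv), since a naive bound would pick up an unwanted $\log m$ from $A_0$; the cancellation between $\log\delta_0\, \cos\circ\dbold_{p_0}$ and $A_0\phie$ at second order in the scaled coordinates is what makes the estimate go through. Everything else is essentially bookkeeping that parallels Section \ref{Stwocircle}, simplified by the absence of horizontal balancing.
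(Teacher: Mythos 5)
Your proposal is correct and follows essentially the same route as the paper's proof: defining $V_{j,new}$ by the right-hand sides of (i)--(ii), verifying matching and that $\Lcalp V_{j,new}=W_j$, invoking uniqueness from \ref{LLD}, then reading off the $2\times2$ matrix of $\Ecal_{\Lep}$ to get (v) and using $\Lcalp(\mu_0 V_0+\mu_2V_2)=\mu_0W_0+\mu_2W_2$ with the two-point evaluation for (vi). The cancellation you single out in (iv) on $D_{L_0}(\delta_0)$ is exactly what makes the paper's terse citation of \ref{Dphieo} and $A_0=\log\delta_0$ work (one can equivalently pair $A_0\cos\circ\dbold_{p_0}$ with the constant $A_0$ and invoke $\|\Ptp_0+A_0\|\le C$ from \ref{LPlast}.iii); your phrase "dominant contribution" is a slight misnomer since after cancellation that term is $O(\log m/m^2)$ and $\Phip_0=O(1)$ actually dominates, but this does not affect the argument.
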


\begin{proof}
The proof is similar in structure to the one for \ref{LVVp}. 
Let $V_{0,new}$ and $V_{2,new}$ be defined by the expressions for $V_0$ and $V_2$ in (i) and (ii). 
The expressions for $V_{0,new}$ and $V_{2,new}$ in (i) and (ii) match because by 
\ref{DVeq}, \ref{EPpppeq}, and \ref{Lpphiave} we have 
$V_{0,new}=\Phi_0= \Pp_0+ \Phip_0 -A_0\phie $ 
on a neighborhood of $\partial D_{p_0}(2\delta_0)$ 
and 
$V_{2,new}=\Phi_2=\phie = G_{p_N}+ (1-\log2)\phio$ 
on a neighborhood of $\partial D_{p_N}(2\delta_2)$. 
Since $\Lcalp \Phi_0 = \Lcalp \Phi_2 =  0$ by 
\ref{Eeqvarphi}, 
we conclude that by \ref{EWWeq} we have 
that 
$\Lcalp V_{0,new}=W_0$ and $\Lcalp V_{2,new}= W_2$,
which characterize 
by \ref{LLD} $V_0$ and $V_2$.  
This completes the proof of (i) and (ii).

The equalities in (iii) now follow from (i), (ii), \ref{Leqphiave}, and \ref{Dphieo}, 
where we used also $A_0=\log\delta_0$ from \ref{EVeq}. 
The estimates in (iii) follow from \ref{Lpppeq} and \ref{Edeltaz}. 
(iv) is implied by (i), (ii), \ref{CPlast}, \ref{Dphieo}, \ref{LPtwo}, and $A_0=\log\delta_0$.
Note now that if we use $\{V_0,V_2\}$ as the basis for $\skernelv_\sym[L_\ep]$ and 
the standard basis for $\val_\sym[L_\ep]$, then the entries for the matrix of 
$\Ecal_{L_\ep}$, defined as in \ref{DEcal}, are given in (iv), 
and therefore using (iv) we can easily check that (v) holds.  
(vi) follows by the same argument we used for \ref{LVVpeq}.vi. 
\end{proof}

\subsection*{The family of MLD solutions}
$\phantom{ab}$
\nopagebreak

We discuss now the family of MLD solutions which is converted to a family of
initial surfaces by \ref{Dinit}.
The parameters of the family are 
$\zetabold=(\zeta_0,\zeta_2)\in\R^2$
and their range is specified by
\begin{equation}
\label{Ezetarangeeq}
|\zeta_0|,|\zeta_2|\le \cunder_2,
\end{equation}
where $\cunder_2>1$ is a 
constant independent of $m$ and $\tau$
which will be specified later.
Given $(\zeta_0,\zeta_2)$ as in \ref{Ezetarange}
we define $\tau_0,\tau_2$ by
\begin{equation}
\label{Eparameterseq}
\tau_0 = \tau_0[\zetabold,m]
:= m^{-3/4} \, e^{\zeta_0} \, e^{-{\sqrt{m/2}}},
\quad
\tau_2:= \tau_2[\zetabold,m]
=\tau_0\,\left(\zeta_2-\frac14\log m+\sqrt{\frac{m}2\,}\,\,\right).
\end{equation}
This definition is motivated by a straightforward calculation
where various error terms are ignored.
We skip this calculation because it is not needed for the proof
and is similar to a precise calculation we present in the proof of \ref{L:heq}.

\begin{lemma}
\label{L:heq}
For $m$ large enough depending on $\cunder_2$
and
$\zetabold=(\zeta_0,\zeta_2)$ as in \ref{Ezetarangeeq},
we define $\taubold:\Lep\to\R^+$
to take the values $\tau_0$ on $L_0   $
and $\tau_2$ on $\{p_N,p_S\}$,
with $\tau_0,\tau_2$
defined as in 
\ref{Eparameterseq},
and also 
$
\xiw=\xiw[\zetabold,m] 
:=
\mu_0[\zetabold,m]\, W_0[m]
+
\mu_2[\zetabold,m]\, W_2[m]
$
defined uniquely by the requirement that
\begin{equation}
\label{E:heq}
\varphi= \varphi[[\zetabold,m]]
:=\varphi[\, \Lep[m] \,,\,\taubold[\zetabold,m]\,,\,\xiw[\zetabold,m]\,]
= \tau_0\Phi_0+\tau_2\Phi_2+\mu_0 V_0 + \mu_2 V_2
\end{equation}
is an MLD solution as in \ref{DMLD}.
Moreover the following hold.
\newline
(i).
$\taubold=\taubold[\zetabold,m]$ 
and
$\mubold=\mubold[\zetabold,m]:=(\mu_0,\mu_2)$
depend continuously on $\zetabold$.
\newline
(ii).
$ \left | \zeta_0 \, + \, 
\frac{ \mu_0 (m/2)^{ 1/2} + \mu_2 } {2 \, \tau_0} \right | \, \le \, C $ 
and 
$ \left | \zeta_2 \, + \, 
\frac{ - \mu_0 (m/2)^{ 1/2} + \mu_2 } {2 \, \tau_0} \right | \, \le \, C $ . 
\newline
(iii).
$ \| \, \varphi[[\zetabold,m]] : 
C^{3,\beta}_\sym(\Spheq\setminus D' , g)\|
\le 
\tau_0^{1-4\gammagl} 
\le 
\tau_0^{8/9}$ 
where $D' := \disjun_{j=0,2} D_{L_j  }(\delta'_j)   $  
where $\delta'_j:=\tau_j^\alpha$.  
\newline
(iv).
$c\,\tau_2 \, \le \, \varphi$ on 
$\Spheq\setminus D' $ 
for some absolute constant $c>0$.
\newline
(v).
$\varphi$ satisfies the conditions in \ref{con:one},  
\ref{AEcal}, and \ref{EEcal}.
\end{lemma}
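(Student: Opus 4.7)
The plan is to reduce the matching condition $\Bcal_{L_\ep}\varphi=0$ to a $2\times 2$ linear system for $(\mu_0,\mu_2)$, invert it, and read off the existence, uniqueness, (i), (ii), and (iii)--(v). First I would use the imposed symmetries: since $\groupthree$ acts on $T_{p_0}^*\Spheq$ and $T_{p_N}^*\Spheq$ without fixing any nonzero covector, the differential component of $\Bcal_{L_\ep}\varphi$ vanishes automatically for every $\grouptwo$-symmetric $\varphi$, so only two scalar equations remain---one at $p_0$ and one at $p_N$. Writing $\varphi=\tau_0\Phi_0+\tau_2\Phi_2+\mu_0 V_0+\mu_2 V_2$ and using the local expansions $\Phi_0=G_{p_0}+\Phip_0-A_0\phie$ near $p_0$ (from \ref{DPhipppeq} and \ref{DVeq}), $\Phi_2=G_{p_N}+(1-\log 2)\phio$ near $p_N$ (from \ref{Lpphiave}), the value $\Phi_0(p_N)=m/2+O(1)$ (from \ref{CPlast}(iii)), and the data in \ref{LVVpeq}(iii), I obtain a linear system whose coefficient matrix has determinant $-m/2+O(1)$; thus for $m$ large Cramer's rule defines $\mu_0,\mu_2$ uniquely and continuously in $\zetabold$, proving existence, uniqueness, and (i).

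The heart of the argument is (ii), which rests on two precise cancellations secured by the specific ansatz \ref{Eparameterseq}. The first cancellation---$\tau_2+\tau_0\log(9m\tau_0/2)=\tau_0(\zeta_0+\zeta_2+\log(9/2))$, so the first right-hand side reduces to $O(\tau_0)$---is immediate from substituting $\log\tau_0=\zeta_0-\tfrac34\log m-\sqrt{m/2}$. The second cancellation is more delicate: Taylor-expanding $\log(\tau_2/4)$ to two nontrivial orders in $1/\sqrt{m/2}$ around its leading value $-\sqrt{m/2}$ should yield $\tau_2(1+\log(\tau_2/4))=-\tau_0 m/2+\tau_0\sqrt{m/2}\,(\zeta_0-\zeta_2)+O(\tau_0\log^2 m)$, so the $-\tau_0 m/2$ from the second right-hand side cancels against this term and leaves a remainder of size $\tau_0\sqrt{m/2}$. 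Solving the now well-controlled system gives $\mu_0\sqrt{m/2}/\tau_0=\zeta_2-\zeta_0+o(1)$ and $\mu_2/\tau_0=-(\Phip_0(p_0)+\zeta_0+\zeta_2+\log(9/2))+o(1)$, and (ii) follows by adding and subtracting the two identities.

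For (iii), I would substitute $\varphi=\tau_0\Phi_0+\tau_2\Phi_2+\mu_0 V_0+\mu_2 V_2$ into the weighted $C^{3,\beta}$ norm and estimate each summand using \ref{CPlast}(i), \ref{LPtwo}(i), and \ref{LVVpeq}(iv), together with the sizes $\tau_2=O(\tau_0\sqrt m)$, $\mu_0=O(\tau_0/\sqrt m)$, $\mu_2=O(\tau_0)$ from the previous step; the factors $(\delta'_j)^{-3-\beta}$ coming from third derivatives of the logarithmic terms get absorbed into $\tau_0^{1-4\gammagl}$ after using $m\sim\sqrt 2\,|\log\tau_0|$. For (iv), I would partition $\Spheq\setminus D'$ into an equatorial zone where $\tau_2\phie\ge c\tau_2$ is the dominant positive term, a midlatitude zone where $\tau_0\Phi_0\sim \tau_2\sqrt{m/2}\,\sin|\xx|\gg\tau_2$ dominates, and a polar zone where the Green's-function matching forces $\varphi\sim(1-\gammagl)\tau_2|\log\tau_2|\gg\tau_2$; in each region the remaining perturbation $\mu_0 V_0+\mu_2 V_2=O(\tau_2)$ is negligible. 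Finally (v) is assembled from (iii), (iv), \ref{Lpppeq}, and \ref{LVVpeq}(v), together with the exponential smallness of $\tau_{\max}=\tau_2$ in $m$ which handles \ref{AEcal} and \ref{EEcal}.

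The main obstacle will be the Taylor-expansion cancellation producing (ii); everything else is careful bookkeeping with the estimates collected in Section~\ref{Sequatorpole}, but this cancellation is where the precise form of the ansatz \ref{Eparameterseq} is forced, and one must expand $\log\tau_2$ past its dominant order $-\sqrt{m/2}$ to control the remainder without spurious $\log m$ factors spoiling the claimed $O(1)$ bounds.
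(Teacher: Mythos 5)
Your proposal is essentially correct and follows the same route as the paper: reduce the matching condition to a $2\times 2$ system in $(\mu_0,\mu_2)$ (noting that the derivative components vanish by symmetry, which the paper uses implicitly), invert it using the determinant $-m/2+O(1)$, and then read (ii)--(v) off from the resulting sizes $|\mu_0|\sqrt m+|\mu_2|\lesssim\cunder_2\tau_0$ together with the estimates collected in Section~\ref{Sequatorpole}. Two small corrections: the relation you quote as $m\sim\sqrt 2\,|\log\tau_0|$ should read $\sqrt{m/2}\sim|\log\tau_0|$ (i.e.\ $m\sim 2|\log\tau_0|^2$), as follows directly from \ref{Eparameterseq}; and for (iii) the cited references \ref{CPlast}(i) and \ref{LPtwo}(i) only control $\Phi_j$ on $\Spheq\setminus D_{L_j}(\delta_j)$, so on the remaining annuli $D_{L_j}(\delta_j)\setminus D_{L_j}(\delta'_j)$ you need, as the paper does, the decomposition $\Phi_j=\Pp_j+\Ptp_j$ together with \ref{LGp}(vii) for the singular piece $\Pp_j$ and \ref{CPlast}(ii), \ref{LPtwo}(ii) for the global bounds on $\Ptp_j$---this is exactly where your mentioned $(\delta'_j)^{-3-\beta}$ factors come from.
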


\begin{proof}
Note that by 
\ref{DLDnoK}.ii and \ref{DVeq} for $\varphi$ as in \ref{E:heq} we have that 
\begin{equation}
\label{EPhipeq}
\begin{aligned}
\varphihat_{p_0}\, =& \,  \tau_0\Ptp_0+\tau_2\Phi_2+\mu_0 V_0 + \mu_2 V_2,
\qquad
\text{ on } \quad
D_{p_0}(2\deltaz)\, ,
\\
\varphihat_{p_N}\, =& \,  \tau_0\Phi_0+\tau_2\Ptp_2+\mu_0 V_0 + \mu_2 V_2,
\qquad
\text{ on } \quad
D_{p_N}(2\delta_2)\, ,
\end{aligned} 
\end{equation}
where motivated by \ref{Lpphiave} we define 
$\Ptp_2 := (1-\log2)\phio$. 
Using \ref{DPhipppeq}, \ref{Lpphiave} we calculate
\begin{equation}
\Ptp_0(p_0)=
\,\Phip_0(p_0) - A_0, 
\quad
\Phi_2(p_0)=1,
\quad
\Phi_0(p_N)=m/2,
\quad
\Ptp_2(p_N)=1-\log 2.
\end{equation}
By straightforward calculation using 
\ref{Eparameterseq}
we obtain 
\begin{equation}
\label{Elogeq}
\log\tau_0=\zeta_0-\frac34\log m-\sqrt{\frac{m}2\,},
\qquad\qquad
\log\tau_2=\zeta_0-\frac14\log m-\sqrt{\frac{m}2\,}+
O(1),
\end{equation}
where in this proof we use $O(1)$ to denote terms which are uniformly bounded (independently of $\cunder_2$) 
as $m\to\infty$.
Using the above, \ref{LVVpeq}.iii, and  \ref{Lpppeq}, 
we calculate the matching condition in \ref{DMLD} amounts to 
$$
\begin{gathered}
\tau_0 \, (\zeta_0+\zeta_2 +O(1) \, )
+O(1) \, \mu_0 
+\mu_2 
=0,
\\
\tau_0\,(\zeta_0-\zeta_2+O(1)\, ) \, \, \sqrt{\frac{m}2\,} 
+ \, \frac{m}2 \mu_0 \, + 
O(1) \, \mu_2 \,
=0.
\end{gathered}
$$
Solving this linear system for $\mu_0,\mu_2$ we obtain its unique solution given by
$$
\mu_0=
-\tau_0\, (\zeta_0-\zeta_2+O(1)\,) \,\,(m/2)^{-1/2},
\qquad\qquad
\mu_2=
-\tau_0\, (\zeta_0+\zeta_2+O(1)\,), 
$$
where we assumed that $m$ is large enough in terms of $\cunder_2$.

The above clearly imply (i) and (ii).
They also imply that 
\begin{equation}
\label{Emueq} 
|\mu_0| \, \sqrt{m} \, + \, |\mu_2| \, \le \, C\,\cunder_2\,\tau_0,
\end{equation} 
which together with \ref{LVVpeq}.iv implies that 
\begin{equation}
\label{EmuVeq} 
\| \, \mu_0 V_0 + \mu_2 V_2 : C_\sym^{3,\beta}(\,\Spheq\,,\gtilde \, )\|\le C \, \cunder_2 \, \sqrt{m} \, \tau_0 \, .      
\end{equation} 
Using \ref{LGp}.vii we obtain that 
$ \| \, \Pp_j \, : 
C^{3,\beta}_\sym(\Spheq\setminus D' , g)\|
\le
\, C \,(\delta'_j)^{-3-\beta}\,|\log\delta'_j|\,$ 
for $j=0,2$. 
Note that we have on $\Spheq\setminus D'$ that 
$$
\varphi = \tau_0\Pp_0+\tau_2\Pp_2 + \tau_0\Ptp_0+\tau_2\Ptp_2 + \mu_0 V_0 + \mu_2 V_2.
$$
Combining the above with \ref{LPtwo}.ii and \ref{CPlast}.ii 
we conclude (iii) by assuming $m$ large enough. 

To prove (iv) observe that on $D_{L_0}(\delta_0) \setminus D_{L_0}(\delta'_0)$ 
we have  
$
\varphi = \tau_0\Pp_0+  \tau_0\Ptp_0+ ( \tau_2+\mu_2) \Phi_2 + \mu_0 V_0 , 
$
on $D_{L_2}(\delta_2) \setminus D_{L_2}(\delta'_2)$ 
we have  
$
\varphi = (\tau_0 + \mu_0) \Phi_0 + \tau_2\Pp_2 + \tau_2\Ptp_2 + \mu_2 V_2,
$
and on $\Spheq\setminus \disjun_{j=0,2} D_{L_j}(\delta_j)$ 
we have   
$
\varphi = (\tau_0 + \mu_0) \Phi_0 + 
(\tau_2 + \mu_2) \Phi_2 . 
$
Using \ref{Emueq} and \ref{Eparameterseq} 
we obtain bounds for the coefficients.
(iv) on $\Spheq\setminus \disjun_{j=0,2} D_{L_j}(\delta_j)$ follows then 
by using \ref{Lphieo}.  
By \ref{LGp}.vii and \ref{Elogeq} we have 
$| \,  \Pp_j \, | \le \, C \, \alpha \, |\log \tau_j |
\, | \le \, C \, \alpha \, \sqrt{m}$ 
on $D_{L_j}(\delta_j) \setminus D_{L_j}(\delta'_j)$  for $j=0,2$. 
Using \ref{CPlast}.iii, \ref{LPtwo}, and \ref{LVVpeq}.iv 
to estimate the remaining terms,  
we conclude the proof of (iv) 
by assuming $\alpha$ small enough as in \ref{con:alpha}.

Finally we prove (v): 
\ref{con:one}.i follows from \ref{Edeltaz}, \ref{Eparameterseq}, and by choosing $m$ large enough.
\ref{con:one}.ii-iii are obvious from the definitions and choosing $m$ large enough. 
\ref{con:one}.iv follows from \ref{EPhipeq} by using \ref{CPlast}.i,ii, \ref{LPtwo}, and \ref{EmuVeq}.
\ref{con:one}.v-vi follow from (iii) and (iv). 
\ref{AEcal}  and \ref{EEcal} follow from \ref{LVVpeq}.v.  
\end{proof}

\begin{remark}
\label{rem:eq}
Note that if we only had bridges on the equatorial circle then \ref{E:heq} 
would have to be replaced by 
``$
\varphi = \tau_0\Phi_0+\mu_0 V_0 
$''.
Clearly then it would be impossible to satisfy the vertical matching condition and construct an 
MLD solution in this way. 
\hfill $\square$
\end{remark}

\section{Main results}
\label{Smain}
\nopagebreak

\begin{theorem}[The two parallel circles case]
\label{Ttwocir} 
There is an absolute constant $\cunder_1>0$ such that if $m$ is large enough
depending on $\cunder_1$, 
then there is ${\zetaboldhat}=(\widehat{\zeta},\widehat{\zeta}')\in \R^2$ 
satisfying \ref{Ezetarange}  
such that (in the notation of \ref{L:h}) 
$\xxhat_1:=\xx_1[{\zetaboldhat} ,m] $, 
$\tauhat:=\tau[{\zetaboldhat} ,m]$, 
$ \xiwhat :=\xiw[\zetaboldhat,m]$,  
and 
$ \varphi[[\zetaboldhat,m]]$,  
satisfy \ref{L:h}.ii-vii, 
and moreover 
there is $\phihat\in C^\infty (\Mhat)$, 
where $\Mhat:=M[ \, L[\xxhat_1,m]\,,\,\tauhat\,,\, \xiwhat\,]$ in the notation of 
\ref{Dinit}, 
such that 
in the notation of \ref{D:norm} 
$$
\| \, \phihat \, \|_{2,\beta,\gamma;\Mhat} \, \le \, \tauhat^{1+\alpha/4}, 
$$
and furthermore $\Mhat_\phihat$ (in the notation of \ref{Lquad}) 
is a genus $2m-1$ embedded minimal surface 
in $\Sph^3(1)$, 
which is invariant under the action of $\groupthree$ and has area 
$Area( \Mhat_\phihat ) \to 8\pi$ as $m\to\infty$.
\end{theorem}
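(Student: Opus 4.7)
The plan is to reduce the theorem to a two-dimensional topological fixed-point problem by combining the MLD construction of Section \ref{Stwocircle} with the linear and nonlinear perturbation theory of Section \ref{Slinear}. For each parameter $\zetabold = (\zeta, \zeta')$ satisfying \ref{Ezetarange} with $\cunder_1$ to be chosen, Lemma \ref{L:h} produces an MLD solution $\varphi[[\zetabold, m]]$ together with an initial surface $M = M[\zetabold, m]$, and \ref{L:h}.vii guarantees that the hypotheses \ref{con:one}, \ref{AEcal}, and \ref{EEcal} needed for Proposition \ref{Plinear} all hold. By Lemma \ref{LHM} the initial mean curvature satisfies $\|H - \xiw \circ \PiSph\|_{0,\beta,\gamma-2;M} \le \tau_{max}^{1+\alpha/3}$, which is the smallness that will drive the perturbation.

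Next, for each fixed $\zetabold$, the plan is to solve the nonlinear equation $H_\phi = w_E[\zetabold] \circ \PiSph$ in the small ball $\{\phi \in C^{2,\beta}_\sym(M): \|\phi\|_{2,\beta,\gamma;M} \le \tauhat^{1+\alpha/4}\}$, using the standard Picard iteration $(\phi_{n+1}, w_{E,n+1}) := \Rcal_M\bigl(-H + \xiw \circ \PiSph - Q_{\phi_n}\bigr) + (\phi_n, w_{E,n})$ of suitably arranged increments. Contraction follows from Lemma \ref{Lquad} (quadratic control of $Q_\phi$), Proposition \ref{Plinear} (bound $\|u\|_{2,\beta,\gamma;M} \le C(b)\, \delta_{min}^{-2-\beta} \|\Ecalinv\| \cdot (\text{source})$), and the fact that by \ref{LVVp}.v and \ref{L:h}.iii we have $\|\Ecalinv\| \le C m^{2+\beta}$ while $\tauhat$ is exponentially small in $m$, so $\delta_{min}^{-2-\beta}\|\Ecalinv\|\,\tauhat^{1+\alpha/3}$ easily fits inside $\tauhat^{1+\alpha/4}$. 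This yields for each $\zetabold$ a unique pair $(\phi[\zetabold], w_E[\zetabold]) \in C^{2,\beta}_\sym(M) \times \skernel_\sym[L]$, depending continuously on $\zetabold$ by \ref{Plinear}.iv and \ref{L:h}.i.

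The second step is to choose $\zetabold$ so that $w_E[\zetabold] = 0$. Writing $w_E[\zetabold] = \mu_{out}[\zetabold]\, W + \mu'_{out}[\zetabold]\, W'$, Lemma \ref{LVVp}.vi implies that both coefficients vanish iff $w_E[\zetabold]$ vanishes, so it suffices to show the continuous map $\Jcal: [-\cunder_1, \cunder_1]^2 \to \R^2$ defined by $\Jcal(\zetabold) := (m \mu_{out}[\zetabold]/\tauhat, \mu'_{out}[\zetabold]/\tauhat)$ has a zero. The key observation is that $\Jcal$ differs from $\zetabold \mapsto (m\mu[\zetabold]/\tauhat, \mu'[\zetabold]/\tauhat)$ only by the obstruction contribution arising from the perturbation $\phi[\zetabold]$, and the latter is uniformly $O(\tauhat^{\alpha/4})$ as $m \to \infty$ by the estimates above; meanwhile \ref{L:h}.iv gives $m\mu/\tauhat = -\phi_1\,\zeta + O(m)$ and $\mu'/\tauhat = -\zeta' + O(1)$, i.e., $\Jcal(\zetabold) = (-\phi_1 \zeta, -\zeta') + O(1)$ with an error uniform in $\zetabold$. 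Since $\phi_1 \sim_C m \to \infty$, by choosing $\cunder_1$ large enough in absolute terms and $m$ large enough in terms of $\cunder_1$, the map $\Jcal$ points outward on $\partial [-\cunder_1, \cunder_1]^2$, so Brouwer's theorem provides a root $\zetaboldhat$, and we set $\phihat := \phi[\zetaboldhat]$.

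Finally, the surface $\Mhat_\phihat$ is embedded by Lemma \ref{LMemb} combined with the smallness of $\phihat$ via Lemma \ref{Lquad}; $\groupthree$-invariance follows from the uniqueness of the fixed point in the symmetric class; the Euler characteristic calculation $\chi = 2\chi(\Spheq) - 2m \cdot \chi(\text{disc}) + 2m\cdot\chi(\text{cylinder}) - 2m = 4 - 4m$ yields genus $2m - 1$; and the area tends to $8\pi$ because outside $\bigsqcup_{p\in L} D_p(\delta'_p)$ the surface converges smoothly in pairs to $\Spheq$ while each catenoidal region contributes $O(\tauhat^2 |\log\tauhat|)$. The main obstacle is the third step, balancing the two-dimensional topological count against the growth $\|\Ecalinv\| \sim m^{2+\beta}$ and controlling the nonlinear error contribution to $\Jcal$; this is precisely where the sharp estimates of Section \ref{Stwocircle} on $\Phip$ and $\Ptp_\osc$, and the sign information $d\hhat/d\xx < 0$ at $\xxbal$ from Lemma \ref{L:hhat}, do the essential work.
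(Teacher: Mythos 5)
Your proposal takes a genuinely different route from the paper. The paper runs a single Schauder fixed point in the combined space $B=\{v:\|v\|\le\tb^{1+\alpha}\}\times[-\cunder_1,\cunder_1]^2$, performing only one Newton-type correction per evaluation of $\Jcal$ and letting Schauder do all the iterating; you instead iterate to convergence (Picard/Banach) for each fixed $\zetabold$, and then run a separate finite-dimensional Brouwer degree argument in $\zetabold$. Both are standard templates and, if carried out carefully, both reach the same conclusion; the two-stage version has the advantage of being conceptually cleaner, while the paper's one-shot Schauder version avoids ever having to prove continuity of the \emph{solved} nonlinear problem in $\zetabold$.

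That said, there is a genuine gap exactly at the point you gloss over. For your Brouwer step you need the map $\zetabold\mapsto (\mu_{out}[\zetabold],\mu'_{out}[\zetabold])$ to be continuous, but the underlying surface $M[[\zetabold]]$, the configuration $L[\xx_1[\zetabold,m],m]$, the obstruction space $\skernel_\sym[L]$ with its basis $W,W'$, and the norms all depend on $\zetabold$. One cannot even state, let alone prove, that ``$(\phi[\zetabold],w_E[\zetabold])$ depends continuously on $\zetabold$'' without first identifying the $\zetabold$-varying family of surfaces with a single reference surface. This is precisely what Step 1 of the paper's proof supplies: the $\groupthree$-equivariant diffeomorphisms $F_\zetabold:M[[\zerobold]]\to M[[\zetabold]]$ built from the rotational maps $F'_{\xx_1}$ and a conformal stretching of the catenoidal necks, followed by a norm-equivalence lemma (Step 2). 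Your appeal to ``\ref{Plinear}.iv and \ref{L:h}.i'' does not substitute for this, since \ref{Plinear}.iv is a statement about the map $\Rcal_M$ on a \emph{fixed} $M$; the needed continuity is across a family of different $M$'s. You must either construct the $F_\zetabold$ as the paper does, or give some other identification, before Brouwer can be invoked.

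Two smaller issues. First, the asymptotic you write, $m\mu/\tauhat=-\phi_1\,\zeta+O(m)$, is misstated: from \ref{L:h}.iv and \ref{Lphiave} one has $\mu\phi_1/\tau=-\zeta+O(1)$, hence $m\mu/\tau=(m/\phi_1)(-\zeta+O(1))$ with $m/\phi_1\sim_C1$; the leading coefficient is bounded, not growing like $\phi_1\sim m$. The outward-pointing conclusion on $\partial[-\cunder_1,\cunder_1]^2$ for large $\cunder_1$ survives, but the formula as written is wrong. Second, the Picard recursion as displayed, $(\phi_{n+1},w_{E,n+1}):=\Rcal_M(-H+\xiw\circ\PiSph-Q_{\phi_n})+(\phi_n,w_{E,n})$, is not what you want (it would re-add the source $-H+\xiw\circ\PiSph$ at every step rather than only correcting the residual); the intended scheme is $(\phi_{n+1},w_{n+1})=-\Rcal_M(H-\xiw\circ\PiSph+Q_{\phi_n})$ or equivalently an increment scheme driven by the residual $H_{\phi_n}-w_n\circ\PiSph$. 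This is easy to repair but as written the iteration is incorrect.
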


\begin{proof}
\textit{Step 1: Construction of the diffeomorphisms $F_\zetabold$:}
We fix an $m\in\N$ which we assume as large in terms of $\cunder_1$ as needed.
We will use the notation 
$\zerobold:=(0,0)\in\R^2$,
$\tb:=\tau[\zerobold,m]$, 
and for $\zetabold\in\R^2$ satisfying \ref{Ezetarange} 
$ M[[\zetabold]] := 
M [\, L[ \,\xx_1[\zetabold,m]\, ,m] \,,\,\tau[\zetabold,m]\,,\,\xiw[\zetabold,m]\,] $ 
(recall \ref{L:h} and \ref{Dinit}) 
and 
$ L[[\zetabold]] := L[ \,\xx_1[\zetabold,m]\, ,m] $. 
We define for $\zetabold\in\R^2$ satisfying \ref{Ezetarange} 
a smooth diffeomorphism 
$F_{\zetabold} : M[[ \zerobold ]] \to M[[\zetabold]]$, 
covariant under the action of $\groupthree$, 
as follows. 
We start by constructing 
smooth diffeomorphisms 
${ F'_{\xx_1} : \Spheq \to \Spheq } $
which depend smoothly on $\xx_1$,  
are  covariant under the action of $\grouptwo$, 
and satisfy the following.
\newline
(a). 
${ F'_{\xx_1} ( L[[  \zerobold ]] \, )
\,=\, 
L[\xx_1,m] \, }$
and moreover if $p\in L[[ \zerobold ]] $,
then 
${ F'_{\xx_1} (p)}$ is the nearest point in 
${ L[\xx_1,m] \, }$ to $p$ 
(which amounts to being on the same side of the equator and of the same longitude). 
\newline
(b). 
$\forall p\in L[[ \zerobold ]] $ 
we have on $D_p(4\deltaL)$ that 
$F'_{\xx_1} = R[\xx_1,p]$, 
where $R[\xx_1,p] \in\text{SO}(3)$ is characterized by 
${ R[\xx_1,p] (p) = F'_{\xx_1} (p)}$ 
(as defined in (a) above),  
and 
${ d_p R[\xx_1,p] (\nabla_p\xx) = \nabla_{F'_{\xx_1} (p)}\xx}$. 
\newline
(c). 
If $q=\Theta(\xx,\yy,0) \in 
D_{\Lpar[\xx_1[\zerobold,m]]}(8\deltaL) \setminus D_{L[[ \zerobold ]]}(5\deltaL)$ 
with $\xx\in (0,\pi)$ (recall \ref{ETheta}), 
then 
$F'_{\xx_1}(q) \,=\, \Theta(\xx+\xx_1- \, \xx_1[\zerobold,m]\,,\yy,0)$.
\newline
(d). 
On $\Spheq \setminus D_{L[[ \zerobold ]]}(5\deltaL)$ 
$F'_{\xx_1}$ is rotationally covariant in the sense that 
it maps a point $\Theta(\xx,\yy,0)$ to $\Theta(\,f_{\xx_1}(\xx)\, ,\yy,0)$  for 
a suitably chosen function $f_{\xx_1}$. 
Note this is consistent with (c) where $f_{\xx_1}$ is implicitly specified on a smaller region. 
\newline
(e). 
On $D_{L[[ \zerobold ]]}(5\deltaL) \setminus D_{L[[ \zerobold ]]}(4\deltaL)$ 
we interpolate between the definitions in (b) and (c) by using cut-off functions.

By choosing $f_{\xx_1}$ carefully we can ensure that $F'_{\xx_1}$ depends smoothly 
on $\xx_1$ and is close to the identity in all necessary norms. 
We proceed now to use $F'_{\xx_1[\zetabold,m]}$ to define $F_\zetabold$ 
by requesting the following.
\newline
(f). 
$\forall p \in L[[\zerobold]]$  we define $F_\zetabold$ 
to map 
$\Lambda_\zerobold:= \Shat_1  [ p ] \subset \, M[[\zerobold]]$ 
onto 
$\Lambda_\zetabold:= \Shat_1  [ \, q \, ] \subset \, M[[\zetabold]]$, 
where $q:=F'_{\xx_1[\zetabold,m]}(p)$, 
and to satisfy on $\Lambda_\zerobold$ (recall \ref{DPiK} and \ref{Shatp}) 
$$
\widehat{F}_{\zetabold} \circ Y_\zerobold \circ \Pi_{\cat,p}
=
Y_\zetabold \circ \Pi_{\cat,q} \circ F_\zetabold,
$$
where $Y_\zetabold$ (and similarly for $Y_\zerobold$) is the conformal isometric from 
$\Pi_{\cat,q} ( \Lambda_\zetabold )$ equipped with the induced metric from the Euclidean metric 
$\tau^{-2}[\zetabold,m] \left. g\right|_p $, 
to the cylinder $[-\ell_\zetabold,  \ell_\zetabold ] \times \Sph^1(1)$ 
equipped with the standard flat metric,
and 
\begin{equation} 
\label{EFhat} 
\widehat{F}_{\zetabold}: 
[-\ell_\zerobold,  \ell_\zerobold ] \times \Sph^1(1)
\to 
[-\ell_\zetabold,  \ell_\zetabold ] \times \Sph^1(1),
\end{equation} 
is of the form in standard coordinates 
$$
\widehat{F}_{\zetabold} (t,\theta)
=
( \, \ell_\zetabold \, t \,/ \,\ell_\zerobold , \, \theta\,), 
$$
where the ambiguity due to possibly modifying the $\theta$ coordinate by adding a constant is 
removed by the requirement that $F_\zetabold$ is covariant with respect to the action 
of $\groupthree$. 
\newline
(g). 
We define now the restriction of $F_\zetabold$ 
on 
$M[[\zerobold]]\setminus \Shat[\, L[[\zerobold]] \, ]=
M[[\zerobold]] \cap \PiSph^{-1} (\,\Spheq\setminus D_{L[[\zerobold]]} (\delta'_1)\,)$ 
to be a map onto 
$M[[\zetabold]]\setminus \Shat[\, L[[\zetabold]] \, ]=
M[[\zetabold]] \cap \PiSph^{-1} (\,\Spheq\setminus D_{L[[\zetabold]]} (\delta'_1)\,)$ 
which preserves the sign of the $\zz$ coordinate and satisfies 
$$
\PiSph \circ F_\zetabold 
=
F'_{\xx_1[\zetabold,m]} \circ \PiSph. 
$$
(h). 
On the region 
$\Shat[\, L[[\zerobold]] \, ] \setminus \Shat_1[\, L[[\zerobold]] \, ] \subset \, M[[\zerobold]]$ 
we apply the same definition as in (g) but with $F'_{\xx_1[\zetabold,m]}$ appropriately modified 
by using cut-off functions and $\dbold_{L[[\zetabold]]}$ 
so that the final definition 
provides an interpolation between (f) and (g). 

\textit{Step 2: Equivalence of norms under $F_\zetabold$:} 
Using \ref{L:h}.iii and 
\ref{Ecatenoid} 
it is easy to check that 
$$
\ell_\zetabold \, \sim_{1+C(\cunder_1)\,/m } \ell_\zerobold.
$$
Using this and arguing as in the proof of \ref{L:norms} we conclude that 
for $u\in C^{2,\beta}(\,M[[\zetabold]]\,)$ 
and $E\in C^{0,\beta}(\,M[[\zetabold]]\,)$ 
we have 
$$
\| \, u\circ F_\zetabold \, \|_{2,\beta,\gamma;M[[\zerobold]] }
\, \sim_2 \, 
\| \, u\, \|_{2,\beta,\gamma;M[[\zetabold]] } ,
\qquad
\| \, E\circ F_\zetabold \, \|_{0,\beta,\gamma-2;M[[\zerobold]] }
\, \sim_2 \, 
\| \, E\, \|_{0,\beta,\gamma-2;M[[\zetabold]] } .
$$

\textit{Step 3: The map $\Jcal$:}
We define now
a map $\Jcal : B \to C^{2,\beta}_\sym(\,M[[\zerobold]]\,)\times\R^2$, where 
$$
B:=\{ \, v\in C^{2,\beta}_\sym(M[[\zerobold]]):\|v\|_{2,\beta,\gamma;M[[\zerobold]]} \, \le \, \tb^{1+\alpha}\,\} 
\times [-\cunder_1,\cunder_1]^2 \subset 
C^{2,\beta}_\sym(\,M[[\zerobold]]\,)\times\R^2,
$$
as follows:
We assume $(v,\zetabold)\in B$ given.
By \ref{L:h}.vii we can apply \ref{Plinear} to obtain 
$(u,\zw_H) := -\Rcal_{M[[\zetabold]]}(\, H-\xiw \circ \PiSph \,)$. 
We define then 
$\phi\in C^{2,\beta}(M[[\zetabold]])$ by
$\phi:= v\circ F_\zetabold^{-1} +u$. 
We have then
\newline
(j).
$\Lcal u + H = ( \xiw + \zw_H) \circ \PiSph$.
\newline
(k). By the definition of B, \ref{L:h}.iii, \ref{Plinear}, \ref{LHM}, and \ref{LVVp}.v we obtain 
$$
\|\zw_H  : C^{0,\beta}(\Spheq,g)\|
+ \|\phi\|_{2,\beta,\gamma;M[[\zetabold]]}\, \le \, \tau^{1+\alpha/4}.
$$

Applying 
\ref{Lquad} and 
\ref{Plinear}
we obtain
$(u_Q,\zw_Q) \, := \, -\Rcal_{M [[\zetabold]]} ( H_\phi-H-\Lcal\phi) $
which satisfies the following:
\newline
(l).
$\Lcal u_Q + H_\phi  =  H + \Lcal \phi + \zw_Q  \circ \PiSph$.
\newline
(m).
$
\|\zw_Q  : C^{0,\beta}(\Spheq,g)\|
+ \| \,u_Q\, \|_{2,\beta,\gamma;M[[\zetabold]]}\, \le \, \tau^{2+\alpha/4}.
$
\newline
(n).
$\Lcal ( \, u_Q \, - \, v\circ F_\zetabold^{-1} \, )           + H_\phi \,  = \, ( \xiw + \zw_H + \zw_Q ) \circ \PiSph\,, \quad$
which follows by combining the definition of $\phi$ with (j) and (l).

This motivates us to define
$$
\Jcal(v,\zetabold)=\left(u_Q\circ F_\zetabold\, , \, \zetabold + \frac1{\tau[\zetabold,m]}\,  ( \, \mu_{sum} \, \phi_1\, , \, \mu'_{sum} \, ) 
\right),
$$
where $\mu_{sum} W + \mu'_{sum} W' = \xiw + \zw_H + \zw_Q$.

\textit{Step 4: The fixed point argument:}
By using (k), (m), \ref{LVVp}.vi and \ref{L:h}.iv,
and by choosing $\cunder_1$ large enough in terms of an absolute constant,
it is straightforward to check that $\Jcal(B)\subset B$.
$B$ is clearly a compact convex subset of $C^{2,\beta'}_\sym( \,M[[\zerobold]]\, )\times\R^2$
for $\beta'\in(0,\beta)$,
and it is easy to check that $\Jcal$ 
is a continuous map in the induced topology.
By Schauder's fixed point theorem
\cite[Theorem 11.1]{gilbarg} then,
there is a fixed point $(\vhat,\zetaboldhat)$ of $\Jcal$, 
which therefore satisfies $\vhat=\uhat_Q\circ F_\zetaboldhat$ and  
$\xiwhat + \zwhat_H + \zwhat_Q=0$, 
where we use ``$\widehat{\phantom{\xiwhat}}$'' to denote the various quantities 
for $\zetabold  = \zetaboldhat$ and $v=\vhat$.
By (n) then we conclude the minimality of $\Mhat_\phihat$. 
The smoothness follows from standard regularity theory and the embeddedness 
from \ref{Lquad} and (k).
The genus follows because we are connecting two spheres with $2m$ bridges. 
Finally the limit of the area as $m\to\infty$ follows from the available estimates 
for 
$\varphi_{init} [ \, L[\xxhat_1,m]\,,\,\tauhat\,,\, \xiwhat\,]$ 
and the bound on the norm of $\varphihat$. 
\end{proof}

\begin{theorem}[The equator and poles case]
\label{Teq-pol} 
There is an absolute constant $\cunder_2>0$ such that if $m$ is large enough
depending on $\cunder_2$, 
then there is ${\zetaboldhat}=(\widehat{\zeta}_0,\widehat{\zeta}_2)\in \R^2$ 
satisfying \ref{Ezetarangeeq}  
such that (in the notation of \ref{Eparameterseq} and \ref{L:heq}) 
$\tauhat_j:=\tau_j[{\zetaboldhat} ,m]$ 
for $j=0,2$, 
$\tauboldhat:=\taubold [{\zetaboldhat} ,m]$, 
$ \xiwhat :=\xiw[\zetaboldhat,m]$,  
and 
$ \varphi[[\zetaboldhat,m]]$,  
satisfy \ref{L:h}.ii-v, 
and moreover 
there is $\phihat\in C^\infty (\Mhat)$, 
where $\Mhat:=M[ \, \Lep[m] \,,\,\tauboldhat \,,\, \xiwhat \,] $
in the notation of \ref{Dinit}, 
such that 
in the notation of \ref{D:norm} 
$$
\| \, \phihat \, \|_{2,\beta,\gamma;\Mhat} \, \le \, \tauhat^{1+\alpha/4}, 
$$
and furthermore $\Mhat_\phihat$ (in the notation of \ref{Lquad}) 
is a genus $m+1$ embedded minimal surface 
in $\Sph^3(1)$, 
which is invariant under the action of $\groupthree$ and has area 
$Area( \Mhat_\phihat ) \to 8\pi$ as $m\to\infty$.
\end{theorem}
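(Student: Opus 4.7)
The plan is to follow the same four-step structure used for \ref{Ttwocir}, with simplifications arising from the fact that the configuration $\Lep[m]$ is completely determined by the symmetry group $\groupthree$ (so no horizontal sliding is needed) and with complications arising from having two different bridge sizes $\tau_0$ and $\tau_2$ and two independent obstruction parameters $\mu_0,\mu_2$. For $\zetabold\in\R^2$ satisfying \ref{Ezetarangeeq} set
$M[[\zetabold]]:=M[\,\Lep[m]\,,\,\taubold[\zetabold,m]\,,\,\xiw[\zetabold,m]\,]$
and $\zerobold:=(0,0)$. Condition \ref{L:heq}.v ensures that the hypotheses of \ref{Plinear} and \ref{Lquad} are satisfied for each such $M[[\zetabold]]$.

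First I would construct a family of smooth $\groupthree$-equivariant diffeomorphisms $F_\zetabold:M[[\zerobold]]\to M[[\zetabold]]$ depending continuously on $\zetabold$. Since $\Lep[m]$ does not depend on $\zetabold$, the horizontal projection $\PiSph\circ F_\zetabold$ can be taken to be the identity on $\Spheq\setminus\disjun_{j=0,2}D_{L_j}(\delta_j)$, and on each catenoidal region $\Shat[p]$ the map $F_\zetabold$ is obtained by conformally rescaling the cylinder parametrization, exactly as in \ref{EFhat}, using the two separate aspect ratios $\ell_{\zetabold,j}/\ell_{\zerobold,j}$ for $j=0,2$. The interpolation between these definitions in the transition annuli $\Shat[L_j]\setminus\Shat_1[L_j]$ is done with cutoff functions. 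By \ref{Eparameterseq} each ratio $\tau_j[\zetabold,m]/\tau_j[\zerobold,m]$ lies in $[1/C(\cunder_2),C(\cunder_2)]$, so the same scaling argument used in Step 2 of the proof of \ref{Ttwocir} gives $\|u\circ F_\zetabold\|_{2,\beta,\gamma;M[[\zerobold]]}\sim_C\|u\|_{2,\beta,\gamma;M[[\zetabold]]}$ and the analogous estimate for the $C^{0,\beta}$ norm of errors.

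Next, given $(v,\zetabold)\in B:=\{v\in C^{2,\beta}_\sym(M[[\zerobold]]):\|v\|_{2,\beta,\gamma;M[[\zerobold]]}\le \tau_0[\zerobold,m]^{1+\alpha}\}\times[-\cunder_2,\cunder_2]^2$, I would apply \ref{Plinear} to solve $\Lcal u+(H-\xiw\circ\PiSph)=\zw_H\circ\PiSph$ on $M[[\zetabold]]$, set $\phi:=v\circ F_\zetabold^{-1}+u$, apply \ref{Lquad} and \ref{Plinear} again to obtain $(u_Q,\zw_Q)=-\Rcal_{M[[\zetabold]]}(H_\phi-H-\Lcal\phi)$, and define
\[
\Jcal(v,\zetabold):=\Bigl(u_Q\circ F_\zetabold\,,\,\zetabold+\tfrac{1}{2\tau_0[\zetabold,m]}\bigl(\mu_{sum}(m/2)^{1/2}+\mu_{sum}'\,,\,-\mu_{sum}(m/2)^{1/2}+\mu_{sum}'\bigr)\Bigr),
\]
where $\mu_{sum}W_0+\mu_{sum}'W_2=\xiw+\zw_H+\zw_Q$. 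The coefficients in the second component are chosen so that, by \ref{L:heq}.ii, a fixed point $\zetaboldhat$ of $\Jcal$ automatically gives $\xiwhat+\zwhat_H+\zwhat_Q=0$, which is the exact minimality condition for $\Mhat_\phihat$. Using \ref{LHM}, \ref{Plinear}, \ref{Lquad}, \ref{LVVpeq}.vi, and \ref{L:heq}.ii, one verifies that for $\cunder_2$ large enough in absolute terms $\Jcal(B)\subset B$; since $B$ is convex and compact in the $C^{2,\beta'}$-topology for $\beta'<\beta$, Schauder's fixed point theorem \cite[Theorem 11.1]{gilbarg} produces the required $(\vhat,\zetaboldhat)$.

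The main obstacle is controlling the obstruction coefficients $\mu_{sum}$ and $\mu_{sum}'$ well enough that the second component of $\Jcal$ genuinely maps $[-\cunder_2,\cunder_2]^2$ into itself. The delicate point is that $\mu_0$ and $\mu_2$ scale asymmetrically with $m$ (compare \ref{Emueq}: $|\mu_0|\sqrt{m}+|\mu_2|\le C\cunder_2\tau_0$), so the linear map from $(\mu_0,\mu_2)$ to the adjustments of $(\zeta_0,\zeta_2)$ in \ref{L:heq}.ii must be inverted with care, and the estimate in \ref{LVVpeq}.vi must be used to convert $C^{0,\beta}$-bounds on $\zw_H+\zw_Q$ into bounds on $\mu_{sum},\mu_{sum}'$ with the correct $m$-dependence. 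Once this bookkeeping is done the embeddedness follows from \ref{Lquad} combined with \ref{LMemb}, the smoothness from standard elliptic regularity, the genus count $m+1$ from the addition of $m+2$ catenoidal handles connecting two spheres, and the area estimate $\mathrm{Area}(\Mhat_\phihat)\to 8\pi$ from the explicit estimates on $\varphi_{init}$ provided by \ref{LMemb} together with $\|\phihat\|_{2,\beta,\gamma;\Mhat}\le\tauhat_0^{1+\alpha/4}$.
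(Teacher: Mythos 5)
Your proposal is correct and follows essentially the same route as the paper's proof: the same four-step structure inherited from Theorem~\ref{Ttwocir}, with the same observation that the construction of $F_\zetabold$ simplifies because $\Lep[m]$ is independent of $\zetabold$, the same two cylinder scaling ratios for the equatorial and polar bridges, and the identical formula for the second component of $\Jcal$ (your $(\mu_{sum},\mu_{sum}')$ are the paper's $(\mutilde_0,\mutilde_2)$), closed by Schauder's fixed point theorem via \ref{L:heq}.ii and \ref{LVVpeq}.vi. The one place you are slightly less precise than the paper: in Step~2 you claim only that $\tau_j[\zetabold,m]/\tau_j[\zerobold,m]\in[1/C,C]$ and deduce $\sim_C$ equivalence of norms, whereas the paper uses \ref{Elogeq} to get the sharper $\ell_\zetabold[p]\sim_{1+Cm^{-1/2}\log m}\ell_\zerobold[p]$ and hence $\sim_2$; since the ratio is in fact $1+O(\cunder_2/\sqrt{m})$ you should record this to guarantee the constants stay absolute in Step~4, but this is a minor tightening rather than a substantive gap.
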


\begin{proof}
The proof has the same structure as the proof of \ref{Ttwocir} and so we only 
provide a brief outline emphasizing the differences: 

\textit{Step 1: Construction of the diffeomorphisms $F_\zetabold$:}
We fix an $m\in\N$ which we assume as large in terms of $\cunder_2$ as needed.
We will use the notation 
$\zerobold:=(0,0)\in\R^2$,
$\tbbold:=\taubold[\zerobold,m]$ taking the value 
$\tb_j:= \tau_j [\zerobold,m]$ on $L_j$ for $j=0,2$, 
and for $\zetabold\in\R^2$ satisfying \ref{Ezetarangeeq} 
we write 
$ M[[\zetabold]] := 
M [\, \Lep [m] \,,\,\taubold[\zetabold,m]\,,\,\xiw[\zetabold,m]\,] $ 
(recall \ref{L:heq} and \ref{Dinit}). 
It is easy to modify the definition of $F_{\zetabold}$  
in the proof of \ref{Ttwocir} 
to define for $\zetabold\in\R^2$ satisfying \ref{Ezetarangeeq} 
a smooth diffeomorphism 
$F_{\zetabold} : M[[ \zerobold ]] \to M[[\zetabold]]$ 
covariant under the action of $\groupthree$. 
Note that actually the definition is simpler because $\Lep$ does 
not depend on $\zetabold$ and therefore we can skip the initial steps 
concerning the diffeomorphisms 
$F'_{\xx_1}$. 
The substantial step is to define maps analogous to 
the $\widehat{F}_{\zetabold}$'s which were defined in \ref{EFhat}. 
In analogy we denote (half) the lengths of the corresponding cylinders by 
$\ell_\zerobold[p]$ and $\ell_\zetabold[p]$, where $p\in \Lep$ is mentioned because 
the lengths depend on whether $p$ is a pole or on the equator.

\textit{Step 2: Equivalence of norms under $F_\zetabold$:} 
Using \ref{Elogeq} and \ref{Ecatenoid} 
it is easy to check that 
$$
\ell_\zetabold[p] \, \sim_{1 \, + \, C\,m^{-1/2}\,\log m\, }  \, \ell_\zerobold[p].
$$
Using this and arguing as in the proof of \ref{L:norms} we conclude that 
for $u\in C^{2,\beta}(\,M[[\zetabold]]\,)$ 
and $E\in C^{0,\beta}(\,M[[\zetabold]]\,)$ 
we have 
$$
\| \, u\circ F_\zetabold \, \|_{2,\beta,\gamma;M[[\zerobold]] }
\, \sim_2 \, 
\| \, u\, \|_{2,\beta,\gamma;M[[\zetabold]] } ,
\qquad
\| \, E\circ F_\zetabold \, \|_{0,\beta,\gamma-2;M[[\zerobold]] }
\, \sim_2 \, 
\| \, E\, \|_{0,\beta,\gamma-2;M[[\zetabold]] } .
$$

\textit{Step 3: The map $\Jcal$:}
By applying \ref{L:heq}.v, \ref{Elogeq}, and \ref{LVVpeq}.v we can repeat 
all definitions and estimates in step 3 of of the proof of \ref{Ttwocir}, 
except for using $\cunder_2$ instead of $\cunder_1$ and modifying the 
definition of $\Jcal$ as follows:
$$
\Jcal(v,\zetabold)=\left(u_Q\circ F_\zetabold\, , \, \zetabold + 
\left( \, \frac{ \mutilde_0 (m/2)^{ 1/2} + \mutilde_2 } {2 \, \tau_0[\zetabold,m] } \, , \, 
\frac{ - \mutilde_0 (m/2)^{ 1/2} + \mutilde_2 } {2 \, \tau_0[\zetabold,m] }  \, \right) \, 
\right),
$$
where $\mutilde_0 W_0 + \mutilde_2 W_2 = \xiw + \zw_H + \zw_Q$.

\textit{Step 4: The fixed point argument:}
Using \ref{LVVpeq}.vi and \ref{L:heq}.ii
we can argue in the same way as in the proof of \ref{Ttwocir} to complete the proof.
\end{proof}

\def\baselinestretch{1}

\bibliographystyle{amsplain}
\bibliography{paper}
\end{document}